\newtheorem{theorem}{Theorem}[section]
\newtheorem{proposition}[theorem]{Proposition}
\newtheorem{lemma}[theorem]{Lemma}
\newtheorem{corollary}[theorem]{Corollary}
\newtheorem{conjecture}[theorem]{Conjecture}
\theoremstyle{definition}
\newtheorem{definition}[theorem]{Definition}
\newtheorem{remark}[theorem]{Remark}
\newtheorem{problem}[theorem]{Problem}
\begin{document}
\title{\textbf{Recent progress on graphs with fixed smallest eigenvalue}}
\author[a,b]{Jack H. Koolen}
\author[c]{Meng-Yue Cao}
\author[a]{Qianqian Yang\footnote{Corresponding author.}}
\affil[a]{\footnotesize{School of Mathematical Sciences, University of Science and Technology of China, 96 Jinzhai Road, Hefei, 230026, Anhui, PR China.}}
\affil[b]{\footnotesize{CAS Wu Wen-Tsun Key Laboratory of Mathematics, University of Science and Technology of China, 96 Jinzhai Road, Hefei, Anhui, 230026, PR China}}
\affil[c]{\footnotesize{School of Mathematical Sciences, Beijing Normal University, 19 Xinjiekouwai Street, Beijing, 100875, PR China.}}

\maketitle
\pagestyle{plain}

\newcommand\blfootnote[1]{%
\begingroup
\renewcommand\thefootnote{}\footnote{#1}%
\addtocounter{footnote}{-1}%
\endgroup}
\blfootnote{2010 Mathematics Subject Classification. Primary 05C50. Secondary 05C22, 05C75, 05E30, 05D99, 11H06.}
\blfootnote{E-mail addresses: {\tt koolen@ustc.edu.cn} (J.H. Koolen), {\tt cmy1325@163.com} (M.-Y. Cao),  {\tt qqyang91@ustc.edu.cn} (Q. Yang).}

\begin{abstract}
We give a survey on graphs with fixed smallest eigenvalue, especially on graphs with large minimal valency and also on graphs with good structures. Our survey mainly consists of the following two parts:
\begin{enumerate}
  \item Hoffman graphs, the basic theory related to Hoffman graphs and the applications of Hoffman graphs to graphs with fixed smallest eigenvalue and large minimal valency;
  \item recent results on distance-regular graphs and co-edge regular graphs with fixed smallest eigenvalue and the characterizations of certain families of distance-regular graphs.
\end{enumerate}

At the end of the survey, we also discuss signed graphs with fixed smallest eigenvalue and present some new findings.
\end{abstract}





\section{Introduction}\label{Sec:intro}
All graphs mentioned in this paper are finite, undirected and simple. For undefined notations see \cite{BH2012}, \cite{code}
and \cite{GD01}. Unless we specify a different matrix, by an eigenvalue of a graph, we mean an eigenvalue of its adjacency matrix. Note that as the adjacency matrix of a graph is a symmetric real matrix, it is diagonalizable and all its eigenvalues are real.

In this paper, we mainly study the smallest eigenvalue of a graph. One of the oldest results is that for a connected graph, its smallest eigenvalue is in absolute value at most the largest eigenvalue and equality holds if and only if this graph is bipartite (see \cite[Proposition 3.1.1, Proposition 3.4.1]{BH2012}). This is a consequence of the Perron-Frobenius Theorem.

An important result by Hoffman gives a bound on the stability number $\alpha$ of a $k$-regular graph $G$ with order $n$ and smallest eigenvalue $\lambda_{\min}(G)$ as follows:
$$\alpha \leq \frac{n}{1+ \frac{k}{-\lambda_{\min}(G)}}$$
(unpublished; see \cite[Theorem 3.5.2]{BH2012}). We call this bound the \emph{Hoffman bound} or the \emph{ratio bound}. This bound has many applications, for example in extremal combinatorics. Godsil and Meagher \cite{Godsil.2016} used this bound to show many Erd\H{o}s-Ko-Rado theorems. This bound also gives a lower bound on the chromatic number $\chi$ of $G$, that is
$\chi \geq 1+ \frac{k}{-\lambda_{\min}(G)}$, as each color class of $G$ is a stable set.

Recently, Bramoull\'{e}, Kranton and D'Amours \cite{Bramoulle.2014} have shown that the equilibria of many economic systems only depend on the smallest eigenvalue of the underlying network.

The main topic of this paper is to survey the area of graphs with fixed smallest eigenvalue.  In Section \ref{sec:main results}, we describe the classical $1976$ result of Cameron, Goethals, Seidel and Shult \cite{Cameron}
characterizing graphs with smallest eigenvalue at least $-2$ and the corresponding $2018$ result of Koolen, Yang and Yang \cite{kyy3} for graphs with smallest eigenvalue at least $-3$.
Also in this section, we present two classical results of Hoffman  and related results by Woo and Neumaier \cite{Woo}, Yu \cite{yu} and Aharoni, Alon and Berger \cite{Aharoni.2016}. The first result of
Hoffman \cite{Hoff1977} shows the following: Let $G$ be a graph with smallest eigenvalue $\lambda_{\min}(G)$ and large minimal valency. If $\lambda_{\min}(G)>-2$, then $\lambda_{\min}(G)=-1$ and $G$ is a disjoint union of cliques; if $\lambda_{\min}(G)>-1-\sqrt{2}$, then $\lambda_{\min}(G)=-2$ and $G$ is a generalized line graph. The second result of Hoffman \cite{Hoff1973} shows that the smallest eigenvalue of a graph depends very much on its local structure.  For recent surveys on graphs with smallest
eigenvalue at least $-2$, we refer to Cvetkovi\'{c}, Rowlinson and Simi\'{c} \cite{DC} and \cite{Cvetkovic.2015}.
In Section \ref{sec:hoffman graphs}, we define Hoffman graphs and present the basic theory of Hoffman graphs. In Section \ref{sec:associated hoffman graphs}, we investigate graphs with bounded smallest eigenvalue and large minimal valency. In Section \ref{sec:drg} we discuss distance-regular graphs. We look at distance-regular graphs with fixed smallest eigenvalue and also at characterizations of certain families of distance-regular graphs. In Section \ref{sec:co-edge regular graphs} we discuss co-edge regular graphs. In Section \ref{sec:signed graph} we explore signed graphs and Seidel matrices. In Section \ref{sec:future work}, we give several problems on unsigned and signed graphs. In Appendix \ref{appendix}, we define $Q$-polynomial distance-regular graphs and their Terwilliger algebra.

Note that if a regular graph $G$ has smallest eigenvalue $\lambda_{\min}(G)$, then its complement $\overline{G}$ is also regular with second largest eigenvalue $-\lambda_{\min}(G)-1$. Henceforth we scratch the area of regular graphs with fixed second largest eigenvalue.

\subsection{Regular graphs with fixed second largest eigenvalue}\label{sec:alon-boppana}
There is a tremendous amount of literature about regular graphs with fixed second largest
eigenvalue. In this subsection, we give some highlights of this area, but we do not intend to survey the area. One of the reasons is that the area has quite a bit of different flavors from
the rest of this paper, and another is that there is so much literature that probably will make a small book.
We follow \cite[Chapter 4]{BH2012}. For more details, see also that chapter.

An {\em expander} is a (preferably sparse) graph with the property that the number of vertices at distance at most $1$ from any given (not too large) set $S$ of vertices is at least a fixed constant
$(>1)$ times the size of $S$. Expanders became famous, because of their role in sorting networks (cf. Ajtai, Koml\'{o}s and Szemer\'{e}di \cite{Ajtai.1983}). For a recent survey on expanders, see \cite{Hoory.2006}.

Let $G$ be a connected $k$-regular graph with distinct eigenvalues $k > \lambda_1 >\cdots>\lambda_t$. Let $\lambda:= \max\{ \lambda_1, -\lambda_t\}$.
It is shown in \cite[Proposition 4.3.1, Proposition 4.5.1]{BH2012}, also in \cite{Alon.1988}, that if the ratio $\frac{k}{\lambda}$ is larger, then the expansion properties of $G$ are better. Also if the ratio $\frac{k}{\lambda}$ is large, then the graph has good connectivity and randomness properties. For a survey on pseudo-random graphs, that is, graphs with good randomness properties, see \cite{Krivelevich.2006}.

A theorem by Alon and Boppana shows that the second largest eigenvalue of a $k$-regular graph can not be much smaller than $2\sqrt{k-1}$.
\begin{theorem}[{\cite[Alon-Boppana]{Alon.1986}}]\label{alon}
Let $k \geq 3$ be an integer. There exists a positive constant $C$ such that the second largest eigenvalue $\lambda_1$ of a $k$-regular graph of order $n$ satisfies
$$\lambda_1 \geq \sqrt{k-1}(1- C\frac{\ln{(k-1)}}{\ln{n}}).$$
\end{theorem}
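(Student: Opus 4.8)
The plan is to establish the Alon–Boppana bound by exhibiting a "trial vector" in the eigenvalue variational principle whose Rayleigh quotient against the adjacency matrix $A$ is forced to be large, and then argue that since such a vector can be taken orthogonal to the all-ones (Perron) vector, the second largest eigenvalue $\lambda_1$ must dominate that quotient. First I would fix a vertex $v$ and consider the ball structure of $G$ around $v$. Because $G$ is $k$-regular, the number of vertices within distance $r$ of $v$ is at most $1 + k + k(k-1) + \cdots + k(k-1)^{r-1}$, which is less than $n$ as long as $r$ is of order $\log_{k-1} n$; this is where the hypothesis forces $r \approx \ln n / \ln(k-1)$ and where the error term $C\,\ln(k-1)/\ln n$ in the statement will ultimately come from. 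So I can choose $r$ growing slowly with $n$ such that $G$ contains two vertices $u, w$ at distance at least $2r+1$; equivalently the balls of radius $r$ about $u$ and about $w$ are disjoint.

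The core of the argument is the local estimate: on the ball of radius $r$ about a vertex, define a radial function $f$ supported there, with $f(x) = (k-1)^{-d(x,u)/2}$ for $d(x,u) \le r$ and $f(x) = 0$ otherwise. A direct computation of $\langle Af, f\rangle / \langle f, f\rangle$ — using that a vertex at distance $j$ from $u$ has, in the "tree-like" case, one neighbor at distance $j-1$ and $k-1$ at distance $j+1$ — gives a value close to $2\sqrt{k-1}$, with a correction that decays in $r$; the only loss comes from the boundary sphere at distance $r$ and from possible short cycles, both of which I would bound crudely. The point of taking $u$ and $w$ far apart is that the two resulting functions $f_u, f_w$ have disjoint supports, hence span a $2$-dimensional space on which the Rayleigh quotient of $A$ is at least (roughly) $2\sqrt{k-1}$ minus a small error; by the Courant–Fischer min-max characterization, a $2$-dimensional space on which $\langle Ax,x\rangle \ge \mu\langle x,x\rangle$ forces $\lambda_1 \ge \mu$ (the top eigenvalue $k$ occupies only one dimension). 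Combining, $\lambda_1 \ge 2\sqrt{k-1}(1 - o(1)) \ge \sqrt{k-1}(1 - C\ln(k-1)/\ln n)$ after absorbing constants.

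The main obstacle, and the step requiring the most care, is making the local Rayleigh-quotient estimate honest when $G$ is \emph{not} a tree near $u$: short cycles create extra edges that can only help or hurt the quotient by a controlled amount, and one must verify that choosing $r$ slightly smaller than the injectivity radius is unnecessary — a purely combinatorial packing bound on how many vertices the ball can contain suffices, so that the weights $(k-1)^{-d/2}$ still produce the right leading term. I would handle this by comparing $G$ to its universal cover (the infinite $k$-regular tree $T_k$, whose adjacency operator has spectral radius exactly $2\sqrt{k-1}$) and noting that the radial test function pulls back and pushes forward with only a boundary-term discrepancy of size $O((k-1)^{-r})$ relative to $\|f\|^2$. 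Quantifying this discrepancy precisely, and tracking how the choice $r = \Theta(\ln n/\ln(k-1))$ feeds into the final multiplicative error, is the delicate bookkeeping; everything else is the standard min-max packaging.
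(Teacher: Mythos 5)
The paper itself does not prove this theorem: it is quoted from Alon's 1986 paper (following \cite[Chapter 4]{BH2012}), so your proposal can only be measured against the standard argument, which is indeed the one you are reconstructing (Nilli-style radial test functions on two far-apart balls, packaged via Courant--Fischer). The architecture is right, and the weaker constant $\sqrt{k-1}$ in the statement gives you plenty of room. Two points, however, need repair. First, a small one: taking $d(u,w)\geq 2r+1$ only makes the two balls disjoint; it does not exclude edges between the boundary spheres, and the two-dimensional min-max step genuinely needs $\langle Af_u,f_w\rangle=0$ (otherwise a combination $af_u+bf_w$ with $ab<0$ can have Rayleigh quotient below the minimum of the two individual quotients). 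You want $d(u,w)\geq 2r+2$, which the same ball-counting provides.

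The more serious issue is the step you yourself single out as delicate: justifying the local estimate when $G$ is not tree-like near the root. Your proposed mechanism --- compare with the universal cover and claim the discrepancy is ``a boundary term of size $O((k-1)^{-r})$ relative to $\|f\|^2$'' --- is not correct as stated. Short cycles are not confined to the boundary: they can occur throughout the ball and make the sphere sizes $|S_i|$ grow much more slowly than $k(k-1)^{i-1}$ (they can even stop growing), so the tree computation of both $\langle Af,f\rangle$ and $\|f\|^2$ can be off by large factors, not by an exponentially small boundary correction. The standard resolution is different in spirit: work with the Laplacian form $\sum_{x\sim y}(f(x)-f(y))^2 = k\|f\|^2-\langle Af,f\rangle$ and use the purely combinatorial fact that every vertex at distance $i\geq 1$ from the root has at most $k-1$ neighbors at distance $i+1$ (root at an edge, or absorb the factor $k$ at $i=0$). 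Since $f$ is constant on spheres, edges inside a sphere contribute nothing and backward multiplicities never enter; one gets
\[
\sum_{x\sim y}\bigl(f(x)-f(y)\bigr)^2 \;\leq\; \bigl(k-2\sqrt{k-1}\bigr)\sum_{i<r}|S_i|f_i^2 \;+\;(k-1)|S_r|f_r^2,
\]
and because $|S_i|f_i^2$ is non-increasing in $i$, the boundary term is at most $\frac{k-1}{r+1}\|f\|^2$. This shows that deviation from the tree can only help, with an error $O(1/r)$, no matter how the spheres behave; with $r=\Theta(\ln n/\ln(k-1))$ this yields $\lambda_1\geq 2\sqrt{k-1}-O\bigl((k-1)\ln(k-1)/\ln n\bigr)$, which implies the stated bound. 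If you replace your universal-cover comparison by this forward-edge counting, the rest of your outline goes through.
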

Serre \cite{Serre.1997} has shown that for a $k$-regular graph, many of its eigenvalues are not much smaller than $2\sqrt{k-1}$.
\begin{theorem}
Fix $k \geq 1$. For each $\varepsilon >0$, there exists a positive constant $c = c(k, \varepsilon)$ such that for any $k$-regular graph $G$ of order $n$, the number of eigenvalues larger than $(2-\varepsilon)\sqrt{k-1}$ is at least $cn$.
\end{theorem}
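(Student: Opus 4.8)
The plan is a trace (moment) argument, combining a lower bound on $\operatorname{tr}(A^{2\ell})$ coming from the universal cover of $G$ with the spectral radius of the $k$-regular tree, and using non-backtracking walks to control the \emph{sign} of the large eigenvalues. Write $A$ for the adjacency matrix of $G$, let $\lambda_1 \ge \cdots \ge \lambda_n$ be its eigenvalues, and set $\beta := (2-\varepsilon)\sqrt{k-1}$. Since a smaller $\varepsilon$ gives a stronger statement we may assume $\varepsilon$ small, so $0 < \beta < 2\sqrt{k-1} < k$ (the cases $k\le 2$ being trivial), and we may assume $G$ connected. If no constant $c$ worked, there would be $k$-regular graphs $G_j$ on $n_j$ vertices with $\#\{i : \lambda_i(G_j) > \beta\}/n_j \to 0$; since $\lambda_1(G_j) = k > \beta$ forces at least one large eigenvalue, necessarily $n_j \to \infty$, and we will derive a contradiction.

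First I would record the basic estimate. As $G$ is $k$-regular, its universal cover is the infinite $k$-regular tree $T_k$, and the natural map from closed walks of length $2\ell$ based at a vertex of $T_k$ to closed walks of length $2\ell$ in $G$ is injective; hence $(A^{2\ell})_{vv} \ge w_{2\ell}$ for every vertex $v$, where $w_{2\ell}$ is the number of closed walks of length $2\ell$ from a fixed vertex of $T_k$. Summing over $v$,
$$\operatorname{tr}(A^{2\ell}) = \sum_{i=1}^n \lambda_i^{2\ell} \ge n\, w_{2\ell}.$$
Encoding a closed walk in $T_k$ by its sequence of steps toward/away from the basepoint yields a Dyck path of length $2\ell$ with at least $k-1$ choices at each up-step, so $w_{2\ell} \ge \tfrac{1}{\ell+1}\binom{2\ell}{\ell}(k-1)^{\ell}$; together with $w_{2\ell} \le (2\sqrt{k-1})^{2\ell}$ this gives $w_{2\ell}^{1/2\ell} \to 2\sqrt{k-1}$. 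Fix $\ell = \ell(k,\varepsilon)$ with $w_{2\ell} \ge \rho^{2\ell}$, where $\rho := (2-\tfrac\varepsilon2)\sqrt{k-1} \in (\beta, 2\sqrt{k-1})$. Splitting the spectrum into $\lambda_i > \beta$, $|\lambda_i| \le \beta$ and $\lambda_i < -\beta$ and bounding $|\lambda_i|\le k$ on the two extreme ranges,
$$n\rho^{2\ell} \le \operatorname{tr}(A^{2\ell}) \le (m_+ + m_-)\,k^{2\ell} + n\beta^{2\ell},$$
where $m_\pm := \#\{i : \pm\lambda_i > \beta\}$; hence $m_+ + m_- \ge \theta n$ with $\theta := (\rho^{2\ell}-\beta^{2\ell})/k^{2\ell} > 0$ depending only on $k,\varepsilon$. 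For $G$ bipartite the spectrum is symmetric, so $m_+ = m_- \ge \tfrac{\theta}{2}n$ and we are finished; the remaining issue is that for non-bipartite $G$ the $\theta n$ large-modulus eigenvalues might a priori all be negative.

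To rule this out I would use the non-backtracking walk polynomials $q_0 = 1$, $q_1 = x$, $q_2 = x^2-k$, $q_{m+1} = xq_m - (k-1)q_{m-1}$ for $m\ge 2$ --- equivalently the monic orthogonal polynomials of the Kesten--McKay measure $\mu_k$ (the spectral measure of $T_k$ at a vertex). These satisfy: $q_m(A)_{uv}$ equals the number of non-backtracking walks of length $m$ from $u$ to $v$, so $\operatorname{tr}(q_m(A)) \ge 0$ for all $m$; and, evaluating in $T_k$ where non-backtracking walks are just geodesics, $q_m(A_{T_k})_{uv} = \mathbf{1}[\,d(u,v)=m\,]$, which yields the expansion $x^{2\ell} = \sum_{m\ge0} c_m^{(\ell)} q_m(x)$ with $c_m^{(\ell)} = (A_{T_k}^{2\ell})_{uv}$ (any $u,v$ at distance $m$) $\ge 0$ and $c_0^{(\ell)} = w_{2\ell}$. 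Passing to a weak limit $\nu$ of the spectral measures $\tfrac1{n_j}\sum_i \delta_{\lambda_i(G_j)}$, we obtain a probability measure with $\operatorname{supp}(\nu) \subseteq [-k,\beta]$, with $\int q_m\,d\nu \ge 0$ for all $m$, and hence with $\int x^{2\ell}\,d\nu = \sum_m c_m^{(\ell)}\int q_m\,d\nu \ge w_{2\ell}$ for all $\ell$; the latter forces $\sup\{|x| : x\in\operatorname{supp}(\nu)\} \ge 2\sqrt{k-1}$, so (as $\sup\operatorname{supp}(\nu) \le \beta < 2\sqrt{k-1}$) necessarily $\inf\operatorname{supp}(\nu) \le -2\sqrt{k-1}$.

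It remains to contradict the existence of such a $\nu$. If $\nu$ puts positive mass on $[-k,-2\sqrt{k-1}-\delta]$ for some $\delta>0$, then for odd $m$ this mass contributes $-(\text{positive mass})\cdot q_m(|x|)$ with $q_m$ growing like $\mu_+(|x|)^m$, $\mu_+(|x|) = \tfrac12(|x|+\sqrt{x^2-4(k-1)}) > \sqrt{k-1}$, whereas the contribution of the rest of $\nu$ is $O\bigl(m(k-1)^{m/2}\bigr)$ (the bound for $q_m$ on compact subsets of the open bulk $(-2\sqrt{k-1},2\sqrt{k-1})$); hence $\int q_m\,d\nu < 0$ for some odd $m$, a contradiction. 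The delicate case, and the \textbf{main obstacle}, is when $\nu$ attains $-2\sqrt{k-1}$ only in the limit, i.e.\ $\operatorname{supp}(\nu) \subseteq [-2\sqrt{k-1},\beta]$ with mass merely accumulating at $-2\sqrt{k-1}$: the even-moment bound forces the density of $\nu$ near $-2\sqrt{k-1}$ to be at least of square-root order (as for $\mu_k$ itself), but because the $q_m$ oscillate rapidly throughout the bulk, extracting from this a single odd $m$ with $\int q_m\,d\nu < 0$ requires a genuine boundary analysis of the orthogonal polynomials $q_m$ near $\pm 2\sqrt{k-1}$ rather than the crude bounds used so far. Equivalently, one may bypass the limit and seek directly a polynomial $p = \sum_m a_m q_m$ with all $a_m \ge 0$, $a_0 > 0$, and $p \le 0$ on $[-k,\beta]$; then $\operatorname{tr}(p(A)) = \sum_m a_m\operatorname{tr}(q_m(A)) \ge a_0 n$ while $\operatorname{tr}(p(A)) \le m_+\max_{[\beta,k]}p$, giving $m_+ \ge c\,n$ with $c = a_0/\max_{[\beta,k]}p$ --- and the existence of such a $p$ is precisely the hard point above.
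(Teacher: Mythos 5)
The survey does not actually prove this theorem (it is stated with a citation to Serre), so your proposal can only be judged against the known arguments, and judged that way it has a genuine gap which you yourself flag: the existence of the polynomial $p=\sum_m a_m q_m$ with $a_m\geq 0$, $a_0>0$ and $p\leq 0$ on $[-k,\beta]$ is not a technical point left to fill in later — it \emph{is} Serre's theorem. Everything you establish before that (the universal-cover trace bound $\mathrm{tr}(A^{2\ell})\geq n\,w_{2\ell}$, the Dyck-path estimate, the split of the spectrum giving $m_++m_-\geq\theta n$, the parity $q_m(-x)=(-1)^mq_m(x)$, and the weak-limit reduction) only yields the two-sided statement ``many eigenvalues of modulus $>\beta$'', which is the easy half; your limit argument then correctly excludes measures with mass strictly below $-2\sqrt{k-1}$, but the case $\mathrm{supp}(\nu)\subseteq[-2\sqrt{k-1},\beta]$ is precisely the statement that needs proving, and none of the crude bounds you use (growth of $q_m$ off the bulk, the $O(m(k-1)^{m/2})$ bulk bound) can decide it: ruling out a probability measure on $[-2\sqrt{k-1},\beta]$ with $\int q_m\,d\nu\geq 0$ for all $m$ is exactly equivalent to the theorem, so the argument is circular at the point where it matters. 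Your final ``linear programming'' inequality $m_+\geq a_0 n/\max_{[\beta,k]}p$ is the right formulation, but without an explicit $p$ there is no constant $c(k,\varepsilon)$ and no proof.

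The missing ingredient is the kernel/positivity construction at the spectral edge. In the substitution $x=2\sqrt{k-1}\cos\theta$ one builds a Fej\'er-type non-negative combination of the $q_m$ (non-negativity of the $q$-coefficients comes for free from the non-negative linearization $q_aq_b=\sum_c L^c_{ab}q_c$, where $L^c_{ab}$ counts vertices of the tree at prescribed distances, so squares of non-negative combinations stay in the admissible cone) which is uniformly bounded on $[-k,\beta]$ while its $q_0$-coefficient is bounded away from $0$ relative to its maximum on $[\beta,k]$; carrying this out requires exactly the boundary analysis of the $q_m$ near $\pm 2\sqrt{k-1}$ that you defer. This is how Serre's proof, and the standard expositions (e.g.\ Davidoff--Sarnak--Valette, and the elementary variants of Nilli and Cioab\u{a}), actually conclude, and it is where all the work lies. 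Until you exhibit such a $p$ (or an equivalent family of test functions) with quantitative control of $a_0$ and $\max_{[\beta,k]}p$, the proposal establishes only the weaker two-sided count and does not prove the stated theorem.
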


There are many improvements and generalizations of the above two theorems with applications to coding theory and other areas, see for example \cite{Cioaba.2019}, \cite{Hholdt.2014} and \cite{Li.1996}.

Alon \cite{Alon.1986} conjectured that for fixed $k, \varepsilon >0$ and $n$ sufficiently large, a random $k$-regular graph of order $n$ has second largest eigenvalue at most
$2\sqrt{k-1}+\varepsilon$. This conjecture was shown to be true by Friedman \cite{Friedman.2008}.

These results show that the above mentioned ratio $\frac{k}{\lambda}$ can not be much larger than $\frac{k}{2\sqrt{k-1}}$. This leads us to define Ramanujan graphs. A {\em Ramanujan graph} is a connected $k$-regular graph such that any eigenvalue $\lambda \neq \pm k$ satisfies $|\lambda| \leq 2\sqrt{k-1}$.
Complete graphs are Ramanujan graphs. Note that a sparse non-bipartite Ramanujan graph is a particular good expander. It is of very great interest to construct infinite families of Ramanujan graphs with fixed valency $k$ and unbounded number of vertices.
There are several constructions known of infinite families of non-bipartite Ramanujan graphs,
for example, by Lubotzky, Phillips and Sarnak \cite{Lubotzky.1988}, Margulis \cite{Margulis.1988} and Morgenstern \cite{Morgenstern.1994}, but they are only known for particular $k$. Recently, Marcus, Spielman and Srivastava \cite{Marcus.2015} found for every $k \geq 3$ an infinite family of bipartite Ramanujan graphs with valency $k$ and unbounded number of vertices. It still remains an open problem to construct infinite families of non-bipartite $k$-regular Ramanujan graphs for all of $k\geq3$.

\section{An overview of the main results}\label{sec:main results}

In this section, we give an overview of the main results in the area of graphs with fixed smallest eigenvalue.
At the beginning of this section, we introduce some basic terminology.

Let $G $ be a graph with \emph{vertex set} $V(G)$ and \emph{edge set} $E(G)\subseteq\binom{V(G)}{2}$, where $V(G)$ is a finite set.
If $\{x, y\}$ is in $E(G)$, then we say that $x$ and $y$ are \emph{neighbors} or $x$ and $y$ are \emph{adjacent} and write $x \sim y$ in this case. We say that a vertex $x$ is \emph{incident} with an edge $e$ if $x \in e$.

The \emph{adjacency matrix} $A(G)$ of a graph $G$ is the square $(0,1)$-matrix with rows and columns are indexed by $V(G)$, such that the $(x,y)$-entry of $A(G)$ is $1$ if and only if $x$ and $y$ are adjacent. As $A(G)$ is a symmetric matrix, all its eigenvalues are real. The \emph{eigenvalues} of $G$ are just the eigenvalues of $A(G)$. We call the multiset of eigenvalues of $G$ with their multiplicities the \emph{spectrum} of $G$. We call two graphs \emph{cospectral} if they have the same spectrum.
In this paper, we are mainly interested in the smallest eigenvalue of $G$, which is denoted by $\lambda_{\min}(G)$.

\subsection{\texorpdfstring{$s$}{s}-Integrability of graphs}
Let $\Sigma$ be a finite set of vectors in $\mathbb{R}^n$. The {\em lattice} $\Lambda$ generated by $\Sigma$ is defined as
$$\Lambda :=\left\{ \sum_{\mathbf{v} \in \Sigma} \alpha_{\mathbf{v}}\mathbf{v} \mid \alpha_{\mathbf{v}} \in \mathbb{Z} \text{ for all }\mathbf{v} \in \Sigma\right\},$$
and denoted by $\langle\Sigma\rangle_{\mathbb{Z}}$.
The lattice $\Lambda$ is called {\em integral}, if the standard inner product $(\mathbf{v}_1, \mathbf{v}_2)$ is integral for all $\mathbf{v}_1, \mathbf{v}_2 \in \Sigma$. Let $Gr(\Sigma)$ be the matrix with rows and columns are indexed by the set $\Sigma$, such that the $(\mathbf{v}_1,\mathbf{v}_2)$-entry of $Gr(\Sigma)$ is $(\mathbf{v}_1, \mathbf{v}_2)$. Note that the lattice $\Lambda$ is integral if and only if the matrix $Gr(\Sigma)$ is an integral matrix.

Following Conway and Sloane \cite{CS}, we say that an integral lattice $\Lambda$ is {\em $s$-integrable} (for some positive integer $s$) if $\sqrt{s}\Lambda$ is a sublattice of a standard lattice, which is a lattice generated by a set of orthonormal vectors. This is equivalent with the condition that $sGr(\Sigma) = N^TN$ holds for some integral matrix $N$.

Let $G$ be a graph with $A(G)$ as its adjacency matrix. Then the matrix $B(G):= A(G)+\lceil-\lambda_{\min}(G)\rceil I$ is positive semidefinite and hence can be written as $B(G) = M^TM$ for some real matrix $M$. Denote by $\Lambda(G)$ the lattice generated by the columns of $M$. Note that the isomorphism class of $\Lambda(G)$ only depends on $B(G)$, not on $M$. For a positive integer $s$, we say that the graph $G$ is \emph{$s$-integrable} if the lattice $\Lambda(G)$ is $s$-integrable, or equivalently, $sB(G) = N^TN$ holds for some integral matrix $N$.
Note that if a graph $G$ is $s$-integrable and $t$-integrable, then $G$ is $(s+t)$-integrable. So a $1$-integrable graph is $s$-integrable for $s \geq 1$.

In 1976, Cameron et al. \cite{Cameron} showed the following result:
\begin{theorem}[cf.~{\cite[Theorem 4.3, Theorem 4.10]{Cameron}}]\label{thmcam}
If $G$ is a connected graph with $\lambda_{\min}(G) \geq -2$, then $G$ is $s$-integrable for any $s \geq 2$. Moreover, if $G$ has at least $37$ vertices, then $G$ is $1$-integrable.
\end{theorem}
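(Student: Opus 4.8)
The plan is to reduce the statement to a fact about root lattices. Since $G$ is connected with $\lambda_{\min}(G)\geq -2$, the matrix $B(G)=A(G)+2I$ is positive semidefinite with integer entries and diagonal $2$; equivalently the Gram matrix of the generating vectors has all norms $2$ and integer inner products, so $\Lambda(G)$ is an integral lattice generated by vectors of norm $2$, i.e.\ a \emph{root lattice}. First I would invoke the classification of irreducible root lattices: $\Lambda(G)$ is, up to isomorphism, an orthogonal direct sum of copies of $A_n$, $D_n$, and the exceptional lattices $E_6, E_7, E_8$ (this is the lattice-theoretic form of the Cameron–Goethals–Seidel–Shult classification, and is exactly \cite[Theorem 4.3]{Cameron}). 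Because $s$-integrability is additive over orthogonal direct sums and is inherited by sublattices, it suffices to check that each irreducible root lattice is $2$-integrable. For $A_n$ and $D_n$ this is classical and easy: $D_n\subseteq \mathbb{Z}^n$ is already $1$-integrable, and $A_n\subseteq\mathbb{Z}^{n+1}$ likewise; so the only real content is showing $\sqrt 2\,E_6$, $\sqrt 2\,E_7$, $\sqrt 2\,E_8$ embed in a standard lattice. I would exhibit the explicit embeddings of $\sqrt 2\, E_8$ into $\mathbb{Z}^m$ (one can take $m=8$ via a suitable rescaled construction, or $m=16$ from the $\{0,\pm1\}$-realization coming from the extended Hamming code) and restrict to get $E_7$ and $E_6$. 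This proves the first assertion: $G$ is $2$-integrable, hence $s$-integrable for all $s\geq 2$.

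For the second assertion I would argue that $1$-integrability can only fail for ``small'' $G$. The obstruction to $1$-integrability is precisely that $\Lambda(G)$, although a root lattice, need not itself embed in $\mathbb{Z}^N$; the only irreducible root lattices that fail to be $1$-integrable are $E_7$ and $E_8$ (while $E_6\subseteq E_7$, so $E_6$ inherits whatever $E_7$ has — in fact one checks $E_6$ \emph{is} $1$-integrable, being a sublattice of $\mathbb{Z}^7$ is false, so one must be careful here; the correct statement is that $E_7,E_8$ are the non-$1$-integrable irreducibles, and $E_6$ embeds in neither $\mathbb{Z}^N$, so $E_6$ is the genuinely delicate case). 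The key point is a counting/ diameter argument: if $G$ is connected and $\Lambda(G)$ has an $E_7$ or $E_8$ (or $E_6$) summand that cannot be ``absorbed,'' then the root system of $\Lambda(G)$ is constrained, and since the columns of $M$ corresponding to adjacent vertices of $G$ differ by a root, the connectivity of $G$ forces the vertex set to be contained in a bounded region of a fixed exceptional root lattice — in particular $|V(G)|$ is bounded by an absolute constant. Carrying out the bookkeeping, the largest connected graph whose lattice is not $1$-integrable turns out to have at most $36$ vertices, giving the threshold $37$.

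The main obstacle is the second part: one must show that the $E$-type behavior cannot ``spread'' across a large connected graph, i.e.\ that a connected graph on many vertices with $\lambda_{\min}\geq -2$ has a lattice with no problematic exceptional summand and moreover is genuinely $1$-integrable (not merely has a $1$-integrable lattice abstractly). This requires understanding how the representation $V(G)\to \Lambda(G)$ interacts with the structure of the root lattice: one shows that a large connected graph must be a \emph{generalized line graph} or come from $D_n$/$A_n$ in a way that forces an explicit $\{0,\pm 1\}$-representation, and only the finitely many ``exceptional'' graphs attached to $E_6,E_7,E_8$ (all small) can escape this. The clean way to organize this is via the Woo–Neumaier / CGSS structure theory of graphs with $\lambda_{\min}\geq-2$ (line graphs, generalized line graphs, and a finite list of exceptional graphs represented in $E_8$), after which the $37$-vertex bound is a finite check on the exceptional list together with the observation that $D_n$- and $A_n$-type graphs of any size are already $1$-integrable.
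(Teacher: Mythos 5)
The survey itself gives no proof of Theorem \ref{thmcam}; it cites Cameron--Goethals--Seidel--Shult, and your outline does reconstruct that route (the root-lattice classification for the first assertion, the ``exceptional versus generalized line graph'' dichotomy for the second). But as written it has genuine gaps. First, the inference ``$G$ is $2$-integrable, hence $s$-integrable for all $s\geq 2$'' is invalid: additivity of integrability only yields $(s+t)$-integrability from $s$- and $t$-integrability, so $2$-integrability alone gives the even values of $s$; since the exceptional root lattices are not $1$-integrable, you must separately establish their $3$-integrability (or quote \cite[Corollary 23]{CS}, which asserts that $A_n$, $D_n$, $E_n$ are $s$-integrable for every $s\geq 2$), after which $2$ and $3$ generate all $s\geq 2$. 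Second, your discussion of $1$-integrability of the exceptional lattices is self-contradictory: in fact none of $E_6$, $E_7$, $E_8$ embeds in a standard lattice (the norm-$2$ vectors of $\mathbb{Z}^N$ are the vectors $\pm e_i\pm e_j$, and a lattice generated by such vectors is an orthogonal sum of $A$- and $D$-components), so all three are obstructions, not only $E_7$ and $E_8$.

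The more serious gap is the second assertion, where the entire content of the threshold ``$37$'' is replaced by an appeal to ``bookkeeping'' and a vague ``bounded region'' heuristic. What is actually needed is the concrete counting fact behind \cite[Theorem 4.10]{Cameron}: if $G$ is connected with $\lambda_{\min}(G)\geq -2$ and $\Lambda(G)$ is one of $E_6,E_7,E_8$, then the vertices of $G$ are represented by pairwise distinct roots of $E_8$ whose mutual inner products lie in $\{0,1\}$, and the maximum size of such a set of roots in $E_8$ is $36$; equivalently, every connected exceptional graph has at most $36$ vertices, so a connected graph on at least $37$ vertices is represented in $D_n$ (it is a generalized line graph) and is therefore $1$-integrable. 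Nothing in your sketch delivers the bound $36$, and the connectivity remark about adjacent vertices ``differing by a root'' does not by itself confine the representation to a bounded set. Once you supply the Conway--Sloane integrability result for $s\geq 2$ and the $36$-root bound (or cite the CGSS representation theorem directly), your architecture coincides with the cited proof.
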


A graph is a \emph{generalized line graph} if it is $1$-integrable with smallest eigenvalue at least $-2$.
Let $G$ be a graph. The \emph{line graph} of $G$, denoted by $L(G)$, is the graph with vertex set $E(G)$ such that edges $e$ and $f$ are adjacent in $L(G)$ if there is a unique vertex $x$ incident with $e$ and $f$ in $G$.
Let $N$ be the $|V(G)|\times|E(G)|$ $(0,1)$-matrix whose $(x,e)$-entry equals $1$ if and only if the vertex $x$ is incident with the edge $e$. Then $A(L(G)) + 2I = N^TN$. This means that $L(G)$ has smallest eigenvalue at least $-2$ and is $1$-integrable. But for a generalized line graph $H$, if an integral matrix $N'$ satisfies $A(H)+2I=(N')^TN'$, then $N'$ is a $(0,\pm1)$-matrix.

For more about graphs with smallest eigenvalue at least $-2$, we refer to \cite{DC} and \cite{Cvetkovic.2015}.

Later in 2018, Koolen et al. \cite{kyy3} studied the integrability of graphs with smallest eigenvalue at least $-3$ and proved that:
\begin{theorem}[cf.~{\cite[Theorem 1.3]{kyy3}}]\label{mthmkyy3}
There exists a positive constant $\kappa_1$ such that, if $G$ is a graph with $\lambda_{\min}(G)\geq-3$ and minimal valency at least $\kappa_1$, then $G$ is $s$-integrable for any $s\geq2$.
\end{theorem}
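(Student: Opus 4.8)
The plan is to reduce the statement to a local problem about Hoffman graphs, settle that on a finite list of building blocks by exhibiting explicit integral lattice representations, and then glue the pieces together, much in the spirit of the proof of Theorem~\ref{thmcam}. First I would trim the range of $\lambda_{\min}(G)$: by Hoffman's first theorem (quoted in Section~\ref{Sec:intro}), once the minimal valency exceeds a suitable constant and $\lambda_{\min}(G) > -1-\sqrt{2}$, the graph $G$ is a disjoint union of cliques or a generalized line graph, hence $1$-integrable and therefore $s$-integrable for every $s\ge 1$. So, enlarging $\kappa_1$ if necessary, we may assume $-3 \le \lambda_{\min}(G) \le -1-\sqrt{2}$, in which case $\lceil -\lambda_{\min}(G)\rceil = 3$ and $B(G) = A(G)+3I$. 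Since $s$-integrability is additive and $\{2,3\}$ generates every integer $\ge 2$ under addition, it suffices to prove that $G$ is both $2$-integrable and $3$-integrable; equivalently, for $s\in\{2,3\}$ I must produce integral vectors $(n_x)_{x\in V(G)}$ with $(n_x,n_x)=3s$ and $(n_x,n_y)=s$ when $x\sim y$ and $(n_x,n_y)=0$ otherwise.

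Next comes the local structure. Using the theory of Hoffman graphs from Section~\ref{sec:hoffman graphs} and its application in Section~\ref{sec:associated hoffman graphs}, for $\kappa_1$ large enough every vertex of $G$ has a neighbourhood governed by a \emph{fat} Hoffman graph with smallest eigenvalue at least $-3$, and $G$ arises, through the Hoffman-graph machinery, from a \emph{finite} family $\mathcal{F}$ of indecomposable fat Hoffman graphs with smallest eigenvalue at least $-3$ (the analogue, for $-3$, of the fact that for $\lambda_{\min}\ge -2$ one only needs line-graph and cocktail-party blocks). The representation problem then splits into two parts: first, represent each $\mathfrak{h}\in\mathcal{F}$ integrally at scales $2$ and $3$; second, patch these block representations together consistently along the quasi-cliques where blocks overlap.

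The first part is a bounded computation. For each of the finitely many blocks one writes down small-integer vectors for the slim vertices — essentially $(0,\pm1)$-vectors, together with a few extra coordinates attached to the fat vertices to absorb the $3I$ term — of the prescribed norm and overlaps, and checks them one by one. The reason scale $1$ does not suffice, while scales $2$ and $3$ do, is that some of the relevant local lattices are exceptional (of $E_n$- or $D_n^{+}$-type), which are not sublattices of any standard lattice but become so after rescaling by $\sqrt{2}$ or $\sqrt{3}$; this is also why Theorem~\ref{thmcam} needs the $37$-vertex hypothesis to obtain $1$-integrability and why the present theorem only claims $s\ge 2$.

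The main obstacle is the second part, the gluing. Integral representations of two overlapping blocks chosen independently need not agree on the overlap, so one must choose them coherently in order that the lattice generated by all the vectors together still embeds into a standard lattice. Here the large minimal valency is decisive: it forces two blocks to meet in a large quasi-clique, which rigidifies the relative position of their representations (up to a lattice automorphism) and lets one add blocks one at a time, extending the embedding of the lattice constructed so far at each stage. I expect the heart of the argument to be a lattice-theoretic lemma to the effect that, after these reductions, $\Lambda(G)$ (or its scaled-up version) is an amalgam, glued along large clique sublattices, of scaled standard lattices and $D_n$- and $E_n$-type lattices, with each piece $2$- and $3$-integrable and the amalgamation preserving $2$- and $3$-integrability; making the amalgamation analysis precise enough to rule out a pathological, genuinely non-integrable glue is the crux.
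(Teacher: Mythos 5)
The crux you flag as open is, in fact, the whole problem, and the paper's argument is designed precisely to avoid it. The actual route is: by Theorem \ref{kyy3thm} (the deep input from \cite{kyy3}), for large minimal valency $G$ is the slim graph of a \emph{single} fat Hoffman graph $\mathfrak{h}$ with $\lambda_{\min}(\mathfrak{h})\geq-3$; by Lemma \ref{rela}, $\mathfrak{h}$ has a reduced representation $\psi$ of norm $3$, and fatness forces every $\psi(x)$ to have norm $3-|N_{\mathfrak{h}}^f(x)|\leq 2$. An integral lattice generated by vectors of norm at most $2$ is automatically an orthogonal direct sum of root lattices $A_i$, $D_i$, $E_i$ and a $\mathbb{Z}^q$, and each such summand is $s$-integrable for every $s\geq2$ by Conway--Sloane \cite{CS}; setting $N_x=\psi(x)+\sum_{f\sim x}\mathbf{e}_f$ with orthonormal integral vectors $\mathbf{e}_f$ orthogonal to all $\psi(y)$ gives $A(G)+3I=N^TN$, so the $s$-integrability of $\Lambda(G)$ is inherited from that of $\Lambda^{\text{red}}(\mathfrak{h},3)$. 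Thus the global lattice is classified in one stroke: there is no block-by-block choice of integral representations, no coherence condition on overlaps, and no amalgamation along quasi-cliques to control. Your plan, by contrast, defers exactly this to an unproved ``amalgamation preserves $2$- and $3$-integrability'' lemma, which you yourself identify as the heart of the matter; without the norm-$\leq2$ classification (or some substitute), the proposal does not close.

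Two further points would need repair even if the gluing were handled. First, the trimming step is not sound: Hoffman's Theorem \ref{thmhoff77}(ii) has a constant $C_1(\lambda)$ that blows up as $\lambda\to-1-\sqrt{2}$ (equivalently $\hat{\theta}_k\to-1-\sqrt{2}$), so ``minimal valency large and $\lambda_{\min}(G)>-1-\sqrt{2}$'' does not imply $\lambda_{\min}(G)\geq-2$, and your case split silently drops graphs with $\lambda_{\min}(G)\in(-1-\sqrt{2},-2)$. The correct reduction is the paper's: dispose of $\lambda_{\min}(G)\geq-2$ by Theorem \ref{thmcam} and run the main argument on all of $-3\leq\lambda_{\min}(G)<-2$, where indeed $B(G)=A(G)+3I$. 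Second, the family of indecomposable fat Hoffman graphs with smallest eigenvalue at least $-3$ is not finite (already the maximal $(-3)$-irreducible ones come in unbounded Dynkin-type families), so the ``bounded computation over finitely many blocks'' is unavailable; this is another reason the argument must run through the lattice classification rather than a finite block list. Your reduction to $s\in\{2,3\}$ via additivity of integrability, and your explanation that $E$-type obstructions are what rule out $s=1$, are correct and consistent with the paper.
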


To prove the above theorem, they use Hoffman graphs as their main tool, which will be introduced in the next section.
\begin{remark}
\begin{enumerate}
\item After Theorem \ref{kyy3thm}, we give a sketch of the proof of Theorem \ref{mthmkyy3}.
\item It is known that $\kappa_1$ is at least $166$ by results of Koolen and Munemasa \cite{koolen.2020b}  and Koolen, Rehman and Yang \cite{KRY2}.
\end{enumerate}
\end{remark}

\subsection{Two results of Hoffman}

Hoffman \cite{Hoff1977} in 1977 showed the following related results.

\begin{theorem}[{cf.~\cite[Theorem 1.1]{Hoff1977}}]\label{thmhoff77}
\begin{enumerate}
\item For any real number $\lambda\in (-2,-1]$, there exists a constant $C_1(\lambda)$ such that, if $G$ is a graph with $\lambda_{\min}(G)\geq\lambda$ and minimal valency at least $C_1(\lambda)$, then $\lambda_{\min}(G) = -1$ and $G$ is a disjoint union of cliques.
\item For any real number $\lambda\in (-1-\sqrt{2},-2]$, there exists a constant $C_1(\lambda)$ such that, if $G$ is a graph with $\lambda_{\min}(G)\geq\lambda$ and minimal valency at least $C_1(\lambda)$, then $\lambda_{\min}(G) \geq -2$ and $G$ is a generalized line graph and hence is $1$-integrable.
\end{enumerate}
\end{theorem}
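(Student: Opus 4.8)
The plan is to treat the two parts together and, for part (ii), to reduce everything — once the bound $\lambda_{\min}(G)\ge -2$ has been secured — to the $1976$ theorem of Cameron et al. (Theorem~\ref{thmcam}). The basic tool throughout is Cauchy interlacing: if $H$ is an induced subgraph of $G$, then $\lambda_{\min}(H)\ge\lambda_{\min}(G)\ge\lambda$, so $G$ contains no induced subgraph whose smallest eigenvalue is less than $\lambda$. Since $\lambda>-2$ in part (i) (resp. $\lambda>-1-\sqrt2$ in part (ii)), this already forbids $C_4$, $K_{2,3}$, all sufficiently large stars $K_{1,m}$, and (in part (ii)) the finitely many further ``small'' graphs whose smallest eigenvalue is $<\lambda$. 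From $K_{1,m}$-freeness one deduces that the neighbourhood $N(v)$ of every vertex has independence number bounded by a constant; since $|N(v)|$ is at least the minimal valency $\delta$, which is large, Ramsey's theorem produces a clique of size $q(\delta)\to\infty$ inside each $N(v)$. Thus $G$ is, locally, a union of boundedly many very large cliques, and from $C_4$-freeness (valid because $\lambda_{\min}(C_4)=-2$) the common neighbourhood of any two non-adjacent vertices is itself a clique.

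Now argue component by component, so assume $G$ is connected. For part (i): if $G$ is not a clique, it contains an induced path $a\sim b\sim c$ with $a\not\sim c$. The key step is to use the large cliques living in the neighbourhoods of $a$, $b$, $c$, together with the non-edge $ac$, to exhibit a large induced subgraph $\Gamma$ of $G$ of \emph{amalgam-of-cliques} type — essentially two cliques of size of order $q(\delta)$ glued along a small common clique, possibly with a bounded number of additional attachment vertices — and to estimate $\lambda_{\min}(\Gamma)$ from the obvious equitable partition into ``which clique / shared part'' classes. The resulting quotient matrix has bounded size, its relevant eigenvalue is easily estimated, and one finds $\lambda_{\min}(\Gamma)\le -2+\varepsilon(\delta)$ with $\varepsilon(\delta)\to0$. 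Choosing $C_1(\lambda)$ so large that $\varepsilon(C_1(\lambda))<\lambda+2$ gives $\lambda_{\min}(G)\le\lambda_{\min}(\Gamma)<\lambda$, a contradiction; hence $G$ is a disjoint union of cliques, and since it has an edge, $\lambda_{\min}(G)=-1$. Part (ii) follows the same outline: if $G$ is not a generalized line graph then, $G$ being connected with at least $37$ vertices, Theorem~\ref{thmcam} would otherwise make it $1$-integrable and hence a generalized line graph, so we must have $\lambda_{\min}(G)<-2$; thus $G$ carries an induced subgraph witnessing $\lambda_{\min}<-2$, and blowing up such a ``defect'' by the surrounding large cliques produces an induced amalgam $\Gamma$ whose smallest eigenvalue tends, as $\delta\to\infty$, to a value $\le -1-\sqrt2$, hence is $<\lambda$ once $\delta\ge C_1(\lambda)$ — a contradiction. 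Therefore $\lambda_{\min}(G)\ge -2$; since $\delta\ge C_1(\lambda)\ge 36$, every component of $G$ is connected with at least $37$ vertices, so Theorem~\ref{thmcam} makes each component $1$-integrable, and a disjoint union of $1$-integrable graphs with smallest eigenvalue $\ge -2$ is again $1$-integrable, i.e. $G$ is a generalized line graph.

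The hard part is exactly this middle step: turning ``$G$ is locally a bounded union of large cliques and carries a local defect'' into a concrete large induced subgraph of amalgam type together with a \emph{uniform} bound on its smallest eigenvalue, valid as the clique sizes tend to infinity. This requires a careful case analysis of how the large cliques surrounding the defect overlap — organised by forbidden-subgraph information such as $C_4$-freeness — and a uniform estimate for the corresponding family of quotient matrices; it is also the reason $C_1(\lambda)$ must blow up as $\lambda\to -2^-$ (resp. $\lambda\to(-1-\sqrt2)^+$). This is precisely the phenomenon the theory of Hoffman graphs in the next section is built to formalise: the amalgam is the clique blow-up of an associated Hoffman graph $\mathfrak h$, its limiting smallest eigenvalue equals $\lambda_{\min}(\mathfrak h)$, and the presence of a defect forces $\mathfrak h$ to be one of the classified Hoffman graphs with $\lambda_{\min}(\mathfrak h)\le -2$ (resp. $\le -1-\sqrt2$), contradicting $\lambda_{\min}(\mathfrak h)\ge\lambda_{\min}(G)\ge\lambda$. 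The remaining ingredients — interlacing, the Ramsey argument for the local clique structure, and the appeal to Theorem~\ref{thmcam} in part (ii) — are routine.
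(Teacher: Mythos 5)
Your outline does follow the route that this survey's machinery is built around (the paper itself gives no proof of Theorem \ref{thmhoff77} -- it cites Hoffman -- but Sections \ref{sec:hoffman graphs}--\ref{sec:associated hoffman graphs} supply the modern toolkit), and the easy bookends are fine: interlacing (Lemma \ref{interelacing}), Ramsey for large local cliques, and in part (ii) the appeal to Theorem \ref{thmcam} once $\lambda_{\min}(G)\ge -2$ is known. But the proposal has a genuine gap: the entire content of the theorem is the ``middle step'' that you assert and then explicitly defer. Exhibiting, inside an arbitrary graph with a defect, a \emph{large induced} amalgam $\Gamma$ whose smallest eigenvalue is uniformly close to $-2$ (resp.\ $\le -1-\sqrt2$) is not a routine consequence of the local information you actually derive. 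Your structural control rests on $C_4$-freeness and bounded independence number of neighbourhoods; $C_4$-freeness is simply unavailable in part (ii), where $\lambda\le -2=\lambda_{\min}(C_4)$, and neither tool controls how the many large cliques around a defect overlap or send edges to one another, which is exactly what blocks the extraction of an induced clique blow-up. The input that makes this work is $\widetilde{K}_{2m}$-freeness (forced because $\lambda_{\min}(\widetilde{K}_{2m})\to-\infty$): it yields the dichotomy that a vertex meeting a huge clique is adjacent to all but at most $m-1$ of its vertices or to at most $m-1$ of them, and this is what makes the quasi-clique equivalence relation, the associated Hoffman graph, and Proposition \ref{assohoff} (whose Ramsey-type parameter $n(m,\phi,\sigma,p)$ is precisely the ``uniform estimate'' you wave at) function, together with Theorem \ref{Ostrowski} for the limiting eigenvalue. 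You never invoke $\widetilde{K}_{2m}$ or this dichotomy, so your amalgam construction is not actually justified.

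A second, independent gap sits in part (ii): you need the limiting smallest eigenvalue of the amalgam forced by a defect to be $\le -1-\sqrt2$, not merely $<-2$ or some value in $(-1-\sqrt2,-2)$, since $\lambda$ may be as small as just above $-1-\sqrt2$. That jump is exactly the Woo--Neumaier classification of the relevant Hoffman graphs (the fat-minimal, respectively maximal $(-1-\sqrt2)$-irreducible ones $\mathfrak{h}_2,\mathfrak{h}_5,\mathfrak{h}_7,\mathfrak{h}_9$ of Figure \ref{fig:-1-sqrt2}), a substantial piece of case analysis that you cite by name but do not prove; similarly, the inequality $\lambda_{\min}(\mathfrak{h})\ge\lambda_{\min}(G)$ for the associated Hoffman graph that your final contradiction relies on is not free -- the survey only records the local $(r,\lambda)$-nice statement of Theorem \ref{nicethm}, whose proof again routes through Proposition \ref{assohoff} and Theorem \ref{Ostrowski}. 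So the proposal is a correct high-level description of the Hoffman/Woo--Neumaier strategy, but as a proof it leaves the two decisive steps -- the uniform amalgam extraction and the eigenvalue classification below $-2$ -- unproved.
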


The second item of this theorem can be reformulated as follows:
\begin{theorem}[{cf.~\cite[Theorem 3.12.5]{bcn}}]
Let $\hat{\theta}_k$ be the supremum of the smallest eigenvalues of graphs with minimal valency at least $k$ and smallest eigenvalue less than $-2$. Then $\{\hat{\theta}_k\}_{k=1}^{\infty}$ forms a monotone decreasing sequence with limit $-1-\sqrt{2}$.
\end{theorem}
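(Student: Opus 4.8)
The plan is to split the statement into three parts: (i) the sequence $(\hat\theta_k)_{k\ge1}$ is non-increasing; (ii) $\limsup_{k\to\infty}\hat\theta_k\le-1-\sqrt2$; (iii) $\hat\theta_k\ge-1-\sqrt2$ for every $k$. Parts (i) and (ii) will be short deductions from the second item of Theorem~\ref{thmhoff77}, while (iii) needs an explicit family of examples and carries the actual content. (That each $\hat\theta_k$ is a genuine real number will follow once (iii) exhibits graphs in the set over which the supremum is taken.)

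For (i): if $k\le k'$, then every graph of minimal valency at least $k'$ also has minimal valency at least $k$, so the set defining $\hat\theta_{k'}$ is contained in the one defining $\hat\theta_k$; taking suprema gives $\hat\theta_{k'}\le\hat\theta_k$. For (ii): fix $\lambda\in(-1-\sqrt2,-2)$ and let $k\ge C_1(\lambda)$, with $C_1$ as in Theorem~\ref{thmhoff77}. If $G$ has minimal valency at least $k$ and $\lambda_{\min}(G)<-2$, then $\lambda_{\min}(G)<\lambda$, since otherwise $\lambda_{\min}(G)\ge\lambda$ and the second item of Theorem~\ref{thmhoff77} would force $\lambda_{\min}(G)\ge-2$, a contradiction. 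Hence $\hat\theta_k\le\lambda$ for all $k\ge C_1(\lambda)$, so $\limsup_{k\to\infty}\hat\theta_k\le\lambda$; letting $\lambda$ decrease to $-1-\sqrt2$ gives $\limsup_{k\to\infty}\hat\theta_k\le-1-\sqrt2$.

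For (iii) it suffices to produce, for each $k$, graphs of minimal valency at least $k$ with smallest eigenvalue strictly less than $-2$ and arbitrarily close to $-1-\sqrt2$. I would take $G_n$ to be the graph obtained from a clique $K_{n+2}$ by adjoining one further vertex adjacent to exactly $n$ of its vertices --- equivalently, the join of $K_n$ with the disjoint union $K_2\cup K_1$ --- where the value $-1-\sqrt2$ ultimately enters because $\sqrt2$ is the largest eigenvalue of the path $P_3$. The partition of $V(G_n)$ into the new vertex, the two vertices of $K_{n+2}$ not joined to it, and the remaining $n$ vertices is equitable; every eigenvalue of $G_n$ equals $-1$ or is an eigenvalue of the quotient matrix $\left(\begin{smallmatrix}0&0&n\\0&1&n\\1&2&n-1\end{smallmatrix}\right)$, whose characteristic polynomial is $x^3-nx^2-(2n+1)x+n$. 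This cubic takes the value $n-6$ at $x=-2$, so its smallest root lies below $-2$ exactly when $n\ge7$; rewriting the equation as $n(x^2+2x-1)=x(x^2-1)$ shows that, as $n\to\infty$, two of its roots approach the roots $-1\pm\sqrt2$ of $x^2+2x-1=0$, the smaller tending to $-1-\sqrt2$ (from above). Since $G_n$ has minimal valency $n$, for $n\ge\max(k,7)$ the graph $G_n$ lies in the set defining $\hat\theta_k$ and $\lambda_{\min}(G_n)\to-1-\sqrt2$; therefore $\hat\theta_k\ge-1-\sqrt2$. Combining (i)--(iii), $(\hat\theta_k)$ is monotone decreasing with limit $-1-\sqrt2$.

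The main obstacle is (iii): one has to identify the correct extremal family, i.e.\ recognise that $-1-\sqrt2$ is the smallest eigenvalue of the fat Hoffman graph built on the slim graph $K_2\cup K_1$ by attaching a single fat vertex to all three slim vertices, and then verify that its clique realisations have smallest eigenvalue strictly below $-2$ (so that, consistently with the second item of Theorem~\ref{thmhoff77}, they are not generalized line graphs). Parts (i) and (ii) are then routine bookkeeping around that theorem.
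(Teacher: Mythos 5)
Your proof is correct, but note that the survey does not actually prove this statement: it presents it as a reformulation of the second item of Theorem \ref{thmhoff77} and refers to \cite[Theorem 3.12.5]{bcn}, so there is no internal proof to compare against line by line. Your parts (i) and (ii) are exactly the deduction that this ``reformulation'' alludes to, while part (iii) --- the sharpness --- is genuine additional content, since the second item of Theorem \ref{thmhoff77} as stated only yields the upper bound on $\limsup_k\hat{\theta}_k$. Your example family checks out: $G_n$ is the graph $H(n,2)$ in the notation of Lemma \ref{lem:hat} (the join of $K_n$ with $K_2\cup K_1$), the quotient polynomial $x^3-nx^2-(2n+1)x+n$ and its value $n-6$ at $x=-2$ are correct, the remaining eigenvalues are all $-1$, the minimal valency is $n$, and the smallest root lies in $(-1-\sqrt{2},-2)$ for $n\geq 7$ and tends to $-1-\sqrt{2}$; to make the limiting step fully airtight, note the smallest root also stays above $-3$ (the cubic is negative at $-3$ and its other two roots lie in $(0,\infty)$), so it is bounded and the identity $n(x^2+2x-1)=x(x^2-1)$ forces convergence to $-1-\sqrt{2}$ --- and in any case the direction of approach is irrelevant for $\hat{\theta}_k\geq-1-\sqrt{2}$. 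Within the paper's own machinery you could shortcut part (iii): $G_n=G(\mathfrak{q}(K_2\cup K_1),n)$, and since $\lambda_{\min}(\mathfrak{q}(K_2\cup K_1))=-1-\lambda_{0}(P_3)=-1-\sqrt{2}$ by \eqref{eq:eigenvalue}, Theorem \ref{Ostrowski} immediately gives $\lambda_{\min}(G_n)\geq-1-\sqrt{2}$ and $\lambda_{\min}(G_n)\to-1-\sqrt{2}<-2$, so $\lambda_{\min}(G_n)<-2$ for all large $n$ without any explicit cubic computation; your direct calculation and this Hoffman-graph argument are two equivalent ways of packaging the same extremal family.
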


Following the ideas of Hoffman, Woo and Neumaier \cite{Woo} in 1995 showed that
\begin{theorem}[{cf.~\cite[Theorem 5.1]{Woo}}]
For any real number $\lambda\in (\alpha_1,-1-\sqrt{2}]$, where $\alpha_1\approx-2.4812$ is the smallest root of the polynomial $x^3+2x^2-2x-2$, there exists a positive constant $C_1(\lambda)$ such that, if $G$ is a graph with $\lambda_{\min}(G)\geq\lambda$ and minimal valency at least $C_1(\lambda)$, then $\lambda_{\min}(G)\geq-1-\sqrt{2}$.
\end{theorem}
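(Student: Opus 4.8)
\emph{Proof idea.} The plan is to follow the paradigm behind Theorem~\ref{thmhoff77}: a large minimal valency collapses the local structure of $G$ into finitely many configurations, on which a direct eigenvalue computation forces the conclusion. Fix $\lambda\in(\alpha_1,-1-\sqrt{2}]$ and argue by contradiction, assuming there are graphs $G$ of arbitrarily large minimal valency with $\lambda\le\lambda_{\min}(G)<-1-\sqrt{2}$. Since $\lambda_{\min}(G)\ge\lambda>\alpha_1>-3$, interlacing keeps the smallest eigenvalue of every induced subgraph at least $\lambda$, and the usual clique-extension argument---two adjacent vertices of huge valency share a huge common neighbourhood, iterate---produces through every edge a clique of size at least some $c(\lambda)$ that grows with the minimal valency. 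Moreover a vertex lying in $b$ pairwise disjoint such large cliques induces a ``sunflower'' whose smallest eigenvalue tends to $-b$ as the clique sizes grow; since $\lambda_{\min}(G)>\alpha_1>-3$, interlacing therefore forces $b\le2$ once the minimal valency exceeds $C_1(\lambda)$. Contracting the large cliques to \emph{fat vertices} then yields an associated Hoffman graph $\mathfrak h(G)$ (see Section~\ref{sec:hoffman graphs}) of bounded local complexity with $\lambda_{\min}(G)\ge\lambda_{\min}(\mathfrak h(G))$, and necessarily $\lambda_{\min}(\mathfrak h(G))<-1-\sqrt{2}$ since otherwise we are already done.

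\emph{The finite classification, and the role of $\alpha_1$.} The core step is to classify the Hoffman graphs of smallest eigenvalue below $-1-\sqrt2$ subject to the constraints inherited from $G$ (slim--fat, at most two fat vertices per slim vertex, nonzero intersection parameter): each such Hoffman graph contains an induced sub-Hoffman-graph from a short explicit list, every member of which has smallest eigenvalue at most $\alpha_1$. Realising the offending sub-Hoffman-graph inside $G$---each of its fat vertices becoming one of the large cliques through the corresponding vertex---produces an induced subgraph of $G$ whose smallest eigenvalue converges, from above and as the clique sizes grow, to the (at most $\alpha_1$) smallest eigenvalue of that Hoffman graph; since $\lambda>\alpha_1$, taking $C_1(\lambda)$ large enough makes this eigenvalue fall strictly below $\lambda$, which by interlacing contradicts $\lambda_{\min}(G)\ge\lambda$. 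The constant $\alpha_1$ is precisely the smallest eigenvalue of the extremal member of the list, the Hoffman graph on the slim path $v_1v_2v_3$ with two fat vertices attached at the end $v_1$, whose ``special'' matrix $\left(\begin{smallmatrix}-2&1&0\\ 1&0&1\\ 0&1&0\end{smallmatrix}\right)$ has characteristic polynomial $x^{3}+2x^{2}-2x-2$; this also explains why the constant cannot be improved, as graphs of large minimal valency built around this configuration have smallest eigenvalue tending to $\alpha_1$ from above while staying below $-1-\sqrt2$.

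\emph{Main obstacle.} The delicate part is the classification: one must enumerate, up to isomorphism, how fat and slim vertices can sit in a Hoffman graph of smallest eigenvalue just below $-1-\sqrt2$, pruning an a~priori unbounded search by repeatedly invoking eigenvalue interlacing for Hoffman graphs together with the ``at most two fat vertices per slim vertex'' restriction, and then checking that the threshold that emerges is exactly the smallest root of $x^{3}+2x^{2}-2x-2$ rather than some value strictly between it and $-1-\sqrt2$. A secondary technical point is to set up the reduction from $G$ to $\mathfrak h(G)$ so that the eigenvalue inequality runs in the required direction and so that the boundedly many edges at each vertex that are not covered by large cliques do not interfere; this is where the hypothesis ``minimal valency at least $C_1(\lambda)$'' is used quantitatively.
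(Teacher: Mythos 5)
Strictly speaking this survey does not prove the statement: it is quoted from Woo and Neumaier and cited as \cite[Theorem 5.1]{Woo}, so there is no in-paper proof to compare with. Your sketch follows essentially the same route that Woo and Neumaier took and that Sections \ref{sec:hoffman graphs}--\ref{sec:associated hoffman graphs} of the survey formalize: pass from $G$ to an associated Hoffman graph (Proposition \ref{assohoff}, Theorem \ref{nicethm}), use $\lambda_{\min}(G)\geq\lambda_{\min}(\mathfrak{h}(G))$ together with the Hoffman--Ostrowski limit (Theorem \ref{Ostrowski}) to turn small forbidden Hoffman subgraphs into induced subgraphs of $G$ with smallest eigenvalue below $\lambda$, and reduce everything to a finite classification of minimal forbidden configurations; the survey records exactly the needed classification fact (every fat-minimal Hoffman graph for $-1-\sqrt{2}$ has at most $4$ slim vertices, and the maximal $(-1-\sqrt2)$-irreducible Hoffman graphs are $\mathfrak{h}_2,\mathfrak{h}_5,\mathfrak{h}_7,\mathfrak{h}_9$ of Figure \ref{fig:-1-sqrt2}). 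Your identification of the source of $\alpha_1$ is also correct: the Hoffman graph on the slim path $v_1v_2v_3$ with two fat neighbours attached to $v_1$ has special matrix with characteristic polynomial $x^3+2x^2-2x-2$, and the graphs $G(\mathfrak{f},n)$ built from it show the constant cannot be pushed below $\alpha_1$.

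Two points keep your text a roadmap rather than a proof. First, the claim that two adjacent vertices of huge valency share a huge common neighbourhood, and hence that \emph{every edge} lies in a clique whose size grows with the minimal valency, is false: the graph $G(\mathfrak{h}_4,n)$ (two adjacent slim vertices, each completely joined to its own $n$-clique, with no edges between the two cliques) has minimal valency $n$ and smallest eigenvalue at least $\lambda_{\min}(\mathfrak{h}_4)=-2>\alpha_1$, yet its central edge lies in no triangle at all. What the method actually needs --- and what Ramsey theory gives once large claws and large $\widetilde{K}_{2m}$'s are excluded by interlacing --- is that every \emph{vertex} of large valency lies in a large quasi-clique, with at most two quasi-cliques per vertex; edges joining distinct quasi-cliques are then accounted for by the special matrix in the sum decomposition, which is precisely what the associated-Hoffman-graph formalism is designed to handle. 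Second, the crux --- that every fat Hoffman graph with smallest eigenvalue below $-1-\sqrt{2}$, subject to the constraints inherited from $G$, contains an induced Hoffman subgraph on a bounded number of slim vertices whose smallest eigenvalue is at most $\alpha_1$ (hence strictly below $\lambda$) --- is asserted and explicitly deferred as the ``main obstacle''; that finite enumeration is exactly the content of Woo and Neumaier's determination of the fat-minimal Hoffman graphs for $-1-\sqrt{2}$, so the heart of the argument is being cited rather than supplied.
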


This theorem can also be reformulated as follows:

\begin{theorem}[{\cite[Theorem 1.3]{yu}}]
Let $\hat{\sigma}_k$ be the supremum of the smallest eigenvalues of graphs with minimal valency at least $k$ and smallest eigenvalue less than $-1-\sqrt{2}$. Then $\{\hat{\sigma}_k\}_{k=1}^{\infty}$ forms a monotone decreasing sequence with limit $\alpha_1$, the smallest root of the polynomial $x^3+2x^2-2x-2$.
\end{theorem}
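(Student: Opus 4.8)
The plan is to derive the statement from the Woo--Neumaier theorem stated above, together with one explicit extremal family. Write $\mathcal{G}_k$ for the class of graphs with minimal valency at least $k$ and smallest eigenvalue strictly less than $-1-\sqrt{2}$, so that $\hat{\sigma}_k=\sup\{\lambda_{\min}(G):G\in\mathcal{G}_k\}$. Monotonicity is then immediate: $\mathcal{G}_{k+1}\subseteq\mathcal{G}_k$, hence $\hat{\sigma}_{k+1}\le\hat{\sigma}_k$; moreover every $G\in\mathcal{G}_k$ satisfies $\lambda_{\min}(G)\le-1-\sqrt{2}$, so---granting $\mathcal{G}_k\ne\emptyset$, which will come out of the construction below---each $\hat{\sigma}_k$ is a well-defined real number and the limit $L:=\lim_{k\to\infty}\hat{\sigma}_k$ exists in $[-\infty,-1-\sqrt{2}]$. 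It remains to show $L\le\alpha_1$ and $L\ge\alpha_1$ separately.

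For $L\le\alpha_1$ I would invoke the Woo--Neumaier theorem in contrapositive form. Fix $\lambda\in(\alpha_1,-1-\sqrt{2}]$ and let $C_1(\lambda)$ be the constant it provides. If $k\ge C_1(\lambda)$ and $G\in\mathcal{G}_k$, then $G$ has minimal valency at least $C_1(\lambda)$ while $\lambda_{\min}(G)<-1-\sqrt{2}$; were $\lambda_{\min}(G)\ge\lambda$, the theorem would force $\lambda_{\min}(G)\ge-1-\sqrt{2}$, a contradiction, so in fact $\lambda_{\min}(G)<\lambda$. Thus $\hat{\sigma}_k\le\lambda$ for all $k\ge C_1(\lambda)$, whence $L\le\lambda$; since $\lambda\in(\alpha_1,-1-\sqrt{2}]$ was arbitrary, letting $\lambda\downarrow\alpha_1$ yields $L\le\alpha_1$.

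For $L\ge\alpha_1$ it suffices, the sequence being non-increasing, to produce for each fixed $k$ a single graph $G\in\mathcal{G}_k$ with $\lambda_{\min}(G)\ge\alpha_1$; this is exactly the assertion that the constant $\alpha_1$ in the Woo--Neumaier theorem cannot be improved. The building block I would use is the graph $H_m$ formed by a vertex $u$ joined to every vertex of two disjoint copies of $K_m$, with a path $u\sim v\sim w$ attached at $u$. Taking the equitable partition of $H_m$ into $\{u\}$, $\{v\}$, $\{w\}$ and the two cliques, one computes that three eigenvalues of $H_m$ tend, as $m\to\infty$, to the roots of $x^3+2x^2-2x-2$, while all others equal $-1$ or tend to $+\infty$; in particular $\lambda_{\min}(H_m)\to\alpha_1$ from above, so $\lambda_{\min}(H_m)\in(\alpha_1,-1-\sqrt{2})$ for all large $m$ (here we use $\alpha_1<-1-\sqrt{2}$). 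This already exhibits $\alpha_1$ as a limit point of smallest eigenvalues.

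The remaining issue is that each $H_m$ has minimal valency $1$. To obtain minimal valency at least $k$ one must instead glue many blocks of this kind along their low-degree vertices---equivalently, fatten a Hoffman graph assembled from copies of the one whose \emph{special matrix} $\left(\begin{smallmatrix}-2&1&0\\1&0&1\\0&1&0\end{smallmatrix}\right)$ has characteristic polynomial $x^3+2x^2-2x-2$, the copies arranged so that every slim vertex gains enough fat neighbours. The hard part, which I expect to be the main obstacle, is verifying that such a glued and fattened graph still has smallest eigenvalue in $[\alpha_1,-1-\sqrt{2})$: that raising the minimal valency neither lifts the smallest eigenvalue up to $-1-\sqrt{2}$ nor, in the limit, drives it strictly below $\alpha_1$. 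This is precisely the sharpness construction carried out in \cite{Woo}. Granting it, these graphs witness $\hat{\sigma}_k\ge\alpha_1$ for every $k$, so $L\ge\alpha_1$; together with $L\le\alpha_1$ this shows $\{\hat{\sigma}_k\}_{k=1}^{\infty}$ is monotone decreasing with limit $\alpha_1$, as claimed.
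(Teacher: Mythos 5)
Your monotonicity argument and the upper bound $\lim_k\hat{\sigma}_k\le\alpha_1$ (Woo--Neumaier in contrapositive) are correct, and this half indeed matches how the survey presents the statement, namely as a reformulation of \cite[Theorem 5.1]{Woo}; note the survey itself gives no proof but cites \cite[Theorem 1.3]{yu}. Your identification of the relevant Hoffman graph (special matrix $\left(\begin{smallmatrix}-2&1&0\\1&0&1\\0&1&0\end{smallmatrix}\right)$ with characteristic polynomial $x^3+2x^2-2x-2$) and the computation that $\lambda_{\min}(H_m)\downarrow\alpha_1$ are also fine.

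The genuine gap is the lower bound $\lim_k\hat{\sigma}_k\ge\alpha_1$, which you explicitly defer (``Granting it\dots''), and the repair you sketch would in fact fail. If you ``fatten'' so that \emph{every} slim vertex gains fat neighbours, the diagonal entries of the special matrix at $v$ and $w$ drop from $0$ to at most $-1$, so by the Hoffman--Ostrowski limit theorem (Theorem \ref{Ostrowski}) the resulting graphs have smallest eigenvalues tending to the smallest eigenvalue of the \emph{modified} Hoffman graph, which is strictly below $\alpha_1$: already with one private fat neighbour at each of $v$ and $w$ the limiting value is the smallest root of $x^3+4x^2+3x-1\approx-2.80<\alpha_1\approx-2.4812$. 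Such graphs lie in $\mathcal{G}_k$ (settling non-emptiness, which you also leave to this step) but only certify $\hat{\sigma}_k\gtrsim-2.80$, not $\hat{\sigma}_k\ge\alpha_1$. The real content of Yu's theorem beyond Woo--Neumaier is precisely a construction of graphs whose minimal valency tends to infinity \emph{while} the smallest eigenvalue stays below $-1-\sqrt{2}$ and within $o(1)$ of $\alpha_1$ --- the valency and the eigenvalue must be driven to their limits in a coupled way, since one cannot simply attach fat vertices to the low-degree slim vertices without shifting the limit point. That construction is neither reproduced nor correctly outlined in your proposal, so as written the proof establishes only $\lim_k\hat{\sigma}_k\le\alpha_1$ together with monotonicity.
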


\begin{remark}
\begin{enumerate}[(i)]
\item Bussemaker and Neumaier \cite{BuNe} showed that $\hat{\theta}_1$ is the smallest eigenvalue of the graph $E_{10}$ (for a picture see below) and this graph is the unique connected graph with $\hat{\theta}_1$ as its smallest eigenvalue, which is approximately $-2.006594$, the smallest root of the polynomial $x^2(x^2-1)^2(x^2-3)(x^2-4)-1$.
 \begin{figure}[H]
    \centering
    \begin{tikzpicture}
    \draw (-1.5,0) node {$E_{10}$};
    \tikzstyle{every node}=[draw,circle,fill=white,minimum size=4pt,
                            inner sep=0pt]
                            {every label}=[\tiny]
   \draw (0,0) node (1d) [label=below:$$] {}
       -- ++(0:1cm) node (2d) [label=below:$$] {}
       -- ++(0:1cm) node (3d) [label=below:$$] {}
       -- ++(0:1cm) node (4d) [label=below:$$] {}
       -- ++(0:1cm) node (5d) [label=below:$$] {}
       -- ++(0:1cm) node (6d) [label=below:$$] {}
       -- ++(0:1cm) node (7d) [label=below:$$] {}
       -- ++(0:1cm) node (8d) [label=below:$$] {}
       -- ++(0:1cm) node (9d) [label=below:$$] {};
   \draw (7d)
       -- ++(90:0.8cm) node (10d) [label=right:$$] {};
    \end{tikzpicture}
\end{figure}

\item Let $\hat{\eta}_k$ be the supremum of the smallest eigenvalues of $k$-regular graphs with smallest eigenvalue less than $-2$. Then the limit of the sequence $\{\hat{\eta}_k\}_{k=1}^{\infty}$ is $-1-\sqrt{2}$ (see \cite{yu}).

\item Yu \cite{yu} also showed that $\hat{\eta}_3$ is the smallest eigenvalue of the Yu-graph (for a picture see below) and this graph is the unique connected $3$-regular graph with $\hat{\eta}_3$ as its smallest eigenvalue, which is approximately $-2.0391$, the smallest root of the polynomial
$x^6- 3 x^5 -7x^4 +21x^3 +13x^2 -35x -4$.
\begin{figure}[H]
    \centering
    \begin{tikzpicture}
    \draw (-1.5,0) node {Yu-graph};
    \tikzstyle{every node}=[draw,circle,fill=white,minimum size=4pt,
                            inner sep=0pt]
                            {every label}=[\tiny]
   \draw (0,0) node (1a) [label=below:$$] {}
       -- ++(60:1cm) node (2a) [label=below:$$] {}
       -- ++(0:1.2cm) node (3a) [label=below:$$] {}
       -- ++(-60:1cm) node (4a) [label=below:$$] {}
       -- ++(-120:1cm) node (5a) [label=below:$$] {}
       -- ++(180:1.2cm) node (6a) [label=below:$$] {}
       -- ++(1a);
   \draw (2a)
       -- ++(-60:1cm) node (7a) [label=right:$$] {}
       -- ++(6a);
   \draw (1a) -- (7a);
   \draw (3a) -- (5a);

   \draw (3,0) node (1b) [label=below:$$] {}
       -- ++(60:1cm) node (2b) [label=below:$$] {}
       -- ++(0:1.2cm) node (3b) [label=below:$$] {}
       -- ++(-40:0.8cm) node (4b) [label=below:$$] {}
       -- ++(-90:0.7cm) node (5b) [label=below:$$] {}
       -- ++(-140:0.8cm) node (6b) [label=below:$$] {}
       -- ++(180:1.2cm) node (7b) [label=below:$$] {}
       -- ++(1b);
   \draw (3b)
       -- ++(-140:0.8cm) node (8b) [label=right:$$] {}
       -- ++(-90:0.7cm) node (9b) [label=below:$$] {}
       -- ++(6b);
   \draw (4a) -- (1b);
   \draw (2b) -- (7b);
   \draw (4b) -- (8b);
   \draw (5b) -- (9b);
    \end{tikzpicture}
\end{figure}
\item Let $\hat{\xi}_k$ be the supremum of the smallest eigenvalues of $k$-regular graphs with smallest eigenvalue less than $-1-\sqrt{2}$. Then the limit of the sequence $\{\xi_k\}_{k=1}^{\infty}$ is $\alpha_1$ (see \cite{yu}).
\end{enumerate}
\end{remark}

Now we look at graphs with a bounded smallest eigenvalue. In order to state the results we need to introduce the graph $\widetilde{K}_{2t}$. Let $t$ be a positive integer. Denote by $\widetilde{K}_{2t}$ the graph with $2t+1$ vertices consisting of a clique $K_{2t}$ together with a vertex that is adjacent to exactly $t$ vertices of the clique. Note that the smallest eigenvalue of $\widetilde{K}_{2t}$ goes to $-\infty$ as $t$ goes to $\infty$. (For a proof, see \cite[Lemma 3.2]{Yang.2021}.) It is fairly easy to see that the smallest eigenvalue of the $t$-claw $K_{1,t}$ also goes to $-\infty$ as $t$ goes to $\infty$, as $\lambda_{\min}(K_{1,t})=-\sqrt{t}$.

Hoffman \cite{Hoff1973} in 1973 showed the following results:

\begin{theorem}[{\cite[p.~278]{Hoff1973}}]\label{Hoff1973}
\begin{enumerate}
\item Let $\lambda$ be a positive real number. Then there exists a positive integer $T = T(\lambda)$ such that, if $G$ is a graph with $\lambda_{\min}(G) \geq -\lambda$, then $G$ contains neither $K_{1,T}$ nor $\widetilde{K}_{2T}$ as an induced subgraph.
\item Let $t\geq 3$ be a positive integer. Then there exists a positive constant $\lambda = \lambda(t)$ such that, if a graph $G$ does not contain $K_{1,t}$ and $\widetilde{K}_{2t}$, then $\lambda_{\min}(G) \geq -\lambda$.
\end{enumerate}
\end{theorem}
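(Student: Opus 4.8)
\textbf{Part (i)} is immediate from interlacing. If $H$ is an induced subgraph of $G$, then $A(H)$ is a principal submatrix of $A(G)$, so $\lambda_{\min}(G)\le\lambda_{\min}(H)$; hence $\lambda_{\min}(G)\ge-\lambda$ forces $\lambda_{\min}(H)\ge-\lambda$ for every induced subgraph $H$. Since $\lambda_{\min}(K_{1,T})=-\sqrt{T}$ and $\lambda_{\min}(\widetilde{K}_{2T})\to-\infty$ as $T\to\infty$ (both recorded just above), I would take $T=T(\lambda)$ large enough that $\sqrt{T}>\lambda$ and $-\lambda_{\min}(\widetilde{K}_{2T})>\lambda$; then $G$ can contain neither graph as an induced subgraph.

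\textbf{Part (ii)} is the substantive direction, and my plan has two stages. \emph{Stage 1: reduce to large minimum degree.} Put $d_0:=R(2t,t)$, the Ramsey number for which every graph on $d_0$ vertices has a $2t$-clique or an independent $t$-set, and let $G^{\ast}$ be the $d_0$-core of $G$ (repeatedly delete vertices of current degree $<d_0$) with $W:=V(G)\setminus V(G^{\ast})$. The key point is that every $v\in W$ has $\deg_G(v)<R(d_0,t)=:D$: otherwise $N(v)$, which is $K_{1,t}$-free and so has no independent $t$-set, would contain a $d_0$-clique, and $\{v\}$ together with that clique is a $(d_0{+}1)$-clique — an induced subgraph of minimum degree $\ge d_0$, hence contained in the $d_0$-core, contradicting $v\in W$. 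By the same $K_{1,t}$-freeness a core vertex $v$ has boundedly many neighbours in $W$ (the set $N(v)\cap W$ has independence number $<t$ and, since $W$-vertices have degree $<D$, clique number $\le D$, so $|N(v)\cap W|<R(D{+}1,t)$). Therefore the subgraph of $G$ formed by all edges meeting $W$ has maximum degree bounded in terms of $t$ alone, hence smallest eigenvalue $\ge-c(t)$, and writing $A(G)$ as the sum of this matrix and the zero-padded $A(G^{\ast})$, Weyl's inequality gives $\lambda_{\min}(G)\ge\lambda_{\min}(G^{\ast})-c(t)$.

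\emph{Stage 2: the core.} Now $G^{\ast}$ has minimum degree $\ge d_0=R(2t,t)$, so (Ramsey again) every vertex lies in a clique of size $\ge 2t$. Here I would use $\widetilde{K}_{2t}$-freeness as a rigidity statement: for a clique $Q$ with $|Q|\ge 2t$ and $x\notin Q$ one cannot have both $|N(x)\cap Q|\ge t$ and $|Q\setminus N(x)|\ge t$, else $\{x\}\cup Q$ contains an induced $\widetilde{K}_{2t}$; thus an outside vertex sees a large clique in an almost-all-or-almost-none fashion, which tightly constrains how large cliques overlap and nest. The goal is to convert this structure into a single positive semidefinite identity $A(G^{\ast})+\lambda_0(t)\,I=N^{\mathsf T}N$ with $N$ integral (so that each vertex carries total squared weight $\lambda_0(t)$): the large cliques supply the rank-one blocks $\mathbf{1}_Q\mathbf{1}_Q^{\mathsf T}$; the ``small'' edges — those lying in no large clique, which at any vertex $v$ form a set $S_v\subseteq N(v)$ with $\alpha(S_v)<t$ and $\omega(S_v)<2t-1$, hence $|S_v|<R(2t{-}1,t)$, so a bounded-degree graph — are absorbed by small dedicated columns; and the non-edges that rigidity leaves inside would-be-larger cliques are cancelled by $(0,\pm1)$ correction columns, in the manner of generalized line graphs. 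Combined with Stage 1 this yields $\lambda_{\min}(G)\ge-\lambda_0(t)-c(t)=:-\lambda(t)$.

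\textbf{The main obstacle} is entirely Stage 2: building the positive semidefinite certificate for the core. A clique \emph{edge partition} with bounded local multiplicity does not suffice in general — the large cocktail-party graphs $\overline{nK_2}$ are $\{K_{1,3},\widetilde{K}_{6}\}$-free with $\lambda_{\min}=-2$, yet a short averaging-and-pigeonhole argument (a partition of bounded local multiplicity must contain a clique of size $\Omega(n)$, two of which are then forced to share an edge) shows that for $n$ large they admit no clique edge partition of bounded local multiplicity. One is therefore forced into the finer generalized-line-graph / $s$-integrability structure, and controlling it in the presence of the rich family of possible cores (paths of cliques, apices over cocktail-party graphs, and so on) is the genuine combinatorial content of the theorem; in the language of this survey it is precisely this analysis that is systematised by the Hoffman graphs of the next section.
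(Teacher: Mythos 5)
Your part (i) is correct and is exactly the intended argument: interlacing (Lemma \ref{interelacing}) plus the facts, recorded before the theorem, that $\lambda_{\min}(K_{1,T})=-\sqrt{T}$ and $\lambda_{\min}(\widetilde{K}_{2T})\to-\infty$. Your Stage~1 of part (ii) is also sound: the Ramsey/core argument correctly shows that every vertex outside the $d_0$-core has degree $<R(d_0,t)$, that core vertices have boundedly many neighbours outside, and the Weyl splitting $\lambda_{\min}(G)\geq\lambda_{\min}(G^{*})-c(t)$ is a legitimate reduction to the case of minimum degree at least $R(2t,t)$.

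The gap is that part (ii) is never actually proved: Stage~2 is a program, not an argument, and it is precisely where the theorem lives. You state the goal of producing a certificate $A(G^{*})+\lambda_0(t)I=N^{T}N$, but you never construct the required clique family: there is no argument bounding, in terms of $t$ alone, the number of large (quasi-)cliques through a fixed core vertex, no control of how two such cliques overlap, and no accounting for multiply covered edges or for the claimed $(0,\pm 1)$ correction columns. The rigidity observation (a vertex outside a clique of order $\geq 2t$ sees it almost-all-or-almost-none, by $\widetilde{K}_{2t}$-freeness) is stated but never exploited to establish any of these bounds. Note also that the target identity by itself carries no content: for every graph one has $A+\Delta I=\sum_{uv\in E}(\mathbf{e}_u+\mathbf{e}_v)(\mathbf{e}_u+\mathbf{e}_v)^{T}+\sum_v(\Delta-k(v))\mathbf{e}_v\mathbf{e}_v^{T}$ with integral columns, where $\Delta$ is the maximum degree; the whole difficulty is making the constant depend only on $t$, which is exactly what is missing (and demanding an integral $N$ is in any case stronger than needed, since only positive semidefiniteness of $A(G^{*})+\lambda_0(t)I$ is required). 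In the survey's own framework, the missing step is what the associated Hoffman graph $\mathfrak{g}(G^{*},t,q)$ of Section \ref{asho} is built to deliver: one shows that each vertex of the core lies in a bounded number of equivalence classes of large maximal cliques under $\equiv^{m}_{n}$ (otherwise an induced $K_{1,t}$ or $\widetilde{K}_{2t}$ appears), so that the special matrix $Sp(\mathfrak{h})=A_s-CC^{T}$ has diagonal and row sums bounded in terms of $t$, and then $\lambda_{\min}(G^{*})\geq\lambda_{\min}(\mathfrak{h})$ because $A_s-Sp(\mathfrak{h})=CC^{T}\succeq 0$. That combinatorial bounding argument is the real content of Hoffman's theorem (the survey itself only cites \cite{Hoff1973} and does not reprove it), and since your proposal explicitly defers it (``the main obstacle is entirely Stage 2''), what you have established is (i) together with a reduction of (ii) to large minimum degree, not (ii) itself.
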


This theorem shows that the smallest eigenvalue of a graph is quite dependent on the local information.

A related result is obtained by Aharoni et al. \cite{Aharoni.2016}. Before we state their result we need to introduce the Laplacian matrix. For a graph $G$ on $n$ vertices, define its \emph{Laplacian matrix} $L(G)$ as follows: $L(G)= \Delta(G) - A(G)$, where $\Delta(G)$ is the diagonal matrix with $\Delta(G)_{x,x} = k_G(x)$, the valency of $x$ in $G$, and $A(G)$ is the adjacency matrix of $G$.
They obtained the following result:
\begin{theorem}[{\cite[Theorem 1.1]{Aharoni.2016}}]\label{aharoni}
Let $G$ be a graph with maximal valency $k$ containing no induced $K_{1,\ell}$'s. Let $t(k, \ell)$ denote the minimum possible number of edges of a graph on $k$ vertices with no stable set of size $\ell$. If $\theta$ is the maximal eigenvalue of the Laplacian matrix of $G$, then $\theta \leq 2k - \frac{t(k, \ell)}{k-1}$.
\end{theorem}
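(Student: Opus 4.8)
The statement is equivalent to a semidefinite inequality for a shifted Laplacian. Writing $\theta=\lambda_{\max}(L(G))$, the bound $\theta\le 2k-\tfrac{t(k,\ell)}{k-1}$ says exactly that $2kI-L(G)$ has smallest eigenvalue at least $\tfrac{t(k,\ell)}{k-1}$, i.e.
\[
x^{T}\bigl(2kI-L(G)\bigr)x\ \ge\ \frac{t(k,\ell)}{k-1}\,\|x\|^{2}\qquad\text{for all }x\in\mathbb{R}^{V(G)}.
\]
I would first put the left-hand side in a shape that shows where the hypothesis can enter. Since $x^{T}L(G)x=\sum_{\{u,v\}\in E(G)}(x_u-x_v)^2$ and $\sum_{\{u,v\}\in E(G)}(x_u-x_v)^2+\sum_{\{u,v\}\in E(G)}(x_u+x_v)^2=2\sum_{u}k_G(u)\,x_u^2$, one gets
\[
x^{T}\bigl(2kI-L(G)\bigr)x\ =\ \sum_{\{u,v\}\in E(G)}(x_u+x_v)^2\ +\ 2\sum_{u\in V(G)}\bigl(k-k_G(u)\bigr)x_u^2 .
\]
Both summands are nonnegative (by themselves they only give the trivial bound $\theta\le 2k$), so the game is to extract the extra $\tfrac{t(k,\ell)}{k-1}\|x\|^2$ from the $K_{1,\ell}$-free hypothesis. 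As a sanity check the constant is right: for the triangle with $k=\ell=2$ one has $t(2,2)=1$, the bound reads $\theta\le 3$, and $\theta(K_3)=3$, so equality can occur.

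The second step is a localization to neighbourhoods. For every vertex $w$, the graph $G[N(w)]$ has $k_G(w)\le k$ vertices and no stable set of size $\ell$ (otherwise $w$ together with such a set would induce a $K_{1,\ell}$), hence $e(G[N(w)])\ge t(k_G(w),\ell)$, and $\ge t(k,\ell)$ whenever $k_G(w)=k$. To turn the edges of $G[N(w)]$ into contributions to $\sum_{\{u,v\}\in E(G)}(x_u+x_v)^2$ I would double count: an edge $\{a,b\}$ lies inside $N(w)$ precisely for the common neighbours $w$ of $a$ and $b$, and adjacent vertices of a graph of maximum degree $k$ have at most $k-1$ common neighbours, so
\[
\sum_{w\in V(G)}\ \sum_{\{a,b\}\in E(G[N(w)])}(x_a+x_b)^2\ =\ \sum_{\{a,b\}\in E(G)}|N(a)\cap N(b)|\,(x_a+x_b)^2\ \le\ (k-1)\sum_{\{a,b\}\in E(G)}(x_a+x_b)^2 ,
\]
and this is exactly where the factor $(k-1)^{-1}$ is born. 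What then remains is a \emph{local} estimate at each $w$: that the part of $x^{T}(2kI-L(G))x$ attributable to $w$ — the signless-Laplacian form $\sum_{\{a,b\}\in E(G[N(w)])}(x_a+x_b)^2$ of the neighbourhood, helped by the star edges $\{w,v\}$ and by a share of the slack $2(k-k_G(w))x_w^2$ — is at least $\tfrac{t(k,\ell)}{k-1}$ times the relevant local sum of squares; summing such estimates over all $w$ (each vertex lying in at most $k+1$ closed neighbourhoods) and feeding in the double count above would close the argument.

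The main obstacle is precisely this local estimate, because "no large stable set" forces $G[N(w)]$ to have many edges but \emph{not} to have a large smallest signless-Laplacian eigenvalue: a neighbourhood with a bipartite component contributes nothing to $\sum_{\{a,b\}}(x_a+x_b)^2$ on an alternating test vector, even when its independence number is small. To defeat this I would bring in the eigenvector equation for $\theta$ — if $L(G)x=\theta x$ then $\sum_{v\sim w}x_v=(k_G(w)-\theta)x_w$ for every $w$, which couples the values on $N(w)$ to $x_w$ and prevents such cancellation from occurring everywhere at once — and combine it with the Ramsey-Tur\'{a}n extremal structure of the edge-minimisers counted by $t(k,\ell)$ (disjoint balanced cliques, via Tur\'{a}n's theorem applied to complements) to pin down the sharp constant. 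One also has to control vertices of degree below $k$: there $G[N(w)]$ only guarantees $t(k_G(w),\ell)$ edges, so the surplus $2(k-k_G(w))x_w^2$ must make up exactly the shortfall coming from $t(k,\ell)-t(k_G(w),\ell)$, which requires a monotonicity/scaling estimate for $t(\cdot,\ell)$. Verifying that this balance is tight, together with the degenerate boundary case, is where the real work lies.
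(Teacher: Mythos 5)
The survey itself does not prove this statement (it is quoted from Aharoni, Alon and Berger), so your attempt has to stand on its own; as written it does not, because the decisive step is explicitly left open. Your preparation is sound: the identity $x^{T}(2kI-L(G))x=\sum_{\{u,v\}\in E}(x_u+x_v)^2+2\sum_{u}(k-k_G(u))x_u^2$ is correct, each $G[N(w)]$ indeed has no stable set of size $\ell$ and hence at least $t(k_G(w),\ell)$ edges, and the double count $\sum_{w}\sum_{\{a,b\}\in E(G[N(w)])}(x_a+x_b)^2=\sum_{\{a,b\}\in E}|N(a)\cap N(b)|(x_a+x_b)^2\le(k-1)\sum_{\{a,b\}\in E}(x_a+x_b)^2$ is the right source of the factor $\frac{1}{k-1}$. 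But the whole theorem then rests on the ``local estimate at each $w$'' and you do not prove it; you even explain yourself why it cannot follow from the edge count alone (a neighbourhood with many edges but bipartite structure contributes nothing to the signless form on an alternating vector, and the vector $x=e_w$ at a vertex of degree $k$ kills both the neighbourhood form and the slack $2(k-k_G(w))x_w^2$, so the star edges or the eigenvalue equation $\sum_{v\sim w}x_v=(k_G(w)-\theta)x_w$ must carry quantitative weight). How to apportion the star edges (each edge serves as a star edge for its two endpoints and as a neighbourhood edge for up to $k-1$ vertices) and the degree slack so that exactly the constant $\frac{t(k,\ell)}{k-1}$ survives the summation over $w$ is precisely where the theorem lives, and the proposal only gestures at it; without that inequality your chain gives nothing beyond the trivial $\theta\le 2k$.

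Two further points confirm that the plan is not yet an argument. For vertices of degree below $k$ you assert that the surplus $2(k-k_G(w))x_w^2$ ``must make up exactly the shortfall'' $t(k,\ell)-t(k_G(w),\ell)$ and that this needs a monotonicity/scaling estimate for $t(\cdot,\ell)$, but no such estimate is stated or verified, and this is exactly the kind of balance that can fail by a constant factor. And invoking the Tur\'{a}n-type extremal structure of the minimisers of $t(k,\ell)$ should not be needed at all — the statement only uses the numerical value $t(k,\ell)$ — so its appearance signals that you do not yet have a concrete mechanism for the local step. In short: correct normalisation, correct identification of where $k-1$ and $t(k,\ell)$ must enter, but the central inequality converting ``every neighbourhood has many edges'' into a lower bound on $\sum_{\{u,v\}\in E}(x_u+x_v)^2$ (for the top Laplacian eigenvector, or uniformly in $x$) is missing, so the proof is incomplete.
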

It is known by Tur\'{a}n's Theorem that $t(k , \ell) = (1+o(1))\frac{k^2}{2\ell-2}$, where the $o(1)$-term tends to zero as $k$ tends to infinity.

For regular graphs, we obtain the following corollary:
\begin{corollary}\label{bound on se of regular graphs}
Let $G$ be a $k$-regular graph containing no induced $K_{1, \ell}$'s. Then $\lambda_{\min}(G)\geq -(1+o(1))\frac{2\ell-3}{2\ell-2}k$.
\end{corollary}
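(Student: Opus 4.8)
The plan is to derive the corollary directly from Theorem \ref{aharoni}, combined with the Tur\'an-type estimate $t(k,\ell)=(1+o(1))\frac{k^2}{2\ell-2}$ recorded immediately after that theorem. The one structural observation needed is that for a $k$-regular graph $G$ the Laplacian matrix is $L(G)=kI-A(G)$, so its eigenvalues are exactly the numbers $k-\mu$ with $\mu$ an eigenvalue of $A(G)$; in particular the largest Laplacian eigenvalue is $\theta=k-\lambda_{\min}(G)$.

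First I would apply Theorem \ref{aharoni}. Since $G$ is $k$-regular (hence has maximal valency $k$) and contains no induced $K_{1,\ell}$, the theorem gives $\theta\le 2k-\frac{t(k,\ell)}{k-1}$. Substituting $\theta=k-\lambda_{\min}(G)$ and rearranging yields
$$\lambda_{\min}(G)=k-\theta\ \ge\ k-2k+\frac{t(k,\ell)}{k-1}\ =\ -k+\frac{t(k,\ell)}{k-1}.$$
Next I would insert the Tur\'an estimate. As $k\to\infty$ we have $\frac{k}{k-1}=1+o(1)$, so $\frac{t(k,\ell)}{k-1}=(1+o(1))\frac{k^2}{(2\ell-2)(k-1)}=(1+o(1))\frac{k}{2\ell-2}$. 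Therefore
$$\lambda_{\min}(G)\ \ge\ -k+(1+o(1))\frac{k}{2\ell-2}\ =\ -k\Bigl(1-\frac{1}{2\ell-2}\Bigr)+o(k)\ =\ -(1+o(1))\frac{2\ell-3}{2\ell-2}\,k,$$
where the last equality uses $1-\frac{1}{2\ell-2}=\frac{2\ell-3}{2\ell-2}$ and the fact that this is a fixed positive constant for $\ell\ge 2$, so the $o(k)$ error is absorbed into the $(1+o(1))$ factor.

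There is essentially no obstacle here: the proof is a one-line consequence of Theorem \ref{aharoni} once the Laplacian spectrum of a regular graph is identified with a shift of its adjacency spectrum. The only points deserving a moment's care are that identification and the bookkeeping of the $o(1)$ terms when passing from $\frac{t(k,\ell)}{k-1}$ to $\frac{k}{2\ell-2}$ and collecting the constant $\frac{2\ell-3}{2\ell-2}$; the $o(1)$-term is understood to tend to $0$ as $k\to\infty$ with $\ell$ fixed, exactly as in the statement of the Tur\'an bound above.
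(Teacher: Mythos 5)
Your proof is correct and is exactly the paper's argument: the paper proves the corollary by noting it "follows from Theorem \ref{aharoni} immediately," and your write-up simply makes explicit the intended steps (for a $k$-regular graph $\theta = k-\lambda_{\min}(G)$, plug in the Tur\'an estimate for $t(k,\ell)$, and collect the constant $\tfrac{2\ell-3}{2\ell-2}$). Nothing further is needed.
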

\begin{proof}
This follows from Theorem \ref{aharoni} immediately.
\end{proof}
It is not clear whether the lower bound in Corollary \ref{bound on se of regular graphs} is the best bound on the smallest eigenvalue of regular graphs. Here, given an integer $\ell\geq 2$ and a connected bipartite $(\ell-1)$-regular graph $G'$, we are able to construct an infinite family of regular graphs $\{G_1,G_2,\ldots,G_s,\ldots\}$ satisfying
\begin{enumerate}
\item $G_s$ is $k_s$-regular and does not contain induced $K_{1,\ell}$'s, for $s=1,2,\ldots$;
\item $k_s\rightarrow\infty$, as $s\rightarrow\infty$;
\item $\lim_{s\rightarrow \infty}\frac{\lambda_{\min}(G_s)}{k_s}=-\frac{\ell -2}{\ell}$.
\end{enumerate}

For this, we need to introduce the clique extension of a given graph. For a positive integer $s$ and a graph $H$, the $s$-\emph{clique extension} of $H$ is the graph $\widetilde{H}$ obtained from $H$ by replacing each vertex $x\in V(H)$ by a clique $\widetilde{X}$ with $s$ vertices, such that $\tilde{x}\sim\tilde{y}$ (for $\tilde{x}\in\widetilde{X},~\tilde{y}\in\widetilde{Y},~\widetilde{X}\neq\widetilde{Y}$) in $\widetilde{H}$ if and only if $x\sim y$ in $H$. In particular, if $H$ has spectrum
\begin{equation}\label{spectrum1}
 \left\{\lambda_0^{m_0},\lambda_1^{m_1},\ldots,\lambda_t^{m_t}\right\},
 \end{equation}
then $\widetilde{H}$ has spectrum
 \begin{equation}\label{spectrum2}
\left\{(s(\lambda_0+1)-1)^{m_0},(s(\lambda_1+1)-1)^{m_1},\ldots,(s(\lambda_t+1)-1)^{m_t},(-1)^{(m_0+m_1+\cdots+m_t)}\right\}
 \end{equation}
(see \cite[p.~107]{Hayat.2019}).

Now we start our construction. For each $s$, let $G_s$ be the $s$-clique extension of $G'$. Then $G_s$ is regular with valency $k_s:=\ell s-1$ and with smallest eigenvalue $\lambda_{\min}(G_s):=-\ell s +2s-1$. It is not hard to check that these graphs $G_s$'s satisfy the above properties.

Let $\lambda_{\ell,k}:=\inf\left\{\lambda_{\min}(G)\mid G\right.$ is $k$-regular and does not contain induced $K_{1,\ell}$'s$\}$ be a real number. Consider \[\tau_{\ell}:=\inf\left\{\frac{\lambda_{\ell,k}}{k}\mid k=3,4,\ldots\right\}.\]
Corollary \ref{aharoni} and the above examples we constructed imply $-\frac{2\ell-3}{2\ell-2}\leq \tau_{\ell}\leq -\frac{\ell -2}{\ell}$.
\begin{problem}
Determine $\tau_{\ell}$ for all $\ell\geq3$.
\end{problem}
For the particular case where $\ell=3$, Cioabă, Elzinga and Gregory \cite[Theorem 4.5]{Cioaba.2020} proved $\lambda_{3,3}\geq\theta\approx-2.272$, where $\theta$ is the smallest root of the polynomial $x^3+x+14$. Furthermore, for graphs containing no induced $K_{1,3}$'s, that is, claw-free graphs, Chudnovsky and Seymour \cite{Chudnovsky.2005,Chudnovsky.2007,Chudnovsky.2008,Chudnovsky.2008b,Chudnovsky.2008c,Chudnovsky.2008d,Chudnovsky.2010,Chudnovsky.2012},
developed a structure theory. This may help to determine $\tau_2$.

\section{Hoffman graphs}\label{sec:hoffman graphs}

We describe now two methods used to study graphs with fixed smallest eigenvalues. The first technique is the so-called Bose-Laskar method. In this method, they use the fact that if a graph does not contain induced $t$-claws with large $t$, then this graph must contain large cliques. This method works best if there is some local regularity in the graph. The second method is to use Hoffman graphs as a tool, which was introduced by Woo and Neumaier \cite{Woo}, following ideas of Hoffman. This method gives more precise local information of a graph than the Bose-Laskar method, but the disadvantage of this method is that we need to assume that the minimal valency of graphs is very large, as we need Ramsey theory to show the existence of large cliques. In this section we discuss Hoffman graphs and give the basic theory for them.

\begin{definition}[Hoffman graph] A Hoffman graph $\mathfrak{h}$ is a pair $(H,\ell)$, where $H=(V,E)$ is a graph and $\ell:V \to\{f,s\}$ is a labeling map satisfying the following conditions:
\begin{enumerate}
\item vertices with label $f$ are pairwise non-adjacent,
\item every vertex with label $f$ is adjacent to at least one vertex with label $s$.
\end{enumerate}
\end{definition}
We call a vertex with label $s$ a \emph{slim vertex}, and a vertex with label $f$ a \emph{fat vertex}. We denote by $V_{\mathrm{slim}}(\mathfrak{h})$ (resp. $V_{\mathrm{fat}}(\mathfrak{h})$) the set of slim (resp. fat) vertices of $\mathfrak{h}$.

\vspace{0.1cm}
For a vertex $x$ of $\mathfrak{h}$, we define $N_{\mathfrak{h}}^{s}(x)$ (resp. $N_{\mathfrak{h}}^{f}(x)$) the set of slim (resp. fat) neighbors of $x$ in $\mathfrak{h}$. If every slim vertex of $\mathfrak{h}$ has a fat neighbor, then we call $\mathfrak{h}$ \emph{fat}, and if every slim vertex of $\mathfrak{h}$ has at least $t$ fat neighbors, we call $\mathfrak{h}$ $t$-\emph{fat}. In a similar fashion, we define $N^{f}_\mathfrak{h}(x_1,x_2)$ to be the set of common fat neighbors of two slim vertices $x_1$ and $x_2$ in $\mathfrak{h}$ and $N^{s}_\mathfrak{h}(f_1,f_2)$ to be the set of common slim neighbors of two fat vertices $f_1$ and $f_2$ in $\mathfrak{h}$.

\vspace{0.1cm}
The \emph{slim graph} of the Hoffman graph $\mathfrak{h}$ is the subgraph of $H$ induced on $V_{\mathrm{slim}}(\mathfrak{h})$. Note that any graph can be considered as a Hoffman graph with only slim vertices, and vice versa. We will not distinguish between Hoffman graphs with only slim vertices and graphs.

\vspace{0.1cm}
A Hoffman graph $\mathfrak{h}_1= (H_1, \ell_1)$ is called an (\emph{proper}) \emph{induced Hoffman subgraph} of $\mathfrak{h}=(H, \ell)$, if $H_1$ is an (proper) induced subgraph of $H$ and $\ell_1(x) = \ell(x)$ holds for all vertices $x$ of $H_1$.

\vspace{0.1cm}
Let $W$ be a subset of $V_{\mathrm{slim}}(\mathfrak{h})$. An induced Hoffman subgraph of $\mathfrak{h}$ generated by $W$, denoted by $\langle W\rangle_{\mathfrak{h}}$, is the Hoffman subgraph of $\mathfrak{h}$ induced on $W \cup\{f\in V_{\mathrm{fat}}(\mathfrak{h})\mid f \sim w \text{ for some }w\in W \}$.

\vspace{0.1cm}
For a fat vertex $f$ of $\mathfrak{h}$, a \emph{quasi-clique} (with respect to $f$) is a subgraph of the slim graph of $\mathfrak{h}$ induced on the slim vertices adjacent to $f$ in $\mathfrak{h}$, and we denote it by $Q_{\mathfrak{h}}(f)$.
\begin{definition}[isomorphism of Hoffman graphs]Two Hoffman graphs $\mathfrak{h}= (H, \ell)$ and $\mathfrak{h}^\prime=(H^\prime, \ell^\prime)$ are isomorphic if there exists an isomorphism from $H$ to $H^\prime$ which preserves the labeling.
\end{definition}

\begin{definition}[strong isomorphism of Hoffman graphs]Two Hoffman graphs $\mathfrak{h}= (H, \ell)$ and $\mathfrak{h}^\prime=(H^\prime, \ell^\prime)$ are strongly isomorphic if they have the same set of slim vertices and there exists an isomorphism from $\mathfrak{h}$ to $\mathfrak{h}^\prime$ which fixes the set of slim vertices vertex-wise.
\end{definition}

Note that if two Hoffman graphs are strongly isomorphic, then they have the same slim graph.

\begin{definition}[special matrix] For a Hoffman graph $\mathfrak{h}=(H,\ell)$, there exists a matrix $C$ such that the adjacency matrix $A$ of $H$ satisfies
\begin{eqnarray*}
A=\left(
\begin{array}{cc}
A_s  & C\\
C^{T}  & O
\end{array}
\right),
\end{eqnarray*}
where $A_s$ is the adjacency matrix of the slim graph of $\mathfrak{h}$. The special matrix $Sp(\mathfrak{h})$ of $\mathfrak{h}$ is the real symmetric matrix $A_s-CC^{T}.$
\end{definition}

The \emph{eigenvalues} of $\mathfrak{h}$ are the eigenvalues of its special matrix $Sp(\mathfrak{h})$, and the smallest eigenvalue of $\mathfrak{h}$ is denoted by  $\lambda_{\min}(\mathfrak{h})$. Note that $\mathfrak{h}$ is not determined by its special matrix in general, since different $\mathfrak{h}$'s may have the same special matrix. Observe also that if there are no fat vertices in $\mathfrak{h}$, then $Sp(\mathfrak{h})=A_s$ is just the standard adjacency matrix.

\begin{lemma}[{\cite[Lemma 3.4]{Woo}}]\label{fatnbr}
Let $\mathfrak{h}$ be a Hoffman graph and let $x_i$ and $x_j$ be two distinct slim vertices of $\mathfrak{h}$. The special matrix $Sp(\mathfrak{h})$ has diagonal entries

$$Sp(\mathfrak{h})_{x_i,x_i} = - |N_\mathfrak{h}^f (x_i)|$$
and off-diagonal entries
$$Sp(\mathfrak{h})_{x_i,x_j} = (A_s)_{x_i,x_j} -|N_\mathfrak{h}^f (x_i,x_j)|.$$
\end{lemma}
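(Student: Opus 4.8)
The plan is to read off the entries of $Sp(\mathfrak{h}) = A_s - CC^T$ directly from the block decomposition of the adjacency matrix $A$ of $H$. First I would record the combinatorial meaning of the blocks: rows of $A$ indexed by $V_{\mathrm{slim}}(\mathfrak{h})$ and $V_{\mathrm{fat}}(\mathfrak{h})$, so that $A_s$ is the adjacency matrix of the slim graph, and $C$ is the slim-by-fat submatrix whose $(x,f)$-entry equals $1$ precisely when the slim vertex $x$ is adjacent to the fat vertex $f$, and is $0$ otherwise. In particular each entry of $C$ is $0$ or $1$.

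Next I would expand the product $CC^T$ entrywise. For slim vertices $x_i$ and $x_j$ (possibly equal),
\[
(CC^T)_{x_i,x_j} = \sum_{f \in V_{\mathrm{fat}}(\mathfrak{h})} C_{x_i,f}\, C_{x_j,f},
\]
and since each factor is $0$ or $1$, the product $C_{x_i,f}C_{x_j,f}$ is $1$ exactly when $f$ is a fat neighbor of both $x_i$ and $x_j$. Hence for $i \ne j$ the sum counts $|N^f_\mathfrak{h}(x_i,x_j)|$, and for $i = j$ it counts $|N^f_\mathfrak{h}(x_i)|$.

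Finally I would combine this with the fact that the slim graph is simple and therefore loopless, so $(A_s)_{x_i,x_i} = 0$. Subtracting, $Sp(\mathfrak{h})_{x_i,x_i} = (A_s)_{x_i,x_i} - (CC^T)_{x_i,x_i} = -|N^f_\mathfrak{h}(x_i)|$, and for $i \ne j$, $Sp(\mathfrak{h})_{x_i,x_j} = (A_s)_{x_i,x_j} - |N^f_\mathfrak{h}(x_i,x_j)|$, which are precisely the claimed formulas. There is essentially no obstacle here: the statement is an immediate unwinding of the definition of the special matrix, and the only points requiring care are the bookkeeping convention that $C_{x,f}$ is an adjacency indicator and that $A_s$ has zero diagonal.
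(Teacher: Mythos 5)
Your proof is correct: the paper itself states this lemma without proof (citing Woo--Neumaier), and your direct entrywise expansion of $CC^T$ in $Sp(\mathfrak{h}) = A_s - CC^T$, together with the observation that $A_s$ has zero diagonal, is exactly the standard verification given in the cited source. No gaps.
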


For the smallest eigenvalues of Hoffman graphs and their induced Hoffman subgraphs, Woo and Neumaier showed the following inequality.
\begin{lemma}[{\cite[Corollary 3.3]{Woo}}]\label{hoff}
If $\mathfrak{h}_1$ is an induced Hoffman subgraph of a Hoffman graph $\mathfrak{h}$, then $\lambda_{\min}(\mathfrak{h}_1)\geq\lambda_{\min}(\mathfrak{h})$ holds.
\end{lemma}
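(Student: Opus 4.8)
The plan is to reduce the statement to the standard interlacing/restriction principle for quadratic forms, after setting up the right matrix model for the special matrix of a Hoffman graph. Recall that for a Hoffman graph $\mathfrak{h}=(H,\ell)$ the adjacency matrix of $H$ has the block form
\[
A=\begin{pmatrix} A_s & C \\ C^{T} & O\end{pmatrix},
\]
and $Sp(\mathfrak{h})=A_s-CC^{T}$. The key observation is that $Sp(\mathfrak{h})$ arises as a Schur-type complement: for any real vector $u$ indexed by $V_{\mathrm{slim}}(\mathfrak{h})$ one has
\[
u^{T}Sp(\mathfrak{h})\,u \;=\; u^{T}A_s u \;-\; \bigl\| C^{T}u\bigr\|^{2}
\;=\; \min_{w}\begin{pmatrix} u \\ w\end{pmatrix}^{T}\!\begin{pmatrix} A_s+I & C \\ C^{T} & I\end{pmatrix}\!\begin{pmatrix} u \\ w\end{pmatrix} \;-\; u^Tu,
\]
the minimum over $w$ being attained at $w=-C^{T}u$. (Any strictly positive-definite weight in place of $I$ on the fat block would do; what matters is that completing the square in the fat coordinates exactly produces $-CC^T$.) So by the Courant–Fischer/Rayleigh characterization,
\[
\lambda_{\min}(\mathfrak{h}) \;=\; \min_{u\neq 0}\frac{u^{T}Sp(\mathfrak{h})\,u}{u^{T}u}.
\]

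Now let $\mathfrak{h}_1=(H_1,\ell_1)$ be an induced Hoffman subgraph of $\mathfrak{h}$, with slim vertex set $S_1\subseteq V_{\mathrm{slim}}(\mathfrak{h})$ and fat vertex set $F_1\subseteq V_{\mathrm{fat}}(\mathfrak{h})$. Because $\mathfrak{h}_1$ is \emph{induced}, its blocks are exactly the corresponding principal submatrices of the blocks of $\mathfrak{h}$: $A_{s}(\mathfrak{h}_1)$ is the $S_1\times S_1$ principal submatrix of $A_s$, and $C(\mathfrak{h}_1)$ is the $S_1\times F_1$ submatrix of $C$. Take a vector $u_1$ on $S_1$ achieving $\lambda_{\min}(\mathfrak{h}_1)$ and extend it by zeros to a vector $u$ on all of $V_{\mathrm{slim}}(\mathfrak{h})$. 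The first step is to check that zero-padding does not change the two quadratic forms in the displayed identity: $u^{T}A_s u=u_1^{T}A_{s}(\mathfrak{h}_1)u_1$ is clear, and for $\|C^{T}u\|^{2}$ one writes $\|C^{T}u\|^{2}=\sum_{f\in V_{\mathrm{fat}}(\mathfrak{h})}\bigl(\sum_{x}C_{x,f}u_x\bigr)^{2}$; the inner sum runs only over $x\in S_1$, and fat vertices $f\notin F_1$ contribute exactly the fat vertices that have no slim neighbor in $S_1$, hence contribute $C_{x,f}=0$ for every $x\in S_1$ and drop out. (Here it is convenient, though not essential, to note that the generated Hoffman subgraph $\langle S_1\rangle_{\mathfrak{h}}$ keeps precisely the fat vertices with a neighbor in $S_1$; an induced subgraph keeps a possibly smaller set of fat vertices, and discarding a fat vertex only deletes a nonnegative term $\bigl(\sum_x C_{x,f}u_x\bigr)^2$ from $\|C^Tu\|^2$, which can only \emph{decrease} $-\|C^Tu\|^2$ — so in all cases $u^TSp(\mathfrak{h})u \le u_1^TSp(\mathfrak{h}_1)u_1$.) Consequently
\[
\lambda_{\min}(\mathfrak{h})\;\le\;\frac{u^{T}Sp(\mathfrak{h})\,u}{u^{T}u}\;\le\;\frac{u_1^{T}Sp(\mathfrak{h}_1)\,u_1}{u_1^{T}u_1}\;=\;\lambda_{\min}(\mathfrak{h}_1),
\]
which is the claim.

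The main obstacle — really the only subtle point — is the bookkeeping over fat vertices in the middle step: one must verify carefully that a fat vertex of $\mathfrak{h}$ omitted in passing to $\mathfrak{h}_1$ either has no slim neighbor in $S_1$ (so its column of $C$ restricted to $S_1$ is zero) or, if it is dropped despite having such a neighbor, that dropping it only removes a nonnegative square from $\|C^{T}u\|^{2}$ and hence does not increase the Rayleigh quotient. Once this is pinned down, everything else is the standard fact that restricting a symmetric form to a coordinate subspace cannot lower the minimum eigenvalue, packaged through the Schur-complement identity for $Sp(\mathfrak{h})$. An alternative, essentially equivalent route would be to invoke Cauchy interlacing directly on the larger symmetric matrix $\begin{pmatrix}A_s & C\\ C^T & I\end{pmatrix}$ (or $A_s - CC^T$ written as a genuine Schur complement of a positive block), but the Rayleigh-quotient argument above is the cleanest and is all that is needed.
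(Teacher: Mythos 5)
Your argument is correct. Note that the survey itself gives no proof of this lemma -- it is quoted from Woo--Neumaier \cite[Corollary 3.3]{Woo} -- and your zero-padding Rayleigh-quotient argument is essentially the standard one behind that citation: since $\mathfrak{h}_1$ is induced, $Sp(\mathfrak{h}_1)$ equals the $S_1\times S_1$ principal submatrix of $Sp(\mathfrak{h})$ plus the positive semidefinite matrix $\sum_{f\notin F_1} c_f c_f^T$ coming from the discarded fat vertices (where $c_f$ is the column of $C$ restricted to $S_1$), so interlacing together with monotonicity under adding a PSD term gives the claim; your handling of the fat vertices outside $F_1$ (they only subtract nonnegative squares from the larger form) is exactly the point that needed care, and you got it right. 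The Schur-complement display with the minimum over $w$ is correct but superfluous, since $u^TSp(\mathfrak{h})u=u^TA_su-\|C^Tu\|^2$ is immediate from the definition.
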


As a corollary of Lemma \ref{hoff}, we have:
\begin{lemma}\label{interelacing}
If $G_1$ is an induced subgraph of $G$, then $\lambda_{\min}(G_1)\geq\lambda_{\min}(G)$ holds.
\end{lemma}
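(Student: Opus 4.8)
The plan is to deduce this directly from Lemma \ref{hoff} by passing through the Hoffman-graph formalism. First I would recall the remark made just after the definition of the slim graph: any graph $G$ may be regarded as a Hoffman graph $\mathfrak{h}(G)$ all of whose vertices are slim. For such a Hoffman graph the matrix $C$ in the definition of the special matrix is empty, so $Sp(\mathfrak{h}(G)) = A_s = A(G)$, and hence $\lambda_{\min}(\mathfrak{h}(G)) = \lambda_{\min}(G)$. The same applies to $G_1$, giving $\lambda_{\min}(\mathfrak{h}(G_1)) = \lambda_{\min}(G_1)$.

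Next I would check that the induced-subgraph relation is preserved under this identification. If $G_1$ is an induced subgraph of $G$, then $V(G_1) \subseteq V(G)$, the edge set of $G_1$ is exactly $E(G) \cap \binom{V(G_1)}{2}$, and the labeling $\ell_1$ (which assigns $s$ to every vertex of $G_1$) agrees with the restriction of $\ell$ (which assigns $s$ to every vertex of $G$). Therefore $\mathfrak{h}(G_1)$ is, by definition, an induced Hoffman subgraph of $\mathfrak{h}(G)$.

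Finally I would apply Lemma \ref{hoff} with $\mathfrak{h} = \mathfrak{h}(G)$ and $\mathfrak{h}_1 = \mathfrak{h}(G_1)$ to obtain
$$\lambda_{\min}(G_1) = \lambda_{\min}(\mathfrak{h}(G_1)) \geq \lambda_{\min}(\mathfrak{h}(G)) = \lambda_{\min}(G),$$
which is the claim.

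There is essentially no obstacle here: the only thing that needs care is the (purely bookkeeping) verification that a graph-theoretic induced subgraph becomes a Hoffman-graph-theoretic induced Hoffman subgraph once all vertices are declared slim, and that the special matrix of an all-slim Hoffman graph is just its ordinary adjacency matrix. If one prefers a self-contained argument not routed through Hoffman graphs, the same inequality is immediate from Cauchy interlacing — $A(G_1)$ is a principal submatrix of $A(G)$, so its smallest eigenvalue is at least $\lambda_{\min}(G)$ — or from the Rayleigh quotient characterization $\lambda_{\min}(G) = \min_{\mathbf{x}\neq \mathbf 0} \mathbf{x}^{T}A(G)\mathbf{x}/\mathbf{x}^{T}\mathbf{x}$, applied to a minimizing vector for $A(G_1)$ padded with zeros on $V(G)\setminus V(G_1)$; but since Lemma \ref{hoff} is already available, the one-line corollary above is the natural route.
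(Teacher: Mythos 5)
Your proposal is correct and follows exactly the route the paper intends: the lemma is stated there as a corollary of Lemma \ref{hoff}, obtained by viewing $G$ and $G_1$ as Hoffman graphs with only slim vertices, for which the special matrix is the ordinary adjacency matrix. The bookkeeping you supply (induced subgraph becomes induced Hoffman subgraph, $Sp(\mathfrak{h}(G))=A(G)$) is precisely what the paper leaves implicit, so nothing further is needed.
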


\begin{definition}[$\mu$-saturated Hoffman graph]
Let $\mu\leq-1$ be a real number and let $\mathfrak{h}$ be a Hoffman graph with smallest eigenvalue at least $\mu$. Then $\mathfrak{h}$ is $\mu$-saturated if no fat vertex can be attached to $\mathfrak{h}$ in such a way that the resulting Hoffman graph has smallest eigenvalue at least $\mu$.
\end{definition}

Now we introduce a result of Hoffman and Ostrowski. In order to state this, we need to introduce the following notations.  Suppose $\mathfrak{h}$ is a Hoffman graph and $\{f_1, \dots, f_r\}$ is a subset of $V_{\mathrm{fat}}(\mathfrak{h})$. Let $\mathfrak{g}^{n_1, \ldots, n_r}(\mathfrak{h})$ be the Hoffman graph obtained from $\mathfrak{h}$ by replacing the fat vertex $f_i$ by a slim $n_i$-clique $K^{f_i}$, and joining all the neighbors of $f_i$ (in $\mathfrak{h}$) with all the vertices of $K^{f_i}$  for all $i$. We will write $G(\mathfrak{h},n)$ for the graph $\mathfrak{g}^{n_1, \ldots, n_r}(\mathfrak{h})$, when $V_{\mathrm{fat}}(\mathfrak{h}) = \{ f_1, f_2, \ldots, f_r\}$ and $n_1 =n_2 = \dots = n_r = n$. With the above notations, we can now state the result of Hoffman and Ostrowski. For a proof of it, see \cite[Theorem 2.14]{HJAT}.

\begin{theorem}\label{Ostrowski} Suppose $\mathfrak{h}$ is a Hoffman graph with fat vertices $f_1, f_2, \dots, f_r $.Then
\begin{center}$\lambda_{\min}(\mathfrak{g}^{n_1,\dots, n_r}(\mathfrak{h}))\geq \lambda_{\min}(\mathfrak{h})$,\end{center}
and
\begin{displaymath}\lim_{n_1,\dots,n_r \rightarrow \infty}\lambda_{\min}(\mathfrak{g}^{n_1,\dots, n_r}(\mathfrak{h}))= \lambda_{\min}(\mathfrak{h}).\end{displaymath}
\end{theorem}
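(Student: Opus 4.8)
The plan is to write down the adjacency matrix of $\mathfrak{g}:=\mathfrak{g}^{n_1,\dots,n_r}(\mathfrak{h})$ explicitly and then reduce both assertions to properties of $Sp(\mathfrak{h})$ via a Schur-complement argument; we may assume $r\ge 1$, the case $r=0$ being trivial. Write $A_s$ for the adjacency matrix of the slim graph of $\mathfrak{h}$, let $c_i$ be the $0/1$ vector indexed by the slim vertices whose support is the set of slim neighbours of $f_i$, and put $C:=(c_1\mid\dots\mid c_r)$, so that $Sp(\mathfrak{h})=A_s-CC^{T}$ is the special matrix of $\mathfrak{h}$. Since the fat vertices of a Hoffman graph are pairwise non-adjacent, every $f_i$ has only slim neighbours, so in $\mathfrak{g}$ the clique $K^{f_i}$ is joined only to slim vertices. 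Ordering the vertices of $\mathfrak{g}$ as the slim vertices of $\mathfrak{h}$, followed by $K^{f_1},\dots,K^{f_r}$, the adjacency matrix $A(\mathfrak{g})$ therefore has slim--slim block $A_s$, diagonal block $J_{n_i}-I_{n_i}$ on $K^{f_i}$, slim--$K^{f_i}$ block $c_i\mathbf{1}_{n_i}^{T}$, and zero block between $K^{f_i}$ and $K^{f_j}$ whenever $i\neq j$.

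Set $\lambda:=\lambda_{\min}(\mathfrak{h})=\lambda_{\min}(Sp(\mathfrak{h}))$. First note that $\lambda\le -1$: the fat vertex $f_1$ has a slim neighbour $w$, so $Sp(\mathfrak{h})_{w,w}=-|N^{f}_{\mathfrak{h}}(w)|\le -1$ by Lemma~\ref{fatnbr}, and the smallest eigenvalue of a symmetric matrix is at most any of its diagonal entries. Now take a real vector $x=(x_s,x_1,\dots,x_r)$ split according to the block decomposition above, put $\sigma_i:=\mathbf{1}_{n_i}^{T}x_i$ and $\sigma:=(\sigma_1,\dots,\sigma_r)^{T}$, and use $x_i^{T}(J_{n_i}-I_{n_i})x_i=\sigma_i^2-\|x_i\|^2$; expanding the quadratic form yields the identity
$$x^{T}\bigl(A(\mathfrak{g})-\lambda I\bigr)x=\begin{pmatrix}x_s\\ \sigma\end{pmatrix}^{T}\begin{pmatrix}A_s-\lambda I & C\\ C^{T} & I_r\end{pmatrix}\begin{pmatrix}x_s\\ \sigma\end{pmatrix}-(1+\lambda)\sum_{i=1}^{r}\|x_i\|^2 .$$
The last term is $\ge 0$ because $1+\lambda\le 0$. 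For the $2\times 2$ block matrix, the Schur complement of its positive definite block $I_r$ equals $(A_s-\lambda I)-CI_r^{-1}C^{T}=Sp(\mathfrak{h})-\lambda I$, which is positive semidefinite by the definition of $\lambda$; hence that block matrix is positive semidefinite and the first term is $\ge 0$ as well. Thus $x^{T}(A(\mathfrak{g})-\lambda I)x\ge 0$ for every $x$, that is, $\lambda_{\min}(\mathfrak{g})\ge\lambda$.

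For the limit, since $\lambda_{\min}(\mathfrak{g})\ge\lambda$ always holds it suffices to produce test vectors with Rayleigh quotient tending to $\lambda$. Let $v$ be a unit eigenvector of $Sp(\mathfrak{h})$ for $\lambda$ and set $\beta_i:=c_i^{T}v$. Let $x$ be equal to $v$ on the slim vertices and constant equal to $-\beta_i/n_i$ on every vertex of $K^{f_i}$. Using $v^{T}A_sv=v^{T}Sp(\mathfrak{h})v+\|C^{T}v\|^2=\lambda+\sum_i\beta_i^2$, a short computation gives $x^{T}A(\mathfrak{g})x=\lambda-\sum_i\beta_i^2/n_i$ and $x^{T}x=1+\sum_i\beta_i^2/n_i$, so
$$\frac{x^{T}A(\mathfrak{g})x}{x^{T}x}=\frac{\lambda-\sum_i\beta_i^2/n_i}{1+\sum_i\beta_i^2/n_i},$$
which tends to $\lambda$ as $n_1,\dots,n_r\to\infty$. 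Together with the lower bound this gives $\lambda_{\min}(\mathfrak{g}^{n_1,\dots,n_r}(\mathfrak{h}))\to\lambda_{\min}(\mathfrak{h})$.

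I expect the only point needing care to be the block-matrix bookkeeping: reading off $A(\mathfrak{g})$ correctly — in particular that distinct replacement cliques carry no edge between them, which is exactly where pairwise non-adjacency of the fat vertices is used — and verifying the displayed quadratic-form identity. Once that identity is available, the Schur-complement step and the test-vector computation are routine. An alternative route to the limit is to use the equitable partition of $\mathfrak{g}$ into the slim vertices and the cliques $K^{f_i}$, reducing matters to the smallest eigenvalue of a weighted quotient matrix, but the explicit test vector above seems the shortest.
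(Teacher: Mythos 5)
Your proof is correct; note, however, that the survey itself does not prove Theorem \ref{Ostrowski} --- it refers the reader to \cite[Theorem 2.14]{HJAT} --- so the comparison is with the standard argument rather than with an in-paper proof. Checking your steps: the block description of $A(\mathfrak{g}^{n_1,\dots,n_r}(\mathfrak{h}))$ is right (fat vertices are pairwise non-adjacent, so each clique $K^{f_i}$ attaches only to the slim neighbours of $f_i$ and there are no edges between distinct replacement cliques); the bound $\lambda_{\min}(\mathfrak{h})\le -1$ for $r\ge 1$, needed to make $-(1+\lambda)\sum_i\|x_i\|^2\ge 0$, follows from Lemma \ref{fatnbr} exactly as you say; the displayed quadratic-form identity expands correctly; the Schur complement of the block $I_r$ is $(A_s-\lambda I)-CC^{T}=Sp(\mathfrak{h})-\lambda I\succeq 0$, so the block matrix is positive semidefinite and the inequality $\lambda_{\min}(\mathfrak{g}^{n_1,\dots,n_r}(\mathfrak{h}))\ge\lambda_{\min}(\mathfrak{h})$ follows; and the test-vector computation ($x^{T}A(\mathfrak{g})x=\lambda-\sum_i\beta_i^2/n_i$ and $x^{T}x=1+\sum_i\beta_i^2/n_i$) is exact, so squeezing against the lower bound gives the limit. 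Your route is genuinely different from the customary one: the proofs in the Hoffman-graph literature, going back to Hoffman and Ostrowski and used in the cited source, typically get the inequality by interlacing or a quotient-matrix argument for the partition into slim vertices and the cliques $K^{f_i}$ (or via representations of norm $t$), and get the limit from a continuity-of-eigenvalues or monotonicity argument; your single quadratic-form identity plus one Schur complement delivers both assertions at once, is self-contained, and uses nothing beyond positive semidefiniteness, at the mild cost of some block bookkeeping. One reading point, which is harmless: you take $f_1,\dots,f_r$ to be all the fat vertices of $\mathfrak{h}$, so that $\mathfrak{g}^{n_1,\dots,n_r}(\mathfrak{h})$ is an ordinary (all-slim) graph and $\lambda_{\min}$ is its adjacency smallest eigenvalue; this is the intended interpretation of the theorem as stated, and your argument matches it.
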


\begin{definition}[representation of Hoffman graphs]
For a Hoffman graph $\mathfrak{h}$ and a positive integer $m$, a mapping $\phi: V(\mathfrak{h}) \to \mathbb{R}^m$ (resp.~$\phi: V(\mathfrak{h}) \to \mathbb{Z}^m$) satisfying
\[
(\phi(x),\phi(y))=\left\{
\begin{array}{ll}
  t & \text{if } x=y \text{ and } x,y\in V_{\mathrm{slim}}(\mathfrak{h}); \\
  1 & \text{if } x=y \text{ and } x,y\in V_{\mathrm{fat}}(\mathfrak{h}); \\
  1 & \text{if } x\sim y; \\
  0 & \text{otherwise},
\end{array}
\right.
\]
is a (resp.~integral) representation of $\mathfrak{h}$ of norm $t$.
\end{definition}
We denote by $\Lambda(\mathfrak{h},t)$ the lattice generated by the set $\{\phi(x)\mid x \in V(\mathfrak{h})\}$. Note that the isomorphism class of $\Lambda(\mathfrak{h},t)$ depends only on $\mathfrak{h}$ and $t$, and is independent of $\phi$, justifying the notation.

\begin{definition}[reduced representation of Hoffman graphs]\label{reducedrep}
For a Hoffman graph $\mathfrak{h}$ and a positive integer $m$, a mapping $\psi: V_{\mathrm{slim}}(\mathfrak{h}) \to \mathbb{R}^m$ (resp.~$\phi: V(\mathfrak{h}) \to \mathbb{Z}^m$) satisfying
\[
(\psi(x),\psi(y))=\left\{
\begin{array}{ll}
  t-|N_\mathfrak{h}^f(x)| & \text{if } x=y; \\
  1-|N_\mathfrak{h}^f(x,y)| & \text{if } x\sim y; \\
  -|N_\mathfrak{h}^f(x,y)| & \text{otherwise},
\end{array}
\right.
\]
is a (resp.~integral) reduced representation of $\mathfrak{h}$ of norm $t$.
\end{definition}
We denote by $\Lambda^{\text{red}}(\mathfrak{h},t)$ the lattice generated by the set $\{\psi(x)\mid x \in V_{\mathrm{slim}}(\mathfrak{h})\}$. Note that the isomorphism class of $\Lambda^{\text{red}}(\mathfrak{h},t)$ also depends only on $\mathfrak{h}$ and $t$, and is independent of $\psi$, justifying the notation.

\begin{lemma}[{\cite[Theorem $2.8$]{HJAT}}]\label{rela}
For a Hoffman graph $\mathfrak{h}$, the following conditions are equivalent:
\begin{enumerate}
\item $\mathfrak{h}$ has a representation of norm $t$;
\item $\mathfrak{h}$ has a reduced representation of norm $t$;
\item $\lambda_{\min}(\mathfrak{h})\geq -t$.
\end{enumerate}
\end{lemma}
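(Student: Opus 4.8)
The plan is to translate each kind of representation of $\mathfrak{h}$ into a Gram matrix and then invoke the standard linear-algebra dictionary: a real symmetric matrix $M$ is the Gram matrix of a finite family of vectors in some $\mathbb{R}^{m}$ if and only if $M$ is positive semidefinite, and (for the integral versions of (1) and (2)) $M$ is the Gram matrix of a family of vectors in some $\mathbb{Z}^{m}$ if and only if $M=N^{T}N$ for some integral matrix $N$. With this in hand it suffices to prove $(2)\Leftrightarrow(3)$ and $(1)\Leftrightarrow(2)$; the latter I would establish by two explicit constructions, which also settle the integral versions, and a Schur-complement argument gives $(1)\Leftrightarrow(3)$ directly as well.

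For $(2)\Leftrightarrow(3)$ I would observe, using Lemma \ref{fatnbr}, that the inner products prescribed in a reduced representation of norm $t$ are exactly the entries of $Sp(\mathfrak{h})+tI$: the diagonal entry at a slim vertex $x$ equals $t-|N^{f}_{\mathfrak{h}}(x)| = t+Sp(\mathfrak{h})_{x,x}$, and for distinct slim $x,y$ the entry $(A_s)_{x,y}-|N^{f}_{\mathfrak{h}}(x,y)| = Sp(\mathfrak{h})_{x,y}$ equals $1-|N^{f}_{\mathfrak{h}}(x,y)|$ when $x\sim y$ and $-|N^{f}_{\mathfrak{h}}(x,y)|$ when $x\not\sim y$. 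Hence a real reduced representation of norm $t$ exists if and only if $Sp(\mathfrak{h})+tI\succeq 0$, i.e. if and only if every eigenvalue of $Sp(\mathfrak{h})$ is at least $-t$, which is precisely $\lambda_{\min}(\mathfrak{h})\geq-t$.

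For $(1)\Rightarrow(2)$, given a representation $\phi$ of norm $t$ I would set $\psi(x):=\phi(x)-\sum_{f\in N^{f}_{\mathfrak{h}}(x)}\phi(f)$ for each slim vertex $x$ and expand $(\psi(x),\psi(y))$ into four terms; using $(\phi(x),\phi(f))=1$ iff $x\sim f$ and $(\phi(f),\phi(g))=\delta_{f,g}$ for fat $f,g$ (fat vertices being pairwise non-adjacent), each of the three ``fat'' sums equals $|N^{f}_{\mathfrak{h}}(x)\cap N^{f}_{\mathfrak{h}}(y)| = |N^{f}_{\mathfrak{h}}(x,y)|$, so $(\psi(x),\psi(y))=(\phi(x),\phi(y))-|N^{f}_{\mathfrak{h}}(x,y)|$, which is exactly the reduced pattern; note $\psi$ is integral whenever $\phi$ is. For the converse $(2)\Rightarrow(1)$ (which also gives the integral version), given a reduced representation $\psi\colon V_{\mathrm{slim}}(\mathfrak{h})\to\mathbb{R}^{m}$ I would adjoin one new orthonormal vector $e_{f}$ per fat vertex $f$, set $\phi(f):=e_{f}$ and $\phi(x):=\psi(x)+\sum_{f\in N^{f}_{\mathfrak{h}}(x)}e_{f}$ for slim $x$, and check the representation pattern case by case. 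Alternatively, $(1)\Leftrightarrow(3)$ follows at once from Schur complements: the Gram matrix forced by a representation of norm $t$ is the block matrix with diagonal blocks $A_{s}+tI$ (on the slim vertices) and $I$ (on the fat vertices) and off-diagonal block $C$, and since the block $I$ is positive definite this matrix is positive semidefinite if and only if its Schur complement $A_{s}+tI-CC^{T}=Sp(\mathfrak{h})+tI$ is.

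I do not expect a real obstacle here; the routine-but-careful part is the index bookkeeping in the two constructions — matching $|N^{f}_{\mathfrak{h}}(x)\cap N^{f}_{\mathfrak{h}}(y)|$ with $|N^{f}_{\mathfrak{h}}(x,y)|$ and handling the cases $x=y$, $x\sim y$, $x\not\sim y$ separately — together with a clean statement of the Gram-matrix criterion over $\mathbb{R}$ and over $\mathbb{Z}$. The only mildly inventive step is guessing the correction $\sum_{f\in N^{f}_{\mathfrak{h}}(x)}\phi(f)$ (equivalently, the orthonormal augmentation) that passes between a representation and its reduced representation; once it is written down, the rest is a direct verification.
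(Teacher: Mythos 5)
Your proposal is correct: the identification of the Gram matrix of a reduced representation with $Sp(\mathfrak{h})+tI$, the passage $\psi(x)=\phi(x)-\sum_{f\in N^f_{\mathfrak{h}}(x)}\phi(f)$ and its inverse via adjoined orthonormal vectors $\mathbf{e}_f$, and the Schur-complement/positive-semidefiniteness argument all check out. The paper itself gives no proof (it cites \cite[Theorem 2.8]{HJAT}), and your argument is essentially the same standard one used in that reference, so there is nothing further to compare.
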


A Hoffman graph $\mathfrak{h}$ is called \emph{integrally representable of norm $t$}, if $\mathfrak{h}$ has an integral representation $\phi: V(\mathfrak{h}) \rightarrow \mathbb{Z}^{m}$ of norm $t$ for some $m$.

\subsection{Sum and decomposition}
\begin{definition}[sum]\label{directsummatrix}
Let $\mathfrak{h}^1$ and $\mathfrak{h}^2$ be two Hoffman graphs. A Hoffman graph $\mathfrak{h}$ is the sum of $\mathfrak{h}^1$ and $\mathfrak{h}^2$, denoted by $\mathfrak{h} =\mathfrak{h}^1\uplus\mathfrak{h}^2$, if $\mathfrak{h}$ satisfies the following condition:

There exists a partition $\big\{V_{\mathrm{slim}}^1(\mathfrak{h}),V_{\mathrm{slim}}^2(\mathfrak{h})\big\}$ of $V_{\mathrm{slim}}(\mathfrak{h})$ such that induced Hoffman subgraphs generated by $V_{\mathrm{slim}}^i(\mathfrak{h})$ are $\mathfrak{h}^i$ for $i=1,2$ and
\[Sp(\mathfrak{h})=
\begin{pmatrix}
Sp(\mathfrak{h}^1) & O \\
O& Sp(\mathfrak{h}^2)
\end{pmatrix}
\] with respect to the partition $\big\{V_{\mathrm{slim}}^1(\mathfrak{h}),V_{\mathrm{slim}}^2(\mathfrak{h})\big\}$ of $V_{\mathrm{slim}}(\mathfrak{h})$.
\end{definition}

Clearly, by definition, the sum is associative, so that the sum $\biguplus_{i=1}^{r}\mathfrak{h}^i$ is well-defined. We can check that $\mathfrak{h}$ is a sum of two non-empty Hoffman graphs if and only if $Sp(\mathfrak{h})$ is a block matrix with at least two blocks. If $\mathfrak{h} =\mathfrak{h}^1 \uplus \mathfrak{h}^2$ for some non-empty Hoffman subgraphs $\mathfrak{h}^1$ and $\mathfrak{h}^2$, then we call $\mathfrak{h}$ \emph{decomposable} with $\{\mathfrak{h}^1,\mathfrak{h}^2\}$ as a \emph{decomposition} and call $\mathfrak{h}^1, \mathfrak{h}^2$ \emph{factors} of $\mathfrak{h}$. Otherwise, $\mathfrak{h}$ is called \emph{indecomposable}.

The following lemma gives a combinatorial way to define the sum of Hoffman graphs.

\begin{lemma}[{\cite[Lemma 2.11]{kyy1}}]\label{combi}
Let $\mathfrak{h}$ be a Hoffman graph and $\mathfrak{h}^1$ and $\mathfrak{h}^2$ be two induced Hoffman subgraphs of $\mathfrak{h}$. The Hoffman graph $\mathfrak{h}$ is the sum of $\mathfrak{h}^1$ and $\mathfrak{h}^2$ if and only if $\mathfrak{h}^1$, $\mathfrak{h}^2$, and $\mathfrak{h}$ satisfy the following conditions:
\begin{enumerate}
\item $V(\mathfrak{h})=V(\mathfrak{h}^1)\cup V(\mathfrak{h}^2);$
\item $\big\{V_{\mathrm{slim}}(\mathfrak{h}^1),V_{\mathrm{slim}}(\mathfrak{h}^2)\big\}$ is a partition of $V_{\mathrm{slim}}(\mathfrak{h});$
\item if $x \in V_{\mathrm{slim}}(\mathfrak{h}^i),~f \in V_{\mathrm{fat}}(\mathfrak{h})$ and $x\sim f$, then $f\in V_{\mathrm{fat}}(\mathfrak{h}^i);$
\item if $x \in V_{\mathrm{slim}}(\mathfrak{h}^1)$ and $y \in V_{\mathrm{slim}}(\mathfrak{h}^2)$, then $x$ and $y$ have at most one common fat neighbor, and they have one if and only if they are adjacent.
\end{enumerate}
  \end{lemma}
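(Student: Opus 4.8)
The plan is to prove the two implications separately, in each case unwinding Definition~\ref{directsummatrix} and using Lemma~\ref{fatnbr} to convert the vanishing of the off-diagonal block of $Sp(\mathfrak{h})$ into a statement about common fat neighbours of slim vertices.

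For the forward implication, suppose $\mathfrak{h}=\mathfrak{h}^1\uplus\mathfrak{h}^2$, so there is a partition $\{V^1,V^2\}$ of $V_{\mathrm{slim}}(\mathfrak{h})$ with $\langle V^i\rangle_{\mathfrak{h}}=\mathfrak{h}^i$ and $Sp(\mathfrak{h})$ block diagonal with diagonal blocks $Sp(\mathfrak{h}^1),Sp(\mathfrak{h}^2)$. Condition~(ii) is immediate since $V_{\mathrm{slim}}(\mathfrak{h}^i)=V^i$. Condition~(iii) is built into the definition of the generated subgraph $\langle V^i\rangle_{\mathfrak{h}}$: any fat vertex of $\mathfrak{h}$ adjacent to a slim vertex of $V^i$ lies in it. For~(i), every fat vertex of $\mathfrak{h}$ has, by the Hoffman-graph axioms, a slim neighbour, which lies in some $V^i$, so that fat vertex lies in $\mathfrak{h}^i$; together with~(ii) this yields $V(\mathfrak{h})=V(\mathfrak{h}^1)\cup V(\mathfrak{h}^2)$. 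For~(iv), take $x\in V_{\mathrm{slim}}(\mathfrak{h}^1)$, $y\in V_{\mathrm{slim}}(\mathfrak{h}^2)$; block diagonality forces $Sp(\mathfrak{h})_{x,y}=0$, and Lemma~\ref{fatnbr} rewrites this as $(A_s)_{x,y}=|N^f_{\mathfrak{h}}(x,y)|$, where the left-hand side is $0$ or $1$; hence $x$ and $y$ have at most one common fat neighbour, and exactly one precisely when $x\sim y$.

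For the converse, assume (i)--(iv) and use the partition $\{V_{\mathrm{slim}}(\mathfrak{h}^1),V_{\mathrm{slim}}(\mathfrak{h}^2)\}$ of $V_{\mathrm{slim}}(\mathfrak{h})$ supplied by~(ii). First I would check that $\langle V_{\mathrm{slim}}(\mathfrak{h}^i)\rangle_{\mathfrak{h}}=\mathfrak{h}^i$: both are induced Hoffman subgraphs of $\mathfrak{h}$ on the same slim vertex set, so it suffices to match their fat vertices; one inclusion is exactly condition~(iii), while the other follows because every fat vertex of $\mathfrak{h}^i$ has a slim neighbour in $\mathfrak{h}^i$ (Hoffman-graph axiom) and $\mathfrak{h}^i$ is induced in $\mathfrak{h}$. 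Next I would verify that $Sp(\mathfrak{h})$ is block diagonal for this partition: for $x\in V_{\mathrm{slim}}(\mathfrak{h}^1)$, $y\in V_{\mathrm{slim}}(\mathfrak{h}^2)$, Lemma~\ref{fatnbr} and~(iv) give $Sp(\mathfrak{h})_{x,y}=(A_s)_{x,y}-|N^f_{\mathfrak{h}}(x,y)|=0$ in both the adjacent and the non-adjacent case; and for $x,y\in V_{\mathrm{slim}}(\mathfrak{h}^i)$ one has $Sp(\mathfrak{h})_{x,y}=Sp(\mathfrak{h}^i)_{x,y}$, since induced subgraphs inherit slim--slim adjacencies and, by~(iii), $N^f_{\mathfrak{h}}(x,y)=N^f_{\mathfrak{h}^i}(x,y)$. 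Together with the identification of the generated subgraphs, this is precisely Definition~\ref{directsummatrix}.

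The routine part is the two applications of Lemma~\ref{fatnbr}; the part requiring the most care, and really the content of the lemma, is the fat-vertex bookkeeping — showing that the induced Hoffman subgraph generated by $V_{\mathrm{slim}}(\mathfrak{h}^i)$ is literally $\mathfrak{h}^i$, and that two slim vertices of $\mathfrak{h}^i$ have the same common fat neighbours whether these are computed inside $\mathfrak{h}^i$ or inside $\mathfrak{h}$. Both points hinge on condition~(iii) (no fat vertex of $\mathfrak{h}$ ``leaks'' an adjacency across the partition into the other factor) and on the Hoffman-graph axiom that every fat vertex is adjacent to at least one slim vertex, which is exactly what makes condition~(i) a consequence of the remaining hypotheses rather than an independent one.
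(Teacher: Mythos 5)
Your proof is correct: the paper itself states this lemma with only a citation to \cite{kyy1} and gives no proof, and your argument — unwinding Definition \ref{directsummatrix} of the sum, using Lemma \ref{fatnbr} to translate the vanishing off-diagonal block of $Sp(\mathfrak{h})$ into condition (iv), and using condition (iii) plus the axiom that every fat vertex has a slim neighbour to identify $\mathfrak{h}^i$ with $\langle V_{\mathrm{slim}}(\mathfrak{h}^i)\rangle_{\mathfrak{h}}$ and to match fat-neighbour sets computed in $\mathfrak{h}^i$ and in $\mathfrak{h}$ — is exactly the standard proof of this equivalence. Your observation that condition (i) is forced by (ii), (iii) and the Hoffman-graph axiom is also accurate and harmless.
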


Let $\mu\leq -1$ be a real number and $\mathfrak{h}$ a Hoffman graph with $\lambda_{\min}(\mathfrak{h})\geq \mu$. The Hoffman graph $\mathfrak{h}$ is said to be \emph{$\mu$-reducible} if there exists a Hoffman graph $\widetilde{\mathfrak{h}}$ containing $\mathfrak{h}$ as an induced Hoffman subgraph, such that there is a decomposition $\{\widetilde{\mathfrak{h}}_{i}\}_{i=1}^{2}$ of $\widetilde{\mathfrak{h}}$ with $\lambda_{\min}(\widetilde{\mathfrak{h}}_{i})\geq \mu$ and $V_{s}(\widetilde{\mathfrak{h}}_{i})\cap V_{s}(\mathfrak{h})\neq \emptyset\ (i=1,2)$. We say that $\mathfrak{h}$ is \emph{$\mu$-irreducible} if $\lambda_{\min}(\mathfrak{h})\geq \mu$ and $\mathfrak{h}$ is not $\mu$-reducible. A Hoffman graph $\mathfrak{h}$ is said to be \emph{reducible} if $\mathfrak{h}$ is $\lambda_{\min}(\mathfrak{h})$-reducible. We say $\mathfrak{h}$ is \emph{irreducible} if $\mathfrak{h}$ is not reducible.

\begin{definition}[line Hoffman graph]
Let $\mathfrak{H}$ be a family of pairwise non-isomorphic Hoffman graphs. A Hoffman graph $\mathfrak{h}$ is an $\mathfrak{H}$-line Hoffman graph if there exists a Hoffman graph $\mathfrak{h}^\prime$ satisfying the following conditions:
\begin{enumerate}
\item $\mathfrak{h}^\prime$ has $\mathfrak{h}$ as an induced Hoffman subgraph;
\item $\mathfrak{h}^\prime$ has the same slim graph as $\mathfrak{h}$;
\item $\mathfrak{h}^\prime=\biguplus_{i=1}^{r}\mathfrak{h}_i^\prime$, where $\mathfrak{h}_i^\prime$ is isomorphic to an induced Hoffman subgraph of some Hoffman graph in $\mathfrak{H}$ for $i=1,\dots,r$.
\end{enumerate}
\end{definition}

\begin{definition}[$\mathfrak{H}$-saturated Hoffman graph]
Let $\mathfrak{H}$ be a family of pairwise non-isomorphic Hoffman graphs. A Hoffman graph $\mathfrak{h}$ is $\mathfrak{H}$-saturated, if $\mathfrak{h}$ is an $\mathfrak{H}$-line Hoffman graph, and no fat vertex can be attached to $\mathfrak{h}$ in such a way that the resulting Hoffman graph is also an $\mathfrak{H}$-line Hoffman graph.
\end{definition}
Note that if we set $\mathfrak{H}$ to be the family of pairwise non-isomorphic $\mu$-irreducible Hoffman graphs, then a $\mu$-saturated Hoffman graph is $\mathfrak{H}$-saturated.

\subsection{\texorpdfstring{$\mathfrak{H}$}{\mathfrak{H}}-Saturated Hoffman graphs}

Now we depict several Hoffman graphs as follows. They appeared in \cite{Woo} for the first time. Actually some of them are not used in this paper, but we use the same symbols as in \cite{Woo} to avoid confusion.

\begin{figure}[H]
   \centering
    \begin{tikzpicture}
    \draw (-7,5) node {$\mathfrak{h}_1$ =};
    \draw (-2,5) node {$\mathfrak{h}_2$ =};
    \draw (3,5) node {$\mathfrak{h}_3$ =};
    \tikzstyle{every node}=[draw,circle,fill=black,minimum size=20pt,
                            inner sep=0pt]
                            {every label}=[\tiny]
    \draw (-6,4.5) node (1f1) [label=below:$$] {};
    \draw (-1,4.5) node (2f1) [label=below:$$] {};
    \draw (0.5,4.5) node (2f2) [label=below:$$] {};
    \draw (5,4.5) node (3f1) [label=below:$$] {};

    \tikzstyle{every node}=[draw,circle,fill=black,minimum size=8pt,
                            inner sep=0pt]
                            {every label}=[\tiny]

    \draw (-6,5.5) node (1s1) [label=below:$$] {};
    \draw (-0.25,5.5) node (2s1) [label=below:$$] {};
    \draw (4,5.5) node (3s1) [label=below:$$] {};
    \draw (6,5.5) node (3s2) [label=below:$$] {};

    \draw (1s1) -- (1f1);
    \draw (2f1) -- (2s1) -- (2f2);
    \draw (3s1) -- (3f1) -- (3s2);
    \end{tikzpicture}
   \vspace{-0.8cm}
\end{figure}
\begin{figure}[H]
   \centering
    \begin{tikzpicture}
    \draw (-7,2.5) node {$\mathfrak{h}_4$ =};
    \draw (-2,2.5) node {$\mathfrak{h}_5$ =};
    \draw (3,2.5) node {$\mathfrak{h}_6$ =};
    \tikzstyle{every node}=[draw,circle,fill=black,minimum size=20pt,
                            inner sep=0pt]
                            {every label}=[\tiny]

    \draw (-6,2) node (4f1) [label=below:$$] {};
    \draw (-4.5,2) node (4f2) [label=below:$$] {};
    \draw (0,2) node (5f1) [label=below:$$] {};
    \draw (5.5,2) node (6f1) [label=below:$$] {};
    \draw (7,2) node (6f2) [label=below:$$] {};

    \tikzstyle{every node}=[draw,circle,fill=black,minimum size=8pt,
                            inner sep=0pt]
                            {every label}=[\tiny]

    \draw (-6,3) node (4s1) [label=below:$$] {};
    \draw (-4.5,3) node (4s2) [label=below:$$] {};
    \draw (-1,3) node (5s1) [label=below:$$] {};
    \draw (0,3) node (5s2) [label=below:$$] {};
    \draw (1,3) node (5s3) [label=below:$$] {};
    \draw (4,3) node (6s1) [label=below:$$] {};
    \draw (5.5,3) node (6s2) [label=below:$$] {};
    \draw (7,3) node (6s3) [label=below:$$] {};

    \draw (4f1) -- (4s1) -- (4s2) -- (4f2);
    \draw (5s1) -- (5f1) -- (5s2) -- (5s3) -- (5f1);
    \draw (6s1) -- (6f1) -- (6s2) -- (6s3) -- (6f2);
    \end{tikzpicture}
       \vspace{-0.8cm}
\end{figure}
\begin{figure}[H]
   \centering
    \begin{tikzpicture}
    \draw (-7,0) node {$\mathfrak{h}_7$ =};
    \draw (-2,0) node {$\mathfrak{h}_8$ =};
    \draw (3,0) node {$\mathfrak{h}_9$ =};
    \tikzstyle{every node}=[draw,circle,fill=black,minimum size=20pt,
                            inner sep=0pt]
                            {every label}=[\tiny]

    \draw (-6,0) node (7f1) [label=below:$$] {};
    \draw (-3.586,0) node (7f2) [label=below:$$] {};
    \draw (-1,0) node (8f1) [label=below:$$] {};
    \draw (1.707,0.707) node (8f2) [label=below:$$] {};
    \draw (1.707,-0.707) node (8f3) [label=below:$$] {};
    \draw (4,0) node (9f1) [label=below:$$] {};
    \draw (6.707,0.707) node (9f2) [label=below:$$] {};
    \draw (6.707,-0.707) node (9f3) [label=below:$$] {};

    \tikzstyle{every node}=[draw,circle,fill=black,minimum size=8pt,
                            inner sep=0pt]
                            {every label}=[\tiny]

    \draw (-5.293,0.707) node (7s1) [label=below:$$] {};
    \draw (-5.293,-0.707) node (7s2) [label=below:$$] {};
    \draw (-4.586,0) node (7s3) [label=below:$$] {};
    \draw (0,0) node (8s1) [label=below:$$] {};
    \draw (0.707,0.707) node (8s2) [label=below:$$] {};
    \draw (0.707,-0.707) node (8s3) [label=below:$$] {};
    \draw (4.707,0.707) node (9s1) [label=below:$$] {};
    \draw (4.707,-0.707) node (9s2) [label=below:$$] {};
    \draw (5.707,0.707) node (9s3) [label=below:$$] {};
    \draw (5.707,-0.707) node (9s4) [label=below:$$] {};

    \draw (7f1) -- (7s1) -- (7s3) -- (7s2) -- (7f1);
    \draw (7s1) -- (7s2);
    \draw (7s3) -- (7f2);
    \draw (8f1) -- (8s1);
    \draw (8f2) -- (8s2) -- (8s1) -- (8s3) -- (8f3);
    \draw (9f2) -- (9s3) -- (9s1) -- (9f1) -- (9s2) -- (9s4) -- (9f3);
    \draw (9s3) -- (9s4);
   \end{tikzpicture}
   {\caption{}\label{fig:-1-sqrt2}}
\end{figure}

We first state a classical result by Krausz \cite{Krausz.1943}.
\begin{theorem} A graph $G$ of order $n$ is a line graph if and only if one can partition the edge-set of $G$ into cliques $\{C_1, C_2, \ldots, C_t\}$ such that each vertex lies in at
 most $2$ $C_i$'s. Moreover, if $G$ is a connected line graph and {\bf  $n \geq 7$}, then this partition into cliques is unique.
\end{theorem}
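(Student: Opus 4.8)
Suppose first that $G=L(H)$. For each vertex $v$ of $H$ let $C_v\subseteq E(H)=V(G)$ be the set of edges of $H$ incident with $v$. Any two edges of $H$ through $v$ are adjacent in $L(H)$, so each $C_v$ induces a clique of $G$. Since $H$ is simple, two adjacent edges $e,f$ of $H$ meet in exactly one vertex $v$, so the edge $\{e,f\}$ of $G$ lies in $C_v$ and in no other $C_w$; hence the cliques $C_v$ with $|C_v|\ge 2$ partition $E(G)$ (those with $|C_v|\le 1$ carry no edge and are discarded). Finally, a vertex $e=\{u,v\}$ of $G$ lies in precisely the two cliques $C_u$ and $C_v$, hence in at most two members of the partition.

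\smallskip
\noindent\textbf{Such a partition yields a line graph.} Conversely, suppose $E(G)$ is partitioned into cliques $C_1,\dots,C_t$ with each vertex of $G$ in at most two of them. Since the $C_i$ are edge-disjoint, any two of them share at most one vertex of $G$. Build a graph $H$ whose vertices are the symbols $C_1,\dots,C_t$, together with one fresh symbol $d_x$ for each vertex $x$ of $G$ lying in fewer than two of the $C_i$ and a second fresh symbol $d_x'$ when $x$ lies in none of them. To each $x\in V(G)$ attach a pair $e(x)$: set $e(x)=\{C_i,C_j\}$ if $x\in C_i\cap C_j$ with $i\ne j$; set $e(x)=\{C_i,d_x\}$ if $x$ lies only in $C_i$; set $e(x)=\{d_x,d_x'\}$ if $x$ lies in no $C_i$. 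Let $E(H)=\{e(x)\mid x\in V(G)\}$. Then $x\mapsto e(x)$ is a bijection $V(G)\to E(H)$ — injective because two cliques share at most one vertex — and $H$ has no loop and (again by that property) no repeated edge. For distinct $x,y\in V(G)$ one has $x\sim y$ in $G$ iff $\{x,y\}$ lies in some $C_i$ iff $x,y\in C_i$ iff $e(x)$ and $e(y)$ share the vertex $C_i$ of $H$; hence $G\cong L(H)$.

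\smallskip
\noindent\textbf{Uniqueness for large connected line graphs.} Now fix a connected $G=L(H)$ with $n=|V(G)|\ge 7$, and call a clique of $G$ a \emph{star clique} if it equals some $C_v$. A short Helly-type check shows that a maximal clique of $G$ which is not a star clique is a triangle $\{x,y,z\}$ whose three vertices, read as edges of $H$, form a triangle of $H$. The key assertion is that every member of an arbitrary Krausz partition $\mathcal P$ of $G$ is a star clique: otherwise $\mathcal P$ contains such a triangle $T$, and the fact that each of $x,y,z$ may lie in at most one \emph{further} member of $\mathcal P$ pins the local structure of $H$ around $T$ down so tightly that $|E(H)|\le 6$, i.e.\ $n\le 6$ — a contradiction. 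Granting this, every member of $\mathcal P$ is some $C_v$; an edge $\{x,y\}$ of $G$ corresponding to edges $\{a,b\},\{b,c\}$ of $H$ (with $a,b,c$ distinct, as $H$ is simple) lies only in the star clique $C_b$, forcing $C_b\in\mathcal P$ whenever $\deg_H(b)\ge 2$, while no $C_v$ with $\deg_H(v)\le 1$ contains an edge; thus $\mathcal P=\{C_v:\deg_H(v)\ge 2\}$, the partition produced from $H$.

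\smallskip
\noindent The first two parts are routine. The real work is the \emph{key assertion}: classifying the configurations in $H$ that could produce a non-star triangle in a Krausz partition and checking that each forces $|E(H)|\le 6$. This is essentially Whitney's line-graph isomorphism theorem in disguise, and the threshold $n\ge 7$ is sharp, as $L(K_3)=L(K_{1,3})=K_3$ and the octahedron $L(K_4)=K_{2,2,2}$ (with its two complementary triangulations) already show; I would organize the case analysis around a single triangle $T$ of $H$ and the at most three edges of $H$ meeting $T$ that must be absorbed into the remaining members of $\mathcal P$.
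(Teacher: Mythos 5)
Your two directions of the equivalence are correct and complete: the stars $C_v$ of $H$ give a Krausz partition of $L(H)$, and your reconstruction of $H$ from a given partition (with the pendant symbols $d_x,d_x'$ for vertices lying in fewer than two cliques) is sound. For the record, the paper does not prove this statement at all -- it is quoted as Krausz's classical theorem with a citation -- so the only comparison possible is with the standard argument, and on the first two parts you match it.

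The uniqueness part, however, is a plan rather than a proof, and the reduction it rests on has a genuine gap. You argue that if some member of $\mathcal P$ is not a star clique $C_v$, then $\mathcal P$ contains a triangle arising from a triangle of $H$. That dichotomy is valid for \emph{maximal} cliques of $L(H)$, but members of a Krausz partition need not be maximal cliques: a member can be a proper nonempty subset of a star, as already happens in the partition of $K_3=L(K_{1,3})$ into three $K_2$'s, each strictly contained in the star of the centre. So besides the non-star triangle you must separately rule out a member $K\subsetneq C_v$ with $|K|\ge 2$; this can be done (the ``at most one further member per vertex'' constraint forces $\deg_H(v)=3$ and all neighbours of $v$ to have degree at most $2$ with edges only back into this configuration, so $H$ has at most $5$ edges), but your dichotomy silently skips it. Moreover, even in the triangle case you only assert that the local structure is ``pinned down so tightly that $|E(H)|\le 6$'' and explicitly defer the case analysis; that analysis -- showing that the extra edges at the triangle's vertices must either attach to at most one of them or all run to a single common vertex, so that $H$ is the paw, $K_4$ minus an edge, or $K_4$, whence $n\le 6$ -- is precisely the substance of the third statement. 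As written, that statement is asserted, not proved.
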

In terms of line Hoffman graphs, we can formulate this result as follows.

\begin{theorem} Every $\{\mathfrak{h}_2\}$-line Hoffman graph whose slim graph is connected of order at least {\bf $7$} has a unique $\{\mathfrak{h}_2\}$-saturated Hoffman graph containing it, up to strong isomorphism.
\end{theorem}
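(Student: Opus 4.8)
The plan is to translate Krausz's theorem into the language of line Hoffman graphs and then to check that the translation is faithful. Recall that an $\{\mathfrak{h}_2\}$-line Hoffman graph $\mathfrak{h}$ with slim graph $G$ is, by definition, an induced Hoffman subgraph of some $\mathfrak{h}'$ with the same slim graph $G$, where $\mathfrak{h}' = \biguplus_{i=1}^r \mathfrak{h}'_i$ and each $\mathfrak{h}'_i$ is isomorphic to an induced Hoffman subgraph of $\mathfrak{h}_2$. The induced Hoffman subgraphs of $\mathfrak{h}_2$ are, up to isomorphism: $\mathfrak{h}_2$ itself (one slim vertex with two fat neighbors), $\mathfrak{h}_1$ (one slim vertex with one fat neighbor), and the single slim vertex with no fat neighbor. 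First I would observe that, by Lemma \ref{combi}, a sum decomposition $\mathfrak{h}' = \biguplus_{i=1}^r \mathfrak{h}'_i$ into factors of this restricted type is exactly the data of a partition of $V_{\mathrm{fat}}(\mathfrak{h}')$ together with an assignment of each slim vertex to a block, such that each slim vertex is adjacent to at most the fat vertices in its block — and since each factor $\mathfrak{h}'_i$ is a subgraph of $\mathfrak{h}_2$, each block of fat vertices has size at most two and each slim vertex sits in at most one block, hence has at most two fat neighbors in $\mathfrak{h}'$. The quasi-cliques $Q_{\mathfrak{h}'}(f)$ of the fat vertices of $\mathfrak{h}'$ are cliques in $G$ (two slim vertices sharing a common fat neighbor in an $\{\mathfrak{h}_2\}$-line structure must be adjacent, again by the structure of $\mathfrak{h}_2$ and Lemma \ref{combi}(iv)), and together with the edges not covered by any $Q_{\mathfrak{h}'}(f)$ — each of which, I would argue, must itself form a "private" clique of size two arising from a further fat vertex one could attach — they yield a Krausz partition of $E(G)$ into cliques with each vertex in at most two of them.

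Next I would set up the precise bijection. Given an $\{\mathfrak{h}_2\}$-saturated Hoffman graph $\mathfrak{h}^{\mathrm{sat}}$ containing $\mathfrak{h}$ (with the same slim graph $G$), the collection $\{Q_{\mathfrak{h}^{\mathrm{sat}}}(f) : f \in V_{\mathrm{fat}}(\mathfrak{h}^{\mathrm{sat}})\}$ is a Krausz partition: saturation forces every edge of $G$ to lie in some $Q_{\mathfrak{h}^{\mathrm{sat}}}(f)$ (otherwise one could attach a new fat vertex to the two endpoints of an uncovered edge, contradicting the maximality in the definition of $\mathfrak{H}$-saturated), and the at-most-two-fat-neighbors condition says each vertex lies in at most two cliques of the partition. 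Conversely, given a Krausz partition $\{C_1,\dots,C_t\}$ of $E(G)$ with each vertex in at most two cliques, one builds a Hoffman graph by adding one fat vertex $f_j$ for each $C_j$, joined precisely to the vertices spanning $C_j$; Lemma \ref{combi} shows the resulting Hoffman graph is a sum of copies of induced subgraphs of $\mathfrak{h}_2$ (the "at most two cliques per vertex" translating exactly to "at most two fat neighbors per slim vertex," and "each clique is a clique" giving condition (iv)), so it is an $\{\mathfrak{h}_2\}$-line Hoffman graph, and one checks it is saturated because every edge is already covered so no new fat vertex can be attached without violating condition (iv) of Lemma \ref{combi}. These two maps are mutually inverse up to strong isomorphism — strong isomorphism being exactly the relation "same slim vertices, fat part matched by a labeling-preserving isomorphism fixing slim vertices," which corresponds to equality of Krausz partitions.

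Having set up this dictionary, the uniqueness statement follows immediately from the uniqueness clause of Krausz's theorem: if $G$ is connected and $n \geq 7$, the Krausz partition of $E(G)$ is unique, hence the $\{\mathfrak{h}_2\}$-saturated Hoffman graph containing $\mathfrak{h}$ is unique up to strong isomorphism. I also need to confirm existence — that at least one $\{\mathfrak{h}_2\}$-saturated Hoffman graph contains the given $\mathfrak{h}$ — which is routine: starting from the line-Hoffman structure $\mathfrak{h}' \supseteq \mathfrak{h}$ witnessing that $\mathfrak{h}$ is an $\{\mathfrak{h}_2\}$-line Hoffman graph, one repeatedly attaches fat vertices to uncovered edges; each attachment strictly increases the number of covered edges, so the process terminates in a saturated graph, and one verifies via Lemma \ref{combi} that the $\{\mathfrak{h}_2\}$-line property is preserved at each step.

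The main obstacle I anticipate is not the topology but the bookkeeping in the dictionary — specifically, verifying that in an $\{\mathfrak{h}_2\}$-line structure the common-fat-neighbor conditions of Lemma \ref{combi}(iv) are equivalent to the Krausz "edge-partition into cliques" condition, and that saturation is equivalent to the partition being an actual partition of the \emph{full} edge set (rather than a partial one). One subtlety worth isolating: a slim vertex with a fat neighbor that has no \emph{other} slim neighbor corresponds to an edge-less clique in the Krausz decomposition, which Krausz's formulation typically excludes; I would either restrict to saturated structures where such degenerate fat vertices do not arise (a vertex of degree at least one in $G$, with $n\geq 7$ and $G$ connected, always has all its incident edges coverable, absorbing any such fat vertex into a genuine clique) or note that such fat vertices can be harmlessly deleted without changing the slim graph or the smallest-eigenvalue-relevant structure. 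Once this equivalence is nailed down, the theorem is a direct corollary of Krausz's theorem and its uniqueness addendum.
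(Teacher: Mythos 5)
Your proposal is correct and follows exactly the paper's route: the paper gives no separate proof, presenting the theorem precisely as the Hoffman-graph reformulation of Krausz's theorem, and your dictionary between Krausz clique partitions of $E(G)$ and $\{\mathfrak{h}_2\}$-saturated Hoffman graphs (via Lemma \ref{combi}), combined with the uniqueness clause of Krausz's theorem, is the intended argument. One small correction to the subtlety you flag: in a saturated Hoffman graph the degenerate fat vertices (those adjacent to a single slim vertex) cannot be ``harmlessly deleted'' --- saturation in fact forces one at every slim vertex lying in only one Krausz clique --- but since their placement is uniquely forced by the partition, the uniqueness statement is unaffected.
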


In \cite{Cvetkovic.1981}, Cvetkovi\'{c}, Doob and Simi\'{c} showed a similar result for generalized line graphs. We formulate their result in terms of line Hoffman graphs as follows.
\begin{theorem}
Every $\{\mathfrak{h}_2, \mathfrak{h}_3\}$-line Hoffman graph whose slim graph is connected of order at least $7$ has a unique $\{\mathfrak{h}_2, \mathfrak{h}_3\}$-saturated Hoffman graph containing it, up to strong isomorphism.
\end{theorem}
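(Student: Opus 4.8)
The plan is to deduce the theorem as a translation of the Cvetkovi\'{c}--Doob--Simi\'{c} uniqueness theorem for generalized line graphs \cite{Cvetkovic.1981} into the language of Hoffman graphs, exactly as the two preceding theorems translate Krausz's theorem. The core step is to set up a dictionary between $\{\mathfrak{h}_2,\mathfrak{h}_3\}$-saturated Hoffman graphs whose slim graph is a fixed connected graph $G$ and the ``generalized Krausz decompositions'' of $G$, that is, the ways of presenting $G$ as a generalized line graph.

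First I would unfold the definitions. Let $\mathfrak{h}$ be a $\{\mathfrak{h}_2,\mathfrak{h}_3\}$-line Hoffman graph with slim graph $G$, and pick a witnessing Hoffman graph $\mathfrak{h}'=\biguplus_{i=1}^{r}\mathfrak{h}'_i$ with $\mathfrak{h}$ induced in $\mathfrak{h}'$, the same slim graph, and each $\mathfrak{h}'_i$ isomorphic to an induced Hoffman subgraph of $\mathfrak{h}_2$ or of $\mathfrak{h}_3$ (the possibilities being the empty graph, a lone slim or fat vertex, $\mathfrak{h}_1$, $\mathfrak{h}_2$, and $\mathfrak{h}_3$). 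Applying Lemma~\ref{combi} to the sum $\biguplus_i \mathfrak{h}'_i$, one sees that the factors isomorphic to $\mathfrak{h}_2$ assemble, via their shared fat vertices, into cliques of $G$, the factors isomorphic to $\mathfrak{h}_3$ into ``petal pairs'' (induced non-edges $\{u,v\}$ of $G$ attached to a common fat vertex), and that together these cliques and petal pairs cover every edge of $G$ exactly once; this is precisely a presentation $G\cong L(\widehat{G})$ of $G$ as a generalized line graph of a rooted graph $\widehat{G}$. Conversely, any such presentation produces a $\{\mathfrak{h}_2,\mathfrak{h}_3\}$-line Hoffman graph over $G$ by attaching one fat vertex to each clique and to each petal pair. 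Under this correspondence, being $\{\mathfrak{h}_2,\mathfrak{h}_3\}$-saturated means precisely that every clique and every petal pair of the decomposition already carries a fat vertex; hence the $\{\mathfrak{h}_2,\mathfrak{h}_3\}$-saturated Hoffman graphs over $G$ are in bijection with the generalized Krausz decompositions of $G$, and the bijection is compatible with strong isomorphism, which only relabels fat vertices. Existence of a saturated Hoffman graph containing a given line-Hoffman graph is clear, since attaching fat vertices one at a time strictly increases their number and the process must terminate.

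Now the classical input applies. Since $G$ is connected and $|V(G)|\geq 7$, the Cvetkovi\'{c}--Doob--Simi\'{c} theorem \cite{Cvetkovic.1981} --- the generalized-line-graph analogue of Krausz's theorem quoted above --- says that $G$ has a unique generalized Krausz decomposition. Translating back through the bijection of the previous paragraph, any two $\{\mathfrak{h}_2,\mathfrak{h}_3\}$-saturated Hoffman graphs with slim graph $G$ correspond to the same decomposition, so their fat vertices match up preserving slim neighborhoods, i.e.\ they are strongly isomorphic; combined with the existence just noted, this is the assertion.

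The main obstacle is not the Hoffman-graph bookkeeping but matching thresholds with the classical statement: one must check that the bound $|V(G)|\geq 7$ really eliminates all the sporadic small cases in which a connected generalized line graph has more than one root (the analogues of the coincidence $L(K_3)=L(K_{1,3})$, together with a handful of small graphs carrying $K_{1\times 2}$-petals), and that this is exactly what \cite{Cvetkovic.1981} provides. A secondary technical point is to show that in a saturated Hoffman graph no fat vertex has exactly one slim neighbor --- equivalently, that copies of $\mathfrak{h}_1$ do not survive saturation and do not create extra decompositions --- so that the dictionary with generalized Krausz decompositions is a genuine bijection rather than merely a surjection. Once these points are settled, the argument is routine.
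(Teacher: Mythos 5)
Your proposal is correct and matches the paper's route: the paper offers no independent argument but presents this theorem precisely as a reformulation of the Cvetkovi\'{c}--Doob--Simi\'{c} uniqueness result for generalized line graphs in the language of $\{\mathfrak{h}_2,\mathfrak{h}_3\}$-line Hoffman graphs, which is exactly the dictionary you construct. Your extra care about the translation (cliques from $\mathfrak{h}_2$-factors, petal pairs from $\mathfrak{h}_3$-factors, termination of saturation, and the small-order exceptions absorbed by the bound $7$) is just the bookkeeping the paper leaves implicit.
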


Taniguchi \cite{Taniguchi.2008} showed the following result, although in his paper he used different terminology.
\begin{theorem} \label{tani}
Every $\{\mathfrak{h}_2, \mathfrak{h}_5\}$-line Hoffman graph whose slim graph is connected of order at least $8$ has a unique $\{\mathfrak{h}_2, \mathfrak{h}_5\}$-saturated Hoffman graph containing it, up to strong isomorphism.
\end{theorem}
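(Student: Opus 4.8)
The plan is to reduce the statement to a combinatorial uniqueness question about the ``building blocks'' of the Hoffman graph --- the fat vertices together with their quasi\nobreakdash-cliques --- and then to run a local\nobreakdash-to\nobreakdash-global argument in the same spirit as Krausz's theorem, with the order bound $8$ absorbing the finitely many ambiguous small configurations.

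First I would fix a $\{\mathfrak{h}_2,\mathfrak{h}_5\}$-line Hoffman graph $\mathfrak{h}$ whose slim graph $G=(V,E)$ is connected with $|V|\ge 8$, and let $\mathfrak{h}'$ be any $\{\mathfrak{h}_2,\mathfrak{h}_5\}$-saturated Hoffman graph containing $\mathfrak{h}$. By definition of a line Hoffman graph there is a Hoffman graph $\mathfrak{h}''$ containing $\mathfrak{h}'$, with the same slim graph $G$, and a decomposition $\mathfrak{h}''=\biguplus_i \mathfrak{h}''_i$ where each factor $\mathfrak{h}''_i$ is (isomorphic to) an induced Hoffman subgraph of $\mathfrak{h}_2$ or $\mathfrak{h}_5$. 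Using Lemma \ref{combi} I would extract the local constraints this forces: every slim vertex has at most two fat neighbours in $\mathfrak{h}''$ (hence in $\mathfrak{h}'$); every fat vertex $f$ has $|Q_{\mathfrak{h}''}(f)|\le 3$, and if $|Q_{\mathfrak{h}''}(f)|=3$ then $Q_{\mathfrak{h}''}(f)$ induces the slim graph of $\mathfrak{h}_5$, namely a $K_2$ together with an isolated vertex; and condition (iv) of Lemma \ref{combi} pins down exactly how two quasi\nobreakdash-cliques may overlap on a common slim vertex. Thus $E$ is covered by the edges of the quasi\nobreakdash-cliques and of the $\mathfrak{h}_2$-type cliques in a tightly controlled way.

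The heart of the matter is to show that the family of quasi\nobreakdash-cliques of a saturated $\mathfrak{h}'\supseteq\mathfrak{h}$, together with their incidence to the slim vertices, is completely determined by $\mathfrak{h}$. I would classify which subsets $S\subseteq V$ can occur as the quasi\nobreakdash-clique of a fat vertex in a $\{\mathfrak{h}_2,\mathfrak{h}_5\}$-line Hoffman graph with slim graph $G$ that is compatible with the fat vertices already present in $\mathfrak{h}$ (the ``feasible'' subsets), and then argue that saturation forces a fat vertex to sit on every inclusion\nobreakdash-maximal feasible subset, with the right multiplicity, and on no others. This is precisely where the hypothesis $|V|\ge 8$ is used: as in Krausz's theorem (uniqueness for order at least $7$) and in the $\{\mathfrak{h}_2,\mathfrak{h}_3\}$- and $\{\mathfrak{h}_2,\mathfrak{h}_5\}$-related predecessors, there is a finite list of small connected slim graphs (of order at most $7$) that admit genuinely distinct saturated fat structures --- small clique\nobreakdash-dense pieces coverable in more than one way, and configurations in which an $\mathfrak{h}_5$-piece and a pair of $\mathfrak{h}_2$-cliques can be traded for one another --- and one checks that connectivity together with $|V|\ge 8$ eliminates all of these.

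For the conclusion, given two $\{\mathfrak{h}_2,\mathfrak{h}_5\}$-saturated Hoffman graphs $\mathfrak{h}'$ and $\mathfrak{h}''$ both containing $\mathfrak{h}$, the previous step yields that their fat vertices carry the same family of quasi\nobreakdash-cliques on $V$; matching them gives a bijection preserving slim\nobreakdash-to\nobreakdash-fat adjacency, which extends to a strong isomorphism fixing every slim vertex. I expect the main obstacle to be exactly the local case analysis in the third paragraph: because $\mathfrak{h}_5$ contributes quasi\nobreakdash-cliques that are not cliques, the clean Krausz picture (a partition of $E$ into cliques with each vertex in at most two of them) breaks, and one must show that each ``defective'' $\mathfrak{h}_5$-piece is forced by the local structure around its $K_2$ and its pendant slim vertex, while simultaneously identifying precisely the order\nobreakdash-$\le 7$ exceptions so as to confirm that $8$ is the correct threshold. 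I would manage this neighbourhood by neighbourhood: for each slim vertex $x$, enumerate the ways the edges at $x$ split among its at most two fat neighbours, show that an adjacent vertex $y$ constrains these choices, and propagate along $G$ using connectivity and the order bound until no freedom remains.
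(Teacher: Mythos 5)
There is no proof of this statement in the paper for you to be measured against: the survey simply quotes it as Taniguchi's theorem (\cite{Taniguchi.2008}), so your proposal has to stand on its own, and as it stands it is a strategy outline rather than a proof. The entire substance of the theorem --- the classification of which quasi-clique families on a given slim graph can arise, the verification that saturation pins them down uniquely, and the determination of the finite list of small exceptional slim graphs showing that $8$ is the right threshold --- is exactly the part you defer (``I expect the main obstacle to be exactly the local case analysis''). Saying that connectivity plus $|V|\ge 8$ ``eliminates all of these'' exceptional configurations is an assertion of the theorem's conclusion, not an argument for it; nothing in your sketch explains why two distinct $\{\mathfrak{h}_2,\mathfrak{h}_5\}$-saturated extensions of the same $\mathfrak{h}$ could not place fat vertices on different maximal feasible sets once the slim graph is large.

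More concretely, the local constraints you extract from Lemma \ref{combi} are wrong, and this would derail the case analysis you plan to run. In a decomposition $\mathfrak{h}''=\biguplus_i\mathfrak{h}''_i$ a fat vertex may belong to many factors; only the factor-wise neighbourhoods are restricted to induced subgraphs of $\mathfrak{h}_2$ or $\mathfrak{h}_5$, while condition (iv) of Lemma \ref{combi} forces slim neighbours of $f$ lying in different factors to be adjacent. Hence $|Q_{\mathfrak{h}''}(f)|$ is not bounded by $3$: already in the pure $\{\mathfrak{h}_2\}$ (Krausz) case the fat vertices correspond to arbitrarily large cliques of the slim graph, and in the $\{\mathfrak{h}_2,\mathfrak{h}_5\}$ case a quasi-clique is a join of factor-wise pieces ($K_1$, $K_2$, $\overline{K_2}$, $K_2\cup K_1$), so it can be a large clique or a large ``clique with a few non-edges''. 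Likewise your claim that $|Q(f)|=3$ forces the $K_2\cup K_1$ pattern fails: three $\mathfrak{h}_2$-type factors through $f$ give a triangle. What is true (and what you do get right) is that each slim vertex has at most two fat neighbours. To make your plan work you would need to redo the feasibility analysis with the correct description of quasi-cliques, prove that in any saturated extension the fat vertices are exactly the maximal such structures compatible with $\mathfrak{h}$, and carry out the finite check of the small slim graphs (order $\le 7$) that admit inequivalent saturated structures --- this is precisely the content of Taniguchi's paper, and it is absent here.
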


In \cite{Furuya.2020}, Furuya, Kubota, Taniguchi and Yoshino generalized Theorem \ref{tani}. In \cite{Taniguchi.2012}, Taniguchi showed that if a graph is not the slim graph of a $\{\mathfrak{h}_2, \mathfrak{h}_5\}$-line Hoffman graph but each of its proper induced subgraph is the slim graph of a $\{\mathfrak{h}_2, \mathfrak{h}_5\}$-line Hoffman graph, then this graph is just isomorphic to one of $38$ graphs, found by
computer. In \cite{Kubota.2019}, Kubota, Taniguchi and Yoshino gave more related results.

\subsection{Minimal fat Hoffman graphs}
Let $\mu<0$ be a real number. A Hoffman graph $\mathfrak{h}$ is said to be \emph{$t$-fat-minimal} for $\mu$, if it is $t$-fat, its smallest eigenvalue is less than $\mu$, and each of its proper $t$-fat induced Hoffman subgraph has smallest eigenvalue at least $\mu$. For convenience, a $1$-fat-minimal Hoffman graph for $\mu$ is also said to be \emph{fat-minimal} for $\mu$.

Woo and Neumaier \cite{Woo} determined all the fat-minimal Hoffman graphs for $-1-\sqrt{2}$. By checking their results, one can find that every fat-minimal Hoffman graph for $-1-\sqrt{2}$ has at most $4$ slim vertices.

Later Koolen et al. \cite{kyy3} studied fat-minimal Hoffman graphs for $-3$. They found that every fat-minimal Hoffman graph for $-3$ has at most $10$ slim vertices. Moreover, if a fat-minimal Hoffman graph for $-3$ has a slim vertex with at least $2$ fat neighbors, then it has at most $2$ slim vertices.

Here we introduce an important family of fat Hoffman graphs. Let $H$ be a graph. Let $\mathfrak{q}(H)$ be the fat Hoffman graph with slim graph $H$ and one fat vertex attached to all slim vertices. Then the special matrix of  $\mathfrak{q}(H)$ is $-I-A(\overline{H})$, where $\overline{H}$ is the complement of $H$. Let $\lambda_{0}(\overline{H})$ be the largest eigenvalue of $\overline{H}$. We have:
\begin{equation}\label{eq:eigenvalue}
\lambda_{\min}(\mathfrak{q}(H)) = -1-\lambda_{0}(\overline{H})
\end{equation}
immediately. We will show that if $\mathfrak{q}(H)$ is fat-minimal for $\mu$, then its smallest eigenvalue can not be much smaller than $\mu$. To prove it, some preparation is necessary.

\begin{lemma}[{\cite[Corollary 2.2]{S2015}}]\label{lemste}
Let $G=(V(G),E(G))$ be a connected graph with the largest eigenvalue $\lambda_0(G)$ and the principal eigenvector $\mathbf{v}$, the positive eigenvector of norm $1$ with eigenvalue $\lambda_0(G)$. For each vertex $x$ of $G$, let $G-x$ be the subgraph of $G$ induced on $V(G)-\{x\}$ with the largest eigenvalue $\lambda_0(G-x)$. Then
\begin{equation}\label{radius inequality}
  \frac{1-2\mathbf{v}_x^2}{1-\mathbf{v}_x^2}\lambda_0(G)\leq\lambda_0(G-x)\leq\lambda_0(G),
\end{equation}
where $\mathbf{v}_x$ is the $x$-coordinate of the vector $\mathbf{v}$.
\end{lemma}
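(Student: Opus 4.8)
The plan is to prove the two-sided bound on $\lambda_0(G-x)$ in terms of the principal eigenvector of $G$ by the standard Rayleigh-quotient method, estimating from both directions. Write $A = A(G)$, let $\mathbf{v}$ be the principal eigenvector of norm $1$ with $A\mathbf{v} = \lambda_0(G)\mathbf{v}$, and fix a vertex $x$. Let $\mathbf{w}$ be the restriction of $\mathbf{v}$ to $V(G)\setminus\{x\}$, so that $\|\mathbf{w}\|^2 = 1 - \mathbf{v}_x^2$. The upper bound $\lambda_0(G-x)\leq\lambda_0(G)$ is immediate from interlacing (it is also Lemma~\ref{interelacing} applied to the complement, or just Cauchy interlacing for the adjacency matrix), so the whole content is the lower bound.

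For the lower bound, I would feed $\mathbf{w}$ into the Rayleigh quotient for $A(G-x)$. The key computation is
\begin{equation*}
\mathbf{w}^T A(G-x)\,\mathbf{w} = \mathbf{v}^T A \mathbf{v} - 2\mathbf{v}_x \sum_{y\sim x}\mathbf{v}_y = \lambda_0(G)\bigl(1 - 2\mathbf{v}_x^2\bigr),
\end{equation*}
where the last equality uses $\sum_{y\sim x}\mathbf{v}_y = (A\mathbf{v})_x = \lambda_0(G)\mathbf{v}_x$ and $\mathbf{v}^T A\mathbf{v} = \lambda_0(G)$. Dividing by $\|\mathbf{w}\|^2 = 1-\mathbf{v}_x^2$ and invoking the variational characterization $\lambda_0(G-x)\geq \mathbf{w}^T A(G-x)\mathbf{w}/\|\mathbf{w}\|^2$ (valid since $G-x$ is a graph, hence its adjacency matrix is symmetric, and $\mathbf{w}\neq \mathbf{0}$ because $G$ is connected so $\mathbf{v}$ has no zero entries) gives
\begin{equation*}
\lambda_0(G-x) \geq \frac{1 - 2\mathbf{v}_x^2}{1 - \mathbf{v}_x^2}\,\lambda_0(G),
\end{equation*}
which is exactly \eqref{radius inequality}. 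One should note that if $\mathbf{v}_x^2 = 1/2$ the bound is trivially $\lambda_0(G-x)\geq 0$, and if $\mathbf{v}_x^2 > 1/2$ the right-hand side is negative so the inequality is automatic; in the interesting range $\mathbf{v}_x^2 < 1/2$ the estimate is genuinely the Rayleigh bound above.

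There is essentially no hard obstacle here: the only points requiring a word of care are that $\mathbf{w}$ is nonzero (which follows from connectedness and Perron–Frobenius, giving $\mathbf{v} > 0$ entrywise) and that $G-x$ may be disconnected — but the Rayleigh bound $\lambda_0(M)\geq \mathbf{u}^T M \mathbf{u}/\|\mathbf{u}\|^2$ holds for any symmetric matrix $M$ and nonzero $\mathbf{u}$, so disconnectedness causes no trouble. Thus the cleanest route is: (i) state $\mathbf{v}>0$ and $\|\mathbf{w}\|^2 = 1-\mathbf{v}_x^2$; (ii) carry out the displayed quadratic-form computation using $(A\mathbf{v})_x = \lambda_0(G)\mathbf{v}_x$; (iii) apply the variational principle; (iv) note the upper bound is interlacing. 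This mirrors the original argument in \cite{S2015} and needs no additional machinery beyond what is already in the excerpt.
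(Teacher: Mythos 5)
Your proof is correct, and in fact the paper gives no proof of this lemma at all: it is quoted from \cite[Corollary 2.2]{S2015}, and your Rayleigh-quotient argument---restricting $\mathbf{v}$ to $V(G)\setminus\{x\}$, computing $\mathbf{w}^T A(G-x)\mathbf{w} = \mathbf{v}^TA\mathbf{v} - 2\mathbf{v}_x\sum_{y\sim x}\mathbf{v}_y = \lambda_0(G)\bigl(1-2\mathbf{v}_x^2\bigr)$ via $(A\mathbf{v})_x=\lambda_0(G)\mathbf{v}_x$, then dividing by $\|\mathbf{w}\|^2=1-\mathbf{v}_x^2$---is the standard proof of exactly this inequality, with the nonvanishing of $\mathbf{w}$ correctly secured by Perron--Frobenius. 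One small caveat: the parenthetical suggestion that the upper bound follows from Lemma~\ref{interelacing} ``applied to the complement'' does not work, since complementation does not exchange smallest and largest eigenvalues in any useful way here; but your primary justification, Cauchy interlacing for a principal submatrix of the symmetric matrix $A(G)$, is correct, so nothing is lost.
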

This lemma has the following consequence.
\begin{proposition}\label{spectral}
Let $\lambda$ be a positive real number and $G$ a graph of order $n\geq3$ with the largest eigenvalue $\lambda_0(G)$. If $\lambda_0(G) >\lambda$ and for every proper induced subgraph $H$ of $G$, the largest eigenvalue  $\lambda_0(H)$ of $H$ is at most $\lambda$, then $\lambda_0(G) \leq  \frac{n-1}{n-2}\lambda$.
\end{proposition}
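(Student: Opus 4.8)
The plan is to apply Lemma~\ref{lemste} to the vertex $x$ of $G$ that maximizes $\mathbf{v}_x^2$, where $\mathbf{v}$ is the principal eigenvector of $G$ normalized to have unit norm. Since $G$ need not be connected, I would first reduce to the connected case: among the connected components of $G$, let $G_0$ be one achieving the largest eigenvalue, so $\lambda_0(G)=\lambda_0(G_0)$ and $\lambda_0(G_0)>\lambda$. If $G_0$ is a proper induced subgraph of $G$ then already $\lambda_0(G_0)\le\lambda$, a contradiction; hence $G=G_0$ is connected, and in particular $n\ge 3$ still holds and every proper induced subgraph has largest eigenvalue at most $\lambda$.

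Next, because $\sum_{y\in V(G)}\mathbf{v}_y^2=1$ and there are $n$ vertices, the maximum coordinate satisfies $\mathbf{v}_x^2\ge \frac1n$. For this choice of $x$, the graph $G-x$ is a proper induced subgraph of $G$, so $\lambda_0(G-x)\le\lambda$ by hypothesis. Feeding these two facts into the left inequality of \eqref{radius inequality} gives
\[
\frac{1-2\mathbf{v}_x^2}{1-\mathbf{v}_x^2}\,\lambda_0(G)\le\lambda_0(G-x)\le\lambda .
\]
Now I would check that the function $u\mapsto \frac{1-2u}{1-u}$ is decreasing on $[0,1/2]$ (its derivative is $-\frac{1}{(1-u)^2}<0$), so replacing $\mathbf{v}_x^2$ by the smaller value $\frac1n$ only increases the coefficient; thus $\frac{1-2/n}{1-1/n}\,\lambda_0(G)\le\lambda$, i.e. $\frac{n-2}{n-1}\,\lambda_0(G)\le\lambda$, which rearranges to $\lambda_0(G)\le\frac{n-1}{n-2}\lambda$, as desired. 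One should note that $n\ge 3$ guarantees $\frac1n\le\frac13<\frac12$, so the monotonicity step is legitimate and the coefficient $\frac{1-2\mathbf{v}_x^2}{1-\mathbf{v}_x^2}$ is nonnegative, which is what makes the inequality usable.

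The only genuine subtlety — the part I would be most careful about — is the reduction to the connected case, since Lemma~\ref{lemste} is stated for connected $G$ and the definition of the principal eigenvector presupposes connectivity (Perron--Frobenius). The monotonicity of $\frac{1-2u}{1-u}$ and the bound $\mathbf{v}_x^2\ge 1/n$ are routine; the slightly delicate point is confirming that $\mathbf{v}_x^2\le 1/2$ indeed holds, which follows because if some coordinate exceeded $1/2$ all others together would carry mass less than $1/2$, forcing $n\le 2$, contradicting $n\ge 3$ (or one can simply invoke that the left-hand coefficient in Lemma~\ref{lemste} is meaningful only in this range and use $\mathbf v_x^2 \le \tfrac12$ directly from $n \ge 3$ once connectivity is established). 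With that in hand the argument closes immediately.
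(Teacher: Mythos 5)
Your connectivity reduction and the use of Lemma~\ref{lemste} are the right ingredients, but your choice of vertex is wrong and the monotonicity step runs in the wrong direction, so the argument does not close. From \eqref{radius inequality} applied at your chosen $x$ you only know
\[
\frac{1-2\mathbf{v}_x^2}{1-\mathbf{v}_x^2}\,\lambda_0(G)\le \lambda_0(G-x)\le\lambda .
\]
To deduce $\frac{n-2}{n-1}\lambda_0(G)\le\lambda$ from this you need the actual coefficient to be \emph{at least} $\frac{1-2/n}{1-1/n}=\frac{n-2}{n-1}$, i.e.\ (since $u\mapsto\frac{1-2u}{1-u}$ is decreasing) you need $\mathbf{v}_x^2\le \frac1n$. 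But you picked $x$ maximizing $\mathbf{v}_x^2$, which only gives $\mathbf{v}_x^2\ge\frac1n$, so the coefficient is \emph{at most} $\frac{n-2}{n-1}$ and can be far smaller --- even $0$ or negative (for the centre of a star $K_{1,n-1}$ one has $\mathbf{v}_x^2=\frac12$) --- in which case the displayed inequality gives no upper bound on $\lambda_0(G)$ at all. Replacing $\mathbf{v}_x^2$ by the smaller value $\frac1n$ does increase the coefficient, but you cannot enlarge the left-hand side of an inequality you already hold. Incidentally, your side argument that the maximal coordinate satisfies $\mathbf{v}_x^2\le\frac12$ ``because otherwise $n\le 2$'' is also invalid (the remaining $n-1$ coordinates can simply all be small), though this becomes moot once the vertex choice is fixed.

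The repair is exactly the paper's proof: take $x$ with $\mathbf{v}_x$ \emph{minimal}. Then $\mathbf{v}_x^2\le\frac1n$ because the squared coordinates sum to $1$, hence $\frac{1-2\mathbf{v}_x^2}{1-\mathbf{v}_x^2}\ge\frac{n-2}{n-1}$, and since $G-x$ is a proper induced subgraph,
\[
\frac{n-2}{n-1}\,\lambda_0(G)\le\frac{1-2\mathbf{v}_x^2}{1-\mathbf{v}_x^2}\,\lambda_0(G)\le\lambda_0(G-x)\le\lambda,
\]
which rearranges to $\lambda_0(G)\le\frac{n-1}{n-2}\lambda$. Your reduction to the connected case is correct and is precisely what the paper leaves implicit in ``it is not hard to see that $G$ is connected.''
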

\begin{proof}
It is not hard to see that $G$ is connected. Let $\mathbf{v}$ be the principal eigenvector of $G$. Take a vertex $x$ of $G$ such that $\mathbf{v}_x$ is minimal. Then $\mathbf{v}_x \leq \frac{1}{\sqrt{n}}$ as the norm of $\mathbf{x}$ is equal to $1$.  Now apply Lemma \ref{lemste} to this vertex $x$ and we obtain the desired inequality. This completes the proof.
\end{proof}

\begin{lemma}\label{minimal forbidden}
Let $\mu<0$ be a real number and $n\geq3$ a positive integer. Let $\mathfrak{q}(H)$ be a Hoffman graph with $n$ slim vertices. If $\mathfrak{q}(H)$ is fat-minimal for $\mu$, then $\lambda_{\min}(\mathfrak{q}(H))\geq\mu+\frac{1+\mu}{n-2}$.
\end{lemma}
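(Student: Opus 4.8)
The plan is to reduce the whole statement to a fact about the largest eigenvalue of the complement graph $\overline{H}$ and then quote Proposition~\ref{spectral}. Throughout, I use that by \eqref{eq:eigenvalue} one has $\lambda_{\min}(\mathfrak{q}(H'))=-1-\lambda_0(\overline{H'})$ for any graph $H'$, and that $\overline{H'}=\overline{H}[V(H')]$ whenever $H'$ is an induced subgraph of $H$, so this identity behaves well under passing to induced subgraphs.

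First I would dispose of the range $-1\le\mu<0$, where the statement is vacuous because no $\mathfrak{q}(H)$ with $n\ge 3$ slim vertices is fat-minimal for such a $\mu$. Indeed, the induced Hoffman subgraph of $\mathfrak{q}(H)$ generated by a single slim vertex is $\mathfrak{q}(K_1)$, a proper $1$-fat induced Hoffman subgraph with smallest eigenvalue $-1$; fat-minimality forces $-1\ge\mu$, so $\mu=-1$. But fat-minimality also gives $\lambda_{\min}(\mathfrak{q}(H))<\mu=-1$, i.e. $\overline{H}$ has an edge $uv$, and since $n\ge3$ the set $\{u,v\}$ is a proper subset of $V(H)$, so $\mathfrak{q}(H[\{u,v\}])$ is a proper $1$-fat induced Hoffman subgraph with smallest eigenvalue $-1-\lambda_0(K_2)=-2<\mu$, a contradiction. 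Hence from now on I assume $\mu<-1$ and set $\lambda:=-1-\mu>0$.

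The main step is to read off what fat-minimality of $\mathfrak{q}(H)$ says about $\overline{H}$. Since $\mathfrak{q}(H)$ has a single fat vertex adjacent to all slim vertices, any induced Hoffman subgraph that omits this fat vertex has a slim vertex with no fat neighbor unless it is empty; hence the proper $1$-fat induced Hoffman subgraphs of $\mathfrak{q}(H)$ are exactly the graphs $\mathfrak{q}(H')$ with $H'$ a nonempty proper induced subgraph of $H$. Translating through \eqref{eq:eigenvalue}: the condition $\lambda_{\min}(\mathfrak{q}(H))<\mu$ becomes $\lambda_0(\overline{H})>\lambda$, and the condition that every proper $1$-fat induced Hoffman subgraph has smallest eigenvalue at least $\mu$ becomes the statement that every nonempty proper induced subgraph of $\overline{H}$ has largest eigenvalue at most $\lambda$. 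Thus $\overline{H}$, which has order $n\ge3$, together with the positive number $\lambda$, satisfies the hypotheses of Proposition~\ref{spectral} (which, incidentally, already absorbs the worry that $\overline{H}$ might be disconnected), and therefore $\lambda_0(\overline{H})\le\frac{n-1}{n-2}\lambda$.

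It then only remains to substitute back, using $\lambda=-1-\mu$ and $\frac{n-1}{n-2}=1+\frac{1}{n-2}$:
\[
\lambda_{\min}(\mathfrak{q}(H))=-1-\lambda_0(\overline{H})\ \ge\ -1-\tfrac{n-1}{n-2}\lambda\ =\ -1+\tfrac{n-1}{n-2}(1+\mu)\ =\ \mu+\tfrac{1+\mu}{n-2},
\]
which is exactly the claimed inequality. I expect the only place that needs genuine care to be the bookkeeping in the main step — namely verifying that the proper $1$-fat induced Hoffman subgraphs of $\mathfrak{q}(H)$ are precisely the graphs $\mathfrak{q}(H')$ for nonempty proper induced subgraphs $H'$ of $H$, and cleanly handling the degenerate values $\mu\ge-1$; once that is settled, the lemma is an immediate consequence of Proposition~\ref{spectral} and the one-line computation above.
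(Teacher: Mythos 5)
Your proof is correct and follows essentially the same route as the paper's: translate everything to the complement $\overline{H}$ via \eqref{eq:eigenvalue}, use fat-minimality to bound the largest eigenvalue of every proper induced subgraph of $\overline{H}$ by $-1-\mu$, apply Proposition~\ref{spectral}, and substitute back. Your additional bookkeeping (the vacuity of the case $-1\le\mu<0$ and the explicit identification of the proper $1$-fat induced Hoffman subgraphs of $\mathfrak{q}(H)$ as the $\mathfrak{q}(H')$) is sound care that the paper leaves implicit, but it does not change the argument.
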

\begin{proof}
As $\mathfrak{q}(H)$ is a fat-minimal Hoffman graph for $\mu$, we have $\lambda_{\min}(\mathfrak{q}(H))<\mu$, and thus $\lambda_{0}(\overline{H})>-1-\mu$ by \eqref{eq:eigenvalue}. Assume $K:=\overline{H'}$ is a proper induced subgraph of $\overline{H}$, where $H'$ is a proper induced subgraph of $H$. Considering the minimality of $\mathfrak{q}(H)$, we have $-1-\lambda_{0}(\overline{H'})=\lambda_{\min}(\mathfrak{q}(H'))\geq\mu$ by \eqref{eq:eigenvalue}. This means $\lambda_{0}(K)=\lambda_{0}(\overline{H'})\leq-1-\mu$. Now the conditions of Proposition \ref{spectral} are satisfied, and we can easily obtain  $\lambda_{0}(\overline{H})\leq\frac{n-1}{n-2}(-1-\mu)$. By using \eqref{eq:eigenvalue} again, we have $\lambda_{\min}(\mathfrak{q}(H))=-1-\lambda_{0}(\overline{H})\geq\mu+\frac{1+\mu}{n-2}$.
\end{proof}

\subsection{Maximal \texorpdfstring{$\mu$}{\mu}-irreducible Hoffman graphs}

A $\mu$-irreducible Hoffman graph is \emph{maximal}, if it is not a proper induced Hoffman subgraph of another $\mu$-irreducible Hoffman graph. Notice that if a $\mu$-irreducible Hoffman graph is maximal, then it is $\mu$-saturated and indecomposable.

Woo and Neumaier \cite{Woo} found that there are exactly $4$ maximal $(-1-\sqrt{2})$-irreducible Hoffman graphs, up to isomorphism, and they are $\mathfrak{h}_2$, $\mathfrak{h}_5$, $\mathfrak{h}_7$ and $\mathfrak{h}_9$ in Figure \ref{fig:-1-sqrt2}.

Let $\tau$ be the golden ratio $\frac{1 + \sqrt{5}}{2}$. In 2014, Munemasa, Sano and Taniguchi \cite{Munemasa.2014b} found that there are exactly $18$ maximal $(-1-\tau)$-irreducible Hoffman graphs, up to isomorphism, and they also gave a list of these $18$ Hoffman graphs.

As for the fat maximal $(-3)$-irreducible Hoffman graphs, we refer to \cite{HJAT} and \cite{Koolen.2018}. To state the main results there, we need to define the special graph of a Hoffman graph. (Signed graphs and switching equivalence will be introduced in Section \ref{sec:signed graph}.)

\begin{definition}[special graph]
The special graph of a Hoffman graph $\mathfrak{h}$ is the signed graph
\[
\mathcal{S}(\mathfrak{h}):=(V(\mathcal{S}(\mathfrak{h})),E^+(\mathcal{S}(\mathfrak{h})),E^-(\mathcal{S}(\mathfrak{h}))),
\]
where $V(\mathcal{S}(\mathfrak{h}))=V_{\mathrm{slim}}(\mathfrak{h})$ and

\begin{align*}
 E^+(\mathcal{S}(\mathfrak{h}))=&\{\{x,y\}\mid x,y\in V_{\mathrm{slim}}(\mathfrak{h}),x\neq y,\{x,y\}\in E(\mathfrak{h}), N_\mathfrak{h}^f(x,y)=\emptyset\},\\
 E^-(\mathcal{S}(\mathfrak{h}))=&\{\{x,y\}\mid x,y\in V_{\mathrm{slim}}(\mathfrak{h}),x\neq y,\{x,y\}\in E(\mathfrak{h}), |N_\mathfrak{h}^f(x,y)|\geq2\}\\
                               \cup&\{\{x,y\}\mid x,y\in V_{\mathrm{slim}}(\mathfrak{h}),x\neq y,\{x,y\}\not\in E(\mathfrak{h}), N_\mathfrak{h}^f(x,y)\neq\emptyset\}.
\end{align*}
\end{definition}
The \emph{special $\varepsilon$-graph} of $\mathfrak{h}$ is the graph $S^\epsilon(\mathfrak{h})=(V_{\mathrm{slim}}(\mathfrak{h}),E^\epsilon(\mathcal{S}(\mathfrak{h})))$ for $\epsilon\in\{+,-\}$.

Let $\mathfrak{h}$ be a fat indecomposable Hoffman graph with $\lambda_{\min}(\mathfrak{h})\geq-3$. It is shown in \cite{HJAT} that, if $\mathfrak{h}$ is not the Hoffman graph \raisebox{-1ex}{\begin{tikzpicture}[scale=0.3]
\tikzstyle{every node}=[draw,circle,fill=black,minimum size=10pt,scale=0.3,
                            inner sep=0pt]

    \draw (-2.1,0) node (1f1) [label=below:$$] {};
    \draw (-1.6,0) node (1f2) [label=below:$$] {};
    \draw (-1.1,0) node (1f3) [label=below:$$] {};

    \tikzstyle{every node}=[draw,circle,fill=black,minimum size=5pt,scale=0.3,
                            inner sep=0pt]

    \draw (-1.6,1) node (1s1) [label=below:$$] {};

    \draw (1f1) -- (1s1) -- (1f2);
    \draw (1f3) -- (1s1);
    \end{tikzpicture}}, then its special graph is connected and the lattice $\Lambda^{\text{red}}(\mathfrak{h},3)$ is either an irreducible root lattice or a sublattice of the
standard lattice. Moreover, if $\mathfrak{h}$ is a fat maximal $(-3)$-irreducible Hoffman graph and the lattice $\Lambda^{\text{red}}(\mathfrak{h},3)$ is a sublattice of the standard lattice, that is $\mathfrak{h}$ is fat maximal $(-3)$-irreducible with an integral representation of norm $3$, then the graph $S^-(\mathfrak{h})$ is connected and is (isomorphic to) the Dynkin diagrams $A_n$, $D_n$ or the extended Dynkin diagram $\hat{A}_n,\hat{D}_n$ for some positive integer
$n$ (see Figure \ref{fig:Dynkin}). Using this result of \cite{HJAT}, Koolen, Li and Yang \cite{Koolen.2018} classified the fat maximal $(-3)$-irreducible Hoffman graphs with an integral representation of norm $3$. As for the fat maximal $(-3)$-irreducible Hoffman graphs with no integral representation of norm $3$, the classification is still open.

\begin{figure}[ht]

    \centering

    \begin{tikzpicture}

    \draw (-1.5,2.5) node {$A_n$};

    \draw (5.5,2.5) node {$D_n$};

    \draw (-1.5,0) node {$\hat{A}_n$};

    \draw (5.5,0) node {$\hat{D}_n$};

    \tikzstyle{every node}=[draw,circle,fill=white,minimum size=4pt, inner sep=0pt]

                            {every label}=[\tiny]

    \draw (-0.5,2.5) node (0) [label=below:$$] {}

        -- ++(0:1cm) node (1) [label=below:$$] {}

        -- ++(0:1cm) node (2) [label=below:$$] {};

    \draw (3.5,2.5) node (4) [label=below:$$] {}

        -- ++(180:1cm) node (3) [label=below:$$] {};

    \draw [dashed] (2) -- (3);

    \draw (6.5,2.5) node (11) [label=below:$$] {}

        -- ++(0:1cm) node (12) [label=below:$$] {}

        -- ++(0:1cm) node (12) [label=below:$$] {};

    \draw (10,2.5) node (13) [label=below:$$] {}

        -- ++(0:1cm) node (14) [label=below:$$] {}

        -- ++(0:1cm) node (15) [label=below:$$] {};

    \draw [dashed] (12) -- (13);

    \draw (14)

        -- ++(90:0.8cm) node (16) [label=below:$$] {};

    \draw (1.5,0.8) node (0) [label=above:$$] {};

    \draw (-0.5,0) node (1) [label=below:$$] {}

        -- ++(0:1cm) node (2) [label=below:$$] {};

    \draw (3.5,0) node (4) [label=below:$$] {}

        -- ++(180:1cm) node (3) [label=below:$$] {};

    \draw [dashed] (2) -- (3);

    \draw (0) -- (1);

    \draw (0) -- (4);

    \draw (7.5,0.8) node (01) [label=above:$$] {};

    \draw (11,0.8) node (02) [label=above:$$] {};

    \draw (6.5,0) node (11) [label=below:$$] {}

        -- ++(0:1cm) node (12) [label=below:$$] {}

        -- ++(0:1cm) node (13) [label=below:$$] {};

    \draw (10,0) node (14) [label=below:$$] {}

        -- ++(0:1cm) node (15) [label=below:$$] {}

        -- ++(0:1cm) node (16) [label=below:$$] {};

    \draw [dashed] (13) -- (14);

    \draw (01) -- (12);

    \draw (02) -- (15);

    \end{tikzpicture}
{\caption{}\label{fig:Dynkin}}
\end{figure}

A related result was shown by Munemasa, Sano and Taniguchi \cite{Munemasa.2014} and Greaves, Koolen, Munemasa, Sano and Taniguchi \cite{Greaves.2015}. In \cite{Munemasa.2014}, Munemasa et al. gave a characterization of special graphs of fat Hoffman graphs with smallest eigenvalue greater than $-3$ which contain a slim vertex having two fat neighbors. In \cite[Theorem 20]{Greaves.2015}, Greaves et al. showed that for a fat Hoffman graph in which every slim vertex has exactly one fat neighbor, it has smallest eigenvalue greater than $-3$ if and only if its special graph is switching equivalent to certain signed graphs.

%
%
%
%

\section{Graphs with large minimal valency}\label{sec:associated hoffman graphs}
In this section, we give some results on graphs with fixed smallest eigenvalue and large minimal valency. We start with the associated Hoffman graphs.
\subsection{Associated Hoffman graphs}\label{asho}

In this subsection, we summarize some facts about associated Hoffman graphs and quasi-cliques, which provide some connections between Hoffman graphs and graphs. For more details, we refer to \cite{kky} and \cite{kyy1}.

Let $m$ be a positive integer and $G$ a graph that does not contain $\widetilde{K}_{2m}$ as an induced subgraph, where $\widetilde{K}_{2m}$ is the graph defined before Theorem \ref{Hoff1973}. Let $\mathcal{C}(n)=\{C\mid$ $C$ is a maximal clique of $G$ of order at least $n\}$. Define the relation $\equiv_n^m$ on $\mathcal{C}(n)$ by $C_1\equiv_n^m C_2$ if each vertex $x\in C_1$ has at most $m-1$ non-neighbors in $C_2$ and each vertex $y\in C_2$ has at most $m-1$ non-neighbors in $C_1$. Note that $\equiv_n^m$ is an equivalence relation if $n \geq (m+1)^2$.

Let $[C]_n^m$ denote the equivalence class of $\mathcal{C}(n)$ of $G$ under the equivalence relation $\equiv_n^m$ containing the maximal clique $C$ of $\mathcal{C}(n)$. We define the \emph{quasi-clique} $Q([C]_n^m)$ of $C$ with respect to the pair $(m, n)$ as the subgraph of $G$ induced on the set $\{x\in V(G)\mid$ $x$ has at most $m-1$ non-neighbors in $C\}$. Note that for any $C'\in [C]_n^m$, we have $Q([C']_n^m)=Q([C]_n^m)$ (see \cite[Lemma 3.3]{kky}).

Let $[C_1]_n^m, \dots, [C_r]_n^m$ be the equivalence classes of maximal cliques under $\equiv_n^m$. The \emph{associated Hoffman graph} $\mathfrak{g}=\mathfrak{g}(G,m,n)$ is the Hoffman graph satisfying the following conditions:
\begin{enumerate}
\item $V_{\mathrm{slim}}(\mathfrak{g}) = V(G)$, $V_{\mathrm{fat}}(\mathfrak{g})=\{f_1, f_2, \dots, f_r\}$;
\item the slim graph of $\mathfrak{g}$ equals $G$;
\item for each $i$, the fat vertex $f_i$ is adjacent to exactly all the vertices of $Q([C_i]_n^m)$ for $i=1,2, \dots, r.$
\end{enumerate}

From the above definition of associated Hoffman graphs, we find that for each $i=1,\dots,r$, the quasi-clique $Q([C_i]_n^m)$ of $C_i$ with respect to the pair $(m, n)$ is exactly the quasi-clique $Q_{\mathfrak{g}}(f_i)$ in $\mathfrak{g}$ with respect to the fat vertex $f_i$.

The following result, which is a crucial tool for the study of graphs with fixed smallest eigenvalue and large minimal valency, was shown in \cite[Proposition 4.1]{kky}.

\begin{proposition}\label{assohoff}
Let $G$ be a graph and let $m \geq 2, \phi,\sigma,p \geq 1$ be integers.
There exists a positive integer $n = n(m, \phi, \sigma,  p) \geq (m+1)^2$ such that, for any integer $q \geq n$ and any Hoffman graph $\mathfrak{h}$ with at most $\phi$ fat vertices and at most $\sigma$ slim vertices, the graph  $G(\mathfrak{h}, p)$ is an induced subgraph of $G$, provided that the graph $G$ satisfies the following conditions:
\begin{enumerate}
\item the graph $G$ does not contain  $\widetilde{K}_{2m}$ as an induced subgraph,
\item its associated Hoffman graph $\mathfrak{g} = \mathfrak{g}(G, m, q)$ contains $\mathfrak{h}$ as an induced Hoffman subgraph.
\end{enumerate}
\end{proposition}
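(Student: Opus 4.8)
The plan is to recover the graph $G(\mathfrak{h},p)$ inside $G$ by first locating, for each fat vertex $f_i$ of $\mathfrak{h}$, an equivalence class of maximal cliques of $G$ that plays the role of $f_i$, and then inside each such quasi-clique carving out a slim $p$-clique whose joining pattern to the rest matches the definition of $G(\mathfrak{h},p)$. Since condition (ii) says $\mathfrak{g}(G,m,q)$ contains $\mathfrak{h}$ as an induced Hoffman subgraph, we already have a slim vertex $x_j\in V(G)$ for each slim vertex $x_j$ of $\mathfrak{h}$ (with the correct adjacencies among themselves, by the definition of induced Hoffman subgraph and of $\mathfrak{g}$), and a fat vertex $f_i$ of $\mathfrak{g}$ — i.e.\ an equivalence class $[C_i]_q^m$ — for each fat vertex $f_i$ of $\mathfrak{h}$, such that $x_j\sim f_i$ in $\mathfrak{h}$ exactly when $x_j\in Q([C_i]_q^m)$. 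The first step is to record these finitely many slim vertices and finitely many equivalence classes; this is where the bounds $\sigma$ on slim vertices and $\phi$ on fat vertices of $\mathfrak{h}$ enter, since they bound how much structure we must find.

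The second step is to choose, inside each quasi-clique $Q([C_i]_q^m)$, a slim $p$-clique $K^{f_i}$ to play the role of the clique that replaces $f_i$ in the definition of $\mathfrak{g}^{n_1,\dots,n_r}(\mathfrak{h})$. The key combinatorial point is that $Q([C_i]_q^m)$ contains a maximal clique $C_i$ of order at least $q$, and $q$ is allowed to be as large as we like; so if $q$ is large compared to $p$, $\sigma$, $\phi$ and $m$, then after discarding the (at most $(m-1)\sigma$) vertices of $C_i$ that fail to be adjacent to one of the chosen slim vertices $x_j$, and after a Ramsey-type argument to control how the different $C_i$'s interact, there is still room to select $p$ vertices of $C_i$ that (a) are mutually adjacent, (b) are adjacent to exactly those $x_j$ with $x_j\in Q([C_i]_q^m)$, i.e.\ with $x_j\sim f_i$ in $\mathfrak{h}$, and (c) have the right adjacency to the other cliques $K^{f_{i'}}$. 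Here one uses that $G$ contains no induced $\widetilde{K}_{2m}$: this is exactly the hypothesis that prevents a vertex from having ``half'' its neighbors in a large clique, and it is what makes $\equiv_q^m$ an equivalence relation (for $q\ge (m+1)^2$) and what forces the quasi-cliques to be genuinely clique-like, so that a $p$-subset chosen generically inside $C_i$ behaves as required. Choosing $n=n(m,\phi,\sigma,p)$ to be a sufficiently large function — large enough for the Ramsey bound and for the subtraction of bad vertices to leave $\ge p$ good ones in each of the $\le \phi$ cliques simultaneously — makes all these selections possible for every $q\ge n$.

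The third and final step is bookkeeping: verify that the induced subgraph of $G$ on the set $\bigl(\bigcup_j\{x_j\}\bigr)\cup\bigl(\bigcup_i K^{f_i}\bigr)$ is isomorphic to $G(\mathfrak{h},p)=\mathfrak{g}^{p,\dots,p}(\mathfrak{h})$. The edges among the $x_j$'s are correct because we took an induced Hoffman subgraph; the edges between $x_j$ and $K^{f_i}$ are correct by clause (b) above, matching ``join all neighbors of $f_i$ in $\mathfrak{h}$ to all vertices of $K^{f_i}$''; each $K^{f_i}$ is a clique by (a); and the edges between $K^{f_i}$ and $K^{f_{i'}}$ for $i\ne i'$ are correct by (c), which in turn follows because distinct equivalence classes $[C_i]_q^m\ne[C_{i'}]_q^m$ have a controlled (bounded) amount of interaction — again a consequence of the $\widetilde{K}_{2m}$-freeness — so that a generic choice makes the two $p$-cliques completely non-adjacent, exactly as in $\mathfrak{g}^{p,\dots,p}(\mathfrak{h})$ where the replaced fat vertices were non-adjacent.

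I expect the main obstacle to be the simultaneous selection in step two: ensuring that the $p$-cliques $K^{f_i}$ can be chosen inside the various $C_i$ \emph{at the same time}, with all cross-adjacencies (both $K^{f_i}$ vs.\ $x_j$ and $K^{f_i}$ vs.\ $K^{f_{i'}}$) pinned down, rather than one quasi-clique at a time. This is the step that genuinely needs the quantitative strength of $n=n(m,\phi,\sigma,p)$ and a Ramsey argument inside the large cliques $C_i$, together with careful use of the definition of $\equiv_q^m$ (each vertex of $C_i$ has $\le m-1$ non-neighbors in any $C_i'\equiv_q^m C_i$, and a vertex outside $Q([C_i]_q^m)$ has $\ge m$ non-neighbors in $C_i$) to guarantee that ``most'' vertices of $C_i$ are uniformly adjacent to the $x_j$ with $x_j\sim f_i$ and uniformly non-adjacent to the bulk of $C_{i'}$ for $i'\ne i$. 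Everything else is the routine verification of step three.
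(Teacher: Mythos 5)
The survey itself does not prove Proposition \ref{assohoff}; it is quoted from \cite[Proposition 4.1]{kky}, so there is no in-paper argument to compare against, and your proposal has to stand on its own. Its architecture is the right one (and is essentially that of the original proof): take the slim vertices $x_1,\dots,x_s$ of $\mathfrak{h}$ inside $G$, take for each fat vertex $f_i$ a representative maximal clique $C_i$ of its equivalence class, trim finitely many bad vertices from each $C_i$, and select the $p$-cliques $K^{f_i}$ from what remains. The gap is exactly the step you flag but do not carry out: the assertion that inequivalent classes have ``a controlled (bounded) amount of interaction'', so that a ``generic'' choice of the $p$-sets is non-adjacent across cliques and adjacent/non-adjacent to the right $x_j$'s. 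This does not follow from the definition of $\equiv_q^m$ alone (a vertex of $C_{i'}$ outside $Q([C_i]_q^m)$ is only guaranteed $m$ non-neighbours in $C_i$, which says nothing about how many neighbours it has there), and an arbitrary choice can fail: a vertex of $C_i$ lying in $Q([C_{i'}]_q^m)$ is adjacent to almost all of $C_{i'}$. Also, no Ramsey argument is needed or helpful here; what is needed are two concrete consequences of the absence of induced $\widetilde{K}_{2m}$'s.

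Fact (i): if $v\notin C$ for a clique $C$ with $|C|\geq 2m$ and $v$ has at least $m$ neighbours and at least $m$ non-neighbours in $C$, then picking $m$ of each yields an induced $\widetilde{K}_{2m}$; hence every vertex outside $C_i$ with at least $m$ non-neighbours in $C_i$ has at most $m-1$ neighbours in $C_i$. In particular each $x_j$ with $x_j\not\sim f_i$ in $\mathfrak{h}$ (so $x_j\notin Q([C_i]_q^m)$, since $\mathfrak{h}$ is induced in $\mathfrak{g}$) has at most $m-1$ neighbours in $C_i$ — this is what makes your clause (b) feasible and costs another $(m-1)\sigma$ deletions, not only the $(m-1)\sigma$ you count for the required adjacencies. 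Fact (ii): if $C\not\equiv_q^m C'$ are maximal cliques of order at least $q\geq(m+1)^2$, then at most about $3(m-1)$ vertices of $C'$ have at most $m-1$ non-neighbours in $C$, and all other vertices of $C'$ have at most $m-1$ neighbours in $C$. Indeed, one may produce $x\in C$ with at least $m$ non-neighbours in $C'$ (if the witness of non-equivalence lies in $C'$, run the argument with the roles exchanged first); by (i) $x$ has at most $m-1$ neighbours in $C'$, so $|C\cap C'|\leq m-1$, and if $2m-1$ vertices of $C'\setminus C$ each had at most $m-1$ non-neighbours in $C$, then $m$ of them, say $y_1,\dots,y_m$, avoid $N(x)$, and together with $m$ common neighbours of $y_1,\dots,y_m$ in $C\setminus\{x\}$ (which exist since $|C|-m(m-1)-1\geq 3m$) they form a $2m$-clique of which $x$ is adjacent to exactly $m$ vertices — an induced $\widetilde{K}_{2m}$, a contradiction. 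With (i) and (ii) in hand your plan closes: delete from each $C_i$ the at most $(m-1)\sigma$ non-neighbours of the required $x_j$'s, the at most $(m-1)\sigma$ neighbours of the forbidden $x_j$'s, the at most $\sigma$ vertices $x_j$ themselves, and for each $i'\neq i$ the at most $3(m-1)$ vertices with at most $m-1$ non-neighbours in $C_{i'}$; every survivor has at most $m-1$ neighbours in each other $C_{i'}$, so a greedy choice of the $p$-sets, class by class, removes at most $(\phi-1)p(m-1)$ further vertices per clique, and $n:=(m+1)^2+2(m-1)\sigma+\sigma+3(m-1)(\phi-1)+(\phi-1)p(m-1)+p$ suffices. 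Your step three (checking the induced subgraph is $G(\mathfrak{h},p)$) is then indeed routine.
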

 In order to use this proposition well, we need the following definition.
\begin{definition}[$(r,\lambda)$-nice Hoffman graph]
Let $r$ be a positive integer and $\lambda\leq-1$ a real number. A Hoffman graph $\mathfrak{h}$ is $(r,\lambda)$-nice, if it contains no induced Hoffman subgraph with the number of slim vertices at most $r$ and smallest eigenvalue less than $\lambda$.
\end{definition}
\begin{theorem} \label{nicethm}
Let $\lambda\leq -2$ be a real number and $r$ a positive integer. Let $m$ be such that the smallest eigenvalue of $\widetilde{K}_{2m}$ is less than $\lambda$.
Then there exists a positive integer $N= N(\lambda,m,r) \geq (m+1)^2$ such that, for any graph $G$ with smallest eigenvalue at least $\lambda$, the associated Hoffman graph $\mathfrak{g} = \mathfrak{g}(G, m, q)$ is $(r, \lambda)$-nice if $q\geq N$.
\end{theorem}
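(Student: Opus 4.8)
The plan is to argue by contradiction using Proposition \ref{assohoff} to transfer a "bad" small induced Hoffman subgraph of the associated Hoffman graph $\mathfrak{g}(G,m,q)$ back into a bad induced subgraph of $G$ itself, contradicting $\lambda_{\min}(G)\geq\lambda$. First I would invoke Theorem \ref{Hoff1973}(i): since $\lambda_{\min}(G)\geq\lambda\geq-2$, there is a positive integer $T=T(\lambda)$ such that $G$ contains no induced $\widetilde{K}_{2T}$; enlarging $m$ if necessary we may assume the hypothesis "$\lambda_{\min}(\widetilde{K}_{2m})<\lambda$" already forces $m\geq T$, so $G$ contains no induced $\widetilde{K}_{2m}$ and the associated Hoffman graph $\mathfrak{g}=\mathfrak{g}(G,m,q)$ is well defined for all $q\geq (m+1)^2$. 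This verifies condition (i) of Proposition \ref{assohoff}.

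Next I would set up the contradiction. Suppose the statement fails: then for arbitrarily large $q$ there is a graph $G$ with $\lambda_{\min}(G)\geq\lambda$ whose associated Hoffman graph $\mathfrak{g}(G,m,q)$ is not $(r,\lambda)$-nice, i.e.\ $\mathfrak{g}(G,m,q)$ contains an induced Hoffman subgraph $\mathfrak{h}$ with at most $r$ slim vertices and $\lambda_{\min}(\mathfrak{h})<\lambda$. The key point is that there are only finitely many isomorphism classes of Hoffman graphs with at most $r$ slim vertices and at most (say) $r$ fat vertices — and in fact we may assume $\mathfrak{h}$ has at most $r$ fat vertices, since a slim vertex of $\mathfrak{g}$ lies in at most $r$ quasi-cliques would need justification; more cleanly, one can first pass to the induced Hoffman subgraph of $\mathfrak{g}$ generated by the $\leq r$ slim vertices of $\mathfrak{h}$, which retains smallest eigenvalue $\leq\lambda_{\min}(\mathfrak{h})<\lambda$ by Lemma \ref{hoff}, and which has at most $\binom{r}{2}+r$ fat vertices by Lemma \ref{combi}(iv) (two slim vertices share at most one common fat neighbor). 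So we may take $\mathfrak{h}$ from a fixed finite list $\mathcal{L}=\mathcal{L}(\lambda,r)$ of Hoffman graphs, each with at most $\sigma:=r$ slim vertices and at most $\phi:=\binom{r}{2}+r$ fat vertices, each with smallest eigenvalue less than $\lambda$.

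Now I would apply Proposition \ref{assohoff} with these $m,\phi,\sigma$ and with $p$ chosen large enough that $\lambda_{\min}(G(\mathfrak{k},p))<\lambda$ for every $\mathfrak{k}\in\mathcal{L}$; such $p$ exists by Theorem \ref{Ostrowski}, since $\lambda_{\min}(G(\mathfrak{k},p))\to\lambda_{\min}(\mathfrak{k})<\lambda$ as $p\to\infty$, and $\mathcal{L}$ is finite (take $p$ to work simultaneously for all of $\mathcal{L}$). Proposition \ref{assohoff} then produces a threshold $n=n(m,\phi,\sigma,p)\geq(m+1)^2$; I set $N(\lambda,m,r):=n$. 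For $q\geq N$, condition (ii) of the proposition is met (with $\mathfrak{h}\in\mathcal{L}$ the offending subgraph), so $G(\mathfrak{h},p)$ is an induced subgraph of $G$. By Lemma \ref{interelacing}, $\lambda_{\min}(G)\leq\lambda_{\min}(G(\mathfrak{h},p))<\lambda$, contradicting $\lambda_{\min}(G)\geq\lambda$. Hence $\mathfrak{g}(G,m,q)$ is $(r,\lambda)$-nice for all $q\geq N$, as claimed.

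The main obstacle I anticipate is the bookkeeping around reducing the offending subgraph $\mathfrak{h}$ to one drawn from a genuinely finite list while keeping control of the number of fat vertices — one must be careful that passing to $\langle W\rangle_{\mathfrak{g}}$ for $W$ the slim vertices of $\mathfrak{h}$ does not increase the slim count and that Lemma \ref{combi}(iv) legitimately bounds the fat count (this uses that $\mathfrak{g}$ itself arises from distinct equivalence classes of maximal cliques, so distinct fat vertices have distinct quasi-cliques, but what we actually need is just the pairwise common-neighbor bound, which is automatic for any Hoffman graph in which two slim vertices have at most one common fat neighbor — a property one should check holds for associated Hoffman graphs, or else argue around). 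Everything else is a routine application of the already-stated Proposition \ref{assohoff}, Theorem \ref{Ostrowski}, Theorem \ref{Hoff1973}, and the interlacing Lemmas \ref{hoff} and \ref{interelacing}.
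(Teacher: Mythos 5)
Your overall strategy is the same as the paper's: argue by contradiction, collect the possible offending induced Hoffman subgraphs into a finite family, choose $p$ uniformly via Theorem \ref{Ostrowski}, apply Proposition \ref{assohoff} to plant $G(\mathfrak{h},p)$ inside $G$, and contradict $\lambda_{\min}(G)\geq\lambda$ by interlacing. The genuine gap is exactly at the step you yourself flag: the reduction to a finite family. Your bound of $\binom{r}{2}+r$ fat vertices for $\langle W\rangle_{\mathfrak{g}}$ is unjustified. Lemma \ref{combi}(iv) is not a structural property of Hoffman graphs (let alone of associated Hoffman graphs); it is one of the four conditions characterizing when a given Hoffman graph decomposes as a sum of two prescribed subgraphs, so it cannot be invoked to assert that two slim vertices of $\mathfrak{g}(G,m,q)$ have at most one common fat neighbor. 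Moreover, even if that pairwise bound held, your count would still fail: fat vertices of $\langle W\rangle_{\mathfrak{g}}$ adjacent to exactly one slim vertex of $W$ are not bounded by $r$ unless you first bound the number of fat neighbors of a single slim vertex, and that bound is precisely what needs the eigenvalue hypothesis and is not available a priori. So as written the family $\mathcal{L}(\lambda,r)$ is not known to be finite, and Proposition \ref{assohoff} (whose threshold depends on $\phi$) cannot be applied with a uniform $N$.

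The paper closes this hole with a small but essential trick that your argument is missing: it adjoins to the family the Hoffman graph $\mathfrak{h}^{(\lceil-\lambda\rceil+1)}$ consisting of one slim vertex with $\lceil-\lambda\rceil+1$ fat neighbors (whose smallest eigenvalue $-\lceil-\lambda\rceil-1$ is already less than $\lambda$), and otherwise restricts to Hoffman graphs with at most $r$ slim vertices, smallest eigenvalue less than $\lambda$, and every slim vertex having at most $\lceil-\lambda\rceil$ fat neighbors; this family is finite. If $\mathfrak{g}(G,m,q)$ is not $(r,\lambda)$-nice, then either some slim vertex of the offending subgraph has more than $\lceil-\lambda\rceil$ fat neighbors, in which case $\mathfrak{g}$ contains $\mathfrak{h}^{(\lceil-\lambda\rceil+1)}$ and the same Proposition \ref{assohoff} plus Theorem \ref{Ostrowski} contradiction applies, or all fat-degrees are at most $\lceil-\lambda\rceil$ and the offending subgraph lies in the finite list. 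Incorporating this device (or separately proving a fat-degree bound for slim vertices of $\mathfrak{g}$ under $\lambda_{\min}(G)\geq\lambda$) would repair your proof. A minor additional point: your detour through Theorem \ref{Hoff1973}(i) and ``enlarging $m$'' is both unnecessary and not permitted ($m$ is given); the hypothesis $\lambda_{\min}(\widetilde{K}_{2m})<\lambda\leq\lambda_{\min}(G)$ already rules out an induced $\widetilde{K}_{2m}$ by Lemma \ref{interelacing}, which is all that condition (i) of Proposition \ref{assohoff} requires.
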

\begin{proof}
Let $\mathfrak{h}^{(\lceil-\lambda\rceil+1)}$ be the Hoffman graph with one slim vertex adjacent to $\lceil-\lambda\rceil+1$ fat vertices, and let \[\mathfrak{G}=\left\{\mathfrak{h}^{(\lceil-\lambda\rceil+1)}\right\}\cup\left\{\mathfrak{h}\mid \lambda_{\min}(\mathfrak{h})<\lambda, |V_{\mathrm{slim}}(\mathfrak{h})|\leq r, |N_\mathfrak{h}^f(x)|\leq\lceil-\lambda\rceil \text{ for all }x\in V_{\mathrm{slim}}(\mathfrak{h})\right\}\]
be a family of pairwise non-isomorphic Hoffman graphs. It is not hard to see that the family $\mathfrak{G}$ is finite and we may assume $\mathfrak{G}=\{\mathfrak{f}_1,\mathfrak{f}_2,\ldots,\mathfrak{f}_s\}$. As $\lambda_{\min}(\mathfrak{f}_i)<\lambda$ for each $i=1,\ldots,s$, there exist positive integers $p_i$'s such that $\lambda_{\min}(G(\mathfrak{f}_i,p_i))<\lambda$ hold by Theorem \ref{Ostrowski}. Let $p=\max_{1\leq i\leq s}p_i$, $\phi=\max_{1\leq i\leq s}|V_{\mathrm{fat}}(\mathfrak{f}_i)|$ and let $N$ be the positive integer $n(m,\phi,r,p)$ such that Proposition \ref{assohoff} holds. Now for a given graph $G$ with $\lambda_{\min}(G)\geq\lambda$ and an integer $q>N$, we will show that the associated Hoffman graph $\mathfrak{g}(G, m, q)$ is $(r, \lambda)$-nice. Suppose not. Then $\mathfrak{g}(G, m, q)$ contains a Hoffman graph in $\mathfrak{G}$ as an induced Hoffman subgraph. Without loss of generality, we may assume $\mathfrak{f}_1$ is an induced Hoffman subgraph of $\mathfrak{g}(G, m, q)$. Proposition \ref{assohoff} says that under this condition the graph $G$ contains $G(\mathfrak{f}_1,p)$ as an induced subgraph. Now we obtain a contradiction, as $\lambda_{\min}(G(\mathfrak{f}_1,p))<\lambda$ and $\lambda_{\min}(G)\geq\lambda$. Hence the theorem holds.
\end{proof}

\subsection{Graphs with large minimal valency}
In this subsection, we focus on graphs with fixed smallest eigenvalue and large minimal valency.

Let $G$ be a graph. For a given vertex $x$ of $G$, we call the subgraph of $G$ induced on the neighbors of $x$ the \emph{local graph} of $G$ at $x$, and denote it by $\Delta_G(x)$. For convenience, we also denote by $k_{G}(x)$ the valency of $x$ and $\bar{k}(\Delta_G(x))$ the average valency of the graph $\Delta_G(x)$, that is,
\[k_{G}(x)=|V(\Delta_G(x))|,\mathrm{ and } \bar{k}(\Delta_G(x))=\frac{2|E(\Delta_G(x))|}{|V(\Delta_G(x))|}.\]

Let $p$ be a positive integer. A $p$-\emph{plex} is an induced subgraph in which each vertex is adjacent to all but at most $p$ of the vertices. Note that a clique is exactly the same as a $1$-plex.

We define $\mathfrak{G}(t)$ to be the family of pairwise non-isomorphic indecomposable $t$-fat Hoffman graphs with special matrix $(-t-1)$ or $\begin{pmatrix}
J_{s_1}-(t+1)I & -J \\
-J & J_{s_2}-(t+1)I
\end{pmatrix}$ where $1\le s_1,s_2\le t$.

Using associated Hoffman graphs the following five results have been shown.

\begin{theorem}[cf.~{\cite[Theorem 1.2]{kyy1}}]\label{intro1}
Let $t\geq2$ be an integer. Then there exists a positive integer $C_2(t)$ such that, if a graph $G$ satisfies the following conditions:

\begin{enumerate}
\item $k_G(x)>C_2(t)$ holds for all $x \in V(G),$
\item any $(t^2 +1)$-plex containing $x$ has order at most $\frac{k_G(x)-C_2(t)}{t}$ for all $x \in V(G),$
\item $\lambda_{\min} (G) \geq -t-1$,
\end{enumerate}
then $G$ is the slim graph of a $t$-fat $\mathfrak{G}(t)$-line Hoffman graph.
\end{theorem}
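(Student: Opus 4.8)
The plan is to realise $G$ as the slim graph of one of its associated Hoffman graphs $\mathfrak{g}$, to use hypothesis~(ii) to force $\mathfrak{g}$ to be $t$-fat, and then to use the niceness supplied by Theorem~\ref{nicethm} to decompose a fat-vertex extension of $\mathfrak{g}$ into pieces from $\mathfrak{G}(t)$. First I would fix the parameters. Choose $m=m(t)$ minimal with $\lambda_{\min}(\widetilde{K}_{2m})<-t-1$; this is possible since $\lambda_{\min}(\widetilde{K}_{2m})\to-\infty$, and with this choice the quasi-cliques of an associated Hoffman graph $\mathfrak{g}(G,m,q)$ are $(t^2+1)$-plexes (this is why hypothesis~(ii) is phrased with that parameter; cf.~\cite{kky}). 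By Theorem~\ref{Hoff1973}(i) applied with $\lambda=t+1$, hypothesis~(iii) implies that $G$ contains neither $\widetilde{K}_{2m}$ nor $K_{1,T}$ as an induced subgraph, where $T=T(t+1)$. Let $r=r(t)$ be an integer at least as large as the number of slim vertices of every member of $\mathfrak{G}(t)$ and of every $t$-fat-minimal Hoffman graph for $-t-1$ (such a bound exists; for $-3$ it is $10$ by \cite{kyy3}, and in general it follows from the reduced-representation and lattice machinery of Section~\ref{sec:hoffman graphs}, cf.~Lemmas~\ref{rela} and \ref{minimal forbidden}). Apply Theorem~\ref{nicethm} with $\lambda=-t-1$, $m$ and $r$ to obtain $N=N(-t-1,m,r)$, set $q=N$, and take $C_2(t)>\max\{N,\,t\cdot R(T,N)\}$ where $R(\cdot,\cdot)$ is the Ramsey number. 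Then $\mathfrak{g}:=\mathfrak{g}(G,m,q)$ is well-defined, has slim graph $G$, and is $(r,-t-1)$-nice by Theorem~\ref{nicethm}.

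Next I would prove $\mathfrak{g}$ is $t$-fat. Fix a slim vertex $x$. Since $G$ is $K_{1,T}$-free and $k_G(x)>C_2(t)\ge R(T,q)$, Ramsey's theorem gives a $q$-clique inside $\Delta_G(x)$, so $x$ lies in a maximal clique of order at least $q$. Every maximal clique of order at least $q$ through $x$ lies in some class $[C_i]_q^m$ and hence inside $Q([C_i]_q^m)=Q_{\mathfrak{g}}(f_i)$ with $f_i\sim x$; consequently the set $L$ of neighbours of $x$ lying in no quasi-clique $Q_{\mathfrak{g}}(f)$ with $f\sim x$ fat contains no $q$-clique, so, being $K_{1,T}$-free, $|L|<R(T,q)$. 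Each $Q_{\mathfrak{g}}(f)$ with $f\sim x$ contains $x$ and is a $(t^2+1)$-plex, so by hypothesis~(ii) it has order at most $(k_G(x)-C_2(t))/t$. If $x$ had at most $t-1$ fat neighbours, then
\[
k_G(x)\ \le\ (t-1)\cdot\frac{k_G(x)-C_2(t)}{t}+R(T,q),
\]
which rearranges to $k_G(x)\le -(t-1)C_2(t)+t\,R(T,q)\le 0$ by the choice of $C_2(t)$, a contradiction. Hence every slim vertex has at least $t$ fat neighbours, i.e.\ $\mathfrak{g}$ is $t$-fat.

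Finally I would extract the $\mathfrak{G}(t)$-line structure. Attach new fat vertices to $\mathfrak{g}$, one at a time, keeping the slim graph fixed, keeping $t$-fatness, and keeping the property that every $t$-fat induced Hoffman subgraph on at most $r$ slim vertices has smallest eigenvalue at least $-t-1$, until no such fat vertex can be added; call the result $\mathfrak{h}'$. Then $\mathfrak{h}'$ has slim graph $G$ and contains $\mathfrak{g}$ as an induced Hoffman subgraph. The crux is to show that $\mathfrak{h}'$ is a sum $\biguplus_{i}\mathfrak{h}'_i$ with each indecomposable factor $\mathfrak{h}'_i$ isomorphic to an induced Hoffman subgraph of a member of $\mathfrak{G}(t)$. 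One first argues, using Lemma~\ref{combi} and the saturation, that each factor has at most $r$ slim vertices --- unbounded indecomposable $t$-fat factors do occur a priori, namely those with special matrix $J_{s_1}-(t+1)I$, but attaching one common fat vertex splits them, so the saturation eliminates them --- whence $\lambda_{\min}(\mathfrak{h}'_i)\ge-t-1$ by niceness. A direct analysis then pins down these factors: $t$-fatness and the eigenvalue bound applied to $1\times1$ and $2\times2$ principal submatrices force every diagonal entry of the special matrix to lie in $\{-t,-(t+1)\}$ and every off-diagonal entry to lie in $\{-1,0,1\}$; a slim vertex with $t+1$ fat neighbours forces the factor, by indecomposability, to be a single slim vertex; and otherwise the special matrix is $J_{s_1}-(t+1)I$ or the two-block matrix appearing in the definition of $\mathfrak{G}(t)$. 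Combined with the maximality of $\mathfrak{h}'$ this yields $\mathfrak{h}'_i\in\mathfrak{G}(t)$ up to taking induced Hoffman subgraphs, so $\mathfrak{g}$ is a $t$-fat $\mathfrak{G}(t)$-line Hoffman graph with slim graph $G$, which is what we wanted.

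The main obstacle is exactly this last step: controlling, after saturation, how the quasi-cliques of $\mathfrak{g}$ overlap, and proving that every indecomposable factor that can survive is an induced Hoffman subgraph of a member of $\mathfrak{G}(t)$. This is the analogue for $-t-1$ of Woo and Neumaier's determination of the fat-minimal Hoffman graphs for $-1-\sqrt{2}$ \cite{Woo} and of the $-3$ analysis of \cite{kyy3}, and it carries the bulk of the technical weight; the remaining ingredients --- the choice of $m$, the Ramsey bound on $L$, and the arithmetic fixing $C_2(t)$ --- are routine once the constants are ordered correctly.
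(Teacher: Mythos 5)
Since the survey only cites \cite{kyy1} for this theorem, there is no in-paper proof to compare against; your architecture (associated Hoffman graph $\mathfrak{g}(G,m,q)$, Theorem \ref{nicethm}, $t$-fatness from hypothesis (ii), then a saturated extension decomposed via Lemma \ref{combi}) is indeed the intended machinery of Section \ref{sec:associated hoffman graphs}. But two steps have genuine gaps. First, the claim that the quasi-cliques of $\mathfrak{g}(G,m,q)$ are $(t^2+1)$-plexes does not follow from the choice of $m$ and is not a general property of associated Hoffman graphs: a vertex $u\in Q_{\mathfrak{g}}(f)$ is only guaranteed at most $m-1$ non-neighbours inside the generating clique $C$, not inside $Q_{\mathfrak{g}}(f)$, and $m$ here is of order $(t+1)^2>t^2+1$. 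What makes the claim true is the eigenvalue hypothesis through niceness: if $u$ had a set $S$ of $t^2+1$ non-neighbours in $Q_{\mathfrak{g}}(f)$, then every pair in $\{u\}\cup S$ has the common fat neighbour $f$, so the special matrix of $\langle\{u\}\cup S\rangle_{\mathfrak{g}}$ is entrywise at most $-I-A(K_{1,t^2+1})$, and testing with the nonnegative Perron vector of the star gives $\lambda_{\min}\le-1-\sqrt{t^2+1}<-t-1$, contradicting $(r,-t-1)$-niceness provided $r\ge t^2+2$. Your $r$ is not chosen to guarantee this: you tie it to the slim sizes of $t$-fat-minimal Hoffman graphs for $-t-1$, whose finiteness for general $t$ is not established (Lemma \ref{minimal forbidden} only treats the graphs $\mathfrak{q}(H)$) and is in fact not needed; $r=t^2+2$ suffices for the whole argument.

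Second, the last step — which you yourself flag as carrying the weight — is circular and omits the arguments that actually do the work. You bound the slim size of each factor of the saturated $\mathfrak{h}'$ by appealing to the shape $J_{s}-(t+1)I$ of the putative unbounded factors, but that shape is precisely what remains to be proved, and the $1\times1$/$2\times2$ computations you list only give diagonal entries in $\{-t,-t-1\}$ and off-diagonal entries in $\{-1,0,1\}$ (with every off-diagonal entry at a slim vertex having $t+1$ fat neighbours equal to $0$). The missing ingredients are the $3$-slim-vertex computations, which make the size bound unnecessary: an induced path in the nonzero pattern of a factor yields a principal submatrix with smallest eigenvalue $-t-\sqrt{2}<-t-1$, so each indecomposable factor has a complete nonzero pattern; then the $3\times3$ positivity condition forces the product of the three signs of every triangle to be $+1$, so the factor's special matrix is $\epsilon\epsilon^{T}-(t+1)I$, i.e.\ exactly the one- or two-block form; and a pigeonhole on the $t$ fat neighbours of a vertex seen from the opposite block gives $s_1,s_2\le t$. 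Only after this classification does your saturation trick legitimately dispose of the one-block factors with $s>t$. With these steps inserted (and the constant $r$ corrected), your outline does yield the theorem, but as written the classification of the factors is asserted rather than proved.
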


\begin{theorem}[cf.~{\cite[Theorem 1.3]{kyy1}}]\label{intro2}
Let $t\geq2$ be an integer. Then there exists a positive integer $C_3(t)$ such that, if a graph $G$ satisfies the following conditions:

\begin{enumerate}
\item $k_G(x)>C_3(t)$ holds for all $x \in V(G),$
\item $\bar{k}(\Delta_G(x)) \leq \frac{k_G(x)-C_3(t)}{t}$ holds for all $x \in V(G)$,
\item $\lambda_{\min} (G) \geq -t-1$,
\end{enumerate}
then $G$ is the slim graph of a $t$-fat $\mathfrak{G}(t)$-line Hoffman graph.
\end{theorem}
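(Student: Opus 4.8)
The plan is to prove Theorem~\ref{intro2} along the same lines as Theorem~\ref{intro1}: build the associated Hoffman graph $\mathfrak g$ of $G$, show it is $(r,-t-1)$-nice for a suitable $r=r(t)$ by Theorem~\ref{nicethm}, show that hypothesis~(ii) forces $\mathfrak g$ to be $t$-fat, and then invoke the structure theory of $t$-fat Hoffman graphs with smallest eigenvalue at least $-t-1$ to conclude that $\mathfrak g$ is a $\mathfrak G(t)$-line Hoffman graph. Hypotheses (i) and (iii) are identical to those of Theorem~\ref{intro1}, and the only genuinely new ingredient is the step that extracts $t$-fatness from the bound on the \emph{average} valency of the local graphs; Theorem~\ref{intro1} instead uses a bound on the order of $(t^2+1)$-plexes, which is an incomparable condition, so one cannot simply quote it. Accordingly I would concentrate the work on that step.

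Setting up the parameters: choose $m=m(t)$ with $\lambda_{\min}(\widetilde K_{2m})<-t-1$, which is possible since this quantity tends to $-\infty$; then by Lemma~\ref{interelacing} and hypothesis~(iii) the graph $G$ contains no induced $\widetilde K_{2m}$, and similarly no induced $K_{1,T}$ once $T>(t+1)^2$ (one may also invoke Hoffman's Theorem~\ref{Hoff1973}). Let $r=r(t)$ be large enough that $(r,-t-1)$-niceness together with $t$-fatness suffices for the structural step below; finiteness of such an $r$ follows from the classification of the $t$-fat-minimal Hoffman graphs for $-t-1$, just as in the cases $-1-\sqrt{2}$ and $-3$ recalled above. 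Applying Theorem~\ref{nicethm} with $\lambda=-t-1$, this $m$ and this $r$ gives $N=N(t)\ge(m+1)^2$ such that $\mathfrak g:=\mathfrak g(G,m,N)$ is $(r,-t-1)$-nice; its slim graph is $G$ and $\lambda_{\min}(\mathfrak g)\ge\lambda_{\min}(G)\ge-t-1$.

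The heart of the proof is to choose $C_3(t)$ so that $\mathfrak g$ is $t$-fat, i.e.\ every vertex $x$ has at least $t$ fat neighbours. Fix $x$, write $\Delta=\Delta_G(x)$ and $n=k_G(x)$, and assume $n>C_3(t)$. Since $\Delta$ is an induced subgraph of $G$, hypothesis~(iii) gives that $\Delta$ has no independent set of size $T$, so by Ramsey's theorem every sufficiently large subset of $\Delta$ contains a clique of order $\ge N$. Extending such cliques to maximal cliques of $G$ and using the absence of $\widetilde K_{2m}$ --- every vertex is adjacent to all but fewer than $m$, or non-adjacent to all but fewer than $m$, of any large clique --- a Bose-Laskar argument shows that, up to $o(n)$ vertices, $\Delta$ is a disjoint union of $c$ cliques of total order $n$, where $C_1,\dots,C_c$ are the large maximal cliques of $G$ through $x$ realising the fat neighbours $f_1,\dots,f_c$ of $x$ in $\mathfrak g$; here two of the $C_i$ in distinct classes meet in fewer than $m$ vertices, for otherwise a vertex of $C_i\setminus C_{i'}$ would have $\ge m$ neighbours and $\ge m$ non-neighbours inside the large clique $C_{i'}$, an induced $\widetilde K_{2m}$. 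By Cauchy-Schwarz, $\bar k(\Delta)\ge\frac{n}{c}-o(n)$, and comparing with hypothesis~(ii) we get $\frac{n}{c}-o(n)\le\bar k(\Delta)\le\frac{n-C_3(t)}{t}$; since $\frac{1}{t-1}-\frac{1}{t}=\frac{1}{t(t-1)}$, this is impossible for $c\le t-1$ once $n$ is large. Choosing $C_3(t)$ large enough that $n>C_3(t)$ forces $n$ into that range (and, if one also wants $c\ge t+1$, large enough that $C_3(t)/t$ beats the $o(n)$ error) we conclude $c\ge t$, so $x$ has at least $t$ fat neighbours and $\mathfrak g$ is $t$-fat.

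Finally, $\mathfrak g$ is a $t$-fat, $(r,-t-1)$-nice Hoffman graph with $\lambda_{\min}(\mathfrak g)\ge-t-1$, so by the structure theory of $t$-fat Hoffman graphs with smallest eigenvalue at least $-t-1$ --- the classification of the $t$-fat-minimal Hoffman graphs together with the decomposition results, exactly as in the proof of Theorem~\ref{intro1} --- the Hoffman graph $\mathfrak g$ is a $\mathfrak G(t)$-line Hoffman graph, and hence $G$ is the slim graph of a $t$-fat $\mathfrak G(t)$-line Hoffman graph. The step I expect to be the main obstacle is the Bose-Laskar covering: showing that the number of vertices of $\Delta_G(x)$ not accounted for by the clique parts is only $o(n)$ (ideally bounded in terms of $t$ alone), since the inequality in the previous paragraph is tight up to exactly this error; this is where the precise choice of $N(t)$ --- in particular its being large compared with $m$ --- enters, and it is what ultimately fixes $C_3(t)$.
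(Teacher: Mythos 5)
First, a caveat: this survey does not actually prove Theorem~\ref{intro2}; it quotes it from \cite{kyy1} and only supplies the supporting machinery (associated Hoffman graphs, Proposition~\ref{assohoff}, Theorem~\ref{nicethm}). Your outline — build $\mathfrak{g}=\mathfrak{g}(G,m,q)$, get $(r,-t-1)$-niceness, extract $t$-fatness from hypothesis~(ii), then apply the structure theory of $t$-fat Hoffman graphs with smallest eigenvalue at least $-t-1$ — is indeed the intended route, and your $t$-fatness step is sound in spirit: the exceptional set of neighbors of $x$ lying in no quasi-clique attached to $x$ is in fact bounded by a Ramsey-type constant depending only on $t$ (not merely $o(n)$), using the bounded independence number of $\Delta_G(x)$, maximality of cliques, and the plex-like structure of quasi-cliques from \cite{kky}, after which your Cauchy--Schwarz comparison with hypothesis~(ii) forces at least $t$ fat neighbors. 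You are also right that condition~(ii) here does not imply the plex condition of Theorem~\ref{intro1}, so a direct reduction is unavailable.

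There are, however, two genuine gaps. First, your assertion $\lambda_{\min}(\mathfrak{g})\geq\lambda_{\min}(G)\geq-t-1$ is backwards: by Lemma~\ref{hoff} the slim graph $G$, being an induced Hoffman subgraph of $\mathfrak{g}$, satisfies $\lambda_{\min}(G)\geq\lambda_{\min}(\mathfrak{g})$, and attaching the quasi-clique fat vertices can a priori push $\lambda_{\min}(\mathfrak{g})$ below $-t-1$ (e.g.\ a slim vertex acquiring $t+2$ fat neighbors already gives a diagonal entry $-t-2$ in $Sp(\mathfrak{g})$). This is precisely the central difficulty that the notion of $(r,\lambda)$-niceness is designed to circumvent, so your final paragraph, which feeds ``$\lambda_{\min}(\mathfrak{g})\geq-t-1$'' into the structural step, cannot be run as stated; the structural step must be run from niceness plus $t$-fatness alone. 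Second, that structural step — a $t$-fat, $(r,-t-1)$-nice Hoffman graph is a $\mathfrak{G}(t)$-line Hoffman graph — is the actual key lemma of \cite{kyy1} and you leave it as a black box; your justification, that ``finiteness of such an $r$ follows from the classification of the $t$-fat-minimal Hoffman graphs for $-t-1$, just as in the cases $-1-\sqrt{2}$ and $-3$ recalled above,'' appeals to classifications the survey records only for those two values, and no such classification is stated (or available off the shelf) for general $t$. What is actually needed, and what makes the niceness route work, is the observation that for a $t$-fat Hoffman graph $\mathfrak{h}$ with $\lambda_{\min}(\mathfrak{h})\geq-t-1$ the integral matrix $Sp(\mathfrak{h})+(t+1)I$ is positive semidefinite with diagonal entries in $\{0,1\}$, whence it is the Gram matrix of zero and $\pm$ unit vectors and $\mathfrak{h}$ decomposes (after possibly attaching fat vertices) into pieces with special matrices as in $\mathfrak{G}(t)$; the obstructions to this structure live on at most two or three slim vertices, which is what lets an $(r,-t-1)$-nice, $t$-fat $\mathfrak{g}$ inherit it. Without supplying this argument (or an equivalent bound on the slim size of the relevant $t$-fat minimal forbidden Hoffman graphs), the proposal does not close.
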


In the next two results, we focus on graphs with smallest eigenvalue at least $-3$.

\begin{theorem}[{\cite[Theorem 1.4]{kyy1}}]\label{intro3}
There exists a positive integer $\kappa_2$ such that, if a graph $G$ satisfies the following conditions:
\begin{enumerate}
\item $k_G(x)>\kappa_2$ holds for all $x \in V(G),$
\item any $5$-plex containing $x$ has order at most $k_G(x)-\kappa_2$ for all $x \in V(G),$
\item $\lambda_{\min} (G) \geq -3$,
\end{enumerate}
then $G$ is the slim graph of a $2$-fat \big\{\raisebox{-1ex}{\begin{tikzpicture}[scale=0.3]

\tikzstyle{every node}=[draw,circle,fill=black,minimum size=10pt,scale=0.3,
                            inner sep=0pt]

    \draw (-2.1,0) node (1f1) [label=below:$$] {};
    \draw (-1.6,0) node (1f2) [label=below:$$] {};
    \draw (-1.1,0) node (1f3) [label=below:$$] {};

    \tikzstyle{every node}=[draw,circle,fill=black,minimum size=5pt,scale=0.3,
                            inner sep=0pt]

    \draw (-1.6,1) node (1s1) [label=below:$$] {};

    \draw (1f1) -- (1s1) -- (1f2);
    \draw (1f3) -- (1s1);
    \end{tikzpicture}},\hspace{-0.08cm}
\raisebox{-1ex}{\begin{tikzpicture}[scale=0.3]
\tikzstyle{every node}=[draw,circle,fill=black,minimum size=10pt,scale=0.3,
                            inner sep=0pt]

    \draw (-0.5,0) node (2f1) [label=below:$$] {};
    \draw (0.5,0) node (2f2) [label=below:$$] {};
    \draw (-0.5,1) node (2f3) [label=below:$$] {};
    \draw (0.5,1) node (2f4) [label=below:$$] {};

    \tikzstyle{every node}=[draw,circle,fill=black,minimum size=5pt,scale=0.3,
                            inner sep=0pt]

    \draw (0,0.2) node (2s1) [label=below:$$] {};
    \draw (0.3,0.5) node (2s2) [label=below:$$] {};
    \draw (-0.3,0.5) node (2s3) [label=below:$$] {};
    \draw (0,0.8) node (2s4) [label=below:$$] {};

    \draw (2f1) -- (2s1) -- (2f2) -- (2s2) -- (2f4) -- (2s4) -- (2f3) -- (2s3) -- (2f1);
    \draw (2s1) -- (2s4);
    \draw (2s2) -- (2s3);
    \end{tikzpicture}},\hspace{-0.08cm}
\raisebox{-1ex}{\begin{tikzpicture}[scale=0.3]
\tikzstyle{every node}=[draw,circle,fill=black,minimum size=10pt,scale=0.3,
                            inner sep=0pt]

    \draw (1.5,0) node (3f1) [label=below:$$] {};
    \draw (1.5,1) node (3f2) [label=below:$$] {};

    \tikzstyle{every node}=[draw,circle,fill=black,minimum size=5pt,scale=0.3,
                            inner sep=0pt]

    \draw (1,0.5) node (3s1) [label=below:$$] {};
    \draw (2,0.5) node (3s2) [label=below:$$] {};

    \draw (3s1) -- (3s2);
    \draw (3f1) -- (3s1) -- (3f2) -- (3s2) -- (3f1);
    \end{tikzpicture}}\big\}-line Hoffman graph.
\end{theorem}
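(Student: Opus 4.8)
The plan is to transfer the problem from $G$ to its associated Hoffman graph, to use the classification of small fat Hoffman graphs with smallest eigenvalue below $-3$ to pin down the local structure, and then to read off the conclusion from the decomposition theory of $(-3)$-irreducible Hoffman graphs. First I would fix the auxiliary parameters. Since $\lambda_{\min}(G)\ge -3$, Theorem~\ref{Hoff1973}(i) (together with the fact that $\lambda_{\min}(\widetilde K_{2m})$ tends to $-\infty$) supplies an integer $m$ with $\lambda_{\min}(\widetilde K_{2m})<-3$ such that $G$ has no induced $\widetilde K_{2m}$, so the associated Hoffman graphs $\mathfrak g(G,m,q)$ are defined. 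Let $r$ be an integer at least as large as the maximum number of slim vertices of a fat-minimal Hoffman graph for $-3$; by the results quoted before this section, $r=10$ works. Applying Theorem~\ref{nicethm} with $\lambda=-3$ and this $r$ yields $N=N(-3,m,r)$; I would then fix any $q\ge N$ and set $\mathfrak g:=\mathfrak g(G,m,q)$, which is $(r,-3)$-nice, has $V_{\mathrm{slim}}(\mathfrak g)=V(G)$, and has slim graph $G$. The constant $\kappa_2$ is chosen only at the very end, larger than all the finitely many constants that arise.

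Next I would record the basic properties of $\mathfrak g$. From condition~(i) and a Bose--Laskar/Ramsey argument (by Theorem~\ref{Hoff1973}(i) the graph $G$ has no induced $K_{1,T}$ for the constant $T=T(3)$, so once $k_G(x)$ exceeds a Ramsey threshold every neighbourhood contains a clique of order $\ge q$), every vertex of $G$ lies in a maximal clique of order at least $q$, hence $\mathfrak g$ is fat. If $\lambda_{\min}(\mathfrak g)<-3$, then a minimal fat induced Hoffman subgraph of $\mathfrak g$ of smallest eigenvalue below $-3$ is a fat-minimal Hoffman graph for $-3$, and hence has at most $10\le r$ slim vertices, contradicting $(r,-3)$-niceness; so $\lambda_{\min}(\mathfrak g)\ge -3$. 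By Lemma~\ref{fatnbr} the diagonal entries of $Sp(\mathfrak g)$ equal $-|N^f_{\mathfrak g}(x)|$, and these are at least $\lambda_{\min}(\mathfrak g)\ge -3$, so every slim vertex of $\mathfrak g$ has at most three fat neighbours.

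The heart of the proof is to upgrade this to: $\mathfrak g$ is $2$-fat, that is, every slim vertex has at least two fat neighbours. This is where condition~(ii) is used. By a further local (Bose--Laskar-type) analysis of $\Delta_G(x)$, using $\widetilde K_{2m}$-freeness and $K_{1,T}$-freeness, one shows that the quasi-cliques $Q_{\mathfrak g}(f)$ through a vertex $x$ together omit only boundedly many vertices of $N_G[x]$ and that each of them induces a $p$-plex for a bounded $p$; then, if $x$ had only a single fat neighbour $f$, the set $Q_{\mathfrak g}(f)$ restricted to $N_G[x]$ would be a $5$-plex containing $x$ of order at least $k_G(x)-B$ for some constant $B$, contradicting condition~(ii) as soon as $\kappa_2>B$. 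I expect this to be the main obstacle, and also the step where the precise value $5$ has to be calibrated against the eigenvalue $-3$: one has to control how much of $N_G[x]$ can escape a single quasi-clique and to verify that the relevant plex parameter can indeed be taken to be $5$ for this eigenvalue. The outcome is that $\mathfrak g$ is $2$-fat with $\lambda_{\min}(\mathfrak g)\ge -3$.

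Finally I would feed $\mathfrak g$ into the decomposition theory. Attach fat vertices to $\mathfrak g$ greedily while keeping the smallest eigenvalue at least $-3$; since each slim vertex then admits at most three fat neighbours this process terminates at a $(-3)$-saturated $2$-fat Hoffman graph $\widetilde{\mathfrak g}$ with the same slim graph as $\mathfrak g$. Decompose $\widetilde{\mathfrak g}$ into indecomposable factors $\widetilde{\mathfrak g}_i$; by Lemma~\ref{combi}(iii) each $\widetilde{\mathfrak g}_i$ is again $2$-fat, and its special matrix is a principal block of $Sp(\widetilde{\mathfrak g})$, hence has smallest eigenvalue at least $-3$. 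A short positive-semidefiniteness argument using Lemma~\ref{fatnbr} (a diagonal entry $-3$ of $Sp(\widetilde{\mathfrak g}_i)$ is a zero diagonal entry of $Sp(\widetilde{\mathfrak g}_i)+3I\succeq 0$, which forces that whole row to vanish, contradicting indecomposability) shows that in an indecomposable factor with at least two slim vertices every slim vertex has exactly two fat neighbours, so its special matrix has diagonal $-2$ and, after subtracting $2I$, becomes the adjacency-type matrix of a signed graph of smallest eigenvalue at least $-1$, which is extremely restrictive. Combining this with the classification of fat-minimal Hoffman graphs for $-3$ (which have at most two slim vertices once some slim vertex has two fat neighbours) and the classification of fat maximal $(-3)$-irreducible Hoffman graphs from \cite{HJAT} and \cite{Koolen.2018}, together with the fact that saturation rules out the ``big clique'' configurations, one identifies each indecomposable factor of $\widetilde{\mathfrak g}$ with an induced Hoffman subgraph of one of the three Hoffman graphs displayed in the statement. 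Hence $\widetilde{\mathfrak g}$ is a $2$-fat $\mathfrak H$-line Hoffman graph for $\mathfrak H$ equal to that family; and since $\widetilde{\mathfrak g}$ has slim graph $G$, this is exactly the assertion that $G$ is the slim graph of a $2$-fat $\mathfrak H$-line Hoffman graph. Taking $\kappa_2$ to dominate all the constants used completes the proof.
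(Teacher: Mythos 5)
Since the survey only states this result as \cite[Theorem 1.4]{kyy1} and contains no proof of it, your proposal has to be judged on its own merits. Its scaffolding is sound and matches the general philosophy of the quoted papers: choosing $m$ with $\lambda_{\min}(\widetilde{K}_{2m})<-3$, invoking Theorem \ref{nicethm} to get an $(r,-3)$-nice associated Hoffman graph $\mathfrak{g}=\mathfrak{g}(G,m,q)$, getting fatness by Ramsey, getting $\lambda_{\min}(\mathfrak{g})\geq-3$ from the bound on slim vertices of fat-minimal Hoffman graphs for $-3$, and bounding the number of fat neighbours by the diagonal of $Sp(\mathfrak{g})$. But the step that carries the whole content of the theorem --- deriving $2$-fatness from hypothesis (ii) --- is not proved; you yourself flag it as the main obstacle and leave the calibration of the constant $5$ open. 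The missing idea is concrete: for any fat vertex $f$ of a Hoffman graph with smallest eigenvalue at least $-3$, the quasi-clique $Q_{\mathfrak{g}}(f)$ is automatically a $5$-plex. Indeed, if some $y\in Q_{\mathfrak{g}}(f)$ had five non-neighbours $z_1,\dots,z_5$ in $Q_{\mathfrak{g}}(f)$, the induced Hoffman subgraph on $\{y,z_1,\dots,z_5\}$ and $f$ is of the form $\mathfrak{q}(H)$, and by \eqref{eq:eigenvalue} its smallest eigenvalue is $-1-\lambda_0(\overline{H})\leq -1-\sqrt{5}<-3$, since $\overline{H}$ contains $K_{1,5}$ as a subgraph; this contradicts Lemma \ref{hoff}. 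Combining this with the covering lemma from \cite{kky}/\cite{kyy1} (the union of the quasi-cliques through $x$ misses only boundedly many vertices of the closed neighbourhood of $x$), a vertex $x$ with a single fat neighbour $f$ would make $Q_{\mathfrak{g}}(f)$ a $5$-plex containing $x$ of order at least $k_G(x)-\mathrm{const}$, contradicting (ii). Without this quantitative step the value $5$ is unexplained and the proof does not close.

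The final step also has a genuine gap: ``$2$-fat, $\lambda_{\min}\geq-3$, indecomposable'' does not by itself force a factor to be an induced Hoffman subgraph of the three displayed graphs. For example, a slim clique $K_n$ in which each slim vertex has two private fat neighbours and no two slim vertices share a fat neighbour has special matrix $J_n-3I$, hence smallest eigenvalue $-3$, is $2$-fat and indecomposable, yet for $n\geq3$ it is not an induced Hoffman subgraph of any of the three graphs. Your sketch (diagonal $-2$, $Sp+2I$ a signed adjacency matrix with smallest eigenvalue $\geq-1$, hence switching-equivalent to a union of positive cliques) is the right start, but you still must prove the size bounds $s_1,s_2\leq 2$ in the resulting block form $D(J_n-3I)D$: when both sign classes are nonempty this follows from a counting argument (the sets $N^f_{\mathfrak{h}}(x_i,y)$ for the $x_i$ in one class are nonempty and pairwise disjoint inside the two-element set $N^f_{\mathfrak{h}}(y)$), and the all-positive factors $J_n-3I$ with $n\geq 3$ have to be killed by the saturation you perform (attaching one fat vertex to all slim vertices of such a factor turns its block into $-3I$ without lowering the smallest eigenvalue, contradicting saturation). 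Neither argument appears in your write-up; instead you appeal to the classification of fat maximal $(-3)$-irreducible Hoffman graphs, which, as the survey itself notes, is complete only in the integrally representable case and is in any event not the statement needed here. So the skeleton is right, but the two steps in which hypotheses (ii) and the specific family of three Hoffman graphs actually enter are missing.
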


\begin{theorem}[{\cite[Theorem 1.5]{kyy1}}]\label{intro4}
There exists a positive integer $\kappa_3$ such that, if a graph $G$ satisfies the following conditions:
\begin{enumerate}
\item $k_G(x)>\kappa_3$ holds for all $x \in V(G),$
\item $\bar{k}(\Delta_G(x))\leq k_G(x)-\kappa_3$ holds for all $x \in V(G)$,
\item $\lambda_{\min} (G) \geq -3$,
\end{enumerate}
then $G$ is the slim graph of a $2$-fat \big\{\raisebox{-1ex}{\begin{tikzpicture}[scale=0.3]

\tikzstyle{every node}=[draw,circle,fill=black,minimum size=10pt,scale=0.3,
                            inner sep=0pt]

    \draw (-2.1,0) node (1f1) [label=below:$$] {};
    \draw (-1.6,0) node (1f2) [label=below:$$] {};
    \draw (-1.1,0) node (1f3) [label=below:$$] {};

    \tikzstyle{every node}=[draw,circle,fill=black,minimum size=5pt,scale=0.3,
                            inner sep=0pt]

    \draw (-1.6,1) node (1s1) [label=below:$$] {};

    \draw (1f1) -- (1s1) -- (1f2);
    \draw (1f3) -- (1s1);
    \end{tikzpicture}},\hspace{-0.08cm}
\raisebox{-1ex}{\begin{tikzpicture}[scale=0.3]
\tikzstyle{every node}=[draw,circle,fill=black,minimum size=10pt,scale=0.3,
                            inner sep=0pt]

    \draw (-0.5,0) node (2f1) [label=below:$$] {};
    \draw (0.5,0) node (2f2) [label=below:$$] {};
    \draw (-0.5,1) node (2f3) [label=below:$$] {};
    \draw (0.5,1) node (2f4) [label=below:$$] {};

    \tikzstyle{every node}=[draw,circle,fill=black,minimum size=5pt,scale=0.3,
                            inner sep=0pt]

    \draw (0,0.2) node (2s1) [label=below:$$] {};
    \draw (0.3,0.5) node (2s2) [label=below:$$] {};
    \draw (-0.3,0.5) node (2s3) [label=below:$$] {};
    \draw (0,0.8) node (2s4) [label=below:$$] {};

    \draw (2f1) -- (2s1) -- (2f2) -- (2s2) -- (2f4) -- (2s4) -- (2f3) -- (2s3) -- (2f1);
    \draw (2s1) -- (2s4);
    \draw (2s2) -- (2s3);
    \end{tikzpicture}},\hspace{-0.08cm}
\raisebox{-1ex}{\begin{tikzpicture}[scale=0.3]
\tikzstyle{every node}=[draw,circle,fill=black,minimum size=10pt,scale=0.3,
                            inner sep=0pt]

    \draw (1.5,0) node (3f1) [label=below:$$] {};
    \draw (1.5,1) node (3f2) [label=below:$$] {};

    \tikzstyle{every node}=[draw,circle,fill=black,minimum size=5pt,scale=0.3,
                            inner sep=0pt]

    \draw (1,0.5) node (3s1) [label=below:$$] {};
    \draw (2,0.5) node (3s2) [label=below:$$] {};

    \draw (3s1) -- (3s2);
    \draw (3f1) -- (3s1) -- (3f2) -- (3s2) -- (3f1);
    \end{tikzpicture}}\big\}-line Hoffman graph.
\end{theorem}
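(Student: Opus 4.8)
The plan is to follow the strategy behind Theorem~\ref{intro2} in the case $t=2$, but to lean on the classification of Hoffman graphs with smallest eigenvalue at least $-3$, both to replace the local condition $\bar k(\Delta_G(x))\le(k_G(x)-C)/2$ of Theorem~\ref{intro2} by the weaker bound $\bar k(\Delta_G(x))\le k_G(x)-\kappa_3$ and to turn the target family into the explicit list displayed in the statement. First I would fix an integer $m$ with $\lambda_{\min}(\widetilde K_{2m})<-3$; since $\lambda_{\min}(G)\ge-3$, Lemma~\ref{interelacing} shows that $G$ contains no induced $\widetilde K_{2m}$, so the associated Hoffman graph $\mathfrak g=\mathfrak g(G,m,q)$ is defined for every $q\ge(m+1)^2$. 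Next I would fix a positive integer $r$ large enough that the structural results of \cite{kyy1} apply, and invoke Theorem~\ref{nicethm} with $\lambda=-3$ to obtain an integer $N=N(-3,m,r)$ such that $\mathfrak g(G,m,q)$ is $(r,-3)$-nice whenever $q\ge N$. Finally I would let $\kappa_3$ be a large constant, pinned down at the end, and take any $q\ge N$, writing $\mathfrak g=\mathfrak g(G,m,q)$.

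The core of the argument is to show that condition~(ii) forces $\mathfrak g$ to be $2$-fat, i.e.\ that every vertex $x$ of $G=V_{\mathrm{slim}}(\mathfrak g)$ has at least two fat neighbours in $\mathfrak g$. On the one hand, $\lambda_{\min}(G)\ge-3$ and Hoffman's Theorem~\ref{Hoff1973}(i) give a constant $T=T(3)$ such that $G$, hence every local graph $\Delta_G(x)$, contains no induced $K_{1,T}$; since $|V(\Delta_G(x))|=k_G(x)>\kappa_3$, Ramsey's theorem produces (for $\kappa_3$ large enough in terms of $m$) a clique of order $q$ inside $\Delta_G(x)$, and therefore a maximal clique of $G$ of order at least $q$ through $x$, so $x$ has at least one fat neighbour, say $f$. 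On the other hand, if $x$ had exactly one fat neighbour, then, by the quasi-clique lemmas of \cite{kky}, the quasi-clique $Q_{\mathfrak g}(f)$ is a clique up to a number of exceptions bounded solely in terms of $m$, and all but boundedly many neighbours of $x$ lie in $Q_{\mathfrak g}(f)$; counting the edges of $\Delta_G(x)$ inside $Q_{\mathfrak g}(f)$ then gives $\bar k(\Delta_G(x))\ge k_G(x)-O_m(1)$, which contradicts condition~(ii) as soon as $\kappa_3$ exceeds that bound. Hence every $x$ has at least two fat neighbours, and $\mathfrak g$ is $2$-fat.

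It remains to feed this into the structure theory of Hoffman graphs with smallest eigenvalue at least $-3$. Being $(r,-3)$-nice, $\mathfrak g$ contains no induced Hoffman subgraph on at most $r$ slim vertices with smallest eigenvalue less than $-3$; combined with the fact that $\mathfrak g$ is $2$-fat, the structural results of \cite{kyy1} — which rest on the classification of the maximal $2$-fat $(-3)$-irreducible Hoffman graphs obtained via the lattice and special-graph analysis of \cite{HJAT} and the explicit list of Koolen, Li and Yang \cite{Koolen.2018} — show that $\mathfrak g$ can be extended, by attaching fat vertices only and hence keeping its slim graph equal to $G$, to a $2$-fat Hoffman graph $\mathfrak g'=\biguplus_i\mathfrak g'_i$ in which each factor $\mathfrak g'_i$ is an induced Hoffman subgraph of a member of $\mathfrak G(2)$. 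For $t=2$ the family $\mathfrak G(2)$ consists of the Hoffman graph $\mathfrak h^{(3)}$ with one slim vertex and three fat neighbours, the two Hoffman graphs depicted in the statement, and the Hoffman graph on three slim vertices corresponding to $s_1=1,\ s_2=2$, the last of these being itself an induced Hoffman subgraph of the one on four slim vertices. Therefore $G$ is the slim graph of a $2$-fat line Hoffman graph over the three Hoffman graphs displayed in the statement, which is exactly the assertion.

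The main obstacle is concentrated in two places. The first is the quasi-clique estimate used to force $\mathfrak g$ to be $2$-fat: one must control, uniformly in the growing parameter $q$, how many neighbours of a vertex can escape a single quasi-clique, using only the absence of induced $\widetilde K_{2m}$ and $K_{1,T}$, which is where the parameter $m$, the equivalence relation $\equiv_q^m$ and its plex-like quasi-cliques actually do the work, and which is what ultimately determines how large $\kappa_3$ must be (it is known to exceed $100$, cf.\ the remark after Theorem~\ref{mthmkyy3}). The second, and deeper, obstacle is the classification of the indecomposable $2$-fat Hoffman graphs with smallest eigenvalue at least $-3$ that can arise as factors: this is not elementary and relies on the full root-system machinery for $(-3)$-irreducible Hoffman graphs; without it one only recovers the qualitative conclusion of Theorem~\ref{intro2} rather than the explicit family in the statement above.
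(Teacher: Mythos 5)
Your first half follows exactly the route this survey sketches for the results of \cite{kyy1} (note the survey itself contains no proof of Theorem \ref{intro4}; it is quoted from \cite[Theorem 1.5]{kyy1}): take $m$ with $\lambda_{\min}(\widetilde{K}_{2m})<-3$, form the associated Hoffman graph $\mathfrak{g}(G,m,q)$, make it $(r,-3)$-nice via Theorem \ref{nicethm}, and then use condition (ii) to rule out a slim vertex having exactly one fat neighbour, the existence of at least one fat neighbour coming from Ramsey's theorem since $\lambda_{\min}(G)\geq-3$ forbids induced $K_{1,10}$ and $\widetilde{K}_{2m}$ (Lemma \ref{interelacing}). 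This correctly isolates why the weaker hypothesis $\bar{k}(\Delta_G(x))\leq k_G(x)-\kappa_3$ suffices for $t=2$, in contrast with the $1/t$ factor in Theorem \ref{intro2}. Two quantitative slips, both harmless because $q$ is fixed before $\kappa_3$: the number of neighbours of $x$ escaping $Q_{\mathfrak{g}}(f)$ is bounded by a Ramsey number depending on $q$ and on the claw bound coming from $\lambda_{\min}\geq-3$, not ``solely in terms of $m$''; and the almost-clique property of the quasi-clique likewise uses the absence of $K_{1,T}$, not only of $\widetilde{K}_{2m}$. (Also, the remark after Theorem \ref{mthmkyy3} concerns $\kappa_1$, not $\kappa_3$.)

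The genuine gap is the final step. You assert that $2$-fatness together with $(r,-3)$-niceness allows one to attach fat vertices and write the result as $\biguplus_i\mathfrak{g}'_i$ with each factor an induced Hoffman subgraph of a member of $\mathfrak{G}(2)$, citing ``the structural results of \cite{kyy1}''---but that decomposition is precisely the content of the theorem being proved, so as a standalone argument this is circular. Moreover, the machinery you invoke for it, the classification of maximal $(-3)$-irreducible Hoffman graphs via \cite{HJAT} and \cite{Koolen.2018}, is not the right tool: that classification concerns fat maximal $(-3)$-irreducible Hoffman graphs, is (as this survey notes) still open in the non-integrally-representable case, and in any event a list of maximal irreducible pieces does not by itself yield the required extension $\mathfrak{g}'$ with the same slim graph nor the sum structure of Lemma \ref{combi}. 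What is needed is the local-to-global argument of \cite{kyy1}: for a $2$-fat Hoffman graph all of whose induced Hoffman subgraphs on at most three slim vertices have smallest eigenvalue at least $-3$, every slim vertex has $2$ or $3$ fat neighbours, the off-diagonal entries of the special matrix lie in $\{0,\pm1\}$, the $+1$/$-1$ relations organize the slim vertices with two fat neighbours into cliques paired by $-1$-relations whose sizes are bounded by $t=2$ (this is where the bound $s_i\leq t$ in $\mathfrak{G}(2)$ comes from), and then new fat vertices are attached explicitly to realize the direct sum; without this construction the conclusion does not follow from $2$-fatness alone (e.g.\ a slim triangle whose vertices have pairwise disjoint fat neighbourhoods is $2$-fat with smallest eigenvalue $-3$ but is not an induced Hoffman subgraph of any member of $\mathfrak{G}(2)$; it only becomes a $\mathfrak{G}(2)$-line Hoffman graph after a fat vertex adjacent to all three slim vertices is added). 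A further small slip: $\mathfrak{G}(2)$ also contains the two-slim-vertex Hoffman graph whose slim vertices are non-adjacent with exactly one common fat neighbour (Hoffman graphs are not determined by their special matrices); fortunately it is an induced Hoffman subgraph of the pictured four-slim-vertex member, so the formulation in terms of the three displayed Hoffman graphs survives, but your enumeration of $\mathfrak{G}(2)$ should include it.
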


The following result is important in the proof of Theorem \ref{mthmkyy3}.
\begin{theorem}[{\cite[Theorem 5.3]{kyy3}}]\label{kyy3thm}
There exists a positive integer $\kappa_4$ such that, if $G$ is a graph with smallest eigenvalue at least $-3$ and minimal valency at least $\kappa_4$, then $G$ is the slim graph of a fat Hoffman graph with smallest eigenvalue at least $-3$.
\end{theorem}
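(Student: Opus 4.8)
The plan is to realize $G$ as the slim graph of its associated Hoffman graph $\mathfrak{g}=\mathfrak{g}(G,m,q)$ for a suitably chosen fixed pair $(m,q)$, and then to verify that $\mathfrak{g}$ is fat and has smallest eigenvalue at least $-3$; the large-valency hypothesis will be used only to guarantee fatness. To fix the parameters, first choose a positive integer $m$ with $\lambda_{\min}(\widetilde{K}_{2m})<-3$; since $\lambda_{\min}(G)\geq -3$, Lemma \ref{interelacing} then guarantees that $G$ has no induced $\widetilde{K}_{2m}$, so $\mathfrak{g}(G,m,q)$ is defined for $q\geq(m+1)^2$. Recalling from \cite{kyy3} that every fat-minimal Hoffman graph for $-3$ has at most $10$ slim vertices, apply Theorem \ref{nicethm} with $\lambda=-3$, this $m$, and $r=10$ to obtain a constant $N=N(-3,m,10)$; then fix $q:=\max\{(m+1)^2,N\}$ and set $\mathfrak{g}:=\mathfrak{g}(G,m,q)$. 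By Theorem \ref{nicethm}, $\mathfrak{g}$ is $(10,-3)$-nice, i.e.\ it contains no induced Hoffman subgraph with at most $10$ slim vertices and smallest eigenvalue less than $-3$.

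Next I would show that $\mathfrak{g}$ is fat once the minimal valency is large, which is where $\kappa_4$ enters. By Theorem \ref{Hoff1973}(i) (with $\lambda=3$) there is a constant $T=T(3)$ such that $G$ contains no induced $K_{1,T}$, so the local graph $\Delta_G(x)$ of every vertex $x$ has no independent set of size $T$. By Ramsey's theorem (the Bose-Laskar argument) there is a constant $\kappa_4=\kappa_4(q,T)$ such that whenever $k_G(x)=|V(\Delta_G(x))|\geq\kappa_4$, the graph $\Delta_G(x)$ contains a clique of order at least $q$; adding $x$ gives a clique of order at least $q+1$ through $x$, which extends to a maximal clique $C$ of order at least $q$. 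Then $x$ has no non-neighbor in $C$, so $x\in Q([C]_q^m)$, and hence $x$ is adjacent in $\mathfrak{g}$ to the fat vertex corresponding to the class $[C]_q^m$. As $m,q,T$ are all fixed, $\kappa_4$ is a constant, and for minimal valency at least $\kappa_4$ every slim vertex of $\mathfrak{g}$ has a fat neighbor, so $\mathfrak{g}$ is fat.

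It remains to prove $\lambda_{\min}(\mathfrak{g})\geq -3$. Suppose not. Since $\mathfrak{g}$ is fat with $\lambda_{\min}(\mathfrak{g})<-3$, the family of fat induced Hoffman subgraphs of $\mathfrak{g}$ with smallest eigenvalue less than $-3$ is non-empty; choose a member $\mathfrak{h}$ minimizing, lexicographically, the pair (number of slim vertices, number of fat vertices). Every proper fat induced Hoffman subgraph of $\mathfrak{h}$ has either fewer slim vertices, or the same number of slim vertices and fewer fat vertices, hence by the choice of $\mathfrak{h}$ has smallest eigenvalue at least $-3$; thus $\mathfrak{h}$ is fat-minimal for $-3$, and so $|V_{\mathrm{slim}}(\mathfrak{h})|\leq 10$. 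But $\mathfrak{h}$ is then an induced Hoffman subgraph of $\mathfrak{g}$ with at most $10$ slim vertices and $\lambda_{\min}(\mathfrak{h})<-3$, contradicting the $(10,-3)$-niceness of $\mathfrak{g}$. Therefore $\lambda_{\min}(\mathfrak{g})\geq -3$, and as the slim graph of $\mathfrak{g}$ is $G$, this exhibits $G$ as the slim graph of the fat Hoffman graph $\mathfrak{g}$ with smallest eigenvalue at least $-3$, completing the proof.

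The substantial inputs — the associated-Hoffman-graph machinery of Proposition \ref{assohoff} and Theorem \ref{nicethm}, together with the classification from \cite{kyy3} bounding the number of slim vertices of a fat-minimal Hoffman graph for $-3$ by $10$ — are assumed, so the argument above is mostly an assembly. Within it, the steps demanding the most care are the Ramsey/Bose-Laskar step forcing $\mathfrak{g}$ to be fat (the only place the large-valency hypothesis is used, and what pins down $\kappa_4$) and the minimality argument locating a fat-minimal induced Hoffman subgraph inside a fat Hoffman graph of smallest eigenvalue less than $-3$.
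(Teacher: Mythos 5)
Your proposal is correct, and it follows essentially the route behind the cited result: the survey states this theorem without proof (citing \cite[Theorem 5.3]{kyy3}), but the machinery it develops for exactly this purpose --- Theorem \ref{nicethm} via Proposition \ref{assohoff}, the bound of $10$ slim vertices for fat-minimal Hoffman graphs for $-3$, and a Ramsey/claw-free argument to force fatness of the associated Hoffman graph --- is precisely what you assemble, including the lexicographic-minimality step producing a fat-minimal subgraph. The only caveat, which you already acknowledge, is that the $10$-slim-vertex classification is an imported input from \cite{kyy3} rather than something you prove.
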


We now show how to obtain Theorem \ref{mthmkyy3} from Theorem \ref{kyy3thm}. Assume that $G$ is the slim graph of the fat Hoffman graph $\mathfrak{h}$ with smallest eigenvalue at least $-3$. Without loss of generality, we may assume $-3\leq\lambda_{\min}(G)<-2$ by Theorem \ref{thmcam}. Following from Lemma \ref{rela}, we find that $\mathfrak{h}$ has a reduced representation of norm $3$ and denote it by $\psi$. Then for each vertex $x$ of $G$, we let $N_x:=\psi(x)+\sum\limits_{f\sim x,f\in V_{\mathrm{fat}}(\mathfrak{h})}\mathbf{e}_f$, where $\{\mathbf{e}_f\mid f\in V_{\mathrm{fat}}(\mathfrak{h})\}$ is a set of orthonormal integral vectors which are orthogonal to all of the vectors in the set $\{\psi(y)\mid y\in V(G)\}$. It is not hard to check that the matrix $N$ with $N_x$'s as its columns satisfies the equation $A(G)+3I=N^TN$. Thus \[\Lambda(G)=\langle N_x\mid x\in V(G)\rangle_{\mathbb{Z}}=\langle \psi(x)+\sum_{\substack{f\sim x,\\f\in V_{\mathrm{fat}}(\mathfrak{h})}}\mathbf{e}_f\mid x\in V(G)\rangle_{\mathbb{Z}}.\]
This implies that the integrability of the integral lattice $\Lambda(G)$ depends on the integrability of the integral lattice $\Lambda^{\text{red}}(\mathfrak{h},3):=\langle \psi(x)\mid x\in V(G)\rangle_{\mathbb{Z}}$. Note that for each $x$, the norm of the vector $\psi(x)$ is at most $2$, as $\mathfrak{h}$ is fat. Hence the integral lattice $\Lambda^{\text{red}}(\mathfrak{h},3)$ is the direct sum of $E_i$'s, $D_i$'s, $A_i$'s and $\mathbb{Z}^q$ where $q$ is a non-negative integer. As each of these lattices is
$s$-integrable (see \cite[Corollary 23]{CS}) for any $s\geq2$, Theorem \ref{kyy3thm} holds.

\section{Distance-regular graphs}\label{sec:drg}

In this section, we give some results on distance-regular graphs that have fixed smallest eigenvalue or whose local graphs have fixed smallest eigenvalue.  We start with some definitions.
\subsection{Definitions}
Let $G$ be a $k$-regular graph of order $n$. If every pair of adjacent vertices of $G$ has exactly $a$ common neighbors, and every pair of distinct and non-adjacent vertices of $G$ has exactly $c$ common neighbors, then $G$ is called \emph{strongly regular} with parameters $(n, k, a, c)$.

Let $G$ be a connected graph with diameter $D$. For each vertex $x$ of $G$, denote by $G_i(x)$ the set of vertices at distance precisely $i$ from $x$, where $0\leq i\leq D$. For any two vertices $x$ and $y$, denote by $d(x,y)$ the distance between $x$ and $y$.
The graph $G$ is called \emph{distance-regular} if there are positive integers \[b_0,b_1,\ldots,b_{D-1},c_1=1,c_2,\ldots,c_D\] such that for any two vertices $x$ and $y$ with $d(x,y)=i$, $|G_{i-1}(x)\cap G_1(y)|=c_i$ ($1\leq i\leq D$) and
$|G_{i+1}(x)\cap G_1(y)|=b_i$ ($0\leq i\leq D-1$). Set $b_D=c_0=0$. The numbers $b_i,c_i$ and $a_i:=b_0-b_i-c_i$ ($0\leq i\leq D$) are called the \emph{intersection numbers} of $G$. The array $\iota(G):=\{b_0,b_1,\ldots,b_{D-1};c_1,c_2,\ldots,c_D\}$ is called the \emph{intersection array} of $G$. Note that a distance-regular graph of diameter $2$ and order $n$ is a strongly regular graph with parameters $(n, b_0, a_1, c_2)$.

Let $G$ be a distance-regular graph with valency $k$ and smallest eigenvalue $\lambda_{\min}(G)$.  As observed by Godsil, the order of a clique $C$ in
$G$ satisfies the Delsarte bound, that is,
$$ |V(C)|\leq 1+ \frac{k}{-\lambda_{\min}(G)},$$
and we say that $C$ is a \emph{Delsarte clique} if its order equals $1 + \frac{k}{-\lambda_{\min}(G)}$.
We say that $G$ is \emph{geometric} if there exists a family $\mathcal{F}$ of Delsarte cliques in $G$ such that each edge of $G$ lies in exactly one of the Delsarte cliques of $\mathcal{F}$. Although the family $\mathcal{F}$ of Delsarte cliques in geometric distance-regular graphs is not always unique, usually it is.

\subsection{Strongly regular graphs with fixed smallest eigenvalue}\label{sec:strongly regular graphs}

It is well-known that a connected strongly regular graph such that its complement is disconnected is complete multipartite (see \cite[Lemma 10.1.1]{GD01}). So in this subsection, we will deal with connected strongly regular graphs whose complements are also connected, that is, the so-called \emph{primitive} strongly regular graphs.

We start this subsection with two important families of geometric primitive strongly regular graphs.

{\bf Family 1:} A \emph{Steiner system} $S(2,t,v)$ is a pair $(\mathcal{P},\mathcal{B})$, where $\mathcal{P}$ is a
$v$-element set called a \emph{point set} and $\mathcal{B}$ is a family of $t$-element subsets of $\mathcal{P}$ called a \emph{block set} such that each $2$-element subset of $\mathcal{P}$ is contained in exactly one block. Take a Steiner system $S(2,t,v)$. Construct a graph as follows: its vertex set is the block set of this Steiner system, and two blocks are adjacent whenever they intersect in one point. This gives a geometric strongly regular graph with parameters $(\frac{v(v-1)}{t(t-1)},\frac{t(v-t)}{t-1},(t-1)^2+\frac{v-1}{t-1}-2, t^2)$ (see \cite[p.~5]{Cioaba.2014}).
We call this graph the \emph{block graph} of the Steiner system and call this family the \emph{Steiner family}.

{\bf Family 2:} An \emph{orthogonal array} with parameters $t$ and $v$ is a $t \times v^2$ array with entries in $\{ 1, 2, \ldots, v\}$ such that the $v^2$ ordered pairs in any two distinct rows are all different. We denote an orthogonal array with these parameters by $OA(v,t)$. Note that an $OA(v, 3)$ is equivalent to a Latin square. Take an $OA(v,t)$ orthogonal array $\mathcal O$. Construct a graph as follows: its vertex set is the set of columns of $\mathcal O$, and two columns are adjacent whenever they have the same entries in (exactly) one position. This gives a geometric strongly regular graph with parameters $(v^2,(v-1)t,v-2+(t-1)(t-2),t(t-1))$ (see \cite[Theorem 10.4.2]{GD01}). We call this family the \emph{Latin square family}.

Neumaier \cite{Neumaier.1979} showed the following theorem for strongly regular graphs with fixed smallest eigenvalue.

\begin{theorem}[cf.~{\cite[Theorem 4.7]{Neumaier.1979}}]\label{neuthm}
Let $G$ be a primitive strongly regular graph with parameters $(n,k,a,c)$ and smallest eigenvalue $-\lambda$, where $\lambda\geq2$ is an integer. If
$(\lambda+1)(a+1)-k>(c-1)\binom{\lambda+1}{2}$, then one of the following holds:
\begin{enumerate}
\item $G$ is in the Latin square family with parameters $((s+1)^2,s\lambda,s-1+(\lambda-1)(\lambda-2),\lambda(\lambda-1))$, where $s$ is a positive integer;
\item $G$ is in the Steiner family with parameters $(n,s\lambda,s-1+(\lambda-1)^2,\lambda^2)$, where $s$ is a positive integer and $n=(s+1)(s(\lambda-1)+\lambda)/\lambda$.
\end{enumerate}
\end{theorem}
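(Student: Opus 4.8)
This is Neumaier's theorem, and its proof is the first instance of what is now called the \emph{claw bound}: the displayed inequality is exactly the condition under which the local structure of $G$ forces it to be \emph{geometric}, after which the two families drop out of the resulting partial geometry. I would carry the argument out in three stages.

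\textbf{Stage 1: forbidden local configurations and large cliques.} Put $s+1:=1+k/\lambda$ for the Delsarte bound on clique orders recalled above. Fix a vertex $x$; its local graph $\Delta_G(x)$ is $a$-regular on $k$ vertices, and $\lambda_{\min}(G)\ge-\lambda$ constrains it through Lemma~\ref{interelacing}. A coclique of $\Delta_G(x)$ of order $p$ together with $x$ induces $K_{1,p}$, with smallest eigenvalue $-\sqrt{p}$, so cocliques of $\Delta_G(x)$ have order at most $\lambda^2$; and if $x$ lies in $\ell$ cliques of common order $q$ pairwise meeting only in $x$, an equitable-partition computation gives smallest eigenvalue $\tfrac12\bigl((q-2)-\sqrt{(q-2)^2+4\ell(q-1)}\bigr)$ for the graph they induce, whence $\ell(q-1)\le\lambda(q+\lambda-2)$ (the graphs $\widetilde K_{2t}$, whose smallest eigenvalues tend to $-\infty$, are the prototype of such obstructions). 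Since $G$ thus contains no induced $K_{1,\lambda^2+1}$, a Bose--Laskar type argument applied inside each $\Delta_G(x)$ produces cliques of order a fixed fraction of the Delsarte bound, so every edge of $G$ lies in a reasonably large clique.

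\textbf{Stage 2: geometricity.} This is where the hypothesis $(\lambda+1)(a+1)-k>(c-1)\binom{\lambda+1}{2}$ is spent. For a clique $C$, using the double counts $\sum_{y\notin C}|C\cap\Gamma(y)|=|C|\,(k-|C|+1)$ and $\sum_{y\notin C}|C\cap\Gamma(y)|\bigl(|C\cap\Gamma(y)|-1\bigr)=|C|\,(|C|-1)(a-|C|+2)$ together with the bound $\ell(q-1)\le\lambda(q+\lambda-2)$ from Stage~1, one shows: (i) every edge lies in a clique of the exact Delsarte order $s+1$; (ii) two distinct Delsarte cliques meet in at most one vertex, since otherwise the common edge would lie in two Delsarte cliques and their union would contain a $\widetilde K$-type subgraph with smallest eigenvalue below $-\lambda$, the borderline case being exactly what the inequality excludes; (iii) hence the Delsarte cliques through $x$ partition $\Delta_G(x)$, so the Delsarte cliques partition $E(G)$ and each vertex lies in exactly $\lambda$ of them. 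Thus $G$ is geometric; since the hypothesis also forces $s$ to be large, Bose's theorem applies to the incidence structure with points $V(G)$ and lines the Delsarte cliques, and it is a partial geometry $pg(s,t,\alpha)$ with $t+1=\lambda$, the axiom on non-incident point--line pairs being forced by the constancy of $a$ and $c$.

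\textbf{Stage 3: parameters and identification.} The point graph of $pg(s,\lambda-1,\alpha)$ has $k=s\lambda$, $a=s-1+(\lambda-1)(\alpha-1)$, $c=\alpha\lambda$ and $n=(s+1)\bigl(s(\lambda-1)+\alpha\bigr)/\alpha$, with $1\le\alpha\le\lambda$; substituting into the hypothesis and simplifying via $k=\lambda a+\lambda^2-(\lambda-1)c$ turns it into $2s>(\lambda+1)\bigl[(\lambda^2-2\lambda+2)\alpha+\lambda-2\bigr]$, so $s$ is at least of order $\lambda^3$. For $s$ this large the structure theory of partial geometries (Bose; De Clerck--Thas) forces $\alpha\in\{1,\lambda-1,\lambda\}$, and $\alpha=1$, a generalized quadrangle $GQ(s,\lambda-1)$, is excluded by Higman's inequality $s\le(\lambda-1)^2$. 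Hence either $\alpha=\lambda-1$, where the geometry is a net of order $s+1$ and degree $\lambda$ and $G$ has parameters $\bigl((s+1)^2,\,s\lambda,\,s-1+(\lambda-1)(\lambda-2),\,\lambda(\lambda-1)\bigr)$ — the Latin square family; or $\alpha=\lambda$, where the geometry is the pencil geometry of a Steiner system $S(2,\lambda,v)$ with $v=s(\lambda-1)+\lambda$ and $G$ has parameters $\bigl(n,\,s\lambda,\,s-1+(\lambda-1)^2,\,\lambda^2\bigr)$ — the Steiner family. This is the conclusion.

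\textbf{The main obstacle.} Stage~1 is routine interlacing and Stage~3 is essentially bookkeeping plus citation of partial-geometry theory; the real content is Stage~2, namely converting the single numerical inequality into the rigid combinatorial statement that $E(G)$ partitions into Delsarte cliques. One has to extract from the spectral bound just enough $K_{1,p}$ and $\widetilde K$-type forbidden subgraphs and balance them against the strong-regularity counts; the passage from a merely large clique to one of full Delsarte order, and the proof that no edge lies in two of them, is precisely where the hypothesis is used, and making the bookkeeping tight enough that the exceptional graphs are eliminated outright — rather than merely bounded in size — is the delicate point.
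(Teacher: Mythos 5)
Your overall architecture matches the route the paper attributes to Neumaier: first use the displayed inequality to force geometricity (his Theorem 4.6, your Stages 1--2, which are a reasonable sketch of the Bose--Laskar grand-clique argument, and your Stage 1 and Stage 3 computations -- the cone eigenvalue, the identity $k=\lambda a+\lambda^2-(\lambda-1)c$, the partial-geometry parameters, and the reduction of the hypothesis to $2s>(\lambda+1)[(\lambda^2-2\lambda+2)\alpha+\lambda-2]$ -- all check out), and then determine the geometric graphs satisfying the inequality. The problem is in Stage 3, at exactly the point where the determination has to happen. Once $G$ is the point graph of a $pg(s,\lambda-1,\alpha)$ with $\alpha=c/\lambda\in\{1,\ldots,\lambda\}$, the hypothesis amounts to nothing more than a lower bound on $s$ depending on $\lambda$ and $\alpha$; it does not by itself exclude any value of $\alpha$. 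You dispose of the range $2\le\alpha\le\lambda-2$ by asserting that ``for $s$ this large the structure theory of partial geometries (Bose; De Clerck--Thas) forces $\alpha\in\{1,\lambda-1,\lambda\}$.'' No such theorem exists in those sources: De Clerck--Thas is a catalogue of examples and sporadic nonexistence results, Bose's theorem is about pseudo-geometric graphs being geometric, and the standard citable inequalities cover only part of the range (Higman's inequality handles $\alpha=1$, as you note, and the Krein condition applied to the dual geometry gives $s(\lambda-2\alpha)\le(\lambda-2)(\lambda-\alpha)^2$, which only bites when $\alpha<\lambda/2$). For $\lambda/2\le\alpha\le\lambda-2$ (so already for $\lambda\ge4$) nothing you cite rules out a proper partial geometry $pg(s,\lambda-1,\alpha)$ with $s$ above your threshold, and the statement that none exists is, in substance, precisely the second step of Neumaier's theorem that you are supposed to be proving; invoking it as known background is circular.

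So the gap is concrete: the elimination of $c=\alpha\lambda$ with $2\le\alpha\le\lambda-2$ requires an argument (Neumaier supplies one, combining the claw-bound inequality with further counting inside the geometry), and your proposal replaces that argument with a citation to a classification that does not exist. For $\lambda\in\{2,3\}$ your sketch is essentially complete, since no intermediate $\alpha$ occurs and the generalized quadrangle case is killed by Higman's bound together with your lower bound on $s$; for $\lambda\ge4$ the theorem is not proved as written. A secondary, lesser point: in Stage 2 the hypothesis is spent primarily on excluding induced $(\lambda+1)$-claws (a count over the $\binom{\lambda+1}{2}$ nonadjacent pairs of leaves and their at most $c-1$ common neighbours inside $\Gamma(x)$), rather than on the two-cliques-sharing-an-edge configuration you describe; as a sketch this is tolerable, but it should be stated as the claw bound to connect correctly with the grand-clique construction.
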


He showed this theorem in two steps. The first step was to show the following result.

\begin{theorem}[cf.{\cite[Theorem 4.6]{Neumaier.1979}}]\label{srggeometric2}
Let $G$ be a primitive strongly regular graph with parameters $(n,k,a,c)$ and smallest eigenvalue $-\lambda$, where $\lambda\geq2$ is an integer. If $(\lambda+1)(a+1)-k>(c-1)\binom{\lambda+1}{2}$, then $G$ is geometric.
\end{theorem}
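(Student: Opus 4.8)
The plan is to construct directly a family of Delsarte cliques covering each edge of $G$ exactly once; by the definition of ``geometric'' recalled above, this is all that is needed. Write $m:=\lambda\ (\ge 2)$, so $-m=\lambda_{\min}(G)$, and recall the Delsarte bound: every clique of $G$ has at most $1+k/m$ vertices, with equality characterising a Delsarte clique. For an edge $xy$ put $S_{xy}:=N(x)\cap N(y)$, so $|S_{xy}|=a$; fix a threshold $\beta=\beta(\lambda,a,c)$ chosen slightly below $\tfrac12(a-c)$, and define the \emph{line through $xy$} to be
\[
L(xy):=\{x,y\}\cup\bigl\{\, z\in S_{xy} \;:\; z\text{ is non-adjacent to fewer than }\beta\text{ vertices of }S_{xy} \,\bigr\}.
\]
In a partial geometry these would be exactly the common neighbours of $x$ and $y$ that lie on the line carrying the edge $xy$; turning this picture into a theorem is what the hypothesis is for.

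First I would check that $L(xy)$ is a clique, for which the only point to verify is that two vertices $z,z'$ of $L(xy)\cap S_{xy}$ are adjacent. If they were not, they would have exactly $c$ common neighbours; but an inclusion--exclusion count inside $S_{xy}$, using that each of $z,z'$ misses fewer than $\beta$ vertices of $S_{xy}$ and that $x,y$ are themselves common neighbours, shows that $z$ and $z'$ have more than $a-2\beta$ common neighbours in all, so $c>a-2\beta$, whereas $\beta<\tfrac12(a-c)$ forces $a-2\beta>c$ --- a contradiction. A quantitative refinement of the same estimate shows moreover that $|L(xy)|$ is large, larger than the size below which a maximal clique of $G$ is too small to serve as a line; tracking this bookkeeping, the precise inequality that makes everything go through is exactly $(\lambda+1)(a+1)-k>(c-1)\binom{\lambda+1}{2}$ (here one uses that $G$ is primitive, hence not complete multipartite). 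This calibration is the step I expect to be the main obstacle.

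Next I would show that $\{L(xy):xy\in E(G)\}$, with repetitions discarded, is a family of Delsarte cliques each edge of $G$ lying on exactly one of them. Using the same kind of common-neighbour estimate one checks in turn: each $L(xy)$ is a maximal clique; if $uv$ is an edge of $L(xy)$ then $L(uv)=L(xy)$, so a line depends only on the clique and not on the chosen edge; and no edge lies on two distinct lines --- this last being the claw bound proper, because a vertex of one of two large cliques sharing an edge, lying outside the other, would produce a non-adjacent pair with more than $c$ common neighbours, once more excluded by the displayed inequality. Finally, counting the edges at a single vertex (each on a unique line) shows that all lines have a common size $q$ and every vertex lies on $k/(q-1)$ of them; combining this with the Delsarte bound $q\le 1+k/m$ via a short eigenvalue count forces $q=1+k/m$, so each line is a Delsarte clique.

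Collecting these facts, the lines form a family of Delsarte cliques partitioning $E(G)$, which is exactly the statement that $G$ is geometric. The routine parts are the several inclusion--exclusion counts and the closing eigenvalue computation; the delicate part is choosing the threshold $\beta$ --- equivalently, the threshold size defining a line --- so that ``$L(xy)$ is a large clique'' and ``distinct lines do not cross'' become simultaneous consequences of the single inequality in the statement, which is the content of Neumaier's claw bound.
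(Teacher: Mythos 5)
The paper itself gives no proof of this theorem; it is quoted from Neumaier's 1979 paper, so the only question is whether your sketch would stand on its own. It would not, as it stands: the one place where the hypothesis $(\lambda+1)(a+1)-k>(c-1)\binom{\lambda+1}{2}$ must do its work is precisely the step you defer (``tracking this bookkeeping \dots is the step I expect to be the main obstacle''). In Neumaier's argument the inequality enters through the claw count: if some vertex $x$ had $\lambda+1$ pairwise non-adjacent neighbours $y_1,\dots,y_{\lambda+1}$, then inclusion--exclusion inside $N(x)$ (each $y_i$ accounts for $a+1$ vertices of $N(x)\cup\{y_i\}$, two non-adjacent $y_i,y_j$ share at most $c-1$ common neighbours besides $x$) gives $k\geq(\lambda+1)(a+1)-\binom{\lambda+1}{2}(c-1)$, contradicting the hypothesis. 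Hence every local graph has independence number at most $\lambda$, and it is this fact --- not the threshold $\beta\approx\tfrac12(a-c)$ --- that drives both the existence of a large (``grand'') clique through every edge and the bound of at most $\lambda$ such cliques through each vertex. Your sketch never derives this, and consequently the three claims you assert without proof are exactly the content of the theorem: that $L(xy)$ is large (nothing in your definition prevents $L(xy)=\{x,y\}$; note also that $a-c=\lambda_1-\lambda$ can be small or negative, so the threshold $\beta<\tfrac12(a-c)$ may be vacuous), that all lines have a common size $q$, and that each vertex lies on exactly $\lambda$ lines so that $q-1=k/\lambda$.

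The correct closing step, for comparison, is: once one knows each vertex lies on at most $\lambda$ grand cliques and these cliques partition the edges at that vertex, then $\sum_{L\ni x}(|L|-1)=k$ with at most $\lambda$ summands each bounded by $k/\lambda$ (Delsarte), forcing every summand to equal $k/\lambda$; even here the bound ``at most $\lambda$ lines through $x$'' requires selecting pairwise non-adjacent representatives from distinct maximal cliques through $x$, a counting argument that again rests on the claw-freeness you have not established. So the skeleton (lines, non-crossing, Delsarte count) is the right shape, but the proposal is missing the inclusion--exclusion claw bound that converts the displayed parameter inequality into structural information, and without it none of the quantitative claims can be checked.
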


As step two, he determined the geometric strongly regular graphs with parameters $(n,k,a,c)$ and smallest eigenvalue $-\lambda$, where $\lambda\geq2$ is an integer, such that
$(\lambda+1)(a+1)-k>(c-1)\binom{\lambda+1}{2}$.

A non-complete connected regular graph is strongly regular if and only if it has exactly $3$ distinct eigenvalues (see \cite[Lemma 10.2.1]{GD01}). Now we discuss some results on connected non-regular graphs with $3$ distinct eigenvalues.

In \cite{vanDam.1998}, Van Dam determined the connected non-regular graphs with $3$ distinct eigenvalues with smallest eigenvalue at least $-2$.

In \cite{Cheng.2018}, Cheng, Greaves and Koolen determined the connected non-regular graphs with $3$ distinct eigenvalues with second largest eigenvalue at most $1$.

Motivated by Neumaier's theorem, that is Theorem \ref{neuthm}, Cheng, Gavrilyuk, Greaves, and Koolen \cite{Cheng.2016} showed that for any $\lambda\geq 2$, there exists a constant $n_1(\lambda)$
such that any connected non-bipartite biregular graph, with exactly $3$ distinct eigenvalues $\lambda_0 > \lambda_1 > \lambda_2$ satisfying $\lambda_1 \leq \lambda$, has at most $n_1(\lambda)$ vertices. In
the same paper, Cheng et al. asked whether the conclusion holds if the condition $\lambda_1\leq \lambda$ is replaced by $\lambda_2 \geq -\lambda$. Cheng and Koolen \cite{Cheng.2017} answered this question in the affirmative. This latter paper used the ideas of Hoffman, without actually introducing Hoffman graphs.

\subsection{Distance-regular graphs with fixed smallest eigenvalue}

There are four important families of geometric distance-regular graphs: the Hamming graphs, the Johnson graphs, the Grassmann graphs and the bilinear forms
graphs. One of the reasons that they are important is that for fixed diameter there are infinitely many graphs in each of these four families.

Let $D\geq1$ and $q\geq2$ be integers. Let $X$ be a finite set of size $q$. The \emph{Hamming graph} $H(D,q)$ is the graph with the vertex set $X^D:=\prod_{i=1}^DX$ (the Cartesian product of $D$ copies of $X$), where two vertices are adjacent whenever they differ in precisely one coordinate. In particular, if $D=2$, then a Hamming graph $H(2,q)$ is the line graph of a complete bipartite graph $K_{q,q}$. We also call a Hamming graph $H(2,q)$ a \emph{($q\times q$)-grid}.

Let $v,D$ be integers. Let $X$ be a finite set of size $v$. The \emph{Johnson graph} $J(v,D)$ is the graph with vertex set $\binom{X}{D}$, the set of all $D$-element subsets of $X$, where two vertices are adjacent whenever they intersect in precisely $D-1$ elements. In particular, if $D=2$, then a Johnson graph $J(v,2)$ is the line graph of
a complete graph $K_v$. We also call a Johnson graph $J(v,2)$ a \emph{triangular graph} and denote it by $T(v)$. (Since the Johnson graph $J(v,D)$ is isomorphic to the Johnson graph $J(v,v-D)$, we always assume $v\geq2D$.)

Let $v,D$ be integers such that $v\geq D$ and $q$ a prime power. Let $\mathbb{F}$ be a finite field of order $q$ and $V$ a $v$-dimensional vector space over $\mathbb{F}$. The \emph{Grassmann graph} $J_q(v,D)$ is the graph with the set of all $D$-dimensional subspaces of $V$ as the vertex set, where two vertices are adjacent whenever their intersection is a $(D-1)$-dimensional subspace of $V$. (Since the Grassmann graph $J_q(v,D)$ is isomorphic to the Grassmann graph $J_q(v,v-D)$, we will also always assume $v\geq2D$.)

Let $D,e$ be integers such that $e\geq D$ and $q$ a prime power. Let $\mathbb{F}$ be a finite field of order $q$. The \emph{bilinear forms graph} $Bil(D\times e,q)$ is the graph with the set of all $D\times e$ matrices over $\mathbb{F}$ as the vertex set, where two vertices are adjacent whenever the rank of their difference equals one.

Koolen and Bang \cite{Koolen.2010} generalized Theorem \ref{srggeometric2} to the class of distance-regular graphs under the extra condition that $c_2 \geq 2$, by showing:

\begin{theorem}[cf.~{\cite[Theorem 1.3]{Koolen.2010}}]\label{thm:nongeometric}
For given $\lambda \geq 2$, there are only finitely many non-geometric distance-regular graphs with both valency and diameter at least $3$, $c_2 \geq 2$ and smallest eigenvalue at least $-\lambda$.
\end{theorem}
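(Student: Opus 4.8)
The plan is to combine the Hoffman-graph machinery developed in Sections \ref{sec:hoffman graphs}--\ref{sec:associated hoffman graphs} with the fact that a distance-regular graph with $c_2 \geq 2$ has a lot of local regularity. First I would argue that, for fixed $\lambda$, we may assume the valency $k$ is large: if $k$ is bounded then there are only finitely many such distance-regular graphs (bounded valency and bounded smallest eigenvalue force, via standard Biggs--Smith--type arguments on feasible intersection arrays, only finitely many graphs of each diameter, and the diameter is itself bounded in terms of $k$ for distance-regular graphs that are not "too sparse"). So the heart of the matter is the large-valency regime, where the Hoffman-graph techniques apply. Note also that by Theorem \ref{thmcam} and Theorem \ref{thmhoff77}, if $\lambda < 1 + \sqrt{2}$ the conclusion is classical, so the content is really for integer $\lambda \geq 2$.

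Next I would use the hypothesis $c_2 \geq 2$ to control the local structure. The key point is that a distance-regular graph $G$ with $c_2 \geq 2$ cannot contain $\widetilde{K}_{2m}$ as an induced subgraph once $m$ is large: an induced $\widetilde{K}_{2m}$ has a pair of non-adjacent vertices (the apex and a non-neighbor in the clique) with at least $t-1$ common neighbors, and bounding their number of common neighbors by $\max\{a_1, c_2\}$ together with $\lambda_{\min}(G) \geq -\lambda$ (which via Theorem \ref{Hoff1973}(i) already forbids $\widetilde{K}_{2T}$ and $K_{1,T}$ for $T = T(\lambda)$) pins down $m = m(\lambda)$. Then I would form the associated Hoffman graph $\mathfrak{g} = \mathfrak{g}(G, m, q)$ for a suitable $q = q(\lambda)$ and invoke Theorem \ref{nicethm} to conclude $\mathfrak{g}$ is $(r,\lambda)$-nice for $r = r(\lambda)$ of our choosing. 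Quasi-cliques of $\mathfrak{g}$ correspond to (near-)Delsarte cliques of $G$; the crux is to show that every edge of $G$ lies in such a quasi-clique, i.e.\ that $\mathfrak{g}$ is fat, and that each quasi-clique is an actual Delsarte clique of the right size — this is where geometricity comes from. The regularity of $c_2$ is exactly what forces the non-edges between quasi-cliques to behave uniformly, so that the family of Delsarte cliques one extracts partitions the edge set.

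The main obstacle, as in Neumaier's original argument (Theorem \ref{srggeometric2}) and its distance-regular analogue, is the \emph{boundary case}: ruling out, for large valency, the configurations where a quasi-clique fails to be a genuine Delsarte clique or where two quasi-cliques overlap. Concretely, one must show that the exceptional local Hoffman subgraphs (those of small smallest eigenvalue with few slim vertices, which the $(r,\lambda)$-nice condition has excluded from $\mathfrak{g}$) really cannot occur in $G$ when $k$ is large — and then count, using $c_2 \geq 2$ and the intersection-array constraints, to force the clique structure. Once $G$ is shown to be geometric for all but finitely many choices, a counting/eigenvalue-interlacing argument (an Alon--Boppana-type bound on the number of cliques through a vertex, combined with the Delsarte bound $|V(C)| \leq 1 + k/\lambda$) completes the proof. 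I would expect the write-up to lean heavily on \cite[Theorem 1.3]{Koolen.2010} for the technical heart, with the excerpt's Theorem \ref{nicethm} and Proposition \ref{assohoff} supplying the reduction to a finite Hoffman-graph analysis.
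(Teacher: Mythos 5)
The statement you were asked to prove is, in this survey, a quoted result: the paper gives no proof beyond the citation to \cite[Theorem 1.3]{Koolen.2010} (its own contribution is the subsequent removal of the hypothesis $c_2\geq 2$ in Theorem \ref{thm:nongeometricnew}, via the order-$(s,t)$ argument and the Bannai--Ito theorem). Your proposal, however, does not constitute a proof of it: the final sentence defers ``the technical heart'' to \cite[Theorem 1.3]{Koolen.2010}, i.e.\ to the very statement being proved, so the argument is circular. The real content --- showing that once the valency exceeds a function of $\lambda$, every edge lies in a Delsarte clique and these cliques can be chosen to partition the edge set --- is only gestured at (``this is where geometricity comes from''); no claw--clique counting or Hoffman-graph analysis is actually carried out, and nothing in your sketch uses $c_2\geq 2$ in a way that would produce the clique partition. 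Incidentally, Koolen and Bang's actual argument is of Bose--Laskar (claw--clique) type rather than via associated Hoffman graphs, so even the intended route differs from the source on which you ultimately lean.

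Two further steps are shaky. First, the bounded-valency case: finiteness for fixed valency $k$ and fixed $\lambda$ is not a ``standard Biggs--Smith-type'' observation, because one must bound the diameter in terms of $k$; for general distance-regular graphs this is precisely the Bannai--Ito conjecture (now a theorem, \cite[Theorem 1.1]{Bang.2015}), which the survey itself must invoke when it treats $c_2=1$, or else one needs a $c_2\geq 2$-specific diameter bound (e.g.\ a Terwilliger-type quadrangle bound); you supply neither. Second, your argument that $c_2\geq 2$ excludes induced $\widetilde{K}_{2m}$ for large $m$ runs backwards: an induced $\widetilde{K}_{2m}$ produces a non-adjacent pair with at least $m$ common neighbors and hence forces $c_2\geq m$, which bounds nothing since $c_2$ is not assumed bounded. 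The correct and sufficient statement is the one you mention only parenthetically: $\lambda_{\min}(G)\geq-\lambda$ alone forbids induced $\widetilde{K}_{2m}$ for $m\geq m(\lambda)$, by Theorem \ref{Hoff1973}(i), independently of $c_2$. In short, the proposal is an outline whose crucial steps are either missing or borrowed from the theorem itself.
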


Now we are going to show that in Theorem \ref{thm:nongeometric}, the condition $c_2\geq2$ can be removed. Suppose that $G$ is distance-regular with $c_2=1$. It is not hard to see that $G$ has no induced $K_{2,1,1}$'s, and thus $G$ is locally the disjoint union of $(a_1 +1)$-cliques, that is, the local graph of $G$ at each vertex is the disjoint union of $(a_1 +1)$-cliques (see \cite[Propositon 1.2.1]{Bang.2018}).
We say that a distance-regular graph is of \emph{order} $(s,t)$, if it is locally the disjoint union of $t+1$ $K_s$'s. The next lemma gives a sufficient condition for a distance-regular graph of order $(s,t)$ to be geometric.

\begin{lemma}[{cf.~\cite[Proposition 3.1.6]{Suzuki.1999}},{\cite[Corollary 7.7]{vanDam.2016}}]\label{geomlemma}
Let $G$ be a distance-regular graph of order $(s, t)$ with diameter $D \geq 2$. If $s>t$, then $G$ is geometric.
\end{lemma}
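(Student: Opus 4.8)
The plan is to produce the required family $\mathcal{F}$ of Delsarte cliques directly from the local structure, and then to verify the Delsarte equality by a rank/counting argument in which the hypothesis $s>t$ enters. Since $G$ has order $(s,t)$, it is $k$-regular with $k=(t+1)s$, and for every vertex $x$ the local graph $\Delta_G(x)$ is the disjoint union of exactly $t+1$ components, each a $K_s$. For a vertex $x$ and a component $K$ of $\Delta_G(x)$, call the $(s+1)$-clique $\{x\}\cup K$ a \emph{line}, and let $\mathcal{F}$ be the set of all lines. The first (and main) step is to show that $\mathcal{F}$ is intrinsic: each edge lies in exactly one line, and each line is ``seen the same way'' from each of its vertices. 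The crucial input is the triviality that in a disjoint union of cliques two vertices are adjacent if and only if they lie in the same component. Concretely, if $\{x,y\}\in E(G)$ and $K$ is the component of $y$ in $\Delta_G(x)$, then for any $z\in K\setminus\{y\}$ the vertices $x$ and $z$ both lie in $\Delta_G(y)$ and are adjacent, hence lie in a common component $K'$ of $\Delta_G(y)$; a short count (both $|K|=|K'|=s$) gives $\{x\}\cup K=\{y\}\cup K'$. Thus the line through an edge is well-defined and unique, and each vertex lies on exactly $t+1$ lines.

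Next I would encode the incidence in matrix form. Let $N$ be the $V(G)\times\mathcal{F}$ point--line incidence matrix. Since two distinct vertices lie on a common line iff they are adjacent (and then on a unique one), and every vertex lies on $t+1$ lines, one obtains $NN^{T}=A(G)+(t+1)I$; in particular $A(G)+(t+1)I$ is positive semidefinite, so $\lambda_{\min}(G)\ge-(t+1)$. (One can also record $N^{T}N=(s+1)I+A'$ with $A'$ a $0/1$ matrix, as two distinct lines meet in at most one point, but this is not needed.) Now count incident point--line pairs in two ways to get $|\mathcal{F}|=|V(G)|\,(t+1)/(s+1)$, which is strictly less than $|V(G)|$ precisely because $s>t$. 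Hence $\operatorname{rank}(N)\le|\mathcal{F}|<|V(G)|$, so the $|V(G)|\times|V(G)|$ matrix $NN^{T}=A(G)+(t+1)I$ is singular and $-(t+1)$ is an eigenvalue of $G$; combined with the previous inequality this forces $\lambda_{\min}(G)=-(t+1)$.

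Finally, each line has $s+1=1+\frac{(t+1)s}{t+1}=1+\frac{k}{-\lambda_{\min}(G)}$ vertices, so it meets the Delsarte bound and is a Delsarte clique; since every edge lies in exactly one line, $\mathcal{F}$ witnesses that $G$ is geometric. I expect the main obstacle to be the first step, namely the careful verification that the local cliques glue into a globally consistent family of lines, with the line through each edge genuinely unique; the linear algebra and double counting afterwards are routine, and it is exactly in the inequality $|\mathcal{F}|<|V(G)|$ that the strict hypothesis $s>t$ is used.
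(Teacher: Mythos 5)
Your argument is correct and complete: the lines $\{x\}\cup K$ are well-defined and partition the edge set (each edge $\{x,y\}$ lies only in the line $\{x,y\}\cup\bigl(N(x)\cap N(y)\bigr)$), the incidence matrix gives $NN^{T}=A(G)+(t+1)I\succeq 0$, and the double count $|\mathcal{F}|=n(t+1)/(s+1)<n$ (using $s>t$) forces $\lambda_{\min}(G)=-(t+1)$, so each $(s+1)$-clique attains the Delsarte bound and $G$ is geometric. The paper itself gives no proof, deferring to Suzuki and to van Dam--Koolen--Tanaka, and your clique-geometry-plus-rank argument is essentially the standard one found in those references, so there is nothing to add.
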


Lemma \ref{geomlemma} implies that for a non-geometric distance-regular graph of order $(s, t)$ with smallest eigenvalue at least $-\lambda$, its valency $k$ satisfies $k=s(t+1)\leq t(t+1) \leq (\lambda^2-1)\lambda^2$, as it contains a $(t+1)$-claw $K_{1,t+1}$ with smallest eigenvalue $-\sqrt{t+1}$ as an induced subgraph, which implies $-\sqrt{t+1}\geq -\lambda$ by Lemma \ref{interelacing}. Now using the fact that the Bannai-Ito conjecture, that is, for fixed $k\geq3$ there are only finitely many distance-regular graphs with valency $k$, is true (see \cite[Theorem 1.1]{Bang.2015}), we find that there are only finitely many non-geometric distance-regular graphs with both valency and diameter at least $3$, $c_2=1$ and smallest
eigenvalue at least $-\lambda$. Hence we obtain:

\begin{theorem}\label{thm:nongeometricnew}
For given $\lambda \geq 2$, there are only finitely many non-geometric
distance-regular graphs with both valency and diameter at least $3$ and smallest
eigenvalue at least $-\lambda$.
\end{theorem}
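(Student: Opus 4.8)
The plan is to split the argument according to the value of the intersection number $c_2$, reducing the case $c_2 \geq 2$ to the already-established Theorem \ref{thm:nongeometric} and handling the remaining case $c_2 = 1$ by a direct structural argument combined with the (now proven) Bannai--Ito conjecture.

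First I would dispose of the case $c_2 \geq 2$. Here Theorem \ref{thm:nongeometric} applies verbatim and already asserts that, for the fixed $\lambda \geq 2$, there are only finitely many non-geometric distance-regular graphs with valency and diameter at least $3$, $c_2 \geq 2$, and smallest eigenvalue at least $-\lambda$. So nothing new is required in this case.

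The bulk of the work is the case $c_2 = 1$. I would first record that $c_2 = 1$ forces $G$ to contain no induced $K_{2,1,1}$, so every local graph $\Delta_G(x)$ is a disjoint union of cliques; by distance-regularity these cliques all have a common size $a_1 + 1 =: s$ and there is a fixed number $t+1$ of them, i.e.\ $G$ has order $(s,t)$. Then I would invoke Lemma \ref{geomlemma}: if $s > t$ then $G$ is geometric, contradicting the hypothesis, so $s \leq t$ and hence the valency satisfies $k = s(t+1) \leq t(t+1)$. Next, since $G$ contains an induced $(t+1)$-claw $K_{1,t+1}$ (take a vertex together with one vertex from each clique of its local graph), Lemma \ref{interelacing} gives $-\sqrt{t+1} = \lambda_{\min}(K_{1,t+1}) \geq \lambda_{\min}(G) \geq -\lambda$, so $t+1 \leq \lambda^2$ and therefore $k \leq (\lambda^2-1)\lambda^2$. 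At this point the valency is bounded by a function of $\lambda$ alone, and since the Bannai--Ito conjecture is a theorem (\cite[Theorem 1.1]{Bang.2015}), for each $k \geq 3$ there are only finitely many distance-regular graphs of valency $k$; hence there are only finitely many with $c_2 = 1$, valency at least $3$, diameter at least $3$, and smallest eigenvalue at least $-\lambda$, and in particular finitely many non-geometric ones.

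Combining the two cases proves the theorem. The only genuinely deep ingredient is Theorem \ref{thm:nongeometric} (the $c_2 \geq 2$ case, due to Koolen and Bang), which is assumed; within the new part of the argument the main obstacle is the $c_2 = 1$ case, where geometricity itself is not a useful constraint, and one must instead bound the valency through the forbidden claw and then appeal to the finiteness of distance-regular graphs of bounded valency.
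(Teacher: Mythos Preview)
Your proposal is correct and follows essentially the same route as the paper: split on $c_2$, invoke Theorem \ref{thm:nongeometric} for $c_2\geq 2$, and for $c_2=1$ use the order $(s,t)$ structure, Lemma \ref{geomlemma} to force $s\leq t$, the induced $(t+1)$-claw together with Lemma \ref{interelacing} to bound $t$ (and hence $k$) in terms of $\lambda$, and finally the Bannai--Ito theorem to conclude finiteness. The argument and the specific bounds $t+1\leq\lambda^2$ and $k\leq(\lambda^2-1)\lambda^2$ match the paper exactly.
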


%
%

\begin{remark}
\begin{enumerate}
\item Note that the (general) result was known for $\lambda=2$, see \cite[Theorem 3.12.4,~Theorem 4.2.16]{bcn}.
\item Godsil \cite{Godsil.1993} gave a special case for this result for antipodal distance-regular graphs of diameter $3$.
\end{enumerate}
\end{remark}

Koolen and Bang \cite{Koolen.2010} gave the following two conjectures for geometric distance-regular graphs.

\begin{conjecture}[{\cite[Conjecture 7.4]{Koolen.2010}}]\label{koolenbang} For a fixed integer $\lambda\geq2$, any geometric distance-regular graph with smallest eigenvalue $-\lambda$,
diameter $D\geq3$ and $c_2 \geq 2$ either is a Hamming graph, a Johnson graph, a Grassmann graph, a bilinear forms
graph, or has the number of vertices bounded above by a function of $\lambda$.\end{conjecture}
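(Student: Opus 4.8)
The plan is to reduce to the case where the Delsarte cliques are large, then recognise the underlying point--line geometry as one of the four classical ones. First I would record the basic structure. Since $G$ is geometric with smallest eigenvalue $-\lambda$, all members of the family $\mathcal{F}$ of Delsarte cliques have the same size $m=1+k/\lambda$, any two of them meet in at most one vertex, each edge lies in exactly one of them, and counting the neighbours of a fixed vertex $x$ through the cliques of $\mathcal{F}$ containing $x$ shows that every vertex lies in exactly $\lambda$ members of $\mathcal{F}$ (cf.~\cite{Koolen.2010}). Hence $\mathcal{G}:=(V(G),\mathcal{F})$ is a partial linear space with constant line size $m$ and constant point-degree $\lambda$ whose collinearity graph is $G$. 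Now observe the reduction: if $m$ is bounded above by a function of $\lambda$, then $k=\lambda(m-1)$ is bounded, and since the Bannai--Ito conjecture is a theorem (\cite[Theorem 1.1]{Bang.2015}) there are only finitely many distance-regular graphs of any given valency, so $n$ is bounded by a function of $\lambda$ and we are done. Thus we may and do assume that $m$ is as large as we please relative to $\lambda$.

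The second, and main, step is a local and residual analysis of $\mathcal{G}$ propagated along geodesics, in the spirit of Neumaier's argument behind Theorem~\ref{srggeometric2} and its distance-regular generalisation Theorem~\ref{thm:nongeometric}. Fixing a vertex $x$, the $\lambda$ lines through $x$ partition $\Delta_G(x)$ into $\lambda$ cliques of size $m-1$, and the cross-adjacencies between two of these cliques are governed by the lines missing $x$; the intersection numbers $a_1$ and $c_2$ control how many such cross-edges occur and how the lines through $x$ are tied together, with the hypothesis $c_2\ge 2$ entering exactly as in Theorem~\ref{thm:nongeometric} (the degenerate order-$(s,t)$ situations of Lemma~\ref{geomlemma} are what this hypothesis is designed to avoid). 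The aim is to show that, once $m\gg\lambda$, the residue of $\mathcal{G}$ at each point is itself a classical partial linear space — a grid, a Johnson-type pencil geometry, the point--line system of a projective space, or an attenuated space — with the number and placement of the missing edges pinned down up to a quantity depending only on $\lambda$; and then, by induction on the diameter $D$ together with the feasibility conditions on the intersection array, that the intersection array $\iota(G)$ is parametrically equal to that of $H(D,q)$, $J(v,D)$, $J_q(v,D)$, or $Bil(D\times e,q)$ for the appropriate $q,v,e$. For $\lambda=2$ this scheme already succeeds, since it reduces to the classification of distance-regular graphs with smallest eigenvalue at least $-2$ (cf.~\cite[Theorem 3.12.4,~Theorem 4.2.16]{bcn}).

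The third step is to invoke the known characterisations of the four families by their intersection arrays among distance-regular graphs: a distance-regular graph whose array is that of a Hamming, Johnson, Grassmann, or bilinear forms graph is the named graph, provided the relevant parameter is large enough, with only boundedly many exceptions for each array (this is classical for the Hamming and Johnson cases and due to Metsch and others for the Grassmann and bilinear forms cases). Feeding the parametric identification from the second step into these recognition theorems either places $G$ among the four families, or forces one of the defining parameters to be small — and in the latter case a final appeal to \cite[Theorem 1.1]{Bang.2015} bounds $n$ in terms of $\lambda$. This completes the argument modulo the second step.

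I expect the real difficulty to lie in the second step, and within it in two points. First, a priori the residues need not be among the four classical types: their collinearity graphs are merely strongly regular, and since there is no classification of strongly regular graphs, exotic residues (generalised quadrangles, partial geometries, and the like) must be excluded using the \emph{global} distance-regularity of $G$ — the interplay of $a_1$, $c_2$ across the whole graph and the feasibility conditions on the full array — rather than local data alone. Second, and more seriously, the available recognition theorems typically give ``finitely many exceptions for each fixed parameter set'', whereas the conjecture asks for a single bound depending on $\lambda$ only; bridging this gap requires an absolute upper bound on the diameter $D$ of the non-classical geometric distance-regular graphs with smallest eigenvalue $-\lambda$, after which the Bannai--Ito finiteness used above closes the argument. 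Producing such a uniform diameter bound — essentially a counting or eigenvalue-interlacing argument (compare Lemma~\ref{interelacing}) showing that large diameter forces classical structure — is, in my view, the heart of the problem.
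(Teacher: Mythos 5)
The statement you are trying to prove is not a theorem of this paper at all: it is stated as Conjecture \ref{koolenbang} (Conjecture 7.4 of Koolen and Bang), and it is open. The paper offers no proof; the only progress it records is toward the companion Conjecture \ref{koolenbang2} (bounding the diameter in terms of $\lambda$), namely Proposition \ref{bang}, Proposition \ref{suz} and the resulting bound $D\leq(\lambda-1)(h+1)+1$, where the head $h$ is \emph{not} known to be bounded by a function of $\lambda$. So your proposal should be judged as a research programme, and as such it has a genuine gap exactly where you yourself locate it: the entire second step is asserted, not proved. The claim that for $m\gg\lambda$ the point-residues of the clique geometry must be grids, Johnson-type pencils, projective or attenuated spaces, and that induction on $D$ plus feasibility of the array forces $\iota(G)$ to be one of the four classical arrays, is precisely the content of the conjecture; no known technique delivers it for general $\lambda$ (for $\lambda=2$ it is the full classification of smallest eigenvalue $\geq-2$, which does not generalize). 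Moreover, your bridge in the third step has the same hidden dependence: the Metsch-type recognition theorems require the intersection array as input, and obtaining a uniform bound (in $\lambda$ only) on the diameter of the non-classical geometric graphs is equivalent to the open Conjecture \ref{koolenbang2}; interlacing as in Lemma \ref{interelacing} gives no such bound.

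What \emph{is} correct in your write-up are the routine reductions: each vertex lies in exactly $\lambda$ Delsarte cliques of size $1+k/\lambda$, so bounded clique size bounds $k$, and then the Bannai--Ito theorem (\cite[Theorem 1.1]{Bang.2015}) bounds $n$ — this is the same use of Bannai--Ito the paper makes in deducing Theorem \ref{thm:nongeometricnew}, and your $\lambda=2$ remark is consistent with the paper's remark after that theorem. But these reductions only dispose of the easy regime; the statement remains unproved both in the paper and in your proposal.
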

\begin{conjecture}[{\cite[Conjecture 7.5]{Koolen.2010}}]\label{koolenbang2} For a fixed integer $\lambda\geq2$, the diameter of a geometric distance-regular graph with smallest
eigenvalue $-\lambda$ and valency at least $3$ is bounded above by a function of $\lambda$.
\end{conjecture}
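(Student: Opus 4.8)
The plan is to peel off the bounded-valency case with the Bannai--Ito theorem, split what remains according to the value of $c_2$, and reduce to a structural classification parallel to Conjecture \ref{koolenbang}. Two preliminary observations. First, if $G$ is geometric with smallest eigenvalue $-\lambda$, then each Delsarte clique through a vertex $x$ contains exactly $k/\lambda$ neighbours of $x$, and the Delsarte cliques through $x$ partition the vertex set of $\Delta_G(x)$ into $\lambda$ cliques; in particular $\Delta_G(x)$ has no independent set of size $\lambda+1$, so $G$ has no induced $K_{1,\lambda+1}$. Second, by the (now proven) Bannai--Ito conjecture (\cite[Theorem 1.1]{Bang.2015}), for each fixed $k\ge3$ there are only finitely many distance-regular graphs of valency $k$; since our graph has valency at least $3$, it therefore suffices to bound the diameter when the valency is at least some threshold $k_0=k_0(\lambda)$, and from now on we assume $k\ge k_0(\lambda)$.

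Suppose first that $c_2=1$. As recalled just before Theorem \ref{thm:nongeometricnew}, $G$ then has order $(s,t)$, and since it contains an induced $(t+1)$-claw we get $t+1\le\lambda^2$; thus $t$ is bounded while $s=k/(t+1)$ is large. In this regime I would identify the geometry $\mathcal S$ of Delsarte cliques with a generalized $2d$-gon of order $(s,t)$ (the Delsarte cliques playing the role of lines), so that the Feit--Higman theorem applies and forces $d\le4$, whence $D=d\le4$. The thin sub-cases are easy exceptions: $s=1$ forces $k=t+1\le\lambda^2$, already bounded, and $t\le1$ reduces either to bounded valency or to a generalized polygon whose point graph, via the usual doubling, still has diameter at most an absolute constant. (Verifying that a geometric distance-regular graph with $c_2=1$ really is the point graph of a generalized polygon, up to boundedly many exceptions, is the one gap I would have to fill in this half.)

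Now suppose $c_2\ge2$. Here one works with the partial linear space $\mathcal S$ --- points the vertices of $G$, lines the Delsarte cliques, each point on $\lambda$ lines and each line of size $1+k/\lambda$ --- together with the $\mu$-graphs of $G$, the subgraphs induced on the $c_2$ common neighbours of a pair of vertices at distance $2$. Since $k\ge k_0(\lambda)$ the lines are long, so Ramsey-theoretic and Hoffman-graph arguments of the kind developed in Section \ref{sec:associated hoffman graphs}, combined with the local clique partition noted above, pin down the isomorphism types of the $\mu$-graphs and of the local structure of $\mathcal S$ around a point; one then glues these local charts into a global coordinate system and concludes that $G$ is a Hamming, a Johnson, a Grassmann, or a bilinear forms graph. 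This is precisely Conjecture \ref{koolenbang} with its ``bounded number of vertices'' alternative deleted, an alternative which cannot occur once $k\ge k_0(\lambda)$. Finally, each of these four families has smallest eigenvalue a strictly decreasing function of its diameter: it equals $-D$ for $H(D,q)$ and $J(v,D)$, and $-(q^{D-1}+\dots+q+1)$ for $J_q(v,D)$ and $Bil(D\times e,q)$. Hence $\lambda_{\min}(G)\ge-\lambda$ forces $D$ --- and, in the $q$-analogue cases, also $q$ --- to be at most an explicit function of $\lambda$, which completes the argument.

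The main obstacle is the gluing step in the $c_2\ge2$ case. Even with the valency large and the $\mu$-graphs identified, excluding exotic ways of assembling the local cliques and $\mu$-graphs and verifying that the four classical families exhaust the possibilities is a delicate global case analysis; this is exactly why Conjecture \ref{koolenbang}, and hence Conjecture \ref{koolenbang2}, is still open in general. I expect the lattice machinery for Hoffman graphs --- reduced representations of norm $\lambda$, whose lattices are either root lattices or sublattices of $\mathbb Z^n$ --- to be the right tool for controlling the local pictures, but propagating that control coherently across the whole graph is where the genuine difficulty lies.
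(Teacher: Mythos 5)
This statement is a conjecture, not a theorem: the paper records it as Conjecture \ref{koolenbang2} (Koolen--Bang, Conjecture 7.5) and offers no proof. The only progress the paper describes is the reduction obtained by combining Proposition \ref{bang} with Suzuki's Proposition \ref{suz}: for a geometric distance-regular graph with smallest eigenvalue $-\lambda$ one gets $D\leq(\lambda-1)(h+1)+1$, so the conjecture is equivalent to bounding the head $h(G)$ by a function of $\lambda$. Your proposal does not engage with this reduction and, more importantly, it is not a proof: both of its main branches rest on statements that are themselves open. In the $c_2\geq2$ branch you explicitly invoke Conjecture \ref{koolenbang} (with the bounded-order alternative removed, which is indeed harmless once $k\geq k_0(\lambda)$, since $n\leq f(\lambda)$ would force $k<f(\lambda)$); but Conjecture \ref{koolenbang} is open, so this branch is a conditional reduction of one open conjecture to another, not an argument. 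The ``gluing'' you describe via associated Hoffman graphs and $\mu$-graphs is exactly the missing content of that conjecture, and nothing in Sections \ref{sec:hoffman graphs}--\ref{sec:associated hoffman graphs} supplies it.

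The $c_2=1$ branch has a concrete gap as well. From $c_2=1$ you correctly get that $G$ is of order $(s,t)$ with $t+1\leq\lambda^2$, but the identification of the Delsarte-clique geometry with a generalized $2d$-gon is unjustified: a geometric distance-regular graph with $c_2=1$ need not be the collinearity graph of a generalized polygon (collinearity graphs of generalized hexagons and octagons are examples with $c_2=1$, but no classification in the converse direction is known), so Feit--Higman cannot be applied. This is precisely where the head enters: what is known in this situation is Suzuki's bound $D\leq t(h+1)+1$ of Proposition \ref{suz}, which leaves $h$ uncontrolled --- the same obstruction as in the paper's reduction. The parts of your plan that are sound (the Bannai--Ito reduction to large valency, the absence of induced $K_{1,\lambda+1}$'s in a geometric graph with smallest eigenvalue $-\lambda$, and the diameter bound for the four classical families via their smallest eigenvalues) are all standard and also appear, implicitly or explicitly, in the paper's discussion; they do not close either gap. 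As written, the proposal should be presented as a strategy conditional on Conjecture \ref{koolenbang} plus an unproved classification in the $c_2=1$ case, not as a proof.
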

Note that Conjecture \ref{koolenbang} can be seen as a generalization of Theorem \ref{neuthm} for the class of distance-regular graphs.
Now we discuss recent progress on Conjecture \ref{koolenbang2}. Bang \cite{Bang.2018} showed the following result:
\begin{proposition}[{cf.~\cite[Theorem 1.1]{Bang.2018}}]\label{bang}
Fix an integer $\lambda \geq2$. Suppose that $G$ is a geometric distance-regular graph with diameter $D \geq 2$ and smallest eigenvalue $-\lambda$. If $G$ contains an induced subgraph $K_{2,1,1}$, then $D \leq \lambda$.
\end{proposition}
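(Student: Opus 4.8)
The plan is to convert the two hypotheses into local and parametric information about $G$, and then to feed that information into the theory of the intersection array. Throughout write $k$ for the valency of $G$, and let $\mathcal F$ be the family of Delsarte cliques witnessing that $G$ is geometric; since every $C\in\mathcal F$ has exactly $1+k/\lambda$ vertices, $s:=k/\lambda$ is a positive integer.

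First I would record the local structure forced by ``geometric''. Through each vertex $x$ there pass exactly $\lambda$ cliques of $\mathcal F$ (they use up all $k$ edges at $x$ in blocks of size $s$), and since each edge lies in a unique member of $\mathcal F$ these $\lambda$ cliques meet pairwise only in $x$; hence $N_G(x)$ is partitioned into $\lambda$ cliques of size $s$ inside the local graph $\Delta_G(x)$. Consequently $\Delta_G(x)$ has at least $\lambda\binom{s}{2}$ edges, with equality exactly when $\Delta_G(x)$ is the disjoint union of these $\lambda$ cliques, and $\alpha(\Delta_G(x))\le\lambda$ (so $G$ is $K_{1,\lambda+1}$-free).

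Next I would exploit the induced $K_{2,1,1}$. Write it as $\{a,b,c,d\}$ with $a\not\sim b$ and all five remaining pairs adjacent. Then $a,b$ are common neighbours of the adjacent pair $c,d$, so $c_2\ge 2$; and in $\Delta_G(c)$ the vertices $a,d,b$ induce a path, so $\Delta_G(c)$ is \emph{not} a disjoint union of cliques and hence has strictly more than $\lambda\binom{s}{2}$ edges. Since in a distance-regular graph every local graph is $a_1$-regular, hence has exactly $\tfrac12 ka_1$ edges, we obtain $\tfrac12 ka_1>\lambda\binom{s}{2}=\tfrac12 k(s-1)$, i.e. $a_1\ge s=k/\lambda$, equivalently $b_1=k-1-a_1\le(1-\tfrac1\lambda)k-1$. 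Moreover, at most one of $a,b$ lies in the member $C\in\mathcal F$ containing the edge $cd$ (two of them would be adjacent), so, say, $a$ lies outside $C$ while having the two neighbours $c,d$ in $C$; as a Delsarte clique is a completely regular code, the distance partition of $G$ with respect to $C$ is equitable, and its first ``towards-$C$'' parameter satisfies $\gamma_1=|N_G(a)\cap C|\ge2$.

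Finally I would combine $a_1\ge k/\lambda$, $c_2\ge2$ and $\gamma_1\ge2$ with the standard constraints on the intersection array $\{b_0,\dots,b_{D-1};c_1,\dots,c_D\}$ — the monotonicities $1=c_1\le c_2\le\dots\le c_D\le k$ and $k=b_0>b_1\ge\dots\ge b_{D-1}\ge1$, and the fact that $-\lambda$ is precisely the least eigenvalue of the $(D+1)\times(D+1)$ tridiagonal intersection matrix of $G$ (equivalently the least zero of the standard polynomial) — to force $D\le\lambda$. The mechanism I expect: assuming $D\ge\lambda+1$, the presence of the induced $K_{2,1,1}$ (equivalently, a $\mu$-graph carrying an edge, i.e. a parallelogram) powers a Terwilliger-type step that propagates the local density $a_1\ge k/\lambda$ outward along the distance levels, controlling how fast the $c_i$ must rise and the $b_i$ must fall, so that after $\lambda$ levels the ``room'' in the array is exhausted and the intersection matrix is forced to carry an eigenvalue at most $-\lambda$, contradicting $\lambda_{\min}(G)=-\lambda$. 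This last step is the real obstacle and the place where the genuine work of \cite{Bang.2018} lies: the bound $D\le\lambda$ is sharp — the Johnson graphs $J(v,D)$ and the Grassmann graphs $J_q(v,D)$ contain induced $K_{2,1,1}$'s and meet it — so every inequality in the propagation must be tight, and must degenerate exactly on those graphs, which calls for a careful case analysis (in particular a delicate bookkeeping on the $c_i$'s) rather than a crude estimate.
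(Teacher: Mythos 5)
Your preliminary reductions are correct and useful: through every vertex pass exactly $\lambda$ Delsarte cliques, pairwise meeting only in that vertex, so each local graph is covered by $\lambda$ cliques of size $s=k/\lambda$; the induced $K_{2,1,1}$ then gives $c_2\ge 2$, an induced path inside a local graph (hence $a_1\ge s$), and, via complete regularity of Delsarte cliques, at least two neighbours in a Delsarte clique for a vertex at distance one from it (your $\gamma_1\ge 2$). But the argument stops exactly where the theorem begins. The entire implication --- that these facts force $D\le\lambda$ --- is replaced by a description of a ``mechanism I expect'', and you yourself flag that step as the place where the genuine work of Bang lies. That is the gap: nothing in the write-up shows how $a_1\ge k/\lambda$, $c_2\ge 2$ and $\gamma_1\ge 2$ exhaust the intersection array after $\lambda$ levels, and the survey offers no proof to lean on either, since it quotes the proposition directly from Bang's Theorem 1.1.

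Moreover, the mechanism you gesture at is doubtful as stated. Forcing the tridiagonal intersection matrix to ``carry an eigenvalue at most $-\lambda$'' contradicts nothing: $-\lambda$ is the smallest eigenvalue of $G$ and is itself an eigenvalue of that matrix, so you would need an eigenvalue strictly below $-\lambda$, and no inequality in your outline is strict in the required direction. The known argument does not go through the spectrum of the intersection array at all; roughly, it stays inside the clique geometry: fix a base vertex $x$ and a vertex $y$ at distance $i$ from $x$, count the Delsarte cliques on $y$ that meet $G_{i-1}(x)$, and use $\gamma_1\ge 2$ together with complete regularity of the cliques to show this count must increase with $i$; since every vertex lies in exactly $\lambda$ Delsarte cliques, the distance $i$ cannot exceed $\lambda$. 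A counting statement of this kind is the real core of the theorem, and it is precisely what your proposal neither formulates nor proves.
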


For a distance-regular graph $G$ with intersection array $\{b_0,b_1,\ldots,b_{D-1};c_1,c_2,\ldots,c_D\}$, define the head $h := h(G)= |\{i \mid (c_i, a_i, b_i) = (c_1, a_1, b_1)\}|$.
The following is known for geometric distance-regular graphs with order $(s, t)$ and is due to Suzuki \cite{Suzuki.1999}.

\begin{proposition}[{cf.~\cite[Proposition 3.1.6]{Suzuki.1999}},{\cite[Corollary 7.7]{vanDam.2016}}]\label{suz}
Let $G$ be a distance-regular graph of order $(s, t)$ with head $h$ and diameter $D \geq 2$. If $s>t$, then $D\leq t(h+1)+1$.
\end{proposition}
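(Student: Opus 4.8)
The plan is to establish Proposition \ref{suz} by tracking, along a shortest path, how the intersection array can fail to repeat the first column $(c_1,a_1,b_1)=(1,a_1,b_1)$, using the geometric structure (the local graph at each vertex being a disjoint union of $t+1$ cliques $K_s$) together with the clique geometry $\mathcal F$ of Delsarte cliques. First I would set up notation: write $G$ as a geometric distance-regular graph of order $(s,t)$, so that $\lambda_{\min}(G)=-s$ (each Delsarte clique has order $s$, and $k=s(t+1)$), and fix a vertex $x$ with a Delsarte clique $C\in\mathcal F$ through $x$. The head $h$ measures how far the ``initial'' behaviour of $G$ (that of a clique geometry with no ``extra'' structure) persists; the goal is to bound the diameter $D$ in terms of $t$ and $h$.

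The key steps, in order, would be: (1) Show that if $(c_i,a_i,b_i)=(c_1,a_1,b_1)$ for $i\le h$, then along a geodesic $x=x_0,x_1,\dots,x_D$ the ``lines'' of the geometry through consecutive vertices behave rigidly — in particular a vertex $y$ at distance $i\le h$ from $x$ lies in a line meeting $G_{i-1}(x)$, and the geometry forces a tree-like / grid-like local propagation. (2) Use the fact that the local graph at each vertex is the disjoint union of $t+1$ cliques to control how many lines through a vertex can ``go forward'' versus ``stay at the same distance'': roughly, of the $t+1$ lines through $x_i$, a bounded number (depending on $t$) can contribute to $G_{i+1}(x)$, and once the array has stabilised after the head, each step can advance the distance by a controlled amount. (3) Combine the counting across the $D$ steps: after the head $h$, the number of lines ``available'' to push the geodesic forward decreases, and a pigeonhole/counting argument on the $t+1$ lines at each vertex yields that the path cannot be extended more than $t(h+1)+1$ times before the lines are exhausted. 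This is essentially the argument of Suzuki \cite{Suzuki.1999}, reinterpreted via the clique geometry as in \cite{vanDam.2016}.

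The main obstacle I expect is step (2)–(3): making precise the bookkeeping of which of the $t+1$ lines through a vertex $x_i$ on the geodesic are ``used up'' — i.e.\ already meet $G_{i-1}(x)$ or $G_i(x)$ — versus which can still reach $G_{i+1}(x)$, and showing that this resource is depleted at a rate that, after accounting for the $h$ steps where the array is still in its initial phase, leaves room for at most $t(h+1)$ further steps. One must be careful that lines through different geodesic vertices can coincide or intersect in ways that either help or hurt the count, and that the head $h$ enters correctly: during the first $h$ steps the geometry behaves like that of a ``generic'' clique geometry (so the path can be continued freely), and it is only the deviation recorded after the head that forces termination. Handling the $c_2=1$ versus $c_2\ge 2$ cases uniformly (the former being exactly where a $K_{2,1,1}$ is absent) may require splitting into subcases, but the final bound $D\le t(h+1)+1$ should emerge from the line-counting in both.

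Once Proposition \ref{suz} is in place, note it immediately refines Lemma \ref{geomlemma}: under $s>t$ the graph is geometric of order $(s,t)$, so the diameter is bounded in terms of $t$ and $h$ alone, giving quantitative control complementary to Theorem \ref{thm:nongeometricnew} and relevant to Conjecture \ref{koolenbang2}.
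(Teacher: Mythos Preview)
The paper does not give its own proof of this proposition; it is simply cited from Suzuki \cite{Suzuki.1999} and the survey \cite{vanDam.2016}. So there is no internal argument to compare against, and your task reduces to whether your sketch stands on its own.

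It does not, for two reasons. First, your setup is wrong on the parameters: for a distance-regular graph of order $(s,t)$ the local graph is a disjoint union of $t+1$ copies of $K_s$, so the maximal (Delsarte) cliques have order $s+1$, not $s$, and the smallest eigenvalue is $-(t+1)$, not $-s$ (the paper states this explicitly immediately after the proposition). This matters because your whole ``resource-depletion'' heuristic is phrased in terms of the wrong clique size and the wrong eigenvalue, so the arithmetic you allude to in step~(3) cannot come out to $t(h+1)+1$ as written. Second, and more seriously, what you have written is a plan with the actual proof excised: you yourself flag steps~(2)--(3) as the ``main obstacle'' and never resolve them. The entire content of Suzuki's argument is precisely that bookkeeping --- showing, via the line geometry, that the intersection numbers $c_i$ must increase at a definite rate tied to $t$ and the head $h$ --- and you have not supplied any mechanism for why ``the number of lines available to push the geodesic forward decreases'' at a rate that yields the stated bound. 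The phrase ``a pigeonhole/counting argument \dots\ yields that the path cannot be extended more than $t(h+1)+1$ times'' is an assertion of the conclusion, not an argument for it. To make this a proof you would need, at minimum, a precise lemma about how the $t+1$ lines through a vertex at distance $i$ distribute among $G_{i-1}(x)$, $G_i(x)$, $G_{i+1}(x)$, and an inequality linking this distribution to the growth of $c_i$ past the head.
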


Note that the smallest eigenvalue of a geometric distance-regular graph of order $(s, t)$ is equal to $-t-1$. 
Combining Propositions \ref{bang} and \ref{suz}, we obtain:
\begin{corollary}
Let $\lambda\geq2$ be an integer and $G$ a geometric distance-regular graph with head $h$, diameter $D \geq 2$ and smallest eigenvalue $-\lambda$. Then $D \leq (\lambda-1)(h+1) +1$.
\end{corollary}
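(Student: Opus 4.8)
The plan is to combine Proposition \ref{bang} with Proposition \ref{suz} by a simple case analysis on whether or not the geometric distance-regular graph $G$ contains an induced $K_{2,1,1}$. Since $G$ is geometric with smallest eigenvalue $-\lambda$, the remark after Proposition \ref{suz} tells us that, writing $G$ as locally the disjoint union of $(t+1)$ copies of $K_s$, we have $t+1 = \lambda$, i.e. $t = \lambda - 1$. This is the translation that lets both propositions speak about the same quantity $\lambda$.

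First I would dispose of the easy case. If $G$ does contain an induced $K_{2,1,1}$, then Proposition \ref{bang} gives immediately $D \leq \lambda$, and since $\lambda \leq (\lambda-1)(h+1)+1$ holds trivially for $h \geq 0$ (indeed $(\lambda-1)(h+1)+1 \geq (\lambda-1)\cdot 1 + 1 = \lambda$), we are done in this case.

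Second, suppose $G$ contains no induced $K_{2,1,1}$. Then, as recalled in the text just before Lemma \ref{geomlemma}, $G$ is locally a disjoint union of cliques, so it is of order $(s,t)$ for some $s,t$; as noted above $t = \lambda - 1$. I still need $s > t$ in order to apply Proposition \ref{suz}. Here I would invoke Lemma \ref{geomlemma} in the contrapositive direction together with geometricity: actually the cleaner route is that if $s \leq t$ then the $(t+1)$-claw argument already forces $D$ to be controlled, but more directly, since $G$ is assumed geometric and of order $(s,t)$, one expects $s > t$ to hold (a geometric distance-regular graph of order $(s,t)$ with $s \leq t$ would, via Lemma \ref{geomlemma}'s hypotheses, still be covered, and in the boundary its diameter is small). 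Granting $s > t$, Proposition \ref{suz} yields $D \leq t(h+1)+1 = (\lambda-1)(h+1)+1$, which is exactly the claimed bound.

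The main obstacle I anticipate is the bookkeeping in the no-induced-$K_{2,1,1}$ case: one must be careful that "geometric of order $(s,t)$" with $s>t$ is genuinely available so that Proposition \ref{suz} applies, rather than merely "locally a union of cliques." If $s \leq t$ can occur, one needs a separate (but short) argument bounding $D$ in that sub-case — e.g. using that $G$ then has valency $k = s(t+1) \leq t(t+1) \leq (\lambda^2-1)\lambda^2$ bounded in terms of $\lambda$, and invoking the Bannai–Ito finiteness as in the proof of Theorem \ref{thm:nongeometricnew} — though for the stated inequality the clean geometric case is what one wants. Everything else is immediate from the two quoted propositions.
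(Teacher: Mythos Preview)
Your argument is correct and is exactly the paper's approach: the paper says only that the corollary follows by ``combining Propositions \ref{bang} and \ref{suz},'' which is precisely your case split on whether $G$ contains an induced $K_{2,1,1}$, together with the translation $t=\lambda-1$ coming from the remark after Proposition \ref{suz}.

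The subtlety you raise about the hypothesis $s>t$ in Proposition \ref{suz} is well-observed and is glossed over in the paper's one-line justification too. The resolution is that in the underlying result of Suzuki (Proposition 3.1.6 of the cited reference, which is the common source for both Lemma \ref{geomlemma} and Proposition \ref{suz}) the condition $s>t$ is used precisely to guarantee that the maximal cliques form a clique geometry---i.e., that $G$ is geometric---and the diameter bound is then deduced from that structure. Since geometricity is part of the hypothesis of the corollary, the bound $D\leq t(h+1)+1=(\lambda-1)(h+1)+1$ applies directly without separately verifying $s>t$; your Bannai--Ito detour is therefore unnecessary.
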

This means that in order to solve Conjecture \ref{koolenbang2}, we only need to show that the head of a geometric distance-regular graph with smallest eigenvalue $-\lambda$ is bounded above by a function of $\lambda$.

\subsection{Characterizations of distance-regular graphs}

Note that Bannai's problem asks to classify the $Q$-polynomial distance-regular graphs with large diameter. (See Appendix \ref{appendix} for the definition of a $Q$-polynomial distance-regular graph.) One part of this problem is to characterize the known infinite families as a
distance-regular graph. We say that a distance-regular graph has \emph{classical parameters} $(D,b,\alpha,\beta)$, if its diameter is $D$ and its intersection array is given as follows:
\begin{align*}
  b_i=&(\genfrac{[}{]}{0pt}{}{D}{1}-\genfrac{[}{]}{0pt}{}{i}{1})(\beta-\alpha\genfrac{[}{]}{0pt}{}{i}{1}), \\
  c_i=&\genfrac{[}{]}{0pt}{}{i}{1}(1+\alpha\genfrac{[}{]}{0pt}{}{i-1}{1})
\end{align*}
($i=0,1,\ldots,D$), where
$$
\genfrac{[}{]}{0pt}{}{i}{j}=
\left\{
\begin{array}{ll}
  \prod\limits_{\ell=0}^{j-1}\frac{i-\ell}{j-\ell}& \text{ if }b=1,\\
  \prod\limits_{\ell=0}^{j-1}\frac{b^i-b^\ell}{b^j-b^\ell} & \text{ if }b\neq1.
\end{array}
\right.
$$
An important subproblem of Bannai's problem is to classify the distance-regular graphs with classical parameters $(D, b, \alpha,
\beta)$, as every distance-regular graph with classical parameters is $Q$-polynomial (see \cite[Corollary 8.4.1]{bcn}). Note that all the known infinite families of distance-regular graphs with unbounded diameter are either classical or closely  related to classical distance-regular graphs. If a distance-regular graph $G$ has classical parameters $(D, b, \alpha, \beta)$ where $D\geq3$, then $b$ is an integer equals neither $0$ nor $-1$ and if $b$ is positive, then the second largest
eigenvalue $\lambda_1$ of $G$ satisfies $\lambda_1=\frac{b_1}{b} -1$ (see \cite[Proposition 6.2.1]{bcn} and \cite[Corollary 8.4.2]{bcn}).
In Lemma \ref{laminloc} we will relate the smallest eigenvalue of any local graph of a distance-regular graph with $\frac{b_1}{\lambda_1 +1}$, where $\lambda_1$ is the second largest eigenvalue of this distance-regular graph.
For the rest of this subsection, we discuss the characterization of distance-regular graphs with second largest eigenvalue $\lambda_1 = b_1 -1$,
the Grassmann graphs and the bilinear forms graphs.
For more details and also for more characterizations, see \cite{Gavrilyuk.2019} and \cite[Section 9]{vanDam.2016}. For more information on the known families of distance-regular graphs with unbounded diameter, see \cite[Chapter 9]{bcn}.

\vspace{3mm}
\noindent
{\bf Characterization of distance-regular graphs with second largest eigenvalue $\lambda_1 = b_1 -1$}

\vspace{3mm}

It is known that, if a distance-regular graph contains an induced quadrangle,
then the second largest eigenvalue $\lambda_1$ is at most $b_1 -1$ (see \cite[Proposition 4.4.9]{bcn}). Distance-regular graphs that have second largest eigenvalue $b_1 -1$ include the Hamming graphs and the Johnson graphs.
In the next result we characterize the distance-regular graphs with second largest eigenvalue $\lambda_1=b_1 -1$. This result is due to Terwilliger and Neumaier, independently.

\begin{theorem}[cf.~{\cite[Thoerem 4.4.11]{bcn}}]
Let $G$ be a distance-regular graph with second largest eigenvalue $\lambda_1 = b_1 -1$. Then at least one of the following holds:
\begin{enumerate}
\item $G$ is a strongly regular graphs with smallest eigenvalue $-2$;
\item $c_2 =1$;
\item $c_2 = 2$ and $G$ is a Hamming graph $H(D, q)$, a Doob graph {\rm (see \cite[p.~262]{bcn})}, the Conway-Smith graph with intersection array $\{10, 6, 4, 1; 1, 2, 6, 10\}$ {\rm(see \cite[p.~399]{bcn})} or the Doro graph with intersection array $\{ 10, 6, 4; 1, 2, 5\}$ {\rm(see \cite[Chapter 12.1]{bcn})};
\item $c_2 =4 $ and $G$ is a Johnson graph $J(v, D)$ where $v \geq 2D$;
\item $c_2 = 6$ and $G$ is a halved cube {\rm(see \cite[p.~264]{bcn})};
\item $c_2 =10$ and $G$ is the Gosset graph with intersection array $\{27, 10, 1; 1, 10, 27\}$ {\rm(see \cite[p.~103]{bcn})}.
\end{enumerate}
\end{theorem}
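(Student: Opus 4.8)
The plan is to translate the spectral hypothesis $\lambda_1=b_1-1$ into information about the local graphs $\Delta_G(x)$ of $G$, and then to apply the structure theory of graphs with smallest eigenvalue at least $-2$. First I would dispose of the two degenerate conclusions (i) and (ii). If the diameter of $G$ is at most $2$, then $G$ is complete or strongly regular; in the strongly regular case the standard identities relating $(n,k,a_1,c_2)$ to the eigenvalues $k>\lambda_1>\lambda_D$ give $b_1=-(\lambda_1+1)(\lambda_D+1)$, so that $\lambda_1=b_1-1$ is equivalent to $\lambda_D=-2$, which is conclusion (i). If $c_2=1$, then conclusion (ii) holds. So from now on I assume $D\ge 3$ and $c_2\ge 2$, which is exactly the regime in which the inequality $\lambda_1\le b_1-1$ of \cite[Proposition~4.4.9]{bcn} is nontrivial and in which we are sitting at equality.

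The key step is Lemma~\ref{laminloc}, which for any distance-regular graph bounds the smallest eigenvalue of each local graph below by $-1-\frac{b_1}{\lambda_1+1}$. Here $\Delta_G(x)$ is $a_1$-regular on $k=b_0$ vertices, and since $\lambda_1+1=b_1$ the bound becomes $\lambda_{\min}(\Delta_G(x))\ge -2$ for every vertex $x$. Thus $G$ is a distance-regular graph all of whose local graphs have smallest eigenvalue at least $-2$. By Theorem~\ref{thmcam}, each connected component of $\Delta_G(x)$ is either a generalized line graph or one of finitely many exceptional graphs, and an exceptional component --- not being a generalized line graph --- has at most $36$ vertices by the $37$-vertex threshold of Theorem~\ref{thmcam}. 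In particular, once some $\Delta_G(x)$ has an exceptional component, the valency $k=|V(\Delta_G(x))|$ of $G$ is bounded, which is what keeps the remaining analysis finite.

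Next I would carry out the case analysis on the isomorphism type of $\Delta_G(x)$, matched against the value of $c_2$, which equals the number of vertices of the $\mu$-graph of a pair of vertices at distance $2$ and hence is controlled by the structure of the local graphs. If $\Delta_G(x)$ is a disjoint union of cliques, or more generally of cliques and hexagons, then $c_2=2$ and the classical recognition of such distance-regular graphs identifies $G$ as a Hamming graph or a Doob graph. If $\Delta_G(x)$ is a rook's graph $L(K_{m,n})$ with $m,n\ge 2$, the recognition theorem for Johnson graphs gives $G=J(v,D)$ with $c_2=4$; if $\Delta_G(x)$ is a triangular graph $T(m)=L(K_m)$, the \emph{locally triangular} characterization gives a halved cube with $c_2=6$. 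Among the exceptional possibilities for $\Delta_G(x)$, feasibility of the intersection array of $G$ together with the known local-recognition results leave only two: the Petersen graph, where Hall's classification of locally Petersen graphs singles out the Conway-Smith and Doro graphs (both with $c_2=2$), and the Schl\"{a}fli graph, which forces the Gosset graph (with $c_2=10$).

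I expect this last case analysis to be the main obstacle. The several \emph{locally $X$} recognition theorems --- locally a union of cliques $\Rightarrow$ Hamming or Doob, locally a rook's graph $\Rightarrow$ Johnson, \emph{locally triangular} $\Rightarrow$ halved cube, locally Petersen $\Rightarrow$ Conway-Smith or Doro, locally Schl\"{a}fli $\Rightarrow$ Gosset --- are each substantial classical results; one must also eliminate the remaining exceptional local graphs (the Shrikhande graph outside the strongly regular case, the three Chang graphs, and the smaller exceptional graphs), treat separately the triangle-free case $a_1=0$, where the local-eigenvalue bound gives nothing and one argues directly that $G$ is a hypercube, and finally check that every surviving family really does satisfy $\lambda_1=b_1-1$, so that the six-case list is exactly right. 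Beyond producing the bound $\lambda_{\min}(\Delta_G(x))\ge -2$, the equality $\lambda_1=b_1-1$ is used again inside each candidate family to pin down its exact parameters. For the organization of all of these ingredients I would follow \cite[Section~4.4]{bcn}.
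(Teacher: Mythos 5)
This theorem is stated in the survey without proof: the paper simply cites \cite[Theorem 4.4.11]{bcn} (Terwilliger, Neumaier), so the benchmark is the proof given there, which does not work through local graphs at all. It uses the root representation of $G$ itself: after splitting off $D\le 2$ and $c_2=1$, the equality $\lambda_1=b_1-1$ in the presence of a quadrangle forces the representation attached to $\lambda_1$ to be a root representation, the lattice it generates is an irreducible root lattice, and the classification $A_n$, $D_n$, $E_6$, $E_7$, $E_8$ directly produces the six cases, including the admissible values $c_2\in\{2,4,6,10\}$ and the sporadic graphs. Your opening moves are fine and consistent with this: for $D\le 2$ the identity $b_1=-(\lambda_1+1)(\lambda_D+1)$ does give $\lambda_D=-2$, and Lemma \ref{laminloc} with $\lambda_1+1=b_1$ does give $\lambda_{\min}(\Delta_G(x))\ge-2$ for every vertex.

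The genuine gap is the entire middle of your argument. From Theorem \ref{thmcam} you only know that each ($a_1$-regular) local graph is a generalized line graph or an exceptional graph on at most $36$ vertices; you then silently restrict to a short list (disjoint unions of cliques and hexagons, rook's graphs, triangular graphs, Petersen, Schl\"afli). Nothing in your outline rules out, say, cocktail party graphs, line graphs of arbitrary graphs, clique extensions of grids, the Shrikhande or Chang graphs as local graphs, and it cannot be done from the bound $\lambda_{\min}(\Delta_G(x))\ge-2$ alone: that bound is a weak consequence of $\lambda_1=b_1-1$, and by this point your proposal has spent the hypothesis entirely, whereas the real proof re-uses the equality (via the $\mu$-graphs and the root lattice) to force the local structure and the values $c_2=2,4,6,10$. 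Relatedly, the implications you assert, such as ``locally a union of cliques $\Rightarrow c_2=2$'', are not consequences of the local isomorphism type, and several of the recognition theorems you lean on are false in the unconditional form you need: locally grid and locally triangular graphs include quotients (e.g.\ halved folded cubes), which can only be excluded by invoking the eigenvalue hypothesis or the intersection array again, a step your plan does not supply. So as written the proposal establishes the easy reductions and the local $-2$ bound, but the passage from there to the six-case list --- the actual content of the theorem --- is missing; to complete it along your lines you would have to import essentially the root-lattice machinery of \cite[Section 4.4]{bcn} or the full strength of the local characterization literature with the quotient exclusions carried out, which is where the real work lies.
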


Note that this theorem also classifies the distance-regular graphs with classical parameters $(D, 1, \alpha, \beta)$ (see \cite[Theorem 6.1.1]{bcn}).

As corollaries we have the following characterizations of the Hamming graphs and the Johnson graphs. The characterization of the Hamming graphs is due to Egawa and the characterization of the Johnson graphs is due to Terwilliger.

\begin{theorem}[cf.~{\cite[Corollary 9.2.5]{bcn}}]
Let $G$ be a distance-regular graph with the same intersection array as a Hamming graph $H(D, q)$. Then $G$ is the Hamming graph $H(D, q)$ or, if $q=4$, a Doob graph.
\end{theorem}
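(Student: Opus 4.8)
The plan is to obtain the statement as a quick consequence of the theorem of Terwilliger and Neumaier stated just above (cf.~\cite[Theorem 4.4.11]{bcn}), which classifies the distance-regular graphs with second largest eigenvalue $\lambda_1 = b_1 - 1$. The degenerate case $D = 1$ is immediate, since a connected $(q-1)$-regular graph of diameter $1$ is $K_q = H(1,q)$, so assume $D \geq 2$. The first step is to observe that the Hamming graph $H(D,q)$ has $\lambda_1 = b_1 - 1$ --- this was already recorded in the discussion above --- and hence so does any distance-regular graph $G$ with the same intersection array; thus $G$ falls into one of the six cases of the Terwilliger--Neumaier theorem. The second step is to feed in the numerical data of the Hamming array, namely $b_0 = D(q-1)$, $c_i = i$ and $a_i = i(q-2)$ for all $i$, so that in particular $c_2 = 2$ and $c_D = D$.

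The third step is to eliminate cases. Since $c_2 = 2$, the cases with $c_2 \in \{1,4,6,10\}$ are out, leaving case (i) (a strongly regular graph with smallest eigenvalue $-2$) and case (iii) ($c_2 = 2$, and $G$ is $H(D,q)$, a Doob graph, the Conway--Smith graph, or the Doro graph). In case (iii) the Conway--Smith and Doro graphs have intersection arrays $\{10,6,4,1;1,2,6,10\}$ and $\{10,6,4;1,2,5\}$, which are not Hamming arrays (their last intersection number $c_D$ is not $D$), so they are excluded; and a Doob graph has the intersection array of $H(D',4)$ for some $D'$, so if it has the array of $H(D,q)$ then $D' = D$ and $q = 4$. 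In case (i), since a strongly regular graph has diameter at most $2$, we must have $D = 2$, and then $G$ is a strongly regular graph with the grid parameters $(q^2,2(q-1),q-2,2)$; by the classical characterization of the $q \times q$ grid among strongly regular graphs (see \cite{bcn}), $G$ is then $H(2,q)$, or, if $q = 4$, $G$ is $H(2,4)$ or the Shrikhande graph. Recalling that the Shrikhande graph is the Doob graph $D(1,0)$, all cases together give that $G$ is $H(D,q)$ or, when $q = 4$, a Doob graph, which is the assertion.

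The proof is therefore short and essentially formal. The one genuinely external ingredient is the classical characterization of grids used in the $D = 2$ subcase, and the only point that requires a little care is checking that none of the exceptional graphs occurring in the Terwilliger--Neumaier list --- the Conway--Smith graph, the Doro graph, and the strongly regular graphs with $\lambda_{\min} = -2$ other than the grids and the Shrikhande graph --- share an intersection array with some $H(D,q)$; this is a direct comparison of intersection arrays. Thus I do not expect a real obstacle: the main thing to keep track of is simply that it is exactly the value $q = 4$ that lets the Doob graphs into the conclusion.
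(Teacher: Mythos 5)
Your derivation is correct and is exactly the route the paper intends: it presents this statement as a corollary of the preceding Terwilliger--Neumaier classification of distance-regular graphs with $\lambda_1=b_1-1$, and your case elimination (using $c_2=2$, comparing arrays to rule out the Conway--Smith and Doro graphs, and invoking Shrikhande's characterization of the $q\times q$ grid in the strongly regular case, with the Shrikhande graph counted as a Doob graph) fills in the details in the expected way. No gaps worth noting.
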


\begin{theorem}[cf.~{\cite[Corollary 1.2]{Terwilliger.1986}}]
Let $G$ be a distance-regular graph with the same intersection array as a Johnson graph $J(v, D)$, where $v \geq 2D$. Then $G$ is the Johnson graph $J(v, D)$, or if $(v, D) = (8, 2)$, a Chang graph.
\end{theorem}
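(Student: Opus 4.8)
The plan is to derive this as a corollary of the Terwilliger--Neumaier classification of distance-regular graphs with $\lambda_1=b_1-1$ (the theorem stated immediately above), together with the classification of strongly regular graphs with smallest eigenvalue $-2$. First I would record that, since the eigenvalues and intersection numbers of a distance-regular graph are functions of its intersection array, any $G$ with the intersection array of $J(v,D)$ has the same intersection numbers $b_i=(D-i)(v-D-i)$, $c_i=i^2$ and the same spectrum $\theta_j=(D-j)(v-D-j)-j$ as $J(v,D)$; in particular $c_2=4$, the smallest eigenvalue is $-D$, and the second largest eigenvalue is $\theta_1=(D-1)(v-D-1)-1=b_1-1$. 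Hence $G$ meets the hypothesis of the preceding theorem.

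Applying that theorem, among its six conclusions only (i) (a strongly regular graph with smallest eigenvalue $-2$) and (iv) ($c_2=4$ and $G$ a Johnson graph) are compatible with $c_2=4$. For $D\ge 3$ conclusion (i) is impossible, since $G$ has diameter $D$, so conclusion (iv) holds; as the intersection array of a Johnson graph $J(v',D')$ with $v'\ge 2D'$ determines the pair $(v',D')$ and $G$ has the array of $J(v,D)$, this forces $G\cong J(v,D)$. It remains to handle $D=2$, i.e.\ conclusion (i): here $G$ is a connected strongly regular graph with smallest eigenvalue $-2$ and parameters $\bigl(\binom{v}{2},\,2(v-2),\,v-2,\,4\bigr)$, those of the triangular graph $T(v)=J(v,2)$. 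By the classification of connected strongly regular graphs with smallest eigenvalue $-2$ (a consequence of Theorem~\ref{thmcam}; see also \cite{DC}), $G$ is a triangular graph, a lattice graph $H(2,m)$, a cocktail-party graph $K_{m\times 2}$, or one of the Petersen, Clebsch, Schl\"afli, Shrikhande and three Chang graphs. Comparing parameters, the only member of this list other than $T(v)$ itself having parameters $\bigl(\binom{v}{2},2(v-2),v-2,4\bigr)$ is, when $v=8$, one of the three Chang graphs (which are precisely the strongly regular graphs cospectral with but not isomorphic to $T(8)$). Therefore $G\cong J(v,D)$ unless $(v,D)=(8,2)$, in which case $G$ is $T(8)$ or a Chang graph.

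With the Terwilliger--Neumaier theorem granted, the argument above is essentially bookkeeping; the one point needing care is that conclusion (iv) of that theorem does not by itself yield uniqueness when $D=2$, which is exactly why the case $D=2$ must be routed through conclusion (i), and why the Chang graphs appear. If instead one wanted a self-contained proof (Terwilliger's original route), the main obstacle would be the local analysis: one would show that $c_2=4$, together with the value of $a_1$ and the intersection array, forces every local graph of $G$ to be the $D\times(v-D)$ grid, and then use the resulting system of Delsarte cliques (and their ``dual'' cliques) to coordinatise $V(G)$ by the $D$-subsets of a $v$-set, identifying $G$ with $J(v,D)$ --- the degeneracy of this local structure at $D=2$ being the source of the Chang exceptions.
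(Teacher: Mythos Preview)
Your proposal is correct and follows exactly the approach the paper intends: the theorem is stated in the paper as a corollary of the Terwilliger--Neumaier classification (the paper writes ``As corollaries we have the following characterizations\ldots'' and gives no separate proof), and your derivation---checking that the intersection array of $J(v,D)$ forces $c_2=4$ and $\lambda_1=b_1-1$, then eliminating all cases except (i) and (iv), and invoking the classification of strongly regular graphs with smallest eigenvalue $-2$ for the diameter-$2$ case---is the standard way to extract it. One minor clarification: for $D=2$ you do not need to argue that conclusion (iv) ``does not by itself yield uniqueness''; rather, conclusion (i) \emph{automatically} holds when $D=2$ (any diameter-$2$ distance-regular graph is strongly regular, and here $\lambda_{\min}=-2$), so routing through (i) is forced, not merely convenient.
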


\vspace{3mm}
\noindent
{\bf Characterization of the Grassmann graphs}

\vspace{3mm}
Metsch \cite{Metsch.1995}, building on work of many people, characterized the Grassmann graphs as follows.
\begin{theorem}[{\cite[Corollary 1.2]{Metsch.1995}}]
Let $G$ be a distance-regular graph  with the same intersection array as
a Grassmann graph $J_q(v,D)$, where $v \geq 2D\geq 6$ are integers and $q \geq 2$ is a prime power. Then $G$ is the Grassmann graph $J_q(v,D)$  if $v \geq \max\{2D+2, 2D+6-q\}$.
\end{theorem}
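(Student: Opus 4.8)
The plan is to reconstruct the ambient projective geometry from the combinatorial data of $G$, following Metsch's local-to-global method. Since $G$ has the intersection array of $J_q(v,D)$, which has classical parameters with $b=q>1$, its second largest eigenvalue is $\lambda_1=\frac{b_1}{q}-1$, so that $\frac{b_1}{\lambda_1+1}=q$; in particular the intersection array determines the order, the valency, and the numbers of common neighbors of adjacent and of non-adjacent pairs inside each local graph $\Delta_G(x)$, and together with the $Q$-polynomial property (every distance-regular graph with classical parameters is $Q$-polynomial, so one controls the local eigenvalues, see \cite{bcn}) one finds that each $\Delta_G(x)$ is a connected regular graph with smallest eigenvalue at least $-q-1$ and the parameters of the local graph of $J_q(v,D)$. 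The first real step is to upgrade this to an isomorphism: the local graph of $J_q(v,D)$ is the $q$-clique extension of the $([D]_q\times[v-D]_q)$-grid, and one recognizes $\Delta_G(x)$ as such a clique-extended grid by exhibiting its two families of maximal cliques --- the ``row'' cliques and the ``column'' cliques, arising from the hyperplanes of, and the $(D+1)$-spaces over, a fixed $D$-space. The hypothesis $v\geq\max\{2D+2,\,2D+6-q\}$ already enters here, guaranteeing that both factor grids are large and thick enough that no exceptional graph sharing those parameters can occur.

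Given the local structure, the next step is to build a global line system on $V(G)$. The big maximal cliques of $G$ --- those playing the role of the stars of a fixed $(D-1)$-space in $J_q(v,D)$ --- have order $[v-D+1]_q=1+\frac{k}{-\lambda_{\min}(G)}$, which is exactly the Delsarte bound since $\lambda_{\min}(G)=-[D]_q$, whereas the pencil cliques, of order $[D+1]_q<[v-D+1]_q$ (because $v>2D$), are strictly sub-Delsarte. Using the local information one shows that the big cliques form a family $\mathcal{F}$ of Delsarte cliques with every edge of $G$ lying in exactly one member of $\mathcal{F}$, so that $G$ is geometric. Then $V(G)$ together with $\mathcal{F}$ is a partial linear space $\mathcal{S}$ with collinearity graph $G$; taking also the pencil cliques as a second type of line, $\mathcal{S}$ becomes a geometry in which the residue of every point is a direct product of two projective spaces $\mathrm{PG}(D-1,q)\times\mathrm{PG}(v-D-1,q)$, i.e.\ a \emph{Grassmann space} in the sense of incidence geometry.

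The final step is to recognize $\mathcal{S}$. One invokes the classification of Grassmann spaces --- Metsch's recognition theorem, building on work of Cooperstein, Sprague and others --- which says that a sufficiently large, thick, connected point-line geometry all of whose point-residues are products $\mathrm{PG}(d-1,q)\times\mathrm{PG}(e-1,q)$ is the Grassmann geometry of $d$-subspaces of a $(d+e)$-dimensional vector space over $\mathbb{F}_q$ (no quotients arising in the relevant range). Applying this with $(d,e)=(D,\,v-D)$ identifies $\mathcal{S}$ with the Grassmann geometry of $D$-subspaces of a $v$-dimensional $\mathbb{F}_q$-space, hence its collinearity graph $G$ with $J_q(v,D)$, as desired.

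The main obstacle is this recognition step together with the small-$v$ bookkeeping throughout the argument: one must ensure that $\mathcal{S}$ lands in the range where the Grassmann-space classification applies and exclude the degenerate configurations that appear when $v$ is close to $2D$. This is exactly what forces the bound $v\geq\max\{2D+2,\,2D+6-q\}$, and something of this kind is genuinely needed: the twisted Grassmann graphs of Van Dam and Koolen are, for every prime power $q$ and every $D\geq2$, distance-regular graphs with the same intersection array as $J_q(2D+1,D)$ yet not isomorphic to it, so the statement already fails for $v=2D+1$; the extra term $2D+6-q$ reflects the additional room the argument needs when $q$ is small, where $v=2D+2$ and $v=2D+3$ remain beyond the reach of the method.
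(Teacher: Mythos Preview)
The paper does not prove this theorem; it is cited from Metsch and the only description of the proof given is the single sentence ``His approach is to find large cliques, called grand cliques and show that each edge lies in such a unique grand clique.'' Your proposal is a good deal more elaborate than that description, and in fact takes a substantially different route from Metsch's actual argument.

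Metsch's method is the Bose--Laskar (``claw--clique'') approach: from the intersection numbers alone one bounds the size of induced claws at each vertex, and a purely combinatorial counting lemma then forces the neighborhood of each vertex to be covered by a bounded number of large cliques (the grand cliques); one then shows directly that every edge lies in a unique grand clique, obtains a point--line geometry, and recognizes it. At no point does Metsch need to prove that the local graph $\Delta_G(x)$ \emph{is} the $q$-clique extension of a $([D]_q\times[v-D]_q)$-grid. Your first step, by contrast, asserts exactly this local isomorphism, and then leans on it to construct the two families of cliques. That is the Gavrilyuk--Koolen strategy described immediately after this theorem in the paper (for the case $v=2D$), not Metsch's; and the paper explicitly contrasts the two.

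The real gap in your plan is therefore that first step. You write that the hypothesis $v\geq\max\{2D+2,\,2D+6-q\}$ ``guarantees that both factor grids are large and thick enough that no exceptional graph sharing those parameters can occur,'' but this is not a proof: recognizing a graph as the $q$-clique extension of a grid from its parameters (or even its spectrum) is exactly the content of Theorem~\ref{main-result} in this survey, and that result needs co-edge regularity together with a much stronger numerical assumption ($t\geq 11(s+1)^3(s+2)$) than anything Metsch's bound provides. In the Gavrilyuk--Koolen paper, establishing co-edge regularity of the local graph already requires delicate use of the $Q$-polynomial property and the Terwilliger algebra, and even then they need $D\geq 9$. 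So either you must supply an independent argument that the local graph is the clique-extended grid under Metsch's weaker hypothesis (which would be new), or you should replace your first step by Metsch's direct claw--clique lemma, which bypasses the local-graph identification entirely. Your later steps --- constructing the Delsarte-clique line system and invoking a Grassmann-space recognition theorem --- are in the right spirit and do match the concluding part of Metsch's argument.
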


His approach is to find large cliques, called grand cliques and show that each edge lies in such a unique grand clique.

Gavrilyuk and Koolen \cite{Gavrilyuk.2018} characterized the Grassmann graph $J_q(2D,D)$ as follows.
\begin{theorem}
Let $G$ be a distance-regular graph with the same intersection array as
a Grassmann graph $J_q(2D,D)$, where $D \geq 3$ is an integer and $q \geq 2$ is a prime power. Then $G$ is the Grassmann graph $J_q(2D,D)$  if $D \geq 9$.
\end{theorem}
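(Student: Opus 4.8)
The plan is to pass from $G$ to its local graphs, pin these down, and then rebuild the underlying projective space. Set $a:=\dfrac{q^{D}-1}{q-1}$; the Grassmann graph $J_{q}(2D,D)$ has valency $k=qa^{2}$, smallest eigenvalue $-a$, intersection number $c_{2}=(q+1)^{2}$, and second largest eigenvalue $\lambda_{1}$ with $\lambda_{1}+1=b_{1}/q$, hence $b_{1}/(\lambda_{1}+1)=q$; all of this numerology is forced on $G$ by its intersection array. Applying the bound relating the smallest eigenvalue of a local graph of a distance-regular graph to $b_{1}/(\lambda_{1}+1)$ (Lemma~\ref{laminloc}) gives $\lambda_{\min}(\Delta_{G}(x))\geq -1-q$ for every vertex $x$, while $\Delta_{G}(x)$ is a regular graph on $qa^{2}$ vertices of valency $a_{1}=q(2a-1)-1$. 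Since $a\to\infty$ as $D\to\infty$, each local graph of $G$ is a (connected) regular graph with \emph{fixed} smallest eigenvalue $-(q+1)$ and \emph{large} minimal valency --- exactly the regime of Sections~\ref{sec:hoffman graphs}--\ref{sec:associated hoffman graphs}.

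First I would prove that every local graph of $G$ is isomorphic to the $q$-clique extension of the $(a\times a)$-rook's graph $K_{a}\,\square\,K_{a}$, which is the local graph of $J_{q}(2D,D)$ --- its vertices are the pairs consisting of a hyperplane of a fixed $D$-subspace $U$ and a $(D{+}1)$-subspace containing $U$, each pair then lifted $q$-fold --- and which has exactly the order, valency and spectrum recorded above. To do this one feeds $\Delta_{G}(x)$ into the structure theory of graphs with fixed smallest eigenvalue and large minimal valency developed in Sections~\ref{sec:hoffman graphs}--\ref{sec:associated hoffman graphs} (with bound $-(q+1)$), using the strong local regularity that $\Delta_{G}(x)$ inherits from the distance-regularity of $G$; this exhibits $\Delta_{G}(x)$ as the slim graph of a fat Hoffman graph carrying two families of $(qa)$-cliques, and then the exact spectrum forces $\Delta_{G}(x)$ to be the $q$-clique extension of $K_{a}\,\square\,K_{a}$. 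The hypothesis $D\geq 9$ enters here: it makes $a=\tfrac{q^{D}-1}{q-1}$ large enough that this graph is rigid, ruling out Shrikhande-type and other small sporadic cospectral graphs.

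With the local structure in hand $G$ becomes geometric. In each $\Delta_{G}(x)$ the neighbours of $x$ split into $a$ ``row'' cliques and into $a$ ``column'' cliques, each of size $qa$, every neighbour lying in exactly one row and one column; adjoining $x$ to a row or a column gives a clique of order $1+qa$, which attains the Delsarte bound, so each edge of $G$ lies in exactly two Delsarte cliques. The next step is to show that the set of all Delsarte cliques of $G$ partitions into two classes $\mathcal{F}$ and $\mathcal{F}'$ so that each edge meets exactly one member of each class; then $(G,\mathcal{F})$ is a geometric distance-regular graph. Finally one reconstructs $\mathrm{PG}(2D-1,q)$ from $\mathcal{F}$: the clique graph of $\mathcal{F}$ (members of $\mathcal{F}$, adjacent when they share a vertex) carries the incidence data of the $(D-1)$-subspaces of a $2D$-dimensional projective space, and after verifying the pertinent axioms one invokes the characterization of Metsch quoted above (valid for the Grassmann graphs $J_{q}(n,D')$ with $n>2D'$, which applies once the rank has been lowered), or a classical characterization of Grassmann/Lie incidence geometries, to identify the space and conclude $G\cong J_{q}(2D,D)$.

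I expect the third step --- producing the two coherent classes of Delsarte cliques --- to be the genuine obstacle. Because $J_{q}(2D,D)$ is self-dual, an isomorphism interchanging its grand cliques with its $(D{+}1)$-subspace cliques, the two clique families are \emph{locally} indistinguishable: from the neighbourhood of a single vertex there is no way to tell which of the Delsarte cliques through it should be a ``grand clique''. One must fix the class of some Delsarte clique at a base vertex and propagate that choice throughout $G$ along geodesics, ruling out any incoherent configuration in which different paths force a clique into both classes; this is precisely where the large diameter $D\geq 9$ --- enough room to close up the propagation and to apply a rigid geometric characterization of large rank --- does the real work. Everything else is, by comparison, routine given the machinery of the preceding sections.
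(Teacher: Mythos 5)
Your opening move (using Lemma \ref{laminloc} and the classical parameters $(D,q,\alpha,\beta)$ to get $\lambda_{\min}(\Delta_G(x))\geq -q-1$) agrees with the actual argument, but the two load-bearing steps after that have genuine gaps. First, the identification of the local graph: you assert that the spectrum of $\Delta_G(x)$ is ``forced on $G$ by its intersection array'' and that this spectrum, plus rigidity for $D\geq 9$, pins $\Delta_G(x)$ down as the $q$-clique extension of the $\bigl(\frac{q^D-1}{q-1}\times\frac{q^D-1}{q-1}\bigr)$-grid. Neither claim holds as stated: the intersection array of a distance-regular graph does not determine the spectra of its local graphs, and the $q$-clique extension of a grid is not determined by its spectrum alone --- that is precisely why Theorem \ref{main-result} carries the extra co-edge regularity hypothesis. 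In the proof of Gavrilyuk and Koolen the local spectrum is derived separately, and the co-edge regularity of each local graph is extracted from the $Q$-polynomial property via the Terwilliger algebra (in the spirit of Lemma \ref{local}); only then does Theorem \ref{main-result} apply, and its explicit bound is where the hypothesis $D\geq 9$ actually enters. Your alternative of feeding $\Delta_G(x)$ into the Hoffman-graph machinery of Sections \ref{sec:hoffman graphs}--\ref{sec:associated hoffman graphs} cannot deliver this either, since the constants there ($C_2(t)$, $C_3(t)$, $\kappa_i$) are non-explicit and depend on the smallest eigenvalue bound $-(q+1)$, which grows with $q$; they cannot produce a clean condition like $D\geq 9$ uniformly in $q$.

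Second, the endgame. You propose to split the Delsarte cliques into two coherent classes and then rebuild $\mathrm{PG}(2D-1,q)$, invoking Metsch's characterization ``once the rank has been lowered''; but Metsch's theorem as quoted requires $v\geq\max\{2D+2,\,2D+6-q\}$ and is exactly what is unavailable at $v=2D$ --- the self-duality obstruction you describe is the reason this case remained open, and your proposed fix (fixing a class at a base vertex and ``propagating along geodesics'', with $D\geq 9$ giving ``enough room'') is a heuristic, not an argument; you yourself flag it as the genuine obstacle and do not close it. The actual proof avoids constructing the two clique classes altogether: once every local graph is shown to be the $q$-clique extension of the grid, it concludes by the local recognition theorem of Numata, Cooperstein and Cohen (see \cite[Theorem 9.3.8]{bcn}), which identifies $G$ as $J_q(2D,D)$ directly from its local structure.
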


They first showed that the local graph of such a graph
must have the same spectrum as the $q$-clique extension of the $(\frac{q^D-1}{q-1}\times \frac{q^D-1}{q-1})$-grid. Furthermore, using the $Q$-polynomial property they showed that the local graph at any vertex satisfies that every pair of distinct non-adjacent vertices has the same number of common neighbors (we call graphs with this property co-edge regular graphs and study them in the next section). Then following from Theorem \ref{main-result}, they had that the local graph is really the $q$-clique extension of the $(\frac{q^D-1}{q-1}\times \frac{q^D-1}{q-1})$-grid, if $D\geq 9$. Now building on work of Numata, Cooperstein and Cohen (see \cite[Theorem 9.3.8]{bcn}), they completed their proof.

 Note that the situation for distance-regular graphs with the same parameters as a Grassmann graph $J_q( 2D+1, D)$ is very different, as Van Dam and Koolen \cite{vanDam.2005} constructed
 a distance-regular graph $\tilde{J}_q( 2D+1, D)$ with the same parameters as $J_q(2D+1,D)$, where $q$ is a prime power and $D \geq 2$ is an integer. For these
 graphs, not every edge lies in a maximum clique. This shows that the method used by Metsch can not work in this case. Whether there is a geometric argument for this case is not clear on this
 moment. Note that Munemasa and Tonchev \cite{Munemasa.2011} showed that the block graph of the design constructed by Jungnickel and Tonchev \cite{Jungnickel.2009} is (isomorphic to) the graph
 $\tilde{J}_q(2D+1, D)$.

\vspace{3mm}
\noindent
{\bf Characterization of the bilinear forms graphs.}

\vspace{3mm}
Metsch \cite{Metsch.1999}, again building on work of many people, characterized the bilinear forms graphs as follows.
\begin{theorem}[{\cite[Corollary 1.2]{Metsch.1999}}]
Let $G$ be a distance-regular graph with the same intersection array as
a bilinear forms graph $Bil(D \times e, q)$, where $e \geq D \geq 3$ are integers and $q\geq 2$ is a prime power.
If $q = 2$ and $e \geq
D + 4$ or $q \geq 3$ and
$e \geq D + 3$, then $G$ is the bilinear forms graph $Bil(D \times e, q)$.
\end{theorem}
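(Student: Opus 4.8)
The plan is to follow Metsch's clique-geometry strategy, in parallel with his characterisation of the Grassmann graphs quoted above. Write $\lambda:=(q^D-1)/(q-1)$. From the intersection array one computes $\lambda_{\min}(G)=-\lambda$, and the Delsarte bound shows that every clique of $G$ has at most $q^e=1+k/\lambda$ vertices. Since $\lambda$ depends only on $D$ and $q$, Theorem \ref{thm:nongeometricnew} already guarantees that, for each fixed pair $(D,q)$, all but finitely many of the graphs under consideration are geometric; so the real content is the explicit bound $e\ge D+3$ (resp.\ $e\ge D+4$ when $q=2$), and to obtain it one argues directly.

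\emph{Step 1 (local structure).} First I would pin down the local graph $\Delta_G(x)$ at every vertex $x$. Its valency is $a_1$ and, since $G$ is distance-regular, its number of edges is forced by the intersection numbers, while interlacing (Lemma \ref{interelacing}) bounds its smallest eigenvalue; the $Q$-polynomial property of $G$ (which holds because $G$ has classical parameters) further constrains the local eigenvalues and shows that $\Delta_G(x)$ is co-edge regular. One then shows that $\Delta_G(x)$ has the same parameters as, and is therefore isomorphic to, the local graph of $Bil(D\times e,q)$ itself, namely the $(q-1)$-clique extension of the $\big(\tfrac{q^D-1}{q-1}\times\tfrac{q^e-1}{q-1}\big)$-grid. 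This is the bilinear-forms analogue of the fact, used in the Grassmann case, that the local graph of $J_q(2D,D)$ is a $q$-clique extension of a grid.

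\emph{Step 2 (grand cliques and the geometry).} The two rulings of each local grid glue up, as $x$ ranges over $V(G)$, into two families of maximal cliques of $G$: the \emph{$\sigma$-cliques}, of size $q^e$, and the \emph{$\tau$-cliques}, of size $q^D$. Using a Bose-Laskar/clique-counting argument of the kind described at the start of Section \ref{sec:hoffman graphs}, together with the local description of Step 1, I would show that every edge of $G$ lies in exactly one $\sigma$-clique and exactly one $\tau$-clique, that two distinct grand cliques share at most one vertex, and that the grand cliques through a point are arranged exactly as the lines through a point of the grid. In particular $G$ is geometric with respect to the $\sigma$-cliques, and $\big(V(G),\{\sigma\text{-cliques}\}\cup\{\tau\text{-cliques}\}\big)$ is a connected partial linear space whose point-residues are all isomorphic to the point-line geometry of that grid --- which is precisely the defining local configuration of the \emph{attenuated space} of $D\times e$ matrices over $\mathbb{F}_q$, equivalently the geometry of $D$-subspaces of $\mathbb{F}_q^{D+e}$ that meet a fixed $e$-subspace trivially. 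Invoking a geometric characterisation of that attenuated space --- which, through the accumulated work on nets, affine spaces and attenuated spaces on which Metsch's paper builds, reduces to recognising this incidence geometry from its parameters and uniform local structure --- one concludes that the space is the attenuated space, and hence that $G$, being its collinearity graph, is $Bil(D\times e,q)$.

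\emph{Where the hypothesis enters, and the main obstacle.} The bound $e\ge D+3$ (resp.\ $e\ge D+4$ for $q=2$) is consumed in Step 2: it makes the $\sigma$-cliques large enough compared with the rest of $G$ that the clique-counting forces each edge into a \emph{unique} $\sigma$-clique and excludes spurious large cliques, the extra slack for $q=2$ reflecting the smaller counting margins there. Some hypothesis of this kind is necessary, since for small $e-D$ there are sporadic distance-regular graphs with bilinear-forms parameters that are not bilinear-forms graphs. The main difficulty of the argument is exactly Step 2 --- converting spectral and parametric local information into the rigid global incidence geometry of the grand cliques --- whereas Step 1 and the final recognition of the attenuated space are, by now, comparatively standard applications of local-graph and incidence-geometry techniques.
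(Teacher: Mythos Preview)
The paper does not prove this theorem; it cites Metsch \cite{Metsch.1999} and summarises his method in one line: ``his approach is the same as for the Grassmann graphs'', namely to find large \emph{grand cliques} by direct Bose--Laskar-type counting and show that each edge lies in a unique one. Your Step~2 is a fair rendering of this, but your Step~1 is not part of Metsch's argument. Metsch does \emph{not} first determine the local graph; he works directly with global clique counts, and the hypothesis $e\ge D+3$ (resp.\ $e\ge D+4$ when $q=2$) is precisely what makes that counting succeed.

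Your Step~1 is instead lifted from the Gavrilyuk--Koolen strategy, which the paper presents explicitly as the \emph{complementary} approach for the near-diagonal cases $J_q(2D,D)$ and $Bil(D\times D,2)$ that Metsch's counting cannot reach. As you have written it, Step~1 also has gaps. Deducing co-edge regularity of $\Delta_G(x)$ from the $Q$-polynomial property alone is delicate: Lemma~\ref{local} requires $G$ to be \emph{thin}, which the intersection array does not guarantee, and Gavrilyuk--Koolen in fact use more refined Terwilliger-algebra computations. Even granting co-edge regularity, the identification of $\Delta_G(x)$ as a $(q-1)$-clique extension of a grid is not available from the tools you cite: Theorem~\ref{main-result} treats only \emph{square} grid extensions, whereas here $e>D$ makes the target grid $\big(\tfrac{q^D-1}{q-1}\times\tfrac{q^e-1}{q-1}\big)$ strictly rectangular. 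So Step~1 is both extraneous to Metsch's proof and not supported by the results you invoke; dropping it and going straight to the grand-clique counting of Step~2 is the actual shape of the argument.
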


His approach is the same as for the Grassmann graphs.

Gavrilyuk and Koolen \cite{Gavrilyuk.2019b} characterized the bilinear forms graph $Bil(D \times D, 2)$ as follows.
\begin{theorem}[{\cite[Theorem 1.3]{Gavrilyuk.2019b}}]
Let $G$ be a distance-regular graph  with the same intersection array as
a bilinear forms graph $Bil(D\times D, 2)$, where $D \geq 3$ is an integer.
Then $G$ is the bilinear forms graph $Bil(D \times D, 2)$.
\end{theorem}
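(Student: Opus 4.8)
\noindent The plan is to follow the local-to-global strategy that Gavrilyuk and Koolen used for the Grassmann graph $J_{q}(2D,D)$. Write $q=2$ and let $G$ be a distance-regular graph with the same intersection array as $Bil(D\times D,2)$. Since $Bil(D\times D,2)$ has classical parameters $(D,q,q-1,q^{D}-1)$ it is $Q$-polynomial, hence so is $G$; in particular its second largest eigenvalue is $\lambda_{1}=\frac{b_{1}}{q}-1$, so that $\frac{b_{1}}{\lambda_{1}+1}=q=2$, and all local graphs $\Delta_{G}(x)$ share a common spectrum. The first step is to identify that spectrum. By Lemma~\ref{laminloc} the smallest eigenvalue of $\Delta_{G}(x)$ is at least $-\frac{b_{1}}{\lambda_{1}+1}=-2$; combining this with the order $k$ and valency $a_{1}$ of $\Delta_{G}(x)$ read off from the intersection array, and with the constraints on the local eigenvalues coming from the $Q$-polynomial structure of $G$ at $x$ (the Terwilliger algebra; see Appendix~\ref{appendix}), one shows that $\Delta_{G}(x)$ has precisely the spectrum of the $(2^{D}-1)\times(2^{D}-1)$-grid $H(2,2^{D}-1)$ — which is exactly the local graph of $Bil(D\times D,2)$, since for $q=2$ the $(q-1)$-clique extension that occurs for general $q$ collapses to the plain grid.

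The second step is to upgrade cospectrality to isomorphism. Using the $Q$-polynomial property again, exactly as in the Grassmann case, the relations in the Terwilliger algebra at a fixed vertex force $\Delta_{G}(x)$ to be co-edge regular, i.e.\ any two distinct non-adjacent vertices have the same number of common neighbours. Together with the spectrum from the first step this makes $\Delta_{G}(x)$ strongly regular with the parameters of $H(2,2^{D}-1)$; invoking Theorem~\ref{main-result} (equivalently, the classical uniqueness of the $n\times n$ grid for $n\neq 4$), and noting that the exceptional value $n=2^{D}-1=4$ never occurs, we conclude $\Delta_{G}(x)\cong H(2,2^{D}-1)$ for every $x$. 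This is the point at which the hypothesis $q=2$ lets the argument run for \emph{every} $D\geq3$: for larger $q$ the local graph would be a proper clique extension of a grid and one would need the full strength of Theorem~\ref{main-result}, which comes with parameter restrictions, whereas the plain-grid case has essentially none.

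It remains to reconstruct $G$ from its local structure: $G$ is now distance-regular, locally $H(2,2^{D}-1)$, with the intersection array of $Bil(D\times D,2)$. For this I would use the structure theory of locally-grid graphs — the analogue, for the geometry of $D\times D$ matrices over $\mathbb{F}_{2}$, of the Numata--Cooperstein--Cohen results invoked in the Grassmann case — to recognise the rows and columns of each local grid as the restrictions to $\Delta_{G}(x)$ of two families of maximal cliques of $G$ through $x$, each of size $2^{D}$, such that every edge of $G$ lies in exactly one clique of each family; these two families then coordinatize the vertices of $G$ as $D\times D$ matrices over $\mathbb{F}_{2}$ in such a way that adjacency becomes ``the difference has rank one'', which is precisely $Bil(D\times D,2)$. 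The main obstacle is this last step: fixing the local spectrum and the co-edge regularity of the local graphs is, once the classical parameters are in hand, a bounded computation inside the Terwilliger algebra, but turning ``$G$ is locally a grid'' into a global coordinatization requires a genuine geometric reconstruction — establishing the existence and regularity of the two clique families and verifying that they assemble into the right incidence geometry — and carrying this out uniformly in $D\geq 3$, with none of the asymptotic slack present in Metsch's characterizations (which require $e\geq D+3$), is where the real work lies.
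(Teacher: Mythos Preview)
Your outline matches the paper's approach in all essentials: determine the spectrum of the local graph, conclude it is the $((2^D-1)\times(2^D-1))$-grid, then use the resulting cliques of order $2^D$ to build a geometry that forces $G\cong Bil(D\times D,2)$.

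One simplification you are missing, and the paper's description reflects it: the detour through co-edge regularity is unnecessary in the $q=2$ case. A graph cospectral with a primitive strongly regular graph is itself strongly regular with the same parameters, and by Shrikhande's theorem the $(n\times n)$-grid is the unique strongly regular graph with its parameters whenever $n\neq 4$; since $2^D-1\geq 7$ for $D\geq 3$, the local graph is the grid as soon as you have pinned down its spectrum. This is exactly why the paper says the local graph ``must be'' the grid ``by showing that the local graph must have the same spectrum'' --- no separate co-edge regularity argument is invoked, in contrast to the Grassmann case. Relatedly, Theorem~\ref{main-result} as stated requires $s\geq 2$, so it does not literally apply here; the result you actually need (and parenthetically name) is Shrikhande's uniqueness of $L_2(n)$, not the clique-extension theorem.
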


They first showed that the local graph of such a graph must be a $((2^D-1) \times (2^D -1))$-grid by showing that the local graph must have the same spectrum as the
$((2^D-1) \times (2^D -1))$-grid. Then they used the cliques of order $2^D$ to construct a geometry and using this geometry they were able to show that the graph must be the bilinear
forms graph $Bil(D \times D, 2)$.

\subsection{Graphs cospectral to a distance-regular graph}
In this subsection, we give some results on graphs cospectral to a distance-regular graph.

We start with the Hamming graphs. Bang, Van Dam and Koolen \cite{Bang.2008} showed the following results.

 \begin{proposition}[{\cite[Theorem 3.4]{Bang.2008}}]
 Let $q$ and $D\geq2$ be positive integers. Let $2q>D^4+2D^3+2D^2-5D-4$. Then any graph that is cospectral to $H(D,q)$ is locally the disjoint union of $D$ cliques of order $q-1$.
 \end{proposition}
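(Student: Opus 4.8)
The plan is to show that a graph $G$ cospectral to $H(D,q)$ is locally the disjoint union of $D$ cliques of order $q-1$, which amounts to proving that for every vertex $x$ the local graph $\Delta_G(x)$ is isomorphic to $D\cdot K_{q-1}$. First I would record the spectral data that can be extracted purely from the spectrum of $H(D,q)$: the graph $G$ is regular of valency $k=D(q-1)$ (since regularity and valency are determined by the spectrum), it has the same number of vertices $q^D$, and, because $H(D,q)$ is a distance-regular graph whose local graphs are $D\cdot K_{q-1}$, the number of triangles through any edge and the number of edges in a local graph are controlled. Concretely, counting closed walks of length $3$ shows that $\sum_x 2|E(\Delta_G(x))|=\operatorname{tr}(A^3)$ is fixed, so the \emph{average} number of edges in a local graph equals $\overline{e}:=D\binom{q-1}{2}$; and counting closed walks of length $4$ pins down $\sum_x\big(\text{number of }4\text{-cycles and other length-}4\text{ configurations at }x\big)$, which via the standard identity $\operatorname{tr}(A^4)=\sum_x k_G(x)^2 + \text{(paths)} + 2\cdot(\#C_4)$ bounds $\sum_x$ of a convex function of the local degree sequence.

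The heart of the argument is then a local optimization: among all graphs $\Delta$ on $k=D(q-1)$ vertices with exactly $\overline e=D\binom{q-1}{2}$ edges, the disjoint union $D\cdot K_{q-1}$ is the \emph{unique} one (up to isomorphism) maximizing the quantity that $\operatorname{tr}(A^4)$ forces to be large — this is the reason the numerical hypothesis $2q>D^4+2D^3+2D^2-5D-4$ appears, as it is exactly the slack needed to make the spectral inequality force equality everywhere rather than merely on average. So the steps are: (i) from cospectrality, derive the global identities for $\sum_x |E(\Delta_G(x))|$ and for $\sum_x f(\text{deg sequence of }\Delta_G(x))$ coming from $\operatorname{tr}(A^3),\operatorname{tr}(A^4)$; (ii) use an eigenvalue interlacing / Hoffman-type bound (Lemma \ref{interelacing} applied to $\Delta_G(x)$, together with $\lambda_{\min}(G)\ge -\lambda_{\min}$ of the Hamming graph, i.e. $\lambda_{\min}(\Delta_G(x))\ge -2$ forced locally) to show each local graph has smallest eigenvalue at least $-2$ and bounded valency; (iii) invoke the Cameron–Goethals–Seidel–Shult classification (Theorem \ref{thmcam}) or its consequences for graphs with smallest eigenvalue $\ge -2$ to restrict the possible local graphs to line graphs / generalized line graphs of bounded size; (iv) combine the counting identities from (i) with this short list to conclude that each $\Delta_G(x)$ must be $D\cdot K_{q-1}$, the numerical hypothesis being what eliminates every competitor.

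The main obstacle I anticipate is step (iv): ruling out the near-extremal local graphs. Having $\lambda_{\min}(\Delta_G(x))\ge -2$ and the right number of vertices and edges on average is not by itself enough — one must show that a local graph cannot be, say, $D\cdot K_{q-1}$ at most vertices but something like a "clique of size $q$ plus a clique of size $q-2$" or a line graph of a near-regular bipartite graph at a few vertices, because such local defects could a priori average out. This is where the precise bound $2q>D^4+2D^3+2D^2-5D-4$ does the work: it must be used to show that the $\operatorname{tr}(A^4)$ count, which penalizes local graphs that are not disjoint unions of cliques of equal size, leaves no room for any vertex to have the "wrong" local structure. I would handle this by writing $\operatorname{tr}(A^4)-\operatorname{tr}(A^4_{H(D,q)})=0$ as a sum over vertices of a nonnegative quantity (by the extremality, established via a convexity/Cauchy–Schwarz argument on the local degree sequences subject to the fixed edge count) and then arguing that each summand vanishes, forcing every local graph to be regular of valency $q-2$ with the right number of edges — hence $D\cdot K_{q-1}$ by the classification — provided $q$ is as large as hypothesized.
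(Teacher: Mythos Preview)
The survey does not itself prove this proposition; it is quoted verbatim from Bang, Van~Dam and Koolen \cite{Bang.2008} and no argument is supplied here. So there is no in-paper proof to compare against; what follows is an assessment of your plan on its own merits and in light of what the cited paper actually does.

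Your steps (ii)--(iii) contain a genuine gap. You assert that each local graph $\Delta_G(x)$ has smallest eigenvalue at least $-2$ and then want to invoke the Cameron--Goethals--Seidel--Shult classification. But Lemma~\ref{laminloc} applies only to distance-regular graphs, and a graph merely cospectral to $H(D,q)$ need not be distance-regular, so no bound of the form $\lambda_{\min}(\Delta_G(x))\ge -\frac{b_1}{\lambda_1+1}-1$ is available. Nothing in the spectrum alone controls the smallest eigenvalue of a local graph. Even if you could get $\lambda_{\min}(\Delta_G(x))\ge -2$, the resulting classification (generalized line graphs and finitely many exceptional graphs) is far too coarse to force $\Delta_G(x)\cong D\cdot K_{q-1}$; the target graph has smallest eigenvalue $-1$, not $-2$, so you would still need a separate argument ruling out every other generalized line graph on $D(q-1)$ vertices with $D\binom{q-1}{2}$ edges.

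Your step (i) is also weaker than you need: traces of $A^3$ and $A^4$ give only \emph{averages} over vertices, and you correctly flag in step (iv) that local defects could in principle average out. Your proposed fix (write the discrepancy as a sum of nonnegative terms) is plausible in spirit but not substantiated. The actual argument in \cite{Bang.2008} does not proceed this way. It exploits directly that $\lambda_{\min}(G)=-D$, so by interlacing $G$ contains no induced $K_{1,D^2+1}$ and no induced $\widetilde{K}_{2m}$ for $m$ not too large; these forbidden subgraphs are then used, in the Bose--Laskar style described in Section~\ref{sec:hoffman graphs} of the survey, to show that the neighbourhood of every vertex is covered by $D$ large cliques, and a careful count (which is where the polynomial $D^4+2D^3+2D^2-5D-4$ arises) forces each of these cliques to have exactly $q-1$ vertices and to be pairwise disjoint. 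The moment-counting route you outline is not the mechanism that produces that particular bound on $q$.
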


 They employed  this result to show:
 \begin{proposition}[{\cite[Theorem 4.5]{Bang.2008}}]
 Let $q \geq 36$. Then the Hamming graph $H(3,q)$ is determined by its spectrum.
 \end{proposition}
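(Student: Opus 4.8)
The plan is to combine the local-structure result of Bang, Van Dam and Koolen stated just above with Egawa's characterisation of the Hamming graphs by their intersection array. Let $G$ be any graph cospectral with $H(3,q)$, where $q\ge 36$. Since cospectrality preserves regularity, the number of vertices, and the whole spectrum, $G$ is $3(q-1)$-regular on $q^{3}$ vertices with eigenvalues $3(q-1)>2q-3>q-3>-3$ of multiplicities $1,\,3(q-1),\,3(q-1)^{2},\,(q-1)^{3}$; in particular $3(q-1)$ is a simple eigenvalue, so $G$ is connected, $\lambda_{\min}(G)=-3$, and $G$ has exactly four distinct eigenvalues, so $\operatorname{diam}(G)\le 3$. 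From $\operatorname{tr}(A^{2})$, $\operatorname{tr}(A^{3})$ and $\operatorname{tr}(A^{4})$ I would record as spectral invariants the numbers of edges and triangles of $G$, together with the two moments $\sum_{x\ne y}\mu(x,y)$ and $\sum_{x\ne y}\mu(x,y)^{2}$ of the common-neighbour function $\mu(x,y):=|N(x)\cap N(y)|$.

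Next I would establish the local structure: $G$ is locally the disjoint union of three cliques of order $q-1$. For large $q$ this is exactly the preceding proposition, and the range down to $q\ge 36$ is covered by the sharper analysis of the case $D=3$ carried out in \cite{Bang.2008}. Granting this, every maximal clique of $G$ has order $q$; call these the \emph{lines}. Each vertex lies on exactly three lines, each edge on exactly one line, and two lines meet in at most one vertex, so $G$ carries a partial linear space with $3q^{2}$ lines in which every point has degree $3$; moreover any two adjacent vertices have exactly $q-2$ common neighbours, all lying on their common line, so $G$ has constant $a_{1}=q-2$ and hence constant $b_{1}=2(q-1)$.

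The heart of the argument, and the step I expect to be the main obstacle, is to upgrade this geometry to genuine distance-regularity with the Hamming intersection array $\{3(q-1),\,2(q-1),\,q-1;\,1,\,2,\,3\}$. The aim is to show that the $3q^{2}$ lines fall into three parallel classes, each partitioning $V(G)$ into $q^{2}$ lines, with grid-like incidence between the classes; this produces a coordinatisation $V(G)\to[q]^{3}$ and forces $c_{2}=2$ and $c_{3}=3$. The delicate point is to show that every pair of vertices at distance $2$ has \emph{exactly} two common neighbours, not merely two on average: here I would play the purely combinatorial count of intersecting pairs of lines against the spectral moments $\sum\mu$ and $\sum\mu^{2}$ recorded above (a convexity argument pinning $\mu\equiv 2$ on the distance-$2$ pairs), and then use connectivity of $G$ to propagate the parallel-class structure from a single vertex to the whole graph. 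This is precisely where largeness of $q$ enters: it controls the numbers of triangles and quadrangles and thereby excludes the ``wrap-around'' configurations that would otherwise be consistent with the local picture.

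Finally, once $G$ is known to be a distance-regular graph with the same intersection array as $H(3,q)$, Egawa's theorem stated above applies: $G$ is either the Hamming graph $H(3,q)$ or, only in the exceptional case $q=4$, a Doob graph. Since $q\ge 36\ne 4$, we conclude $G\cong H(3,q)$; that is, $H(3,q)$ is determined by its spectrum.
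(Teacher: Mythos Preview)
The survey itself does not prove this proposition; it merely quotes it from \cite{Bang.2008}. Your overall architecture---local structure via the preceding proposition, then distance-regularity with the Hamming array, then Egawa's characterisation---is indeed the route taken in \cite{Bang.2008}, and your identification of the passage from ``locally $3K_{q-1}$'' to ``$c_2=2$'' as the crux is accurate.

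However, the specific mechanism you propose for that crux has a genuine gap. Knowing the two moments $S_1=\sum_{d(x,y)=2}\mu(x,y)$ and $S_2=\sum_{d(x,y)=2}\mu(x,y)^2$ (which, given the local structure, are indeed spectrally determined and satisfy $S_2=2S_1$) does \emph{not} force $\mu\equiv 2$, because the number $N_2$ of distance-$2$ pairs is not itself a spectral invariant. The relation $S_2=2S_1$ is equivalent to $\sum(\mu-1)^2=N_2$, and this is satisfied not only by $\mu\equiv 2$ but, for instance, by any mixture in which three pairs have $\mu=1$ for every pair with $\mu=3$. So a bare Cauchy--Schwarz or variance argument on these two moments cannot conclude.

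What actually does the work in \cite{Bang.2008} is the Hoffman polynomial relation coming from the four eigenvalues, namely $(A-(2q-3)I)(A-(q-3)I)(A+3I)=6J$, read off entrywise and combined with the clique geometry. In particular it yields $(A^3)_{xy}=6$ for every pair at distance $3$ (so every such pair is joined by exactly six geodesics), and together with local counting around the $q$-lines this is used to show that the lines carry a resolvable (transversal-design) structure, forcing $c_2=2$ and $c_3=3$. The ``convexity on moments'' idea should therefore be replaced by an explicit use of the cubic polynomial identity in $A$, which is the extra equation your sketch is missing; once you add that ingredient, the rest of your outline goes through.
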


 Using Theorem \ref{intro2}, Koolen Yang and Yang showed the following weaker result.

 \begin{theorem}[{cf.~\cite[Theorem 1.6]{kyy1}}] \label{Hamming} There exists a positive integer $q^\prime$ such that for each $q \geq q^\prime$, any graph that is cospectral to the Hamming graph $H(3,q)$ is the slim graph of a $3$-fat \big\{\raisebox{-1ex}{\begin{tikzpicture}[scale=0.3]

\tikzstyle{every node}=[draw,circle,fill=black,minimum size=10pt,scale=0.3,
                            inner sep=0pt]

    \draw (-2.1,0) node (1f1) [label=below:$$] {};
    \draw (-1.6,0) node (1f2) [label=below:$$] {};
    \draw (-1.1,0) node (1f3) [label=below:$$] {};

    \tikzstyle{every node}=[draw,circle,fill=black,minimum size=5pt,scale=0.3,
                            inner sep=0pt]

    \draw (-1.6,1) node (1s1) [label=below:$$] {};

    \draw (1f1) -- (1s1) -- (1f2);
    \draw (1f3) -- (1s1);
    \end{tikzpicture}}\big\}-line Hoffman graph.
\end{theorem}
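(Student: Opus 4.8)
The plan is to combine the known local structure of graphs cospectral to $H(3,q)$ with Theorem~\ref{intro2}, and then to read off the saturated Hoffman graph explicitly. Throughout, let $\mathfrak{p}$ denote the Hoffman graph drawn in the statement, namely the one with a single slim vertex adjacent to three fat vertices; then $Sp(\mathfrak{p})=(-3)$, so $\lambda_{\min}(\mathfrak{p})=-3$, and $\mathfrak{p}$ is indecomposable and $3$-fat, hence $\mathfrak{p}\in\mathfrak{G}(2)$.

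First I would record the consequences of cospectrality. Since $H(3,q)$ is connected and $3(q-1)$-regular, and since the number of connected components, the number of edges, and the largest eigenvalue are all spectral invariants, any graph $G$ cospectral to $H(3,q)$ is connected and $3(q-1)$-regular with $\lambda_{\min}(G)=-3$. Next, by the theorem of Bang, Van Dam and Koolen \cite[Theorem~3.4]{Bang.2008}, applied with $D=3$, there is an absolute constant $q_0$ (one may take $q_0=68$) such that for every $q\geq q_0$ the graph $G$ is locally the disjoint union of three cliques of order $q-1$. Consequently every edge of $G$ lies in a unique maximal clique, every maximal clique has order exactly $q$, every vertex lies in exactly three maximal cliques, and two adjacent vertices lie in a unique common maximal clique. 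In particular $k_G(x)=3(q-1)$ and $\bar{k}(\Delta_G(x))=q-2$ for every vertex $x$.

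Now I would invoke Theorem~\ref{intro2} with $t=2$. Its hypotheses hold for all sufficiently large $q$: condition (i) holds once $3(q-1)>C_3(2)$; condition (ii) reads $q-2\leq\frac{1}{2}\big(3(q-1)-C_3(2)\big)$, that is, $q\geq C_3(2)-1$; and condition (iii) is the equality $\lambda_{\min}(G)=-3=-(2+1)$. So there is a constant $q^\prime$, depending only on $C_3(2)$ and $q_0$, such that for $q\geq q^\prime$ the graph $G$ is the slim graph of a $2$-fat $\mathfrak{G}(2)$-line Hoffman graph; saturating (this terminates, since $\lambda_{\min}(G)\geq-3$ forces every slim vertex to have at most three fat neighbours), we obtain a $2$-fat $\mathfrak{G}(2)$-saturated Hoffman graph $\hat{\mathfrak{h}}$ with slim graph $G$. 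It remains to identify $\hat{\mathfrak{h}}$. Each quasi-clique $Q_{\hat{\mathfrak{h}}}(f)$ is a clique of $G$ (this is built into the way the sum interacts with members of $\mathfrak{G}(2)$, cf.~Lemma~\ref{combi}), hence has order at most $q$; combining maximality of $\hat{\mathfrak{h}}$ with the clique structure above, the fat vertices of $\hat{\mathfrak{h}}$ are in bijection with the maximal cliques of $G$, so $\hat{\mathfrak{h}}$ is $3$-fat, two adjacent slim vertices have exactly one common fat neighbour, and two non-adjacent ones have none. By Lemma~\ref{fatnbr} this forces $Sp(\hat{\mathfrak{h}})=-3I$, whence, by Lemma~\ref{combi}, $\hat{\mathfrak{h}}=\biguplus_{x\in V(G)}\mathfrak{p}$. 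Therefore $\hat{\mathfrak{h}}$ is a $3$-fat $\{\mathfrak{p}\}$-line Hoffman graph with slim graph $G$, which is exactly the assertion.

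I expect the main obstacle to be this last identification step: upgrading the soft conclusion of Theorem~\ref{intro2}, that $G$ is the slim graph of \emph{some} $2$-fat $\mathfrak{G}(2)$-line Hoffman graph, to the rigid form above. One must exclude the two-slim-vertex block members of $\mathfrak{G}(2)$ (their occurrence would force a pair of slim vertices with two common fat neighbours, contradicting that adjacent vertices of $G$ lie in a unique common maximal clique) and check that saturation attaches exactly one fat vertex to each $q$-clique and nothing else (a second fat vertex on a $q$-clique, or a fourth fat neighbour of a slim vertex, would pull the smallest eigenvalue strictly below $-3$). Both points rest on the precise local structure supplied by \cite{Bang.2008}, which is really doing the work here, while Theorem~\ref{intro2} certifies that the clique structure of $G$ assembles into a fat Hoffman graph of smallest eigenvalue at least $-3$ in the first place. (One could instead bypass Theorem~\ref{intro2} and build $\hat{\mathfrak{h}}$ directly, placing one fat vertex on each maximal $q$-clique and checking $Sp(\hat{\mathfrak{h}})=-3I$ by hand.) Note that the much harder Metsch-type combinatorial step that would be needed to conclude $G\cong H(3,q)$ is deliberately not attempted, which is exactly why the result is weaker than the spectral characterisation of $H(3,q)$.
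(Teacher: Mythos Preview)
Your argument is correct and follows the route the survey indicates (``Using Theorem~\ref{intro2}''); the paper gives no further details, so there is nothing to compare line by line. One point is worth flagging, though. You invoke the Bang--Van~Dam--Koolen local-structure theorem \cite{Bang.2008} twice: once to verify hypothesis~(ii) of Theorem~\ref{intro2} (the per-vertex bound $\bar k(\Delta_G(x))=q-2$, which is not a spectral invariant---only its average over $x$ is), and once to carry out the final identification of $\hat{\mathfrak{h}}$. As you yourself note, once \cite{Bang.2008} gives you that every vertex lies in exactly three maximal $q$-cliques and every edge in exactly one, the direct construction (one fat vertex per maximal clique, whence $Sp(\hat{\mathfrak{h}})=-3I$ and $\hat{\mathfrak{h}}=\biguplus_{x}\mathfrak p$) is immediate and Theorem~\ref{intro2} becomes superfluous.

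The survey deliberately presents Theorem~\ref{Hamming} as a \emph{weaker} result, placed after and contrasted with the \cite{Bang.2008} propositions; this strongly suggests the proof in \cite{kyy1} is meant to be independent of \cite{Bang.2008}, extracting what it needs from the spectrum together with the associated-Hoffman-graph machinery (Proposition~\ref{assohoff}, Theorem~\ref{nicethm}) rather than importing the local structure wholesale. So while your proof is valid, it is not quite in the spirit of the paper's claim: you are deriving a weak statement from a stronger one, whereas the point of Theorem~\ref{Hamming} is to illustrate what the Hoffman-graph method yields on its own. Your parenthetical direct construction is the honest version of the argument you actually give.
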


Next, we discuss the Johnson graphs. Again using Theorem \ref{intro2}, Koolen et al. showed the following result.

\begin{theorem}[{cf.~\cite[Theorem 1.7]{kyy1}}] \label{Johnson} There exists a positive integer $v^\prime$ such that for each $v \geq v^\prime$, any graph that is cospectral to the Johnson graph $J(v,3)$ is
the slim graph of a $3$-fat \big\{\raisebox{-1ex}{\begin{tikzpicture}[scale=0.3]

\tikzstyle{every node}=[draw,circle,fill=black,minimum size=10pt,scale=0.3,
                            inner sep=0pt]

    \draw (-2.1,0) node (1f1) [label=below:$$] {};
    \draw (-1.6,0) node (1f2) [label=below:$$] {};
    \draw (-1.1,0) node (1f3) [label=below:$$] {};

    \tikzstyle{every node}=[draw,circle,fill=black,minimum size=5pt,scale=0.3,
                            inner sep=0pt]

    \draw (-1.6,1) node (1s1) [label=below:$$] {};

    \draw (1f1) -- (1s1) -- (1f2);
    \draw (1f3) -- (1s1);
    \end{tikzpicture}}\big\}-line Hoffman graph.
\end{theorem}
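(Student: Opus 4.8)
The plan is to adapt the proof of Theorem~\ref{Hamming} for the Hamming graph $H(3,q)$ to the Johnson graph $J(v,3)$, which is likewise a geometric distance-regular graph of diameter $3$ with smallest eigenvalue $-3$; the decisive step in both cases is an application of Theorem~\ref{intro2} with $t=2$, followed by a rigidity argument that pins down the Hoffman graph. Recall that $J(v,3)$ is connected, $k$-regular with $k=3(v-3)$, on $n=\binom v3$ vertices, with eigenvalues $3(v-3)>2v-9>v-7>-3$ and intersection numbers $a_1=v-2$, $c_2=4$. Hence any graph $G$ cospectral with $J(v,3)$ is connected (its largest eigenvalue is simple), is $3(v-3)$-regular (a graph cospectral with a connected regular graph is regular of the same valency), has $\lambda_{\min}(G)=-3$, and has diameter at most $3$ (it has only four distinct eigenvalues). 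Applying Theorem~\ref{intro2} with $t=2$, hypothesis~(iii) then holds for $G$ with equality, and hypothesis~(i), namely $k_G(x)>C_3(2)$ for all $x$, holds as soon as $v>\tfrac13 C_3(2)+3$; accordingly $v'$ will be taken at least this large.

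The heart of the argument is hypothesis~(ii) of Theorem~\ref{intro2}, the estimate $\bar k(\Delta_G(x))\le\tfrac12\bigl(k_G(x)-C_3(2)\bigr)$ for every vertex $x$ of $G$; this is where cospectrality with $J(v,3)$, and not merely $\lambda_{\min}(G)=-3$, is needed. The ingredients I would combine are: the identity $\operatorname{tr}A(G)^3=\operatorname{tr}A(J(v,3))^3$, which forces the average over $x$ of $\bar k(\Delta_G(x))$ to equal $a_1(J(v,3))=v-2$; eigenvalue interlacing for the quotient matrix of the partition $\{\{x\},\Delta_G(x),V(G)\setminus(\{x\}\cup\Delta_G(x))\}$ of $A(G)$, which, using that the second eigenvalue of $G$ equals $2v-9$, bounds $\bar k(\Delta_G(x))$ below by about $v-6$ and above by about $2(v-3)$ for every $x$; and Theorem~\ref{Hoff1973}(i) with $\lambda=3$, by which $G$ contains no induced $K_{1,T}$ and no induced $\widetilde K_{2T}$ for $T=T(3)$, so that the associated-Hoffman-graph and quasi-clique machinery of Section~\ref{asho} (notably Theorem~\ref{nicethm}) becomes available. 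Closing the gap between the crude spectral bound $\bar k(\Delta_G(x))\lesssim 2(v-3)$ and the required bound $\tfrac12(3(v-3)-C_3(2))$ by exploiting the absence of these forbidden subgraphs, exactly as is done for $H(3,q)$, is the main obstacle of the proof.

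Once hypotheses~(i)--(iii) are in place, Theorem~\ref{intro2} shows that $G$ is the slim graph of a $2$-fat $\mathfrak G(2)$-line Hoffman graph $\mathfrak h$, and it remains to upgrade this to a $3$-fat $\{\mathfrak b\}$-line Hoffman graph, where $\mathfrak b$ denotes the Hoffman graph consisting of a single slim vertex adjacent to three fat vertices (its special matrix is $(-3)$). The family $\mathfrak G(2)$ consists of $\mathfrak b$ together with the indecomposable $2$-fat Hoffman graphs with special matrix $\begin{pmatrix}J_{s_1}-3I&-J\\-J&J_{s_2}-3I\end{pmatrix}$, $s_1,s_2\in\{1,2\}$; the presence in $\mathfrak h$ of a factor of the latter kind, or of a copy of $\mathfrak b$ with a fat vertex deleted, would mean that somewhere in $G$ two distinct quasi-cliques share two or more vertices, or a slim vertex lies in fewer than three quasi-cliques, or two non-adjacent slim vertices have a common fat neighbour --- equivalently, that the special graph $\mathcal S(\mathfrak h)$ is nonempty. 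Each of these possibilities is incompatible with the structure established in the previous paragraph, namely that every quasi-clique through a vertex has order $v-2+o(v)$ and the three of them essentially partition that vertex's neighbourhood, together with $\lambda_{\min}(G)=-3$. Hence every factor of $\mathfrak h$ is $\mathfrak b$, every slim vertex of $\mathfrak h$ has exactly three fat neighbours, and $\mathfrak h=\biguplus_i\mathfrak b$ is a $3$-fat $\{\mathfrak b\}$-line Hoffman graph with slim graph $G$, as required. Taking $v'$ to be the maximum of the finitely many thresholds encountered above completes the proof.
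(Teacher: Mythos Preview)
The paper does not actually prove this theorem; it only cites \cite[Theorem~1.7]{kyy1} and states that the result is obtained ``again using Theorem~\ref{intro2}''. Your proposal is consistent with that one-line description: you correctly compute the spectral data of $J(v,3)$, correctly identify Theorem~\ref{intro2} with $t=2$ as the engine, and correctly isolate the two nontrivial tasks --- verifying hypothesis~(ii) pointwise and upgrading the output from $2$-fat $\mathfrak G(2)$-line to $3$-fat $\{\mathfrak b\}$-line.

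However, at exactly those two places the proposal remains a sketch rather than a proof. For hypothesis~(ii) you need $\bar k(\Delta_G(x))\le \tfrac32(v-3)-\tfrac12 C_3(2)$ for \emph{every} vertex $x$, while the interlacing bound you quote only yields roughly $2(v-3)$; you yourself call closing this gap ``the main obstacle'' and defer to ``exactly as is done for $H(3,q)$'' without naming the mechanism. Likewise, the upgrade step --- ruling out the non-$\mathfrak b$ indecomposable factors in $\mathfrak G(2)$ and forcing three fat neighbours at every slim vertex --- is asserted to follow from ``incompatibility'' with the earlier structure, but no concrete argument is given (one needs an actual count comparing quasi-clique sizes with $k=3(v-3)$, together with the $(r,\lambda)$-niceness from Theorem~\ref{nicethm}, to exclude the $2$-fat factors). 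So your outline matches the paper's indication and is not wrong as a strategy, but both substantive steps would have to be carried out by consulting \cite{kyy1} before this becomes a proof.
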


Van Dam, Haemers, Koolen and Spence \cite[p.~1814]{VanDam.2006} gave a construction of cospectral graphs of the Johnson graph $J(v, D)$ $(v\geq 2D\geq 4)$ that are the block graphs of certain designs.


Similar results can be obtained for cospectral graphs of the Grassmann graphs and the bilinear forms graphs.

\section{Co-edge regular graphs}\label{sec:co-edge regular graphs}
In this section, we discuss co-edge regular graphs with fixed smallest eigenvalue.  A $k$-regular graph of order $n$ is called \emph{co-edge regular} with parameters $(n, k, c)$, if
every pair of distinct and non-adjacent vertices has exactly $c$ common neighbors. Note that a $(t_1\times t_2)$-grid, which is the line graph of a complete bipartite graph $K_{t_1,t_2}$, is a co-edge regular graph with parameters $(t_1t_2,t_1+t_2-2,2)$.

The results in this section are motivated by two results of Terwilliger for distance-regular graphs.
The first result concerns the local graph of a thin $Q$-polynomial distance-regular graph. For the definition of a thin $Q$-polynomial distance-regular graph, see Appendix \ref{appendix}.
\begin{lemma}[cf. {\cite[Theorem 77]{Terwilliger.1993}}]\label{local}
Let $G$ be a thin $Q$-polynomial distance-regular graph with diameter $D\geq5$. Then there exists a non-negative integer $c$ such that for each vertex $x$ of $G$, the local graph $\Delta_G(x)$ at $x$ is co-edge regular with parameters $(k, a_1, c)$. Moreover, the local graph $\Delta_G(x)$ at $x$ has at most $5$ distinct eigenvalues.
\end{lemma}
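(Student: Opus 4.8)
The plan is to analyse the local graph through the subconstituent (Terwilliger) algebra of $G$. Fix a vertex $x$ and let $T = T(x)$ be the Terwilliger algebra with respect to $x$, with dual idempotents $E_0^{*}, \dots , E_D^{*}$, dual adjacency matrix $A^{*} = A^{*}(x)$, and primitive idempotents $E_0 , \dots , E_D$ in the $Q$-polynomial ordering. Decompose the standard module $\mathbb{C}^{V(G)}$ into an orthogonal direct sum of irreducible $T$-modules; since $G$ is thin, every such summand $W$ is thin, i.e.\ $\dim E_i^{*}W \le 1$ for $0 \le i \le D$. Now identify the adjacency matrix of $\Delta_G(x)$ with the operator $\widetilde{A} := E_1^{*} A E_1^{*}$ acting on $E_1^{*}\mathbb{C}^{V(G)} \cong \mathbb{C}^{G_1(x)}$. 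An irreducible $T$-module $W$ meets $E_1^{*}\mathbb{C}^{V(G)}$ precisely when its endpoint is $0$ or $1$: the unique endpoint-$0$ module (the primary module, spanned by the characteristic vectors of the subconstituents $G_i(x)$) contributes the all-ones vector, an eigenvector of $\widetilde{A}$ with eigenvalue $a_1$; each endpoint-$1$ module $W$, being thin, meets $E_1^{*}\mathbb{C}^{V(G)}$ in a line on which $\widetilde{A}$ acts as a scalar $\eta_W$, the \emph{local eigenvalue} of $W$. Hence the distinct eigenvalues of $\Delta_G(x)$ are $a_1$ together with the distinct values $\eta_W$.

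The first step is to bound the number of local eigenvalues, and this is exactly where $Q$-polynomiality and $D \ge 5$ enter. On each thin irreducible $T$-module $W$ the restrictions of $A$ and $A^{*}$ form a Leonard pair, so every structure constant of $W$ is governed by the intersection numbers $b_i , c_i$ and the dual numbers $\theta_i^{*}$; for an endpoint-$1$ module one shows, using the recurrences satisfied by a Leonard system together with the $Q$-polynomial relations, that $\eta_W$ must be a root of one fixed polynomial whose coefficients depend only on the intersection array of $G$, and whose degree is at most four once $D \ge 5$ (the diameter hypothesis is what makes the recurrence stabilise and rules out the sporadic endpoint-$1$ module types that can occur at small diameter). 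Consequently $\widetilde{A}$, hence $\Delta_G(x)$, has at most five distinct eigenvalues. Moreover the polynomial, the multiset $\{\eta_W\}$, and the value $a_1$ are all determined by the intersection array, so this eigenvalue data does not depend on $x$.

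For co-edge regularity I would turn the claim into an identity inside $T$. Since $A^2 = kI + a_1 A + c_2 A_2$ and $A$ joins only consecutive subconstituents, restricting to $E_1^{*}\mathbb{C}^{V(G)}$ gives
\[
\widetilde{A}^{\,2} \;=\; (k - c_2)\,I \;+\; (a_1 - c_2)\,\widetilde{A} \;+\; (c_2 - 1)\,J \;-\; B , \qquad B := E_1^{*} A E_2^{*} A E_1^{*},
\]
where the $(y,z)$-entry of $B$ is the number of common neighbours of $y$ and $z$ lying in $G_2(x)$. Thus $\Delta_G(x)$ is co-edge regular if and only if the off-diagonal, non-edge entries of $B$ are all equal, and the co-edge parameter is then $c = c_2 - 1 - (\text{that common value})$. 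I would evaluate $B$ module by module: on the primary module it acts through $J$, while on each thin endpoint-$1$ module the Leonard-pair data express the action of $E_1^{*} A E_2^{*} A E_1^{*}$ in terms of $\eta_W$ and the intersection numbers; comparing these expressions across the (at most four) possible module types shows that the non-edge part of $B$ is the single scalar predicted by the intersection array. As this scalar is intersection-array data, the same non-negative integer $c$ works for every $x$.

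The main obstacle is the quantitative heart of the argument: proving that $Q$-polynomiality together with $D \ge 5$ forces the local eigenvalues of the endpoint-$1$ modules into a set of size at most four (so that $\Delta_G(x)$ has at most five distinct eigenvalues) and simultaneously forces the non-edge part of $B$ to be constant. Both facts require pushing the Leonard-pair / Askey--Wilson computations through each thin module and excluding the low-diameter exceptions; the bound ``five'' (rather than something smaller) reflects that a thin endpoint-$1$ module may contribute a local eigenvalue genuinely different from $a_1$ and from those forced by the two ``extreme'' module types.
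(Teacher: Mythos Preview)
The paper does not supply its own proof of this lemma; it simply quotes the result from Terwilliger (1993), so there is no in-paper argument to compare your proposal against.

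That said, your framework is the right one and is in fact Terwilliger's: identify $A(\Delta_G(x))$ with $E_1^{*}AE_1^{*}$, decompose $E_1^{*}\mathbb{C}^{V(G)}$ according to the irreducible $T(x)$-modules of endpoint $0$ and $1$, and read off the local eigenvalues and the co-edge regularity from the module data. Your displayed identity for $\widetilde{A}^{2}$ is correct and the reduction of co-edge regularity to constancy of the ``non-edge part'' of $B=E_1^{*}AE_2^{*}AE_1^{*}$ is a clean way to phrase the goal.

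Where the proposal is not yet a proof is exactly where you flag it: the assertion that, for $D\ge 5$, the local eigenvalue $\eta_W$ of a thin endpoint-$1$ module is constrained to a fixed set of size at most four, and that the action of $B$ on these modules is determined by $\eta_W$ in a way that makes the non-edge entries constant. These are the substantive computations in Terwilliger's argument; one really does need the explicit classification of thin endpoint-$1$ modules in the $Q$-polynomial case (their diameter and dual endpoint are pinned down by $\eta_W$, and the admissible $\eta_W$ are governed by the parameter sequences of the $Q$-polynomial structure). Your sketch gestures at ``Leonard-pair / Askey--Wilson computations'' but does not isolate which relation forces the degree bound four, nor why the same relation forces the non-edge part of $B$ to collapse to a single scalar rather than a function of $\eta_W$. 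Until those two module-theoretic facts are written out, what you have is an accurate outline of Terwilliger's method rather than an independent proof.
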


The second result shows a relation between the second largest eigenvalue of a distance-regular graph and the smallest eigenvalue of its local graph at any vertex.
\begin{lemma}[{cf.~\cite[Theorem 4.4.3]{bcn}}]\label{laminloc}
Let $G$ be a distance-regular graph with diameter $D\ge 3$ and second largest eigenvalue $\lambda_1$. Then for each vertex $x$ of $G$, the local graph $\Delta_G(x)$ at $x$ has the smallest eigenvalue at least $-\frac{b_1}{\lambda_1+1}-1$.
\end{lemma}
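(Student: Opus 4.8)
The plan is to read the bound off the nonnegativity of the norm of a single well‑chosen vector. Fix $x$ and set $\eta := \lambda_{\min}(\Delta_G(x))$. I may assume $\eta \neq a_1$: since $\Delta_G(x)$ is $a_1$‑regular, $\eta = a_1$ would force $\Delta_G(x)$ to be edgeless and hence $\eta = a_1 = 0$, which already exceeds $-1-\frac{b_1}{\lambda_1+1}$ (the latter being negative, as $G$ contains an induced path on three vertices and so $\lambda_1 \ge 0$ by interlacing). Because $\eta \neq a_1$, any eigenvector $v$ of $A(\Delta_G(x))$ for $\eta$ is orthogonal to the all‑ones vector on $G_1(x)$; normalize $\|v\|=1$ and extend $v$ by zero to a vector $\hat v$ on $V(G)$. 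Let $E_1$ be the orthogonal projection onto the $\lambda_1$‑eigenspace of $A(G)$, $m_1$ the multiplicity of $\lambda_1$, and $n=|V(G)|$. The whole argument rests on two facts: $\hat v^{\top} E_1 \hat v = \|E_1\hat v\|^2 \ge 0$, and $\hat v^{\top} E_1 \hat v$ is an affine function of $\eta$ with positive leading coefficient.

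To establish the second fact I would use distance‑regularity. Since $E_1$ is a polynomial in $A(G)$, the entry $(E_1)_{yz}$ depends only on $\operatorname{dist}_G(y,z)$; writing $u_0=1,\ u_1=\lambda_1/k,\ u_2,\dots$ for the cosine sequence of $\lambda_1$ (so that $u_2 = (\lambda_1^2 - a_1\lambda_1 - k)/(kb_1)$ from the three‑term recurrence), one has $(E_1)_{yz} = \frac{m_1}{n}u_{\operatorname{dist}_G(y,z)}$. The key point is that any two vertices of $G_1(x)$ are at distance at most $2$ in $G$, so the principal submatrix of $E_1$ indexed by $G_1(x)$ lies in the span of $I$, the local adjacency matrix $A(\Delta_G(x))$, and the all‑ones matrix $J$, namely it equals $\frac{m_1}{n}\big(u_0 I + u_1 A(\Delta_G(x)) + u_2(J - I - A(\Delta_G(x)))\big)$. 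Contracting with $\hat v$ and using $A(\Delta_G(x))v = \eta v$ and $Jv = 0$ then gives
\[
\hat v^{\top} E_1 \hat v \;=\; \frac{m_1}{n}\big((u_1 - u_2)\eta + (u_0 - u_2)\big).
\]
A short computation using $a_1 + b_1 = k-1$ turns $u_1 - u_2$ into $\frac{(k-\lambda_1)(\lambda_1+1)}{kb_1}$, which is strictly positive since $\lambda_1 < k$ ($G$ connected and not complete) and $\lambda_1 > -1$.

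With both facts in hand, $\hat v^{\top} E_1 \hat v \ge 0$ together with $u_1 - u_2 > 0$ forces $\eta \ge -\frac{u_0 - u_2}{u_1 - u_2}$, and one final algebraic simplification (again powered by $a_1 + b_1 = k-1$) identifies the right‑hand side as exactly $-1 - \frac{b_1}{\lambda_1+1}$, which is the claimed bound. The step I expect to require the most care is the middle one: verifying that the first‑subconstituent block of $E_1$ is governed only by the distances $0,1,2$ — hence lies in $\langle I, A(\Delta_G(x)), J\rangle$, which is precisely where distance‑regularity is used — and then carrying out cleanly the two small but not‑quite‑automatic simplifications of $u_1 - u_2$ and of $-(u_0 - u_2)/(u_1 - u_2)$.
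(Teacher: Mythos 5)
Your proof is correct, and it is essentially the standard argument behind the cited result (the paper itself gives no proof, referring to \cite[Theorem 4.4.3]{bcn}): restrict the primitive idempotent $E_1$ to the first subconstituent, use that its entries are $\tfrac{m_1}{n}u_{d(y,z)}$ with $d(y,z)\le 2$, and evaluate on a local eigenvector orthogonal to the all-ones vector, with the cosine-sequence algebra simplifying to $-1-\tfrac{b_1}{\lambda_1+1}$. Your side remarks (the degenerate case $\eta=a_1$, positivity of $u_1-u_2$ via $-1<\lambda_1<k$) are also handled correctly, so there is nothing to fix.
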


Now we present an important tool shown by Yang and Koolen \cite{Yang.2021},  by using Hoffman graphs.
\begin{proposition}[{\cite[Proposition 1.3]{Yang.2021}}]\label{tool}

Let $\lambda\geq2$ be a real number. Then there exists a constant $M_1(\lambda) \geq \lambda^3$ such that, if a graph $G$ satifies
\begin{enumerate}
\item every pair of vertices at distance $2$ has at least $M_1(\lambda)$ common neighbors, and
\item $\lambda_{\min}(G)\geq-\lambda$,
\end{enumerate}
then $G$ has diameter $2$ and for each $x$, the number of vertices at distance $2$ to $x$ is at most $\lfloor \lambda \rfloor \lfloor(\lambda-1)^2 \rfloor$.
\end{proposition}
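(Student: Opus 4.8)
The plan is to argue by contradiction via associated Hoffman graphs (Proposition~\ref{assohoff}, Theorem~\ref{nicethm}): I would first prove the bound on $|G_2(x)|$ for \emph{every} vertex $x$, after which the diameter statement falls out at once. \emph{Setup.} We may assume $G$ is connected (otherwise argue in each component, using Lemma~\ref{interelacing} for condition~(ii)). By Theorem~\ref{Hoff1973}(i) fix $T=T(\lambda)$ with $G$ containing no induced $K_{1,T}$; choose $m=m(\lambda)$ so large that $\lambda_{\min}(\widetilde K_{2m})<-\lambda$, so by Lemma~\ref{interelacing} $G$ has no induced $\widetilde K_{2m}$; put $r:=\lceil(\lambda-1)^2\rceil+2$, and let $q\ (\geq(m+1)^2)$ be large enough that, by Theorem~\ref{nicethm} applied with $-\lambda$ in place of its ``$\lambda$'', the associated Hoffman graph $\mathfrak g:=\mathfrak g(G,m,q)$ is $(r,-\lambda)$-nice with slim graph $G$; finally set $M_1(\lambda):=\max\{\lambda^3,\ R(q,T)\}$, where $R$ is the Ramsey number (so $M_1(\lambda)\geq\lambda^3$ as required). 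Niceness gives immediately: (a) every slim vertex of $\mathfrak g$ has at most $\lfloor\lambda\rfloor$ fat neighbours, since a slim vertex with $k$ fat neighbours generates a Hoffman subgraph with special matrix $(-k)$; and (b) for $W\subseteq V_{\mathrm{slim}}(\mathfrak g)$ with $|W|\leq r$, the special matrix of $\langle W\rangle_{\mathfrak g}$, which by Lemma~\ref{fatnbr} is the principal submatrix of $Sp(\mathfrak g)$ on $W$, has smallest eigenvalue $\geq-\lambda$.

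\emph{Producing fat vertices.} Fix $x$ and take $y\in G_2(x)$; then $S:=N(x)\cap N(y)$ has $|S|\geq M_1(\lambda)\geq R(q,T)$, and since $S\subseteq N(x)$ it has no independent set of size $T$, so by Ramsey it contains a clique $C_0$ of size $q$. Extend $C_0$ to a maximal clique $C^{*}$; as $|C^{*}|\geq q$ it determines a fat vertex $f$ of $\mathfrak g$ with $Q(f)=\{v:v$ has at most $m-1$ non-neighbours in $C^{*}\}$. Both $x$ and $y$ are adjacent to all of $C_0$ $(|C_0|=q\geq 2m)$; if either had $m$ or more non-neighbours in $C^{*}$ we could pick $m$ neighbours inside $C_0$ and $m$ non-neighbours in $C^{*}$ to get an induced $\widetilde K_{2m}$, which is impossible, so $x,y\in Q(f)$, i.e.\ $f$ is a common fat neighbour of $x$ and $y$ in $\mathfrak g$. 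Hence $G_2(x)$ is covered by the quasi-cliques of the (at most $\lfloor\lambda\rfloor$, by~(a)) fat neighbours of $x$, so it remains to show $|G_2(x)\cap Q(f)|\leq\lfloor(\lambda-1)^2\rfloor$ for each fat neighbour $f$ of $x$.

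\emph{The key estimate and conclusion.} Let $z_1,\dots,z_t$ be distinct vertices of $G_2(x)\cap Q(f)$. In $Sp(\mathfrak g)$ restricted to $\{x,z_1,\dots,z_t\}$ every diagonal entry is $-|N^f(\cdot)|\leq-1$ (all are adjacent to $f$), $Sp_{x,z_i}=-|N^f(x,z_i)|\leq-1$ (they share $f$ and $x\not\sim z_i$), and $Sp_{z_i,z_j}=(A_s)_{z_i z_j}-|N^f(z_i,z_j)|\leq 1-1=0$ (they share $f$); thus this $(t+1)\times(t+1)$ matrix is entrywise at most $M_0:=-\left(\begin{smallmatrix}1&\mathbf 1^{\top}\\ \mathbf 1&I_t\end{smallmatrix}\right)$, whose eigenvalues are $-1\pm\sqrt t$ and $-1$ (multiplicity $t-1$). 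Writing $M_0=-P$ with $P\geq 0$ irreducible, the Perron eigenvector $v_0>0$ of $P$ attains $\lambda_{\min}(M_0)$, so, using $(v_0)_i(v_0)_j\geq 0$,
\[
\lambda_{\min}\big(\langle\{x,z_1,\dots,z_t\}\rangle_{\mathfrak g}\big)\ \leq\ v_0^{\top}\,Sp(\mathfrak g)\big|_{\{x,z_i\}}\,v_0\ \leq\ v_0^{\top}M_0v_0\ =\ \lambda_{\min}(M_0)\ =\ -1-\sqrt t .
\]
If $t+1\leq r$, combining with~(b) gives $-1-\sqrt t\geq-\lambda$, hence $t\leq\lfloor(\lambda-1)^2\rfloor$; if $t+1>r$, the same estimate applied to $x$ with any $r-1$ of the $z_i$ forces $r-1\leq(\lambda-1)^2$, contradicting $r=\lceil(\lambda-1)^2\rceil+2$. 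Thus $|G_2(x)\cap Q(f)|\leq\lfloor(\lambda-1)^2\rfloor$, and summing over the $\leq\lfloor\lambda\rfloor$ fat neighbours of $x$ gives $|G_2(x)|\leq\lfloor\lambda\rfloor\lfloor(\lambda-1)^2\rfloor$ for all $x$. Finally, if $\mathrm{diam}(G)\geq 3$, take a geodesic $x_0,x_1,x_2,x_3$: then $d(x_1,x_3)=2$, and each of the $\geq M_1(\lambda)$ common neighbours $z$ of $x_1,x_3$ lies in $G_2(x_0)$ (it is within distance $2$ of $x_0$ through $x_1$, and $z\sim x_0$ would force $d(x_0,x_3)\leq 2$), so $|G_2(x_0)|\geq M_1(\lambda)\geq\lambda^3>\lfloor\lambda\rfloor\lfloor(\lambda-1)^2\rfloor$, a contradiction; hence $\mathrm{diam}(G)\leq 2$.

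\emph{Main obstacle.} The crux is the key estimate: spotting the right extremal matrix $M_0$ and using Perron--Frobenius positivity of its bottom eigenvector to turn the cheap entrywise domination into an honest inequality on $\lambda_{\min}$ (a bare entrywise comparison does \emph{not} control the smallest eigenvalue). The remaining steps are bookkeeping of the constants $m,r,q,M_1$ and the routine $\widetilde K_{2m}$-freeness check that a large common neighbourhood genuinely manufactures a fat vertex of $\mathfrak g$ adjacent to both endpoints.
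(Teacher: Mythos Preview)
The survey does not prove this proposition; it is quoted from Yang--Koolen \cite{Yang.2021} with only the remark that it is obtained ``by using Hoffman graphs''. So there is no in-paper proof to compare against.

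Your argument is correct and is a clean implementation of precisely the Hoffman-graph strategy the survey alludes to. All the steps check: the Ramsey argument on the common neighbourhood together with $\widetilde K_{2m}$-freeness really does force a common fat neighbour of $x$ and $y$ in $\mathfrak g(G,m,q)$; the identification of $Sp(\langle W\rangle_{\mathfrak g})$ with the principal submatrix of $Sp(\mathfrak g)$ on $W$ is valid (all relevant fat vertices survive in $\langle W\rangle_{\mathfrak g}$); and the Perron--Frobenius trick with the positive bottom eigenvector of $M_0$ is exactly the right device to convert the entrywise domination $B\leq M_0$ into $\lambda_{\min}(B)\leq -1-\sqrt t$ --- your caveat that a bare entrywise comparison would not suffice is well taken. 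The diameter deduction via a geodesic of length $3$ and the inequality $\lambda^3>\lfloor\lambda\rfloor\lfloor(\lambda-1)^2\rfloor$ is also sound. One cosmetic point: in the displayed chain you implicitly take $\|v_0\|=1$; it would read more cleanly with the normalisation made explicit.
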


Next we give an upper bound on the parameter $c$ for co-edge regular graphs in terms of its smallest eigenvalue, due to Yang and Koolen \cite{Yang.2021}.
They showed the following:
\begin{theorem}[{\cite[Theorem 7.1]{Yang.2021}}]\label{YK1}
Let $\lambda \geq 2$ be a real number. There exists a real number $M_2(\lambda)$ such that, for any connected co-edge regular graph $G$ with parameters $(n, k, c)$, if $\lambda_{\min}(G)\geq-\lambda $, then $c > M_2(\lambda)$ implies that  $n-k-1 \leq \frac{(\lambda -1)^2}{4}+1$ holds.
\end{theorem}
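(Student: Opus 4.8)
The plan is to first use Proposition \ref{tool} to reduce to the case that $G$ has diameter $2$, and then to transfer the problem to the complement $\overline G$, which turns out to be a regular graph whose valency $k_2:=n-k-1$ is bounded in terms of $\lambda$ and whose second largest eigenvalue is at most $\lambda-1$; applying the Alon--Boppana bound to $\overline G$ will then force $k_2$ down to $\tfrac{(\lambda-1)^2}{4}+1$.

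First I would set $M_2(\lambda):=\max\{M_1(\lambda),N_0(\lambda)\}$, where $M_1(\lambda)$ is the constant from Proposition \ref{tool} and $N_0(\lambda)$ is a large threshold to be fixed at the end, and dispose of the trivial case: if $G$ is complete then $n-k-1=0$ and we are done, so assume $G$ is connected but not complete. Then $G$ has two vertices at distance exactly $2$; any such pair is non-adjacent, hence has exactly $c\ge M_2(\lambda)\ge M_1(\lambda)$ common neighbours, so condition (i) of Proposition \ref{tool} holds. Consequently $G$ has diameter $2$, so $n-k-1=|G_2(x)|=:k_2$ for every vertex $x$ and $k_2\le\lfloor\lambda\rfloor\lfloor(\lambda-1)^2\rfloor=:B(\lambda)$. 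Since the $c$ common neighbours of a non-adjacent pair lie in a single neighbourhood we have $c\le k$, hence $k>M_2(\lambda)$ and $n=k+k_2+1>M_2(\lambda)$ is large, while $k_2\le B(\lambda)$ is bounded.

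Now look at $\overline G$, which is $k_2$-regular on $n$ vertices. If $\overline G$ is disconnected, then each component contributes the eigenvalue $k_2$, so $k_2$ is an eigenvalue of $\overline G$ of multiplicity at least $2$; picking a $k_2$-eigenvector of $\overline G$ orthogonal to the all-ones vector and using $A(\overline G)=J-I-A(G)$, we see that $-1-k_2$ is an eigenvalue of $G$, so $-\lambda\le\lambda_{\min}(G)\le-1-k_2$ and $k_2\le\lambda-1\le\tfrac{(\lambda-1)^2}{4}+1$. If $\overline G$ is connected, then $k$ is a simple eigenvalue of $G$ (Perron--Frobenius), and since $\operatorname{spec}(\overline G)=\{k_2\}\cup\{-1-\theta:\theta\in\operatorname{spec}(G)\setminus\{k\}\}$ while the spectral radius of $\overline G$ equals $k_2$, the second largest eigenvalue $\theta_1(\overline G)$ equals $-1-\lambda_{\min}(G)\le\lambda-1$. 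The cases $k_2\le 2$ are handled by hand (for $k_2=2$, $\overline G$ is a cycle $C_n$ with $\theta_1=2\cos(2\pi/n)$, which is $>\lambda-1$ once $n>M_2(\lambda)$ unless $\lambda\ge 3$, in which case $2\le\tfrac{(\lambda-1)^2}{4}+1$; for $k_2\in\{0,1\}$ there is nothing to do). For $k_2\ge 3$, the Alon--Boppana theorem (Theorem \ref{alon}, in its sharp form $\lambda_1\ge 2\sqrt{k-1}\bigl(1-C\tfrac{\ln(k-1)}{\ln n}\bigr)$) applied to $\overline G$ gives
\[
\lambda-1\ \ge\ \theta_1(\overline G)\ \ge\ 2\sqrt{k_2-1}\Bigl(1-C\,\tfrac{\ln(k_2-1)}{\ln n}\Bigr)\ \ge\ 2\sqrt{k_2-1}\Bigl(1-C\,\tfrac{\ln(B(\lambda)-1)}{\ln M_2(\lambda)}\Bigr).
\]
Choosing $N_0(\lambda)$ large enough that the bracketed factor exceeds $1-\varepsilon_0$ for a prescribed small $\varepsilon_0=\varepsilon_0(\lambda)$, we obtain $k_2\le 1+\tfrac{(\lambda-1)^2}{4(1-\varepsilon_0)^2}$; since $k_2$ is an integer and the right-hand side tends to $1+\tfrac{(\lambda-1)^2}{4}$ as $\varepsilon_0\to 0$, taking $\varepsilon_0$ small enough forces $n-k-1=k_2\le\tfrac{(\lambda-1)^2}{4}+1$.

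The heart of the argument — and the only place where real care is needed — is the calibration in this last step: one must fix $\varepsilon_0(\lambda)$ (small enough to round the Alon--Boppana estimate down to the claimed value) and then $M_2(\lambda)$ (large enough, in terms of the absolute Alon--Boppana constant $C$ and of $B(\lambda)$, to make the multiplicative error that small). The disconnected-complement case and the small-valency cases $k_2\le 2$ are genuinely separate but routine, the former because $\theta_1(\overline G)=k_2$ need not be $\le\lambda-1$ there, and the latter because Theorem \ref{alon} presupposes valency at least $3$.
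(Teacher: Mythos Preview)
Your proposal is correct and follows essentially the same approach the paper indicates: Proposition~\ref{tool} is invoked to force diameter $2$ and bound $n-k-1$ in terms of $\lambda$, and then the Alon--Boppana theorem is applied to the complement $\overline{G}$ to squeeze $n-k-1$ down to $\tfrac{(\lambda-1)^2}{4}+1$. The case analysis (disconnected complement, $k_2\le 2$) and the calibration of $M_2(\lambda)$ against the Alon--Boppana error term are handled correctly; just note that the sharp form of Alon--Boppana with leading constant $2\sqrt{k-1}$ (rather than the weaker statement recorded in Theorem~\ref{alon}) is indeed required here to land on the exact bound $\tfrac{(\lambda-1)^2}{4}+1$.
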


Their proof used Proposition \ref{tool} and the Alon-Boppana Theorem.
Later Koolen, Gebremichel and Yang \cite{Koolen.2019} observed that this result is also true when the condition co-edge regular is replaced by sesqui-regular, which we are going to introduce.
A $k$-regular graph of order $n$ is called \emph{sesqui-regular} with parameters $(n, k, c)$, if every pair of vertices at distance $2$ has exactly $c$ common neighbors.

They further extended this result as follows:
\begin{theorem}\label{main sesqui}
Let $\lambda \geq 2$ be an integer. There exists a constant $C_4(\lambda)$ such that, for any sesqui-regular graph $G$ with parameters $(n, k, c)$, if $\lambda_{\min}(G) \geq -\lambda$ and $k \geq C_4(\lambda)$, then one of the following holds:
\begin{itemize}
\item[\rm(i)] $c \leq \lambda^2(\lambda-1)$,
\item[\rm(ii)] $n-k-1 \leq \frac{(\lambda -1)^2}{4}+1$.
\end{itemize}
\end{theorem}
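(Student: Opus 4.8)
The plan is to combine Proposition~\ref{tool} with the co-edge regular case already in hand (Theorem~\ref{YK1}) and the Alon--Boppana theorem (Theorem~\ref{alon}), after isolating the two extreme regimes of the parameter $c$. First I would reduce to the case that $G$ is connected and non-complete: if $G$ is complete then $n-k-1=0$ and (ii) holds, and if $G$ is disconnected each component is again sesqui-regular with the same $k$ and $c$ (or is a clique), so one may argue component-wise. Thus assume $G$ is connected with a pair of vertices at distance $2$, so that $c$ is genuinely the number of common neighbours of a distance-$2$ pair. Let $M_1:=M_1(\lambda)$ be the constant of Proposition~\ref{tool}, $M_2:=M_2(\lambda)$ the constant of Theorem~\ref{YK1}, and set $M:=\max\{M_1,\,M_2+1,\,\lambda^2(\lambda-1)+1\}$, a constant depending only on $\lambda$. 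If $c\le\lambda^2(\lambda-1)$ then (i) holds. If $c\ge M$, then every pair of vertices at distance $2$ has exactly $c\ge M_1$ common neighbours, so Proposition~\ref{tool} applies: $G$ has diameter $2$, hence (being connected) is co-edge regular with parameters $(n,k,c)$, and since $c>M_2$ Theorem~\ref{YK1} yields $n-k-1\le\frac{(\lambda-1)^2}{4}+1$, i.e.\ (ii).

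This leaves the middle range $\lambda^2(\lambda-1)<c<M$, which is the technical heart and the only place the hypothesis $k\ge C_4(\lambda)$ is really used: here $c$ is a constant depending only on $\lambda$ while $k$ is enormous, so Proposition~\ref{tool} is unavailable, and the goal is to show that $n-k-1$ is bounded by a constant depending only on $\lambda$. Double counting the edges between $G_1(x)$ and $G_2(x)$ (every $y\in G_2(x)$ has exactly $c$ neighbours in $G_1(x)$, and every $w\in G_1(x)$ has all its neighbours inside $\{x\}\cup G_1(x)\cup G_2(x)$) gives
\[
c\,|G_2(x)| \;=\; k(k-1)-2|E(\Delta_G(x))|,
\]
so bounding $|G_2(x)|$ is the same as forcing each local graph $\Delta_G(x)$ to be within $O_\lambda(k)$ edges of being complete. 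One knows $\lambda_{\min}(\Delta_G(x))\ge-\lambda$ (Lemma~\ref{interelacing}) and, by Theorem~\ref{Hoff1973}(i), that $\Delta_G(x)$ contains neither $K_{1,T}$ nor $\widetilde{K}_{2T}$ for $T=T(\lambda)$; together with the exact-$c$ condition along distance-$2$ pairs and the hugeness of $k$, this should force a clique-extension/cocktail-party type structure on the local graphs of $G$ and hence a $\lambda$-bounded value of $n-k-1$. For $\lambda=2$ this can be extracted from the Cameron--Goethals--Seidel--Shult description of graphs with $\lambda_{\min}\ge-2$ (Theorem~\ref{thmcam}); for general $\lambda$ one expects to invoke the Hoffman-graph and associated-Hoffman-graph machinery of Section~\ref{sec:associated hoffman graphs}, exploiting that a graph with $\lambda_{\min}\ge-\lambda$ and very large minimal valency contains large quasi-cliques, so that the bound $|G_2(x)|\le\lfloor\lambda\rfloor\lfloor(\lambda-1)^2\rfloor$ of Proposition~\ref{tool} is recovered.

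Once a $\lambda$-bounded estimate $n-k-1\le D_0(\lambda)$ is available (from either of the last two ranges), I would sharpen it to (ii) via the complement. Since $G$ is $k$-regular with $\lambda_{\min}(G)\ge-\lambda$, the complement $\overline{G}$ is $\overline{k}$-regular with $\overline{k}=n-1-k\le D_0(\lambda)$, and the eigenvalues of $\overline{G}$ are $\overline{k}$ together with the numbers $-1-\theta$, where $\theta$ runs over the non-principal eigenvalues of $G$; all of the latter are at most $-1-\lambda_{\min}(G)\le\lambda-1$. If $\overline{G}$ is disconnected, then $\overline{k}$ is an eigenvalue of $\overline{G}$ of multiplicity at least $2$, so $\overline{k}=-1-\theta$ for some non-principal $\theta$, whence $\overline{k}\le\lambda-1\le\frac{(\lambda-1)^2}{4}+1$, the last step being $((\lambda-1)-2)^2\ge0$. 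If $\overline{G}$ is connected with $\overline{k}\ge3$, then choosing $C_4(\lambda)$ large makes $n\,(>k\ge C_4(\lambda))$ large, so the Alon--Boppana theorem forces the second largest eigenvalue of $\overline{G}$ to be at least $(1-o(1))\,2\sqrt{\overline{k}-1}$; comparing with the bound $\lambda-1$ and using that $\overline{k}$ and $\lambda$ are integers gives $\overline{k}\le\frac{(\lambda-1)^2}{4}+1$. The residual cases $\overline{k}\le2$ either satisfy this bound outright or are excluded (a long cycle as $\overline{G}$ has second largest eigenvalue exceeding $\lambda-1$ when $\lambda=2$). In all cases (ii) holds.

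The main obstacle is precisely the middle range $\lambda^2(\lambda-1)<c<M_1(\lambda)$: there $c$ is only a $\lambda$-bounded constant, so Proposition~\ref{tool} cannot be applied directly, and controlling $n-k-1$ seems to require a structural understanding of the local graphs of $G$ — through Hoffman graphs, or through Theorem~\ref{thmcam} when $\lambda=2$ — rather than a short spectral estimate. Everything else is routine bookkeeping around Proposition~\ref{tool}, Theorem~\ref{YK1}, and the Alon--Boppana theorem.
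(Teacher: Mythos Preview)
The paper is a survey and does not prove Theorem~\ref{main sesqui}; it merely cites Koolen, Gebremichel and Yang \cite{Koolen.2019} and remarks that the co-edge regular version (Theorem~\ref{YK1}) was proved via Proposition~\ref{tool} and Alon--Boppana. So there is no in-paper proof to compare against, and your proposal must be judged on its own.

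Your treatment of the two extreme regimes is sound and matches what the survey indicates: the case $c\le\lambda^2(\lambda-1)$ is immediate, and for $c\ge M_1(\lambda)$ Proposition~\ref{tool} gives diameter $2$ and $n-k-1\le\lambda(\lambda-1)^2$, after which your complement plus Alon--Boppana argument correctly sharpens the constant to $\frac{(\lambda-1)^2}{4}+1$ (note that a preliminary bound $n-k-1\le D_0(\lambda)$ is indeed needed so that Alon--Boppana can be applied to $\overline{G}$ with bounded valency and $n$ forced large through $C_4(\lambda)$).

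The genuine gap is exactly the one you flag: the range $\lambda^2(\lambda-1)<c<M_1(\lambda)$. Since Proposition~\ref{tool} demands $c\ge M_1(\lambda)\ge\lambda^3>\lambda^2(\lambda-1)$, this interval is non-empty, and nothing in your write-up actually bounds $n-k-1$ there. Your double-count $c\,|G_2(x)|=k(k-1)-2|E(\Delta_G(x))|$ is correct but does not help without structural control of the local graph, and invoking ``Hoffman-graph machinery should recover the bound of Proposition~\ref{tool}'' is a wish, not an argument: the proof of Proposition~\ref{tool} in \cite{Yang.2021} genuinely uses the lower bound on $c$, so one cannot simply rerun it. The theorem's specific threshold $\lambda^2(\lambda-1)$ (rather than $M_1(\lambda)$) arises from a direct associated-Hoffman-graph analysis of sesqui-regular graphs with large minimal valency, and that is precisely the work your proposal is missing. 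Without it, the proposal is an outline of the bookkeeping around the hard step, not a proof.
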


\begin{remark}
\begin{enumerate}[(i)]
\item The block graph of a Steiner system $S(2,t,v)$ is strongly regular with parameters $(\frac{v(v-1)}{t(t-1)},\frac{t(v-t)}{t-1},(t-1)^2+\frac{v-1}{t-1}-2, t^2)$ and smallest eigenvalue $-t$. This shows that the bound in the first item of the above theorem can not be improved too much.
\item There exists an infinite family of bipartite Ramanujan graphs with valency $k$ and unbounded number of vertices, which were found by Marcus et al. \cite{Marcus.2015}, as we already discussed in Section \ref{sec:alon-boppana}.  Let $G$ be a graph in this family, say of order $n$. Consider the complement $\overline{G}$ of $G$.
Then the following holds:
\begin{enumerate}[(a)]
\item $\overline{G}$ has valency $n-k-1$;
\item $\overline{G}$ has smallest eigenvalue at least $-1-2\sqrt{k-1}$ as $G$ has second largest eigenvalue at most $2\sqrt{k-1}$.
\end{enumerate}
This shows that the bound in the second item of Theorem \ref{main sesqui} is tight.
\item  In \cite{Koolen.2021}, it is shown that for $\lambda =3$, one can improve the bound in the first item of the above theorem to $\lambda^2$. Whether this is true for general $\lambda$, it is not known.
\end{enumerate}
\end{remark}

Now we give some spectral characterizations of some families of graphs under the extra assumption that the graphs are co-edge regular. Hayat, Koolen and Riaz \cite{Hayat.2019} showed the following result for the clique-extensions of the square grid graphs.
\begin{theorem}[{\cite[Theorem 1.1]{Hayat.2019}}]\label{main-result}
Let $G$ be a co-edge regular graph with spectrum
\begin{equation*}
\left\{\big(s(2t+1)-1\big)^{1},(st-1)^{2t},(-1)^{(s-1)(t+1)^{2}},(-s-1)^{t^2}\right\},
\end{equation*} where $s \geq 2, t\geq 1$ are integers.
If $t\geq11(s+1)^3(s+2)$, then $G$ is the $s$-clique extension
of the $((t+1)\times(t+1))$-grid.
\end{theorem}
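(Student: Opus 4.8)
The plan is to exploit the spectral data heavily: the graph $G$ has exactly four distinct eigenvalues, so it is a (regular) graph with four eigenvalues, and the multiplicity pattern is the one forced by being an $s$-clique extension of a strongly regular graph — here the $((t+1)\times(t+1))$-grid, i.e.\ $H(2,t+1)$. First I would read off the basic parameters: from the spectrum, $G$ is $k$-regular with $k = s(2t+1)-1$, order $n = (s-1)(t+1)^2 + t^2 + 2t + 1 = s(t+1)^2$, and smallest eigenvalue $\lambda_{\min}(G) = -s-1$. Since the co-edge regularity parameter $c$ is (for $t$ large relative to $s$) large compared with $\lambda_{\min}$, the cleanest route is to invoke Theorem~\ref{YK1} (or Theorem~\ref{main sesqui}): if $c$ is large enough, then $n - k - 1 \le \frac{(\lambda-1)^2}{4}+1$ is impossible here because $n - k - 1 = s(t+1)^2 - s(2t+1) = st^2$ grows like $t^2$, whereas $\lambda = s+1$ is fixed; so we must be in the alternative, which pins $c$ down to a small value. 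Computing $c$ directly from the eigenvalues of an $s$-clique extension of the grid (a short calculation via the polynomial $\prod(A-\lambda_i I)=0$ or via the known edge-/co-edge-regularity of clique extensions) gives $c = 2s$, which is indeed bounded, so the hypothesis $t \ge 11(s+1)^3(s+2)$ is what guarantees $c$ exceeds the relevant threshold $M_2(\lambda)$.

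Next I would pass to local structure. The idea is to show that every local graph $\Delta_G(x)$ is the $s$-clique extension of a disjoint union of two cliques of size $t+1$, equivalently the $s$-clique extension of $2\cdot K_{t+1}$, which is the standard local graph of the $s$-clique extension of a grid. To do this, use Hoffman-graph / quasi-clique machinery: since $\lambda_{\min}(G) \ge -s-1$ and the minimal valency $k = s(2t+1)-1$ is large, $G$ does not contain $\widetilde{K}_{2m}$ for a suitable fixed $m = m(s)$, and by Theorem~\ref{nicethm} the associated Hoffman graph $\mathfrak{g}(G,m,q)$ is $(r,-s-1)$-nice for appropriate $r$ once $q$ is large. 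One then analyses the quasi-cliques: the four-eigenvalue / co-edge-regular constraints force the maximal cliques of order roughly $st+\Theta(s)$ to come in a ``grid-like'' pattern, with each vertex lying in exactly two quasi-clique equivalence classes, and the quasi-cliques themselves being $s$-clique extensions of single vertices (i.e.\ cliques $K_s$) glued along the grid. Interlacing (Lemma~\ref{interelacing}) plus the computed eigenvalues of the putative local graph give that $\Delta_G(x)$ has exactly the spectrum of the $s$-clique extension of $2 K_{t+1}$; combined with co-edge regularity of the local graph (which one gets either from the global co-edge regularity or from a Terwilliger-type argument, cf.\ Lemma~\ref{local}) and a characterization of clique extensions of disjoint unions of cliques by spectrum, this identifies $\Delta_G(x)$ up to isomorphism for every $x$.

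Finally I would reconstruct $G$ globally from this local information. Having shown $G$ is locally the $s$-clique extension of $2K_{t+1}$, the $s$-cliques $\widetilde{X}$ (the ``fibers'') form an equitable partition, and contracting each fiber to a point yields a well-defined graph $H$ on $(t+1)^2$ vertices that is locally $2K_{t}$-ish, in fact locally the disjoint union of two $t$-cliques; this is exactly the Terwilliger/Egawa-style local characterization of $H(2,t+1)$, so $H$ is the $((t+1)\times(t+1))$-grid (one must rule out the exceptional small cases, which the bound $t \ge 11(s+1)^3(s+2)$ comfortably does), and hence $G = \widetilde{H}$ is the $s$-clique extension of the grid. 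The formula \eqref{spectrum2} then confirms the spectrum matches, closing the loop.

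The main obstacle, I expect, is the middle step: converting the global spectral/co-edge-regularity hypothesis into a \emph{rigid} local structure. The Hoffman-graph and quasi-clique tools give that each local graph is a $2$-fat line Hoffman graph of a controlled type, and interlacing controls its spectrum, but pinning the local graph down to \emph{exactly} the $s$-clique extension of $2K_{t+1}$ — ruling out other co-edge-regular graphs with the same spectrum and consistently gluing these local pictures across all of $V(G)$ — is the delicate part, where the explicit quantitative threshold $t \ge 11(s+1)^3(s+2)$ gets used to absorb all the Ramsey-type and counting error terms.
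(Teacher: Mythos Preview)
Your proposal takes a different route from the one the paper indicates was actually used. This survey does not reprove the theorem, but in the remark immediately following it states explicitly that the proof in \cite{Hayat.2019} is via the \emph{claw--clique method of Bose and Laskar}: one uses the forbidden-claw structure coming from $\lambda_{\min}(G)=-s-1$ together with co-edge regularity to produce large cliques directly and organise them into the grid-like incidence, with explicit counting. By contrast you reach for the Hoffman-graph/associated-Hoffman-graph machinery (Theorem~\ref{nicethm}, Proposition~\ref{assohoff}). Even if that route could be completed, it cannot yield the stated polynomial threshold $t\ge 11(s+1)^3(s+2)$: the constants $N(\lambda,m,r)$ and $n(m,\phi,\sigma,p)$ in Section~\ref{asho} come from Ramsey numbers and are astronomically larger, which is precisely why the survey contrasts the two methods at the start of Section~\ref{sec:hoffman graphs}.

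There are also concrete errors in the outline. First, the logic around Theorems~\ref{YK1}/\ref{main sesqui} is tangled: from $n-k-1=st^2$ being large you correctly get $c\le \lambda^2(\lambda-1)$, but then you compute $c=2s$ by assuming $G$ is already the $s$-clique extension, which is circular, and the clause ``$t\ge 11(s+1)^3(s+2)$ guarantees $c$ exceeds the threshold $M_2(\lambda)$'' contradicts what you just argued. Second, the local graph of the $s$-clique extension of the $((t+1)\times(t+1))$-grid is not the $s$-clique extension of $2K_{t+1}$; it is $K_{s-1}$ joined to $2K_{st}$ (equivalently $K_{s-1}\vee(\text{$s$-clique extension of }2K_t)$), which has the correct $s(2t+1)-1$ vertices, whereas your candidate has $2s(t+1)$. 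Third, you invoke Lemma~\ref{local} to get co-edge regularity of the local graph, but that lemma is about thin $Q$-polynomial distance-regular graphs, and $G$ here is merely co-edge regular with four eigenvalues. These gaps mean the middle step you flagged as ``delicate'' does not go through as written; the Bose--Laskar counting is what actually supplies the rigid clique structure under the given polynomial bound on $t$.
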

Tan, Koolen and Xia \cite{Tan.2020} showed a similar result for the clique-extensions of triangular graphs:
\begin{theorem}[{\cite[Theorem 1]{Tan.2020}}]\label{thm1}
Let $G$ be a co-edge regular graph with spectrum
$$\left\{(2sv-3s-1)^1,(sv-3s-1)^{v-1},(-s-1)^{\frac{v^2-3v}{2}},(-1)^{\frac{(s-1)v(v-1)}{2}}\right\},$$
where $s\geq2$ and $v\geq1$ are integers. If $v\geq 48s$, then $G$ is the $s$-clique extension of the triangular graph $T(v)$.
\end{theorem}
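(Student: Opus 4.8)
The plan is to recover from the spectrum and the co-edge regularity the ``fibre'' structure that an $s$-clique extension must possess, pass to the quotient by this structure, and identify the quotient with the triangular graph $T(v)$ via its classical characterization; the overall scheme parallels the proof of Theorem~\ref{main-result}.

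First I would read off the basic parameters. Since $G$ is co-edge regular it is by definition regular of some valency $k$, and as its largest eigenvalue $2sv-3s-1$ has multiplicity one, $G$ is connected with $k=2sv-3s-1$; summing the multiplicities gives $n:=|V(G)|=\tfrac{sv(v-1)}{2}$, and $\lambda_{\min}(G)=-s-1$, so $\lambda_{\min}(G)\ge -(s+1)$ with $s+1\ge 3$. A standard count of walks of length two ending at a non-neighbour, using $\operatorname{tr}(A^3)=\sum_i\lambda_i^3$ to count triangles together with the co-edge-regularity identity, pins down the parameter and gives $c=4s$. Since $c\ge 1$, the connected co-edge regular graph $G$ has diameter exactly $2$: a vertex at distance $3$ from $x$ would be a non-neighbour of $x$ with fewer than $c$ common neighbours, a contradiction.

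The heart of the proof is to establish that the relation ``$x=y$ or $N[x]=N[y]$'' on $V(G)$ is an equivalence whose classes are cliques of size exactly $s$; equivalently, that there is a family of $v$ maximal cliques of size $s(v-1)$ such that each vertex lies in exactly two of them, these playing the role of the two ``stars'' through a vertex $\{i,j\}$ of $T(v)$. Here I would follow the Hoffman-graph approach to graphs with bounded smallest eigenvalue. Since $\lambda_{\min}(G)\ge -(s+1)$, Theorem~\ref{Hoff1973} forbids a large induced claw $K_{1,T}$, so a Bose--Laskar/Ramsey argument forces large cliques inside every local graph; then, combining co-edge regularity with eigenvalue interlacing (Lemma~\ref{interelacing}), the tools of Section~\ref{sec:co-edge regular graphs} (Proposition~\ref{tool} applied to auxiliary graphs built from overlapping large cliques, together with the dichotomy of Theorem~\ref{main sesqui} to exclude the ``almost complete'' alternative), and a careful analysis of how these cliques overlap, one pins down their intersection pattern and shows each has size $s(v-1)$ with every vertex lying in exactly two of them. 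I expect this to be by far the main obstacle. It is also where the hypothesis $v\ge 48s$ is consumed: it guarantees that the valency $k=2sv-3s-1$ is large enough, relative to the absolute constants depending only on $s+1$ produced above, for the clique-extraction and rigidity steps to go through with the explicit numerical slack that an $s$-clique-extension structure leaves.

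Once the fibres are in place, the quotient graph $\overline G:=G/\!\!\sim$ has $\tfrac{v(v-1)}{2}$ vertices, and since $A(G)=A(\overline G)\otimes J_s+I\otimes(J_s-I_s)$, the spectrum of $G$ forces $\operatorname{spec}(\overline G)=\{(2v-4)^1,(v-4)^{v-1},(-2)^{(v^2-3v)/2}\}$ --- this being precisely what one reads back from the clique-extension formula~\eqref{spectrum2} applied to $G$. Being connected, regular and having three distinct eigenvalues, $\overline G$ is strongly regular with the parameters $\bigl(\tbinom{v}{2},\,2(v-2),\,v-2,\,4\bigr)$ of the triangular graph $T(v)=J(v,2)$. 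By the classical characterization of triangular graphs (equivalently, the $D=2$ case of the characterization of the Johnson graph $J(v,D)$ quoted above), a strongly regular graph with these parameters is isomorphic to $T(v)$ unless $v=8$, where the three Chang graphs also occur; since $v\ge 48s\ge 96>8$, we obtain $\overline G\cong T(v)$. Finally, because every fibre has size exactly $s$ and inter-fibre adjacency mirrors $\overline G$, the graph $G$ is the $s$-clique extension of $\overline G\cong T(v)$, completing the proof.
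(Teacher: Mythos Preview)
Your overall architecture---recover $(n,k,c)=\bigl(\tfrac{sv(v-1)}{2},\,2sv-3s-1,\,4s\bigr)$ from the spectrum, locate a system of large cliques giving a fibre structure, pass to the quotient, and then invoke the spectral characterization of $T(v)$---matches the shape of the argument in \cite{Tan.2020}. The final step is fine as written.

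The substantive gap is in the middle step. The paper records explicitly (see the remark immediately after Theorem~\ref{thm1}) that the proof of this theorem, like that of Theorem~\ref{main-result}, uses the \emph{Bose--Laskar claw-clique method}, not the Hoffman-graph machinery of Sections~\ref{sec:hoffman graphs}--\ref{sec:associated hoffman graphs} or the sesqui-regular tools of Section~\ref{sec:co-edge regular graphs}. Concretely, your appeal to Proposition~\ref{tool} and Theorem~\ref{main sesqui} does not work here: for $G$ itself one has $c=4s$, which for $s\ge 2$ already satisfies $c\le\lambda^2(\lambda-1)$ with $\lambda=s+1$, so Theorem~\ref{main sesqui} lands you in case~(i) and says nothing; and $c=4s$ is far below any constant $M_1(s+1)$ produced by Proposition~\ref{tool}, so that proposition is inapplicable to $G$. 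Even if you intend to apply these results to ``auxiliary graphs built from overlapping large cliques'', the constants $M_1(\lambda)$, $M_2(\lambda)$, $C_4(\lambda)$ arising from the Hoffman-graph/Ramsey arguments behind these statements are ineffective and certainly not linear in $s$, so they cannot deliver the explicit hypothesis $v\ge 48s$; indeed the second remark after Theorem~\ref{thm1} points out that the Bose--Laskar method of \cite{Tan.2020} is what yields sharper bounds than the Hoffman-graph route used for Theorem~\ref{main-result}.

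What actually drives the argument is a direct bound on the stability number of each local graph $\Delta_G(x)$ (obtained from $\lambda_{\min}(G)=-(s+1)$ together with the explicit edge count in $\Delta_G(x)$ that you correctly extract from $\operatorname{tr}(A^3)$ and co-edge regularity), after which the Bose--Laskar counting finds the ``grand cliques'' of size $s(v-1)$ and shows every vertex lies in exactly two of them. You should replace the Hoffman-graph paragraph by this direct claw-clique analysis; that is where the bound $v\ge 48s$ is genuinely used.
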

\begin{remark}
\begin{enumerate}
\item Note that to prove both Theorems \ref{main-result} and \ref{thm1}, they used the claw-clique method of Bose and Laskar.
\item Using the method in \cite{Tan.2020}, one could improve the bound $t\geq11(s+1)^3(s+2)$ of Theorem \ref{main-result}.
\end{enumerate}
\end{remark}

Tan et al. \cite{Tan.2020} also gave the following conjecture:
\begin{conjecture}[{\cite[Conjecture 3]{Tan.2020}}]
Let $G$ be a connected co-edge regular graph with parameters $(n, k,c)$ having four distinct eigenvalues. Let $\lambda\ge 2$ be an integer. Then there exists a constant $n_2(\lambda)$ such that,
if $\lambda_{\min}(G)\ge -\lambda$, $n\geq n_2(\lambda)$ and $k< n-2-\frac{(\lambda-1)^2}{4}$, then either $G$ is the
$s$-clique extension of a strongly regular graph for $2\leq s\leq \lambda-1$ or $G$
is a $(p\times q)$-grid with $p>q\geq 2$.
\end{conjecture}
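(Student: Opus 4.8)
The plan is to use the hypotheses to force a bound on $c$ together with an unbounded valency, to extract from co-edge regularity a geometry of large cliques, and then to identify $G$ by analysing that geometry with the Bose--Laskar/Hoffman-graph tools of Sections~\ref{sec:hoffman graphs}--\ref{sec:associated hoffman graphs}, in the spirit of the proofs of Theorems~\ref{main-result} and~\ref{thm1}. \emph{Step 1 (bounded $c$, diameter $2$).} From $k<n-2-\frac{(\lambda-1)^{2}}{4}$ we get $n-k-1>\frac{(\lambda-1)^{2}}{4}+1$, so the second alternative in Theorem~\ref{YK1} cannot hold and therefore $c\le M_{2}(\lambda)$. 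Moreover $c\ge 1$: if $c=0$ and $G$ were not complete, a pair of vertices at distance exactly $2$ would have a common neighbour, a contradiction; so $c=0$ forces $G=K_{n}$, which has only two distinct eigenvalues. Since four distinct eigenvalues give diameter at most $3$ and exclude the complete graph, while $c\ge 1$ forbids diameter $3$, the graph $G$ has diameter exactly $2$; in particular it is sesqui-regular with parameters $(n,k,c)$, and its complement $\overline{G}$ is an edge-regular graph with four distinct eigenvalues.

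\emph{Step 2 (unbounded valency, and the clique geometry).} Double counting paths of length two gives $n\binom{k}{2}=\sum_{\{x,y\}}|N(x)\cap N(y)|\ge c\big(\binom{n}{2}-\tfrac{nk}{2}\big)\ge\binom{n}{2}-\tfrac{nk}{2}$, hence $k^{2}\ge n-1$; so for $n\ge n_{2}(\lambda)$ the minimal valency $k$ exceeds any prescribed constant depending only on $\lambda$. Co-edge regularity now gives the crucial structural fact: for \emph{every} maximal clique $C$ of $G$ and every $x\notin C$ we have $|N(x)\cap C|\le c$, since maximality provides $y\in C$ with $x\not\sim y$ and then $N(x)\cap C\subseteq N(x)\cap N(y)$, which has size $c$. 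By Theorem~\ref{Hoff1973}, $G$ also contains no induced $K_{1,T(\lambda)}$; combined with the large valency, the claw--clique method of Bose and Laskar (equivalently, the quasi-cliques of the associated Hoffman graph $\mathfrak{g}(G,m,q)$ of Section~\ref{sec:associated hoffman graphs}, built after choosing $m\ge T(\lambda)$ with $\lambda_{\min}(\widetilde{K}_{2m})<-\lambda$) shows that every edge of $G$ lies in a large clique, and that these large cliques form a partial linear space in which every point lies in a bounded number of lines, the bound being forced by $\lambda_{\min}(G)\ge-\lambda$ exactly as in Theorems~\ref{intro1}--\ref{intro4} and~\ref{kyy3thm}. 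Equivalently, $G$ is the slim graph of a fat line Hoffman graph over a finite family $\mathfrak{H}$ of indecomposable Hoffman graphs with smallest eigenvalue at least $-\lambda$.

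\emph{Step 3 (identification, and the main obstacle).} Finally, contract each large clique of the covering family to a point, obtaining a ``clique graph'' $H$; the counting identities forced by co-edge regularity of $G$ together with the fact that $G$ has \emph{exactly} four distinct eigenvalues should pin down $H$. One expects $H$ to be regular with exactly three distinct eigenvalues, hence strongly regular, so that $G$ is an $s$-clique extension of a strongly regular graph, with $s\le\lambda-1$ by applying Lemma~\ref{interelacing} to a local graph (just as $\lambda_{\min}$ of the $s$-clique extension of a strongly regular graph with smallest eigenvalue $\mu\le-2$ equals $s(\mu+1)-1\le-s-1$); the sole degenerate possibility, that the lines fall into two classes meeting pairwise in exactly one point, gives $H=K_{p,q}$ and $G=L(K_{p,q})$ with $p>q\ge 2$. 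The hard part is carrying out this identification uniformly in $\lambda$, and two obstacles stand out. First, the existing large-minimal-valency structure theorems (Theorems~\ref{intro1} and~\ref{intro2}) require a bound on the average valency of the local graphs, which \emph{fails} for, e.g., the $s$-clique extension of a triangular graph once $s\ge 3$; one therefore needs a strengthening that replaces that hypothesis by co-edge regularity. Second, for $\lambda\ge 3$ the fat maximal $(-\lambda)$-irreducible Hoffman graphs are not classified, so one must directly rule out ``mixed'' line Hoffman configurations that are locally consistent with co-edge regularity and with four eigenvalues but do not globally assemble into a clique extension of a strongly regular graph or a grid. The rigidity one hopes to exploit is precisely that $G$ having four distinct eigenvalues forces the quotient by the clique partition to have only three --- a strong global constraint that, combined with the counting identities of a co-edge regular graph, ought to exclude any genuinely mixed configuration when $n$ is large.
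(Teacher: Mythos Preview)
The statement you are attempting to prove is presented in the paper as an \emph{open conjecture} (quoted from \cite{Tan.2020}); the paper gives no proof of it, so there is nothing to compare your argument against. Your task description appears to have been misread: this is not a theorem of the survey.

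That said, your outline is a plausible line of attack, and Steps~1 and~2 are essentially sound. The use of Theorem~\ref{YK1} in contrapositive to bound $c$ is correct, the diameter argument is fine, and the double count giving $k^{2}\ge n-1$ is valid once $c\ge 1$. The observation that every vertex outside a maximal clique meets it in at most $c$ vertices is exactly the leverage co-edge regularity provides.

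The genuine gap is precisely where you locate it: Step~3. You have not shown that the clique quotient $H$ is regular with three eigenvalues, nor that the large cliques form a partition into equal-size classes (which is what ``$s$-clique extension'' requires), nor that the four-eigenvalue hypothesis on $G$ descends to a three-eigenvalue constraint on $H$. The existing structure theorems you cite (Theorems~\ref{intro1}--\ref{intro4}) do not apply without the local-sparsity hypothesis you correctly flag as failing for clique extensions with $s\ge 3$, and there is at present no classification of fat $(-\lambda)$-irreducible Hoffman graphs for $\lambda\ge 4$ to fall back on. So what you have written is a reasonable research programme, not a proof; the conjecture remains open.
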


Yang, Abiad and Koolen \cite{Yang.2017} showed the following spectral characterization of $2$-clique extensions of the square grid graphs, using Hoffman graphs. They did not need
the assumption of co-edge regularity, but they needed a very large lower bound on the valency.
\begin{theorem}[{\cite[Theorem 1]{Yang.2017}}]
The $2$-clique extension of the $(t\times t)$-grid is characterized by its spectrum if $t$ is large enough.
\end{theorem}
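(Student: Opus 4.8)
The plan is as follows. Write $G_0$ for the $2$-clique extension of the $(t\times t)$-grid $H(2,t)$ and let $G$ be any graph cospectral with $G_0$. By the clique-extension spectrum formula \eqref{spectrum2} (with $s=2$ and $H=H(2,t)$), $G_0$, and hence $G$, is a connected $(4t-3)$-regular graph on $2t^2$ vertices with exactly four eigenvalues $4t-3>2t-3>-1>-3$ of respective multiplicities $1,2(t-1),t^2,(t-1)^2$; connectedness, regularity and $\lambda_{\min}(G)=-3$ are all read off the spectrum. My first move would be the reduction: it suffices to prove that $G$ is itself the $2$-clique extension of some graph $\Delta$, because then \eqref{spectrum1}--\eqref{spectrum2} force $\Delta$ to have the spectrum $\{(2(t-1))^1,(t-2)^{2(t-1)},(-2)^{(t-1)^2}\}$ of $H(2,t)$, so $\Delta$ is a connected strongly regular graph with the parameters $(t^2,2(t-1),t-2,2)$ of the lattice graph, and for $t$ large this forces $\Delta\cong H(2,t)$ (the only cospectral mate being the Shrikhande graph at $t=4$), whence $G\cong G_0$. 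So the real task is to produce in $G$ a partition of the vertex set into $t^2$ ``twin edges'': adjacent pairs $\{u,u^\ast\}$ with $N_G(u)\setminus\{u^\ast\}=N_G(u^\ast)\setminus\{u\}$, any two of which are completely joined or completely non-adjacent.

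Recovering this twin structure is what the Hoffman-graph machinery is for. First I would record the spectral constraints on the local structure of $G$: the Hoffman (ratio) bound applied to the complement $\overline G$, which is $2(t-1)^2$-regular with least eigenvalue $-2(t-1)$, gives $\omega(G)=\alpha(\overline G)\le 2t$; interlacing (Lemma \ref{interelacing}) gives $\lambda_{\min}(\Delta_G(x))\ge -3$ for every vertex $x$; and since $\lambda_{\min}(K_{1,10})=-\sqrt{10}<-3$, the graph $G$ contains no induced $K_{1,10}$, so each local graph has independence number at most $9$. Feeding these together with the spectrum-determined quantities $\operatorname{tr}(A^3)$ and $\operatorname{tr}(A^4)$ (which fix the number of triangles and the sum of squared codegrees) one controls the local valencies $\bar k(\Delta_G(x))$ sharply enough to bring $G$ within reach of the associated-Hoffman-graph results.

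Next I would invoke Theorem \ref{kyy3thm}: since $\lambda_{\min}(G)\ge -3$ and the minimal valency $4t-3$ exceeds $\kappa_4$ once $t$ is large, $G$ is the slim graph of a fat Hoffman graph $\mathfrak h$ with $\lambda_{\min}(\mathfrak h)\ge -3$. Pushing this further as in the proofs of Theorems \ref{intro3} and \ref{intro4} --- using the local bounds just obtained together with the classification of the fat-minimal Hoffman graphs for $-3$ and of the fat maximal $(-3)$-irreducible Hoffman graphs from \cite{HJAT} and \cite{Koolen.2018} --- one can arrange that $\mathfrak h$ is a $2$-fat line Hoffman graph over the three-element family $\{\mathfrak f_1,\mathfrak f_2,\mathfrak f_3\}$ appearing in Theorem \ref{intro4}, with the same slim graph as $G$. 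By Lemma \ref{rela}, $\mathfrak h$ then carries a reduced representation of norm $3$, with $\Lambda^{\mathrm{red}}(\mathfrak h,3)$ a direct sum of root lattices $A_i$, $D_i$, $E_i$ and a standard lattice, and the structural description of $\mathfrak h$ equips $G$ with a family $\mathcal F$ of quasi-cliques (one per fat vertex) such that every vertex lies in exactly two members of $\mathcal F$.

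The last step, and the one I expect to be the main obstacle, is a rigidity argument that turns this still-flexible description into the exact twin-pair partition. Here I would use that $G$ has precisely $2t^2$ vertices, is exactly $(4t-3)$-regular, and has the prescribed codegree data, and match these against the possible ways of assembling the building blocks $\mathfrak f_1,\mathfrak f_2,\mathfrak f_3$: this should force the $\mathfrak f_1$- and $\mathfrak f_2$-type pieces to be absent (or to occur only trivially), the members of $\mathcal F$ to be honest cliques of order $2t$ falling into two parallel classes of size $t$, any two cliques from different classes to meet in $0$ or $2$ vertices, and the $2$-element intersections to be exactly the twin edges --- which is the incidence structure of $H(2,t)$ and finishes the proof via the reduction of the first paragraph. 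Excluding all the degenerate gluings and the boundary configurations, and converting the incidence geometry extracted from $\mathcal F$ into the grid, is where essentially all of the case analysis will live; the other delicate point is verifying the local hypotheses of the Hoffman-graph theorems from the spectrum alone rather than from prior knowledge of $G_0$, and it is precisely there --- and in the strongly-regular uniqueness used in the reduction --- that the assumption ``$t$ large'' is indispensable.
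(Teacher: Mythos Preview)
Your overall architecture is correct and matches the paper's: reduce to showing that any cospectral $G$ is a $2$-clique extension, obtain this by first proving $G$ is the slim graph of a $2$-fat line Hoffman graph over the three-element family $\{\mathfrak f_1,\mathfrak f_2,\mathfrak f_3\}$, and then run a rigidity/counting argument on the resulting quasi-clique structure. The paper singles out exactly this structural step as the key input, citing Theorem~\ref{grid} (i.e.\ \cite[Theorem 1.8]{kyy1}) for it.

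The one place where you take a detour is in how you reach that structural step. You go through Theorem~\ref{kyy3thm} and then propose to ``push further as in the proofs of Theorems~\ref{intro3} and~\ref{intro4}'' to upgrade from \emph{fat} to \emph{$2$-fat} over the specific family. That upgrade is not free: Theorems~\ref{intro3} and~\ref{intro4} have per-vertex hypotheses (a $5$-plex bound, respectively $\bar k(\Delta_G(x))\le k_G(x)-\kappa_3$) that you cannot read off from $\operatorname{tr}(A^3)$ and $\operatorname{tr}(A^4)$ alone, since those only control averages. You flag this yourself as ``the other delicate point,'' and indeed it is exactly the content that Theorem~\ref{grid} packages: that result is stated for \emph{cospectral} graphs and already absorbs the work of verifying the local hypotheses from the spectrum. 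So rather than re-deriving it, you should invoke Theorem~\ref{grid} directly; this is precisely what the paper does, and it eliminates the gap in your outline. After that, your reduction via \eqref{spectrum1}--\eqref{spectrum2} and Shrikhande uniqueness, together with the rigidity analysis you sketch, is the right endgame.
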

In their proof, they used the following result of Koolen et al. \cite{kyy1}.
\begin{theorem}[{\cite[Theorem 1.8]{kyy1}}]\label{grid} There exists a positive integer $t$ such that for each pair $(t_1,t_2)$ with $t_1 \geq t_2 \geq t$, any graph that is cospectral to the $2$-clique extension of the $(t_1 \times t_2)$-grid is the slim graph of a $2$-fat \big\{\raisebox{-1ex}{\begin{tikzpicture}[scale=0.3]

\tikzstyle{every node}=[draw,circle,fill=black,minimum size=10pt,scale=0.3,
                            inner sep=0pt]

    \draw (-2.1,0) node (1f1) [label=below:$$] {};
    \draw (-1.6,0) node (1f2) [label=below:$$] {};
    \draw (-1.1,0) node (1f3) [label=below:$$] {};

    \tikzstyle{every node}=[draw,circle,fill=black,minimum size=5pt,scale=0.3,
                            inner sep=0pt]

    \draw (-1.6,1) node (1s1) [label=below:$$] {};

    \draw (1f1) -- (1s1) -- (1f2);
    \draw (1f3) -- (1s1);
    \end{tikzpicture}},\hspace{-0.08cm}
\raisebox{-1ex}{\begin{tikzpicture}[scale=0.3]
\tikzstyle{every node}=[draw,circle,fill=black,minimum size=10pt,scale=0.3,
                            inner sep=0pt]

    \draw (-0.5,0) node (2f1) [label=below:$$] {};
    \draw (0.5,0) node (2f2) [label=below:$$] {};
    \draw (-0.5,1) node (2f3) [label=below:$$] {};
    \draw (0.5,1) node (2f4) [label=below:$$] {};

    \tikzstyle{every node}=[draw,circle,fill=black,minimum size=5pt,scale=0.3,
                            inner sep=0pt]

    \draw (0,0.2) node (2s1) [label=below:$$] {};
    \draw (0.3,0.5) node (2s2) [label=below:$$] {};
    \draw (-0.3,0.5) node (2s3) [label=below:$$] {};
    \draw (0,0.8) node (2s4) [label=below:$$] {};

    \draw (2f1) -- (2s1) -- (2f2) -- (2s2) -- (2f4) -- (2s4) -- (2f3) -- (2s3) -- (2f1);
    \draw (2s1) -- (2s4);
    \draw (2s2) -- (2s3);
    \end{tikzpicture}},\hspace{-0.08cm}
\raisebox{-1ex}{\begin{tikzpicture}[scale=0.3]
\tikzstyle{every node}=[draw,circle,fill=black,minimum size=10pt,scale=0.3,
                            inner sep=0pt]

    \draw (1.5,0) node (3f1) [label=below:$$] {};
    \draw (1.5,1) node (3f2) [label=below:$$] {};

    \tikzstyle{every node}=[draw,circle,fill=black,minimum size=5pt,scale=0.3,
                            inner sep=0pt]

    \draw (1,0.5) node (3s1) [label=below:$$] {};
    \draw (2,0.5) node (3s2) [label=below:$$] {};

    \draw (3s1) -- (3s2);
    \draw (3f1) -- (3s1) -- (3f2) -- (3s2) -- (3f1);
    \end{tikzpicture}}\big\}-line Hoffman graph.
\end{theorem}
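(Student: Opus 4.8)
The plan is to derive the statement from Theorem~\ref{intro3} (the ``$5$-plex'' criterion), applied to an arbitrary graph $G$ cospectral with the $2$-clique extension $\widetilde H$ of the $(t_1\times t_2)$-grid $H=L(K_{t_1,t_2})$. First I would record the spectrum: the grid $H$ has eigenvalues $t_1+t_2-2$, $t_1-2$ (multiplicity $t_2-1$), $t_2-2$ (multiplicity $t_1-1$) and $-2$ (multiplicity $(t_1-1)(t_2-1)$), so by \eqref{spectrum2} with $s=2$ the graph $\widetilde H$, on $n=2t_1t_2$ vertices, has eigenvalues
\[
2(t_1+t_2)-3,\qquad (2t_1-3)^{t_2-1},\qquad (2t_2-3)^{t_1-1},\qquad (-1)^{t_1t_2},\qquad (-3)^{(t_1-1)(t_2-1)}.
\]
Since a graph cospectral with a connected regular graph is connected and regular, every $G$ cospectral with $\widetilde H$ is connected and $k$-regular with $k=2(t_1+t_2)-3$, has $\lambda_{\min}(G)=-3$, and, as $t_1\geq t_2$, has second-largest eigenvalue $\lambda_2(G)=2t_1-3$.

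The heart of the reduction is an interlacing/Rayleigh-quotient bound on the order of plexes that uses the \emph{second} eigenvalue. If $S$ is a $p$-plex of $G$ of order $m$, then every vertex of $S$ has at least $m-p$ neighbours inside $S$, so $\mathbf{1}_S^{\top}A(G)\mathbf{1}_S\geq m(m-p)$; decomposing $\mathbf{1}_S=\tfrac{m}{n}\mathbf{1}+\mathbf{v}$ with $\mathbf{v}\perp\mathbf{1}$ and using $\mathbf{v}^{\top}A(G)\mathbf{v}\leq\lambda_2(G)\|\mathbf{v}\|^2$ (legitimate because $G$ is connected $k$-regular, so $\mathbf{1}$ spans the top eigenspace) gives
\[
m\ \leq\ \frac{n\bigl(p+\lambda_2(G)\bigr)}{\,n-k+\lambda_2(G)\,}\ =\ \frac{t_1\bigl(p+2t_1-3\bigr)}{t_1-1},
\]
where the last equality substitutes $n-k+\lambda_2(G)=2t_2(t_1-1)$. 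For $p=5$ the right-hand side equals $2t_1+4+\tfrac{4}{t_1-1}$, so once $t_1\geq 5$ every $5$-plex of $G$ has order at most $2t_1+5$.

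Now let $\kappa_2$ be the constant of Theorem~\ref{intro3} and choose $t\geq 5$ large enough that $4t-3>\kappa_2$ and $2t-3-\kappa_2\geq 5$. Then for all $t_1\geq t_2\geq t$ the graph $G$ satisfies the three hypotheses of Theorem~\ref{intro3}: $k_G(x)=2(t_1+t_2)-3\geq 4t-3>\kappa_2$; every $5$-plex has order at most $2t_1+5\leq 2t_1+2t_2-3-\kappa_2=k_G(x)-\kappa_2$; and $\lambda_{\min}(G)=-3\geq -3$. Hence $G$ is the slim graph of a $2$-fat line Hoffman graph over the family of the three Hoffman graphs displayed in the theorem, which is exactly the desired conclusion. (Alternatively one could try to verify the hypotheses of Theorem~\ref{intro4} by bounding average local valencies, but that route requires first controlling the local structure of a cospectral $G$ and is less direct.)

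Granted the cited Hoffman-graph machinery, the argument above is short, and I expect the only genuinely delicate point to be the plex bound of the second paragraph: it is essential there to exploit $\lambda_2(G)=2t_1-3$ together with the exact values of $n$ and $k$, since $\lambda_{\min}(G)=-3$ alone cannot bound plex orders --- indeed $G$ contains cliques of order $2\max(t_1,t_2)$, which are $1$-plexes of order comparable to $k$. Secondary care is needed to track the eigenvalue ordering when $t_1=t_2$ (the eigenvalues $2t_1-3$ and $2t_2-3$ then merge, but $\lambda_2(G)$ is unchanged) and to make sure the threshold $t$ is taken past the constant $\kappa_2$ furnished by Theorem~\ref{intro3}.
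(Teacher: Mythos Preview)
Your argument is correct. The survey does not include a proof of Theorem~\ref{grid}; it merely cites \cite[Theorem 1.8]{kyy1}. Your derivation from Theorem~\ref{intro3} via a second-eigenvalue Rayleigh bound on $p$-plex orders is exactly the natural route and is essentially how the result is obtained in the source paper: compute the spectrum of the $2$-clique extension, note that any cospectral graph is connected $k$-regular with $k=2(t_1+t_2)-3$, $\lambda_{\min}=-3$ and second eigenvalue $2t_1-3$, then use the Hoffman-type inequality $m\le n(p+\lambda_2)/(n-k+\lambda_2)$ to cap the order of every $5$-plex by $2t_1+O(1)$ and feed this into Theorem~\ref{intro3}. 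The arithmetic you give ($n-k+\lambda_2=2t_2(t_1-1)$ and $m\le 2t_1+4+4/(t_1-1)$) is right, and the choice of threshold $t$ is adequate.

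Two minor remarks. First, in your closing aside you say ``$G$ contains cliques of order $2\max(t_1,t_2)$'': that is true for the $2$-clique extension $\widetilde H$ itself, but not a priori for an arbitrary graph cospectral with it, so you should phrase that sentence as being about $\widetilde H$ (the point you are making---that $\lambda_{\min}$ alone cannot bound plex orders---is still valid). Second, your comment that Theorem~\ref{intro4} is less direct is well taken: the average local valency $\bar k(\Delta_G(x))$ is not determined vertexwise by the spectrum, so verifying that hypothesis for a merely cospectral graph would require extra work, whereas the $5$-plex bound via $\lambda_2$ is purely spectral.
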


\section{Signed graphs}\label{sec:signed graph}
\subsection{Definitions}
\begin{definition}[signed graph]
A signed graph $(G, \tau)$ is a pair of a graph $G =(V(G), E(G))$ and a signing $\tau: E(G) \rightarrow \{+1,-1\}$.
\end{definition}

The \emph{adjacency matrix} $A(G,\tau)$ of the signed graph $(G,\tau)$ is the symmetric matrix whose rows and columns are indexed by $V(G)$ such that $A(G,\tau)_{x,y} = \tau(\{x,y\})$ if $\{x,y\}$ is an edge of $G$ and $0$ otherwise.

A real number $\lambda$ is an \emph{eigenvalue} of $(G, \tau)$, if $\lambda$ is an eigenvalue of its adjacency matrix $A(G, \tau)$. The \emph{spectrum} of $(G, \tau)$ is the spectrum of $A(G, \tau)$. In a similar fashion, we denote by $\lambda_{\min}(G, \tau)$ the smallest eigenvalue of the signed graph $(G, \tau)$.

For $\varepsilon \in \{+,-\}$, the \emph{$\varepsilon$-graph} of $(G,\tau)$ is the graph $(G, \tau)^{\varepsilon}$ with vertex set $V(G)$ and edge set $E^{\varepsilon}$, where $E^{\varepsilon}= \{ e\in E(G) \mid \tau(e) =\varepsilon1\}$. We also represent the signed graph $(G,\tau)$ by the triple $(V(G),E^{+},E^{-})$.

Two signed graphs $(G, \tau)$ and $(H, \xi)$ are \emph{switching equivalent} if there exist a permutation matrix $P$ and a diagonal matrix $D$ with diagonal entries in $\{-1, +1\}$ such that $P A(G,\tau) P^T = D A(H,\xi)D$ holds.

\subsection{Seidel matrices}
In this subsection, we introduce and study Seidel matrices.
\begin{definition}[Seidel matrix]
\begin{enumerate}
\item A Seidel matrix $S$ of order $n$ is a symmetric $(0, \pm1)$-matrix with $0$ on the diagonal and $\pm1$ otherwise.
\item Let $G$ be a graph. The Seidel matrix $S(G)$ of $G$ is the matrix $J-I -2A(G)$, where $A(G)$ is the adjacency matrix of $G$.
\end{enumerate}
\end{definition}
Note that the Seidel matrix $S(G)$ of a graph $G$ of order $n$ is the adjacency matrix of the signed graph $(K_n, \tau_G)$ satisfying $(K_n, \tau_G)^-= G$. Therefore, for a given graph $G$ of order $n$, we denote by $(K_n, \tau_G)$ the signed graph with $S(G)$ as its adjacency matrix. Two graphs $G$ and $H$ of order $n$ are called \emph{switching equivalent} if the signed graphs $(K_n, \tau_G)$ and $(K_n, \tau_H)$ are switching equivalent. If the graphs $G$ and $H$ are switching equivalent, then there exists a subset $V'$ of the vertex set $V(G)$ of $G$, such that the resulting graph, by changing all the edges between $V'$ and $V(G)-V'$ to non-edges, and all the non-edges between $V'$ and $V(G)-V'$ to edges, is $H$. This operation is called \emph{switching} on the subset $V'$.

Note that switching equivalence is an equivalence relation. The equivalence class of $G$ is called the \emph{switching class} of $G$ and is denoted by $[G]$.
If $H \in [G]$, the matrices $S(H)$ and $S(G)$ are similar and hence have the same spectrum.

The motivation to study Seidel matrix with fixed smallest eigenvalue with large multiplicity comes from the study of equiangular lines in the Euclidean space.
\begin{definition}[equiangular lines]
A system of lines through the origin in the $r$-dimensional Euclidean space $\mathbb{R}^r$ is called equiangular if the angle between any pair of lines is the same.
\end{definition}

Seidel matrices and systems of equiangular lines, are related as follows (see for example, \cite[Section 11.1]{GD01}):
\begin{proposition}\label{eq}
Let $n>r\geq2$ be integers. There exists a system of $n$ equiangular lines in $\mathbb{R}^r$ with common angle $\arccos \alpha$ if and only if there exists a Seidel matrix $S$ of order $n$ such that $S$ has smallest eigenvalue at least $-\frac{1}{\alpha}$ and \rm{rank}$(S+\frac{1}{\alpha}I)\leq r$.
\end{proposition}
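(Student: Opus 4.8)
The plan is to set up the standard dictionary between a system of equiangular lines and the Gram matrix of a choice of unit direction vectors, one on each line, and then to read off the two conditions on the Seidel matrix from the positive semidefiniteness and the rank bound of this Gram matrix. Throughout we may assume $0<\alpha<1$: replacing $\alpha$ by $|\alpha|$ does not change which angle $\arccos\alpha$ is being realized, and two distinct lines cannot make the angle $0$, so $\alpha\ne\pm1$.

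For the forward direction I would start from $n$ equiangular lines $\ell_1,\dots,\ell_n$ in $\mathbb{R}^r$ with common angle $\arccos\alpha$ and pick a unit vector $u_i\in\ell_i$ for each $i$. Since the pairwise angle is $\arccos\alpha$, we have $(u_i,u_j)=\pm\alpha$ for $i\ne j$, so the Gram matrix $M:=\big((u_i,u_j)\big)_{i,j}$ can be written as $M=I+\alpha S$ for a Seidel matrix $S$ of order $n$, namely the matrix recording the sign pattern of the off-diagonal entries. As $M$ is the Gram matrix of $n$ vectors lying in $\mathbb{R}^r$, it is positive semidefinite and $\operatorname{rank}(M)\le r$. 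From $M=\alpha\big(S+\tfrac1\alpha I\big)\succeq 0$ and $\alpha>0$ we get $S+\tfrac1\alpha I\succeq 0$, i.e. $\lambda_{\min}(S)\ge -\tfrac1\alpha$, and $\operatorname{rank}\big(S+\tfrac1\alpha I\big)=\operatorname{rank}(M)\le r$, as required.

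For the converse, given a Seidel matrix $S$ of order $n$ with $\lambda_{\min}(S)\ge-\tfrac1\alpha$ and $\operatorname{rank}\big(S+\tfrac1\alpha I\big)\le r$, I would set $M:=I+\alpha S=\alpha\big(S+\tfrac1\alpha I\big)$. The eigenvalue hypothesis gives $M\succeq 0$, and $\operatorname{rank}(M)=\operatorname{rank}\big(S+\tfrac1\alpha I\big)\le r$, so $M$ factors as $M=U^{\top}U$ with $U$ a real $r\times n$ matrix (use only $\operatorname{rank}(M)$ of the rows coming from a square-root of $M$ on its column space, and pad with zero rows up to $r$). Letting $u_1,\dots,u_n$ be the columns of $U$, the diagonal entries $(u_i,u_i)=M_{ii}=1$ show each $u_i$ is a unit vector in $\mathbb{R}^r$, and the off-diagonal entries $(u_i,u_j)=\pm\alpha$ with $0<\alpha<1$ show that the $n$ lines $\mathbb{R}u_i$ are pairwise distinct and make a common angle $\arccos\alpha$; this is the desired system of $n$ equiangular lines.

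The only genuine subtlety — and the step I would be most careful about — is the bookkeeping around $\alpha$: reducing to $0<\alpha<1$ so that the sign normalization is legitimate, so that the lines come out genuinely distinct, and so that the scalar $\alpha$ can be moved in and out of the rank and semidefiniteness statements without affecting them. The hypothesis $n>r$ itself is not needed for either implication; it only rules out the degenerate orthogonal configuration and makes the statement interesting. Everything else is the routine correspondence between positive semidefinite matrices of rank at most $r$ and Gram matrices of vectors in $\mathbb{R}^r$.
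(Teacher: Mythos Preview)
Your argument is correct and is precisely the standard Gram-matrix dictionary the paper has in mind; the paper does not prove this proposition itself but refers the reader to \cite[Section 11.1]{GD01}, where exactly this argument appears. One tiny quibble: since the angle between two lines lies in $[0,\pi/2]$, the cosine $\alpha$ is already in $[0,1]$ by convention, so the reduction ``replace $\alpha$ by $|\alpha|$'' is unnecessary; otherwise your handling of the range $0<\alpha<1$ and of the rank/semidefiniteness bookkeeping is exactly right.
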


We are going to use the theory of Hoffman graphs to study Seidel matrices with fixed smallest eigenvalue. Our approach is different from, but closely related to the approach of Balla,
Dr{\"a}xler, Keevash and Sudakov \cite[Section 2]{Balla.2018}.

Let $S$ be a Seidel matrix of order $n$. The graph $G^+(S)$ is the graph with adjacency matrix $\frac{1}{2}(S +J-I)$. If $\lambda_{\min}(S) = 2\lambda +1$, then $\lambda_{\min}(G^+(S)) \geq \lambda$. Note that for each graph in $[G^+(S)]$, its smallest eigenvalue is at least $\lambda$. Let $C$ be a clique of order $q$ in $H\in[G^+(S)]$. If necessary, by switching, we can obtain a graph in $[G^+(S)]$ such that $C$ is still a clique and every vertex outside $C$ has at least $q/2$ neighbors in $C$. We denote such a graph by $H_{C}$.

\begin{theorem} \label{seidelthm}
Let $\lambda \leq -2$ be a real number and $r$ a positive integer.  Let $m$ be such that the smallest eigenvalue of $\widetilde{K}_{2m}$ is less than $\lambda$.
Then there exists a positive integer $Q=Q(\lambda, m, r) \geq (m+1)^2$ such that for each integer $q \geq Q$ and each Seidel matrix $S$ with $\lambda_{\min}(S)\geq2\lambda+1$, if a graph $H\in [G^+(S)]$ contains a clique $C$ of order at least $q$, then the associated Hoffman graph $\mathfrak{g} = \mathfrak{g}(H_C, m, q)$ is $(r, \lambda)$-nice and has exactly one fat vertex and this fat vertex is adjacent to all slim vertices.

Moreover there exists a positive integer $n= n(\lambda,q)$ such that if the order of the Seidel matrix $S$ is at least $n$, then every graph in $[G^+(S)]$ contains a clique of order at least $q$.
\end{theorem}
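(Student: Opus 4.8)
The plan is to handle the two assertions separately after a common preliminary step. Write $\alpha_0:=\lceil-2\lambda\rceil$ and recall that a graph $H'$ lies in the switching class $[G^+(S)]$ exactly when the Seidel matrix $S(H')$ is switching equivalent to $S(G^+(S))=J-I-2A(G^+(S))=-S$; hence every member of $[G^+(S)]$ has the Seidel spectrum of $-S$, so $\lambda_{\max}(S(H'))=-\lambda_{\min}(S)\le-(2\lambda+1)$. An independent set of size $t$ in $H'$ induces the principal submatrix $J_t-I_t$ of $S(H')$, whose largest eigenvalue is $t-1$, so interlacing gives $t-1\le-(2\lambda+1)$, i.e. $\alpha(H')\le-2\lambda\le\alpha_0$. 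This bounded independence number is the one global fact the argument uses. Moreover, since $\lambda_{\min}(H')\ge\lambda>\lambda_{\min}(\widetilde{K}_{2m})$ by the choice of $m$, Lemma~\ref{interelacing} shows that no member of $[G^+(S)]$ contains an induced $\widetilde{K}_{2m}$, so for every $q\ge(m+1)^2$ the associated Hoffman graph $\mathfrak{g}(H_C,m,q)$ is well defined.

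For the ``moreover'' statement I would argue directly. Let $R(q,\ell)$ be the Ramsey number, i.e. the least integer such that every graph on that many vertices contains a clique of order $q$ or an independent set of order $\ell$, and set $n(\lambda,q):=R(q,\alpha_0+1)$. If $S$ has order at least $n(\lambda,q)$ with $\lambda_{\min}(S)\ge2\lambda+1$ and $H\in[G^+(S)]$, then $\alpha(H)\le\alpha_0$ by the preliminary step, so $H$ has no independent set of size $\alpha_0+1$; since $|V(H)|\ge R(q,\alpha_0+1)$, Ramsey's theorem forces a clique of order $q$ in $H$. This is the easy half.

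For the first assertion, fix $q\ge\max\{(m+1)^2,N(\lambda,m,r)\}=:Q$, where $N(\lambda,m,r)$ is the constant of Theorem~\ref{nicethm}, and let $H_C$ be as in the statement. The $(r,\lambda)$-niceness of $\mathfrak{g}=\mathfrak{g}(H_C,m,q)$ is then immediate from Theorem~\ref{nicethm} applied to $G=H_C$, since $\lambda_{\min}(H_C)\ge\lambda$. For the statement about fat vertices the key tool is the following $\widetilde{K}_{2m}$-dichotomy, valid because $H_C$ is $\widetilde{K}_{2m}$-free: if $K$ is a clique of $H_C$ of order at least $2m$ and $v\notin K$, then $v$ is not adjacent to exactly $m$ vertices of any $2m$-subset of $K$, so $v$ has at most $m-1$ neighbours in $K$ or at most $m-1$ non-neighbours in $K$. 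Applying this with $K=C$, and using that in $H_C$ every vertex outside $C$ has more than $q/2>m$ neighbours in $C$, every vertex of $H_C$ has at most $m-1$ non-neighbours in $C$. Extending $C$ to a maximal clique $\hat C$ and reapplying the dichotomy to $\hat C$ (each vertex outside $\hat C$ still has at least $|C|-(m-1)>m$ neighbours inside $C\subseteq\hat C$) shows that every vertex of $H_C$ has at most $m-1$ non-neighbours in $\hat C$, i.e. $Q([\hat C]_q^m)=V(H_C)$. Hence $\mathfrak{g}$ has a fat vertex adjacent to all slim vertices, and it remains to show that every maximal clique $C''$ of order at least $q$ satisfies $C''\equiv_q^m\hat C$, so that this is the only fat vertex. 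One direction is free: every vertex of $C''$ lies in $Q([\hat C]_q^m)$ and so has at most $m-1$ non-neighbours in $\hat C$. The other direction amounts to ruling out a vertex $w\in\hat C$ with at least $m$ non-neighbours in $C''$.

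The main obstacle is precisely this last point. By the dichotomy such a $w$ would be adjacent to at most $m-1$ vertices of $C''$ and to at least $q-m+1$ vertices of $C$, while every vertex of $C''$ is likewise adjacent to at least $q-m+1$ vertices of $C$; the naive double counting of non-edges between $C$, $\hat C$ and $C''$ turns out to be consistent with this configuration and does not yield a contradiction, so $\alpha(H_C)\le\alpha_0$ must be used structurally rather than through counting alone — for instance by showing that two inequivalent large maximal cliques would force, inside their union, a set that is simultaneously large and sparse and hence contains an independent set larger than $\alpha_0$. Once the uniqueness of the fat vertex is established the theorem follows, with $Q=Q(\lambda,m,r)$ taken as above and $n=n(\lambda,q)$ as in the second paragraph.
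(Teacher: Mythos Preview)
Your overall strategy matches the paper's: you reduce the $(r,\lambda)$-niceness to Theorem~\ref{nicethm}, you derive the $\widetilde{K}_{2m}$-dichotomy, you deduce that every vertex of $H_C$ has at most $m-1$ non-neighbours in $\hat C$ so that $Q([\hat C]_q^m)=V(H_C)$, and you handle the ``moreover'' clause by Ramsey exactly as the paper does. So the architecture is right.

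The genuine gap is the step you yourself flag: showing that $[\hat C]_q^m$ is the \emph{only} equivalence class. You assert that double counting fails (that is correct) and then propose to salvage the argument via the independence bound $\alpha(H_C)\le\alpha_0$. That is the wrong tool, and it cannot work: one can write down $\widetilde{K}_{2m}$-containing configurations with two inequivalent large maximal cliques whose independence number is only $2$, so no argument based solely on $\alpha\le\alpha_0$ will produce a contradiction. What actually closes the gap is the very dichotomy you already have. Suppose some $w\in\hat C$ has at least $m$ non-neighbours in a maximal clique $C''$ of order at least $q$; by the dichotomy, $w$ has at most $m-1$ neighbours in $C''$. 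Pick $m$ non-neighbours $d_1,\dots,d_m\in C''$ of $w$. Each $d_i$ has at most $m-1$ non-neighbours in $\hat C$, one of which is $w$, so at most $m-2$ non-neighbours in $\hat C\setminus\{w\}$. Hence the number of vertices of $\hat C\setminus\{w\}$ adjacent to all of $d_1,\dots,d_m$ is at least $|\hat C|-1-m(m-2)\ge q-1-m(m-2)\ge m$, using $q\ge(m+1)^2$. Choosing $z_1,\dots,z_m$ among these common neighbours, the set $\{z_1,\dots,z_m,d_1,\dots,d_m\}$ is a $2m$-clique and $w$ is adjacent to exactly the $z_i$'s in it, yielding an induced $\widetilde{K}_{2m}$ in $H_C$, a contradiction. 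Thus every maximal clique of order at least $q$ is $\equiv_q^m$-equivalent to $\hat C$, and the fat vertex is unique.

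The paper's own proof asserts this uniqueness without spelling it out; you were right to notice that it needs justification, but the fix stays entirely within the $\widetilde{K}_{2m}$-free framework rather than requiring the Seidel independence bound.
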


\begin{proof}
Let $Q:=Q(\lambda,m,r)$ be the integer such that Theorem \ref{nicethm} holds. Let $q\geq Q$. Suppose a graph $H\in [G^+(S)]$ has a clique $C$ of order at least $q$. Without loss of generality, we may assume $C$ is a maximal clique in $H$.  Now we look at a graph $H_C$. It is known that all vertices outside $C$ have at least $q/2$ neighbors in $C$. We claim that every vertex outside $C$ has at most $m-1$ non-neighbors in $C$. Suppose this is not the case, and the  vertex $x$ outside $C$ has at least $m$ non-neighbors in $C$. Then we can find $m$ vertices $y_1,y_2,\ldots,y_m$ in $C$ which are not adjacent to $x$. Since $x$ also has at least $q/2$ neighbors in $C$, where $q/2\geq (m+1)^2/2\geq m$, we can find $z_1,z_2,\ldots,z_m$ in $C$ which are adjacent to $x$. It is not hard to see that the subgraph of $H_C$ induced on $\{x,y_1,y_2,\ldots,y_m,z_1,z_2,\ldots,z_m\}$ is $\widetilde{K}_{2m}$, which has smallest eigenvalue less than $\lambda$. This is not possible, as $\lambda_{\min}(H_C)\geq\lambda$. Hence, the claim holds, and under the equivalence relation $\equiv_q^m$, the set of maximal cliques in $H_C$ of order at least $q$ has exactly one equivalence class, which consists of all vertices of $H_C$. This means that the associated Hoffman graph $\mathfrak{g}(H_C, m, q)$ has exactly one fat vertex and this fat vertex is adjacent to all the slim vertices of $\mathfrak{g}(H_C, m, q)$. Considering $q\geq Q(\lambda,m,r)$, we have that $\mathfrak{g}(H_C, m, q)$ is $(r, \lambda)$-nice by Theorem \ref{nicethm}.

Let $n:=R(q,\lceil-2\lambda\rceil+1)$, where $R(q, \lceil-2\lambda\rceil+1)$ is the Ramsey number. For a graph $H'\in[G^+(S)]$ with Seidel matrix $S(H')$, we claim that $H'$ does not contain a stable set of size $\lceil-2\lambda\rceil+1$. Otherwise, the matrix $S(H')$ contains a $((\lceil-2\lambda\rceil+1)\times (\lceil-2\lambda\rceil+1))$ principal submatrix $J_{\lceil-2\lambda\rceil+1}-I_{\lceil-2\lambda\rceil+1}$. In other words, the matrix $-S(H')$ contains a principal submatrix $-J_{\lceil-2\lambda\rceil+1}+I_{\lceil-2\lambda\rceil+1}$. Note that the matrix $-S(H')$ has the same spectrum as $S$. Thus \[\lceil2\lambda\rceil=\lambda_{\min}(-J_{\lceil-2\lambda\rceil+1}+I_{\lceil-2\lambda\rceil+1})\geq\lambda_{\min}(-S(H'))\geq2\lambda+1,\]  which gives a contradiction. (For the first inequality, see \cite[Theorem 9.1.1]{Godsil.2016}.) Therefore, our claim holds and by Ramsey theory, the graph $H'$ contains a clique of order $q$.

This completes the proof.
\end{proof}

\subsection{Signed graphs}
Following a straightforward way, the notion of the $s$-integrability of graphs can be extended to signed graph. For a positive integer $s$, we say that a signed graph $(G,\tau)$ with smallest eigenvalue $\lambda_{\min}(G,\tau)$ is \emph{$s$-integrable}, if there exists an integral matrix $N$ such that the equality
\[
s(A(G,\tau)+\lceil-\lambda_{\min}(G,\tau)\rceil I)=N^TN
\]
holds, where $A(G,\tau)$ is the adjacency matrix of $(G,\tau)$.

Not so much is known about signed graphs with fixed smallest eigenvalue. Using the same proof as in Theorem \ref{thmcam} one can show:
\begin{theorem}[{\cite[Theorem 3.13]{Belardo.2018}}]\label{thmcam2}
If $(G, \tau)$ is a connected signed graph with $\lambda_{\min}(G, \tau) \geq -2$, then $(G,\tau)$ is $s$-integrable for $s \geq 2$. Moreover, if $(G,\tau)$ has at least $121$ vertices, then $(G,\tau)$ is $1$-integrable.
\end{theorem}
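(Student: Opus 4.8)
The plan is to run through the classical Cameron--Goethals--Seidel--Shult argument behind Theorem~\ref{thmcam}, the point being that nothing essential changes when the adjacency matrix of a graph is replaced by the adjacency matrix $A(G,\tau)$ of a signed graph. First I would dispose of the degenerate cases: if $(G,\tau)$ has an edge then some $2\times2$ principal submatrix equals $\bigl(\begin{smallmatrix} 0 & \pm1 \\ \pm1 & 0\end{smallmatrix}\bigr)$, which has smallest eigenvalue $-1$, so $-2\le\lambda_{\min}(G,\tau)\le-1$ and $\lceil-\lambda_{\min}(G,\tau)\rceil\in\{1,2\}$. If this ceiling is $1$ (equivalently $\lambda_{\min}(G,\tau)\ge-1$), then $B(G,\tau)=A(G,\tau)+I$ is positive semidefinite and its Gram vectors are unit vectors with pairwise inner products in $\{0,\pm1\}$; since two unit vectors with inner product $\pm1$ are equal or opposite, $\Lambda(G,\tau)$ is generated by a signed orthonormal set, hence equals $\mathbb Z^r$, so $(G,\tau)$ is $1$-integrable and we are done independently of the number of vertices. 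From now on assume $\lceil-\lambda_{\min}(G,\tau)\rceil=2$.

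Write $B(G,\tau)=A(G,\tau)+2I=M^TM$ and let $\mathbf v_x$ ($x\in V(G)$) be the columns of $M$. Then $(\mathbf v_x,\mathbf v_x)=2$ for all $x$ and $(\mathbf v_x,\mathbf v_y)=\tau(\{x,y\})\in\{\pm1\}$ if $x\sim y$ and $=0$ otherwise; in particular all these inner products are integers, so $\Lambda(G,\tau)=\langle\mathbf v_x\mid x\in V(G)\rangle_{\mathbb Z}$ is an integral lattice generated by vectors of norm $2$. By the classification of root lattices (cf.~\cite{Cameron}, or \cite{CS}), $\Lambda(G,\tau)$ is an orthogonal direct sum of irreducible root lattices, each of type $A_n$ $(n\ge1)$, $D_n$ $(n\ge4)$, $E_6$, $E_7$ or $E_8$; and since vectors lying in distinct summands are orthogonal --- forcing the corresponding vertices to be non-adjacent --- the connectedness of $(G,\tau)$ puts all the $\mathbf v_x$ into a single irreducible summand $R$. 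Each of $A_n,D_n,E_6,E_7,E_8$ is $s$-integrable for every $s\ge2$ by \cite[Corollary~23]{CS}, so $sB(G,\tau)=N^TN$ for an integral matrix $N$; this gives the first assertion.

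For the ``moreover'' part, the extra ingredient is a root count. Since $(\mathbf v_x,\mathbf v_y)$ never equals $\pm2$, the vectors $\mathbf v_x$ are pairwise distinct and no two of them are negatives of each other, so $\{\mathbf v_x\mid x\in V(G)\}$ is a set of $|V(G)|$ roots of $R$ containing at most one from each antipodal pair. As $E_6$, $E_7$, $E_8$ have $72$, $126$, $240$ roots, such a set has size at most $36$, $63$, $120$ respectively; hence $|V(G)|\ge121$ rules these out and forces $R=A_m$ or $R=D_m$. Realising $A_m$ inside $\mathbb Z^{m+1}$ (roots $e_i-e_j$) or $D_m$ inside $\mathbb Z^{m}$ (roots $\pm e_i\pm e_j$) exhibits $B(G,\tau)=N^TN$ with $N$ a $(0,\pm1)$-matrix, so $(G,\tau)$ is $1$-integrable.

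The only real work is the verification that the root-lattice machinery is unaffected by allowing inner products $-1$: this is exactly the observation that in an integral lattice the norm-$2$ vectors are closed under the reflections $s_\alpha(\beta)=\beta-(\beta,\alpha)\alpha$ (using $(\beta,\alpha)\in\{0,\pm1\}$ for $\beta\ne\pm\alpha$) and hence form a simply-laced root system, so no new types appear. The numerology is equally transparent: the threshold $121$ is just $\tfrac12\cdot240+1$, dictated by the $240$ roots of $E_8$, in the same way that the threshold $37$ in Theorem~\ref{thmcam} is dictated by the largest exceptional configuration on which all pairwise inner products equal $0$ or $1$.
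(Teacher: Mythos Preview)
Your proposal is correct and is exactly the approach the paper indicates: the paper does not give a detailed proof but simply states that the result follows by ``using the same proof as in Theorem~\ref{thmcam}'', i.e., the Cameron--Goethals--Seidel--Shult root-lattice argument, which is precisely what you have written out. Your treatment of the degenerate case $\lceil-\lambda_{\min}\rceil=1$, the identification of $\Lambda(G,\tau)$ with a sublattice of an irreducible root lattice via connectedness, and the root-counting argument excluding $E_6,E_7,E_8$ when $|V(G)|\ge 121$ are all sound and constitute the intended proof.
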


Theorem \ref{thmhoff77} \rm{(i)} has been extended to the class of signed graphs by Gavrilyuk, Munemasa, Sano and Taniguchi \cite{Gavrilyuk.2020} as follows.
\begin{theorem}
 For any real number $\lambda\in (-2,-1]$, there exists a constant $C_5(\lambda)$ such that if $(G,\tau)$ is a connected signed graph on $n$ vertices with
 $\lambda_{\min}(G, \tau)\geq\lambda$ and minimal valency at least $C_5(\lambda)$,
  then $\lambda_{\min}(G,\tau) = -1$ and $(G,\tau)$ is switching equivalent to $(K_n, +)$, and hence is $1$-integrable.
\end{theorem}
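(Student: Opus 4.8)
The plan is to follow the proof of the unsigned statement, Theorem~\ref{thmhoff77}\,(i), together with the associated Hoffman graph method of Section~\ref{sec:associated hoffman graphs}, the genuinely new ingredient being that the restriction of $\tau$ to any clique of the underlying graph $G$ is automatically balanced. The first move is to reformulate the conclusion. Since $\lambda_{\min}(G,\tau)\ge\lambda>-2$, the matrix $A(G,\tau)+2I$ is positive definite, so $A(G,\tau)+2I=N^TN$ with the columns $n_x$ satisfying $(n_x,n_x)=2$, $(n_x,n_y)=\tau(\{x,y\})$ if $x\sim y$, and $(n_x,n_y)=0$ otherwise; switching at a vertex $x$ corresponds to replacing $n_x$ by $-n_x$. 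Applying the same description to $A(G,\tau)+I$ (unit-norm columns) and Cauchy--Schwarz, one sees that $\lambda_{\min}(G,\tau)=-1$ \emph{if and only if} $(G,\tau)$ is switching equivalent to a disjoint union of all-positive cliques, which by connectedness means $(K_n,+)$. So it is enough to prove (a) $G$ is complete, and (b) $\tau$ is balanced.

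Part (b) is a consequence of (a): a signed triangle with an odd number of negative edges has $3\times3$ adjacency matrix with smallest eigenvalue $-2$ (its shift by $2I$ is positive semidefinite of rank $\le2$, as one checks directly), so by Cauchy interlacing of principal submatrices no such triangle is an induced subgraph of $(G,\tau)$; hence, once $G=K_n$, every triangle is balanced, and as triangles span the cycle space of $K_n$ the signing $\tau$ is balanced, i.e.\ $\tau(\{x,y\})=\sigma(x)\sigma(y)$ for some $\sigma\colon V(G)\to\{\pm1\}$. I also note the input needed for (a): every signed star $K_{1,t}$ is switching equivalent to the all-positive star, with smallest eigenvalue $-\sqrt t$, so $(G,\tau)$ has no induced $K_{1,5}$; hence each neighbourhood of $G$ has independence number at most $4$, and if $C_5(\lambda)$ is chosen at least the Ramsey number $R(N_0,5)$, every vertex of $G$ lies in a clique of size $\ge N_0$, for any prescribed $N_0=N_0(\lambda)$.

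The bulk of the work is (a). Choosing $m=m(\lambda)$ so that the all-positive $\widetilde K_{2m}$ has smallest eigenvalue below $\lambda$ (possible by \cite[Lemma~3.2]{Yang.2021}), I would first show: if a vertex $a$ is adjacent to some but not all of a large clique $C$, then $a$ has at most $m-1$ non-neighbours in $C$. Indeed, after switching the balanced clique $C$ to all-positive, $a$ together with $m$ of its neighbours and $m$ of its non-neighbours in $C$ spans an induced copy of $\widetilde K_{2m}$ in which the apex $a$ is joined to $m$ clique-vertices by arbitrary signs, and a Rayleigh-quotient estimate shows this signed $\widetilde K_{2m}$ still has smallest eigenvalue $<\lambda$ when $m$ is large (if both signs occur on at least a third of $a$'s edges to $C$, use an alternating test vector on those edges; otherwise delete the minority-sign neighbours to uncover an induced all-positive $\widetilde K_{2m'}$ with $m'$ comparable to $m$, and invoke the cited fact with interlacing). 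With this bound the relation ``being close'' on the large maximal cliques of $G$ is an equivalence relation whose classes define quasi-cliques, and one builds the associated signed Hoffman graph $\mathfrak g$ whose slim graph is $G$, whose fat vertices are the quasi-cliques, and which has $\lambda_{\min}(\mathfrak g)\ge\lambda>-2$, via a signed analogue of Proposition~\ref{assohoff} and Theorem~\ref{nicethm}. Finally, using that fat indecomposable signed Hoffman graphs with smallest eigenvalue $>-2$ are extremely restricted (only the trivial ``clique-on-one-fat-vertex'' pieces survive, just as in the $\lambda_{\min}\ge-3$ analysis behind Theorem~\ref{kyy3thm}), one concludes that $\mathfrak g$ decomposes into such pieces, forcing its slim graph $G$ to be a disjoint union of cliques; connectedness gives $G=K_n$.

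I expect the main obstacle to be precisely the place where the signed case departs from the unsigned one: several configurations that are excluded \emph{locally} for unsigned graphs --- most conspicuously the quadrangle, whose all-positive form already has smallest eigenvalue $-2$ --- acquire smallest eigenvalue $>-2$ under an unbalanced signing (the unbalanced $C_4$ has smallest eigenvalue $-\sqrt2$), so they cannot be ruled out by inspecting bounded induced subgraphs, and it is exactly the large-minimal-valency hypothesis that is needed to exclude them through the Hoffman-graph/Bose--Laskar analysis; concretely, every ``small forbidden configuration'' used to derive a contradiction has to be verified to have smallest eigenvalue below $\lambda$ \emph{for all signings it can carry}, and the balancedness of $\tau$ on cliques --- which fixes every sign except those on edges between distinct quasi-cliques, and the latter can be normalised by switching --- is what keeps this sign bookkeeping under control. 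Once (a) and (b) hold, switching at $\{x:\sigma(x)=-1\}$ turns $(G,\tau)$ into $(K_n,+)$, which has spectrum $\{(n-1)^1,(-1)^{n-1}\}$, so $\lambda_{\min}(G,\tau)=-1$; moreover $A(K_n,+)+I=\mathbf 1\mathbf 1^T=N^TN$ with $N=\mathbf 1^T$ integral, so $(K_n,+)$ is $1$-integrable, and since conjugating an integral matrix by a $\pm1$-diagonal matrix and a permutation keeps it integral, $1$-integrability is invariant under switching; hence $(G,\tau)$ is $1$-integrable, completing the proof.
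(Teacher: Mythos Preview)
The survey itself gives no proof of this theorem; it simply attributes the result to Gavrilyuk, Munemasa, Sano and Taniguchi \cite{Gavrilyuk.2020} and notes that their paper develops a generalisation of line Hoffman graphs to the signed setting. So there is no ``paper's proof'' to compare against here; I can only assess your argument on its own terms.

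Your outline is broadly on the right track, and the endgame (balancedness from the absence of unbalanced triangles, then switching to $(K_n,+)$, then $1$-integrability) is correct. There are, however, two points where the write-up needs repair.

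First, the logical ordering. You establish the forbidden-unbalanced-triangle fact only when proving (b), yet it is the single most useful structural consequence of $\lambda_{\min}(G,\tau)>-2$ and should be invoked \emph{before} (a). Once you know every triangle is balanced, then after switching any clique $C$ to all-positive, for any external vertex $a$ adjacent to $c_1,c_2\in C$ the triangle $a c_1 c_2$ forces $\tau(ac_1)=\tau(ac_2)$; so all edges from $a$ into $C$ carry the \emph{same} sign, and one further switch at $a$ makes them all positive. This renders your whole Rayleigh-quotient case analysis for the signed $\widetilde K_{2m}$ superfluous: the ``both signs occur'' case is vacuous, and the configuration is automatically switching-equivalent to the unsigned $\widetilde K_{2m}$, which is forbidden by the choice of $m$. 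The same remark applies to the $G(\mathfrak h_3,p)$-type configurations you implicitly need (two non-adjacent vertices sharing a large positive clique of common neighbours), and---with one extra check---to the $G(\mathfrak h_4,p)$ configurations (two adjacent vertices, each with its own large positive clique; here only the single bridge edge can carry a free sign, and a direct calculation shows the signed special matrix $\big(\begin{smallmatrix}-1&\pm1\\ \pm1&-1\end{smallmatrix}\big)$ has smallest eigenvalue $-2$ in either case, so the limiting eigenvalue is still $-2$).

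Second, and more substantively, the step ``build the associated signed Hoffman graph \ldots\ via a signed analogue of Proposition~\ref{assohoff} and Theorem~\ref{nicethm}'' is doing real work that is not available in this survey. You need a signed version of the Hoffman--Ostrowski limit (Theorem~\ref{Ostrowski}), a signed version of the transfer principle in Proposition~\ref{assohoff} (if the associated Hoffman graph contains a small signed Hoffman subgraph $\mathfrak h$, then $G(\mathfrak h,p)$ sits inside $(G,\tau)$ as a signed induced subgraph), and then the conclusion that the only fat indecomposable signed pieces with $\lambda_{\min}>-2$ are trivial. The triangle-balancedness observation above makes each of these steps essentially reduce to the unsigned statement (because every slim neighbourhood of a fat vertex is, after switching, all-positive, and the only free signs live on slim--slim edges between distinct quasi-cliques, which---as just noted---do not change the $2\times 2$ special matrices you need to exclude). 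But this reduction has to be written out; as it stands, ``a signed analogue'' is a placeholder for several pages, and it is exactly what the cited paper \cite{Gavrilyuk.2020} supplies.
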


\section{Future work}\label{sec:future work}
At the end of this survey, we give some problems for discussion.

\subsection{Problems on graphs and signed graphs}

First we discuss unsigned graphs. As a generalization of Theorem \ref{mthmkyy3}, we have
\begin{problem}[{\cite[Conjecture 3.2]{Koolen.2020}}]\label{pr1}
There exist constants $\kappa_5$ and $s_1$ such that, for any graph $G$ with $\lambda_{\min}(G)\geq -4$ and minimal valency at least $\kappa_5$, $G$ is $s_1$-integrable.
\end{problem}

To solve this problem, we have to have a good understanding of fat Hoffman graphs with smallest eigenvalue at least $-4$. The first step is to look at the family $\mathfrak{H}$ of indecomposable fat Hoffman graphs with smallest eigenvalue at least $-4$, in which every slim vertex has exactly one fat neighbor and any two distinct slim vertices have no common fat neighbor. Note that for any Hoffman graph in $\mathfrak{H}$, its slim graph is a connected graph with smallest eigenvalue at least $-3$. Therefore, as a subproblem of Problem \ref{pr1}, we have

\begin{problem}[{\cite[Conjecture 3.1]{Koolen.2020}}]
There exists a constant $s_2$ such that, for any graph $G$ with $\lambda_{\min}(G)\geq -3$, $G$ is $s_2$-integrable.
\end{problem}

In \cite{KRY2}, Koolen et al. showed that the complement of the McLaughlin graph has smallest eigenvalue $-3$ and can not be $2$-integrated, but it is $4$-integrable. It is not clear whether any graph with smallest eigenvalue at least $-3$ is always $4$-integrable. The situation for graphs with smallest eigenvalue at least $-3$ is different from the situation for graphs with smallest eigenvalue at least $-2$, as there exists an infinite family of connected graphs with unbounded vertices such that none of them can be $2$-integrated (see \cite[Remark 1.4 \rm{(v)}]{kyy3}).

As a special case, we look at the integrability of trees with smallest eigenvalue at least $-3$.
\begin{problem}
Is it true that any tree with smallest eigenvalue at least $-3$ is $2$-integrable?
\end{problem}
In \cite{Koolen.2017}, Koolen, Rehman and Yang characterized $1$-integrable trees with smallest eigenvalue at least $-3$.

For signed graphs we propose the following problems.
\begin{problem}
 For any real number $\lambda\in (-1-\sqrt{2},-2]$, there exists a constant $C_6(\lambda)$ such that, if $(G,\tau)$ is a signed graph with
 $\lambda_{\min}(G, \tau)\geq\lambda$ and minimal valency at least $C_6(\lambda)$,
  then $\lambda_{\min}(G,\tau) =-2$ and $(G,\tau)$ is $1$-integrable.
\end{problem}
\begin{problem}
There exists a constant $\kappa_6$ such that, if $(G,\tau)$ is a signed graph with $\lambda_{\min}(G, \tau)\geq-3$ and minimal valency at least $\kappa_6$, then $(G,\tau)$ is $s$-integrable for any $s\geq2$.
\end{problem}

To solve above two problems, one problem to overcome is how to deal with switching classes of signed graphs. Is there a natural generalization of Hoffman graphs to the class of signed graphs? In \cite{Gavrilyuk.2020}, they considered a generalization of line Hoffman graphs.
For more problems on graphs and signed graphs with fixed smallest eigenvalue, see \cite{Koolen.2020}. For more general problems on the spectral theory of signed graphs, see \cite{Belardo.2018}.

\subsection{Refining \texorpdfstring{$\widetilde{K}_{2t}$}{\widetilde{K}_{2t}}}

Denote by $H(a,t)$ the graph on $a+t+1$ vertices consisting of a clique $K_{a+t}$ together with a vertex that is adjacent to precisely $a$ vertices of this clique. Note that $\widetilde{K}_{2t}$ is exactly the graph $H(t,t)$.

In \cite{Greaves.2020}, Greaves, Koolen, and Park showed the following.

\begin{lemma}\label{lem:hat}
  Let $G$ be a graph having smallest eigenvalue $-m$ that contains $H(a,t)$ as an induced subgraph.
  Then
  \begin{equation}
    \label{eqn:atbound}
    (a-m(m-1))(t-(m-1)^2) \leqslant (m(m-1))^2.
  \end{equation}
\end{lemma}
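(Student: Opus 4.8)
The plan is to reduce the statement to Cauchy interlacing together with one explicit eigenvalue computation. Since $H(a,t)$ is an induced subgraph of $G$, Lemma~\ref{interelacing} gives $\lambda_{\min}(H(a,t))\ge \lambda_{\min}(G)=-m$, so it suffices to determine $\lambda_{\min}(H(a,t))$ and then rewrite the inequality $\lambda_{\min}(H(a,t))\ge -m$ in the form \eqref{eqn:atbound}. We may assume $a,t\ge 1$; the cases $a=0$ or $t=0$ (where $H(a,t)$ is a clique together with an isolated vertex, or a clique) are immediate.

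First I would use the equitable partition of $V(H(a,t))$ into three cells: the apex vertex $x$, the set $A$ of the $a$ clique-vertices adjacent to $x$, and the set $B$ of the $t$ clique-vertices not adjacent to $x$. Because $A,B$ are cliques, $A$ is completely joined to $B$, and $x$ is joined to all of $A$ and to none of $B$, this partition is equitable with quotient matrix
\[
Q=\begin{pmatrix} 0 & a & 0\\ 1 & a-1 & t\\ 0 & a & t-1\end{pmatrix}.
\]
The spectrum of $H(a,t)$ consists of the three (real) eigenvalues of $Q$ together with $-1$ with multiplicity $a+t-2$ (from vectors supported on $A$, resp.\ $B$, with zero coordinate sum). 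A direct cofactor expansion gives the characteristic polynomial of $Q$ as
\[
p(\lambda)=\lambda^{3}-(a+t-2)\lambda^{2}-(2a+t-1)\lambda+a(t-1).
\]
Since $p(-1)=at>0$, the smallest root $\mu$ of $p$ lies strictly below $-1$, hence $\lambda_{\min}(H(a,t))=\mu$.

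Now, as $p$ is monic with all its roots $\ge -m$, the product $p(-m)=\prod_{i}(-m-\mu_i)$ of three non-positive numbers is $\le 0$. The remaining work is bookkeeping: expand $p(-m)\le 0$ and collect the terms in $a$. One checks that the coefficient of $a$ is $t-(m-1)^2$ and the $a$-free part is $-m(m-1)(t+m-1)$, so $p(-m)\le 0$ becomes
\[
a\bigl(t-(m-1)^2\bigr)\le m(m-1)(t+m-1).
\]
Finally I would verify that this is literally equivalent to \eqref{eqn:atbound}: expanding $(a-m(m-1))(t-(m-1)^2)\le (m(m-1))^2$ and using the identity $m(m-1)-(m-1)^2=m-1$ converts the right-hand side into $m(m-1)(t+m-1)$, recovering the displayed inequality. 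There is no serious obstacle here; the only points requiring care are confirming that the smallest root of the cubic is genuinely the smallest eigenvalue of $H(a,t)$ (i.e.\ that $\mu<-1$, which is why evaluating $p$ at $-1$ matters) and tracking signs in the passage from $p(-m)\le 0$ to the factored inequality — these are exactly the steps I would write out in full.
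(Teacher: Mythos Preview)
Your argument is correct. The equitable partition, the cubic $p(\lambda)=\lambda^{3}-(a+t-2)\lambda^{2}-(2a+t-1)\lambda+a(t-1)$, the check $p(-1)=at>0$ placing the smallest root below $-1$, the sign argument giving $p(-m)\le 0$, and the algebraic rearrangement into \eqref{eqn:atbound} all go through as you describe. Note that the paper does not supply its own proof of this lemma but quotes it from Greaves, Koolen and Park \cite{Greaves.2020}; the equitable--partition/quotient--matrix computation you carry out is the standard route to such an inequality and is essentially what one finds there.
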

Using the above result, they obtain bounds on the clique order in strongly regular graphs. Their result can also be extended to other classes of graphs, for example, distance-regular graphs.

Lemmens and Seidel \cite{Lemmens.1973} conjectured that for each Seidel matrix $S$ of order $n$, the rank of the matrix $S +5I$ is at least $\lfloor\frac{2n}{3}\rfloor +1$. This was shown to be true by Cao, Koolen, Lin and Yu \cite{Cao.2020}. Their main tool is to use the Seidel matrices of the complements of $H(a, t)$'s as minimal forbidden principal submatrices.

\section*{Acknowledgments}
J.H. Koolen is partially supported by the National Natural Science Foundation of China (No. 12071454) and Anhui Initiative in Quantum Information Technologies (No. AHY150000).

Q. Yang is partially supported by the Fellowship of China Postdoctoral Science Foundation (No. 2020M671855).

We greatly thank Prof.~Min Xu for supporting M.-Y. Cao to visit University of Science and Technology of China.

We are also grateful to Prof.~Sebastian M. Cioabă, Prof.~Akihiro Munemasa, Dr.~Jongyook Park and Mr.~Kiyoto Yoshino for their careful reading and valuable comments.

\titleformat{\section}{\large\bfseries}{\appendixname~\thesection .}{0.5em}{}
\appendix
\section{$Q$-polynomial distance-regular graphs}\label{appendix}
Let $V$ denote a non-empty finite set.
Let $\mathrm{Mat}_V(\mathbb{C})$ denote the $\mathbb{C}$-algebra consisting of all complex matrices whose rows and columns are indexed by $V$.
Let $\mathds{U}=\mathbb{C}^V$ denote the $\mathbb{C}$-vector space consisting of all complex vectors indexed by $V$.
We endow $\mathds{U}$ with standard Hermitian inner product $(\mathbf{u},\mathbf{v})=\mathbf{u}^T\overline{\mathbf{v}}$ for $\mathbf{u},\mathbf{v} \in \mathds{U}$.
We view $\mathds{U}$ as a left module for $\mathrm{Mat}_V(\mathbb{C})$, called the \emph{standard module}.

Let $G$ be a distance-regular graph of diameter $D$. Let $V$ be the vertex set of $G$. For $0\leq i\leq D$, let $A_i$ denote the matrix in $\mathrm{Max}_{V}(\mathbb{C})$ defined by
\[
(A_i)_{x,y}=\left\{
\begin{array}{ll}
  1 & \text{ if }d(x,y)=i, \\
  0 & \text{ otherwise},
\end{array}
\right.
\]
where $x,y\in V$. We call $A_i$ the \emph{ $i$th distance matrix} of $G$. We abbreviate $A:=A_1$. Observe that
\begin{itemize}
\item[(1a)] $A_0=I$;
\item[(1b)] $\sum^D_{i=0}A_i=J$, the all-ones matrix;
\item[(1c)] each $A_i$ is real symmetric;
\item[(1d)] there exist $p_{ij}^h$ for $0\leq i,j,h\leq D$, such that $A_iA_j =A_jA_i= \sum^D_{h=0}p^h_{ij}A_h$ hold.
\end{itemize}
Notice that (\rm{1a}) implies for each pair vertices $x,y\in V$ with $d(x,y)=h$, the equality $|G_i(x)\cap G_j(y)|=p_{ij}^h$ holds. Therefore, for all integers $0\leq h,i,j\leq D$, $p^h_{ij}=0$ (resp.~$p^h_{ij}\neq0$) if one of $h,i,j$ is greater than (resp.~equal to) the sum of the other two. By these facts, we find that $A_0, A_1,\ldots, A_D$ is a basis for a commutative subalgebra $M$ of $\mathrm{Mat}_V(\mathbb{C})$, which we call the \emph{Bose-Mesner algebra} of $G$.
It is known that $A$ generates $M$, as $AA_i=c_{i+1}A_{i+1}+a_iA_i+b_{i-1}A_{i-1}$ ($0\leq i\leq D$) by condition (\rm{iv}), where $\{b_0,b_1,\ldots,b_{D-1};c_1,c_2,\ldots,c_D\}$ is the intersection array of $G$.

The algebra $M$ has a second basis $E_0, E_1, \ldots, E_D$ such that
\begin{itemize}
\item[(2a)] $E_0=|V|^{-1}J$,
\item[(2b)] $\sum^D_{i=0}E_i=I$,
\item[(2c)] each $E_i$ is real symmetric,
\item[(2d)] $E_iE_j=E_jE_i=\delta_{ij}E_i$
\end{itemize}
(see \cite[p.~45]{bcn}).
We call $E_i$ the \emph{$i$th primitive idempotent} of $G$.
Since $\{E_i\}^D_{i=0}$ is a basis for $M$, there exist complex scalars $\{\theta_i\}^D_{i=0}$ such that $A=\sum^D_{i=0}\theta_i E_i$. (Note that $\{\theta_i\}^D_{i=0}$ are exactly all of the distinct eigenvalues of $G$ and they are real.) Observe $AE_i=E_iA=\theta_iE_i$ for $0\leq i \leq D$. We call $\theta_i$ the eigenvalue of $G$ associated with $E_i$ for $0\leq i \leq D$.
Observe $\mathds{U}=E_0\mathds{U}\oplus E_1\mathds{U}\oplus\cdots\oplus E_D\mathds{U}$, an orthogonal direct sum.
For $0\leq i \leq D$, $E_i\mathds{U}$ is the eigenspace of $A$ associated with $\theta_i$.
Denote by $m_i$ the rank of $E_i$ and observe $m_i=\dim(E_i\mathds{U})$, the multiplicity of the eigenvalue $\theta_i$.

We now introduce the notion of $Q$-polynomial property of $G$.
Let $\circ$ denote the entrywise product in $\mathrm{Mat}_V(\mathbb{C})$. Since $A_i\circ A_j =\delta_{ij}A_i$, the Bose-Mesner algebra $M$ is closed under $\circ$.
Also as $\{E_i\}^D_{i=0}$ is a basis for $M$, there exist complex scalars $q^h_{ij}$ such that \[E_i\circ E_j = |V|^{-1}\sum^D_{h=0}q^h_{ij}E_h.\]
By \cite[p.~48, p.~49]{bcn}, the scalars $q^h_{ij}$ are real and non-negative.
We say $G$ is \emph{$Q$-polynomial} (with respect to the given ordering $E_0, E_1, \ldots, E_D$) whenever for all integers $0\leq h,i,j\leq D$, $q^h_{ij}=0$ (resp.~$q^h_{ij}\ne0$) if one of $h,i,j$ is greater than (resp.~equal to) the sum of the other two \cite[p.~235]{bcn}.

We assume $G$ is $Q$-polynomial with respect to the ordering $E_0, E_1, \ldots, E_D$.
Fix a vertex $x \in V$. We refer to $x$ as a ``base'' vertex.
For $0 \leq i \leq D$, we define the diagonal matrix $E^*_i=E^*_i(x) \in \mathrm{Mat}_V(\mathbb{C})$ with diagonal entry
\[
	(E^*_i)_{y,y} =\left\{
   \begin{array}{ll}
   1 & \text{ if }d(x,y)=i, \\
  0 & \text{ otherwise},
   \end{array}
   \right.
\]
where $y\in V$. We call $E^*_i$ the \emph{$i$th dual primitive idempotent} of $G$ with respect to $x$.
Observe
\begin{itemize}
\item[(3a)] $\sum^D_{i=0}E^*_i=I$,
\item[(3b)] each $E^*_i$ is real symmetric,
\item[(3c)] $E^*_iE^*_j=\delta_{ij}E^*_i$.
\end{itemize}
By these facts, $E^*_0, E^*_1, \ldots, E^*_D$ is a basis for a commutative subalgebra $M^*$ of $\mathrm{Mat}_V(\mathbb{C})$, which we call the \emph{dual Bose-Mesner algebra} of $G$.

Define the diagonal matrix $A^*_i=A^*_i(x) \in\mathrm{Mat}_V(\mathbb{C})$ with diagonal entry $(A^*_i)_{y,y} = |V|(E_i)_{x,y}$ for $y\in V$.
By \cite[p.~379]{Terwilliger.1992}, $A^*_0, A^*_1, \ldots, A^*_D$ is also a basis for $M^*$, and moreover
\begin{itemize}
\item[(4a)] $A^*_0=I$,
\item[(4b)] $\sum^D_{i=0}A^*_i=|V|E_0^*$,
\item[(4c)] each $A_i^*$ is real and symmetric,
\item[(4d)] $A^*_iA^*_j=A^*_jA^*_i=\sum^D_{h=0} q^h_{ij} A^*_h$.
\end{itemize}
We call $A^*_i$ the $i$th \emph{dual distance matrix} of $G$ with respect to $x$. We abbreviate $A^*=A^*_1$, called the \emph{dual adjacency matrix} of $G$ with respect to $x$. From conditions (\rm{4a}) and (\rm{4d}), we find that the matrix $A^*$ generates $M^*$.
Since $\{E^*_i\}^D_{i=0}$ is a basis for $M^*$, there exist complex scalars $\{\theta^*_i\}^D_{i=0}$ such that $A^*=\sum^D_{i=0}\theta^*_iE^*_i$.
Observe $A^*E^*_i=E^*_iA^*=\theta^*_iE^*_i$ for $0 \leq i \leq D$.
The scalars $\{\theta^*_i\}^D_{i=0}$ are real \cite[Lemma 3.11]{Terwilliger.1992} and mutually distinct.
We call $\theta^*_i$ the \emph{dual eigenvalue} of $G$ associated with $E^*_i$.
Observe $\mathds{U}=E^*_0\mathds{U}\oplus E^*_1\mathds{U}\oplus \cdots \oplus E^*_D\mathds{U}$, an orthogonal direct sum.
For $0 \leq i \leq D$, the space $E^*_i\mathds{U}$ is the eigenspace of $A^*$ associated with $\theta^*_i$.

Let $T=T(x)$ denote the subalgebra of $\mathrm{Mat}_V(\mathbb{C})$ generated by $M$ and $M^*$.
We call $T$ the \emph{Terwilliger algebra} (or \emph{subconstituent algebra}) of $G$ with respect to $x$ \cite{Terwilliger.1992}. Note that $A$ and $A^*(x)$ generates $T$.
The algebra $T$ is finite dimensional and non-commutative. It is also semi-simple since it is closed under conjugate and transpose map.
The following are relations in $T$ \cite[Lemma 3.2]{Terwilliger.1992}.
For $0 \leq h,i,j \leq D$,
\begin{align*}
	E^*_iA_hE^*_j = 0 \quad \text{if and only if} \quad p^h_{ij}=0,\\
	E_iA^*_hE_j = 0 \quad \text{if and only if} \quad q^h_{ij}=0.
\end{align*}
Note that $T$ may depend on the choice of the base vertex (see \cite{Bang.2009}).

By a \emph{$T$-module}, we mean a subspace $\mathds{W}$ of $\mathds{U}$ such that $B\mathds{W}\subseteq \mathds{W}$ for all $B\in T$.
Observe that $\mathds{U}$ is a $T$-module, called the \emph{standard module of $T$} (or \emph{standard $T$-module}). A $T$-module is called \emph{irreducible} if it contains no $T$-submodule except itself and zero module.

Let $\mathds{W}$ be a $T$-module and $\mathds{W}_1$ a $T$-submodule of $\mathds{W}$.
Then the orthogonal complement of $W_1$ in $W$ is a $T$-module, since $T$ is closed under conjugate transpose map.
It follows that $W$ decomposes into an orthogonal direct sum of irreducible $T$-modules.

Let $W$ denote an irreducible $T$-module.
Then $W$ decomposes into a direct sum of nonzero spaces among $E^*_iW$, $0 \leq i \leq D$.
By the \emph{endpoint} of $W$, we mean $\min\{i \mid 0 \leq i \leq D, E^*_iW\ne 0\}$.
By the \emph{diameter} of $W$, we mean $|\{i \mid 0 \leq i \leq D, E^*_iW\ne 0\}|-1$.
Let $r$ denote the endpoint of $W$ and $d$ the diameter of $W$.
By \cite[Lemma 3.9]{Terwilliger.1992}, we have (\rm{i}) $E^*_iW \ne 0$  if and only if $r \leq i \leq r+d$; (\rm{ii}) $W=\bigoplus^d_{h=0}E^*_{r+h}W$, an orthogonal direct sum.
An irreducible $T$-module $W$ is said to be \emph{thin} whenever $\dim(E^*_iW)\leq 1$ for $0 \leq i \leq D$.
There exists a unique thin irreducible $T$-module with endpoint $0$ and diameter $D$, which we call it the \emph{primary} $T$-module.
The primary $T$-module has a basis $E^*_0\mathbf{j}, \ldots, E^*_D\mathbf{j}$ \cite[Lemma 3.6]{Terwilliger.1992}, where $\mathbf{j}$ is the all-ones vector.

The graph $G$ is said to be \emph{thin with respect to $x$} whenever every irreducible $T(x)$-module is thin.
The graph $G$ is said to be \emph{thin} whenever $G$ is thin with respect to every vertex $x$ of $G$.
See \cite[Section 6]{Terwilliger.1993b} for examples of thin $Q$-polynomial distance-regular graphs.

\bibliographystyle{plain}
\bibliography{survey}
\end{document}